\documentclass[12pt]{amsart}
\usepackage{amssymb,amsfonts,amsmath}

\newcommand{\g}{\mathfrak{g}}
\newcommand{\Spec}{\operatorname{Spec}}
\newcommand{\Z}{\mathbb{Z}}
\newcommand{\A}{\mathcal{A}}
\newcommand{\gr}{\operatorname{gr}}
\newcommand{\Dcal}{\mathcal{D}}

\newcommand{\h}{\mathfrak{h}}

\newcommand\M{\mathcal{M}}
\newcommand\Loc{\operatorname{Loc}}
\newcommand\Ext{\operatorname{Ext}}

\newcommand\GL{\operatorname{GL}}
\newcommand\gl{\mathfrak{gl}}

\newcommand\red{/\!\!/\!\!/}
\newcommand\I{\mathcal{I}}

\newcommand\J{\mathcal{J}}

\newcommand\End{\operatorname{End}}

\newcommand\Hom{\operatorname{Hom}}

\renewcommand\a{\mathfrak{a}}

\newcommand\quo{/\!/}

\newcommand\W{\mathbb{A}}

\newcommand\C{\mathbb{C}}

\newcommand\param{\mathfrak{P}}

\newcommand\HC{\operatorname{HC}}

\newcommand\ZZ{\mathbb{Z}}
\newcommand{\CC}{\mathsf{CC}}

\newcommand{\WC}{\mathfrak{WC}}

\newcommand{\VA}{\operatorname{V}}
\newcommand{\B}{\mathcal{B}}
\newcommand{\Weyl}{\mathbf{A}}
\newcommand{\Cat}{\mathcal{C}}
\newcommand{\OCat}{\mathcal{O}}

\newcommand{\Leaf}{\mathcal{L}}

\newcommand{\Af}{\mathfrak{A}}
\newcommand{\Ca}{\mathsf{C}}
\newtheorem{Thm}{Theorem}[section]
\newtheorem{Prop}[Thm]{Proposition}
\newtheorem{Cor}[Thm]{Corollary}
\newtheorem{Lem}[Thm]{Lemma}
\theoremstyle{definition}

\newtheorem{Rem}[Thm]{Remark}

\numberwithin{equation}{section}
\oddsidemargin=0cm
\evensidemargin=0cm
\textwidth=16cm
\textheight=232mm
\unitlength=1mm
\author{Ivan Losev}
\address{I.L.: Department
of Mathematics, Northeastern University, Boston MA 02115 USA}
\email{i.loseu@neu.edu}
\thanks{MSC 2010: Primary 16G99; Secondary 16G20,53D20,53D55}
\title{Etingof conjecture for quantized quiver varieties II: affine quivers}
\begin{document}
\begin{abstract}
We study the representation theory of quantizations of Gieseker moduli spaces. Namely, we prove the localization
theorems for these algebras, describe their finite dimensional representations and two-sided ideals as well as
their categories $\mathcal{O}$ in some special cases. We apply this to prove our conjecture with Bezrukavnikov
on the number of finite dimensional irreducible representations of quantized quiver varieties for quivers of
affine type.
\end{abstract}
\maketitle
\tableofcontents
\section{Introduction}\label{S_intro}
\subsection{Classical and quantum quiver varieties}\label{SS_quiver_intro}
This paper continues the study of the representation theory of quantized quiver varieties initiated in \cite{BL}.
So  we start by recalling Nakajima quiver varieties and their quantizations.

Let $Q$ be a quiver (=oriented graph, we allow loops and multiple edges). We can formally represent $Q$ as a quadruple
$(Q_0,Q_1,t,h)$, where $Q_0$ is a finite set of vertices, $Q_1$ is a finite set of arrows,
$t,h:Q_1\rightarrow Q_0$ are maps that to an arrow $a$ assign its tail and head. In this paper we
are interested in the case when $Q$ is of affine type, i.e., $Q$ is an extended quiver of type $A,D,E$.

Pick vectors $v,w\in \ZZ_{\geqslant 0}^{Q_0}$ and vector spaces $V_i,W_i$ with
$\dim V_i=v_i, \dim W_i=w_i$. Consider the (co)framed representation space
$$R=R(v,w):=\bigoplus_{a\in Q_1}\Hom(V_{t(a)},V_{h(a)})\oplus \bigoplus_{i\in Q_0} \Hom(V_i,W_i).$$
We will also consider its cotangent bundle $T^*R=R\oplus R^*$, this is a symplectic vector space that can be identified with
$$\bigoplus_{a\in Q_1}\left(\Hom(V_{t(a)},V_{h(a)})\oplus \Hom(V_{h(a)}, V_{t(a)})\right)\oplus \bigoplus_{i\in Q_0} \left(\Hom(V_i,W_i)\oplus \Hom(W_i,V_i)\right).$$
The group $G:=\prod_{k\in Q_0}\GL(V_k)$ naturally acts on $T^*R$ and this action is Hamiltonian. Its moment map $\mu:T^*R\rightarrow
\g^*$ is dual to
$x\mapsto x_R:\g\rightarrow \C[T^*R]$, where $x_R$ stands for the vector field on $R$ induced by $x\in \g$.

Fix a stability condition $\theta\in \Z^{Q_0}$ that is thought as a character of $G$ via $\theta((g_k)_{k\in Q_0})=\prod_{k\in Q_0}\det(g_k)^{\theta_k}$. Then, by definition, the quiver variety $\M^\theta(v,w)$ is the GIT Hamiltonian reduction
$\mu^{-1}(0)^{\theta-ss}\quo G$. We are interested in two extreme cases: when $\theta$ is generic (and so $\M^\theta(v,w)$
is smooth and symplectic) and when $\theta=0$ (and so $\M^\theta(v,w)$ is affine). We will write $\M(v,w)$ for
$\Spec(\C[\M^\theta(v,w)])$, this is an affine variety independent of $\theta$ and a natural projective morphism
$\rho:\M^\theta(v,w)\rightarrow \M(v,w)$ is a resolution of singularities.

Under an additional restriction on $v$, we have the equality $\M(v,w)=\M^0(v,w)$. Namely, let $\g(Q)$
be the affine Kac-Moody algebra associated to $Q$. Let us set $\omega:=\sum_{i\in Q_0}w_i\omega^i, \nu:=\omega-\sum_{i\in Q_0}v_i\alpha^i$, where we write $\omega^i$ for the fundamental weight and $\alpha^i$ for a simple root corresponding to $i\in Q_0$.
Then we have $\M^0(v,w)=\M(v,w)$ provided $\nu$ is dominant.

Note also that we have compatible $\C^\times$-actions on $\M^\theta(v,w),\M(v,w)$ induced from the action on $T^*R$
given by $t.(r,\alpha):=(t^{-1}r,t^{-1}\alpha), r\in R, \alpha\in R^*$.

A special case of most interest and importance for us in this paper is the Gieseker moduli spaces $\M^\theta(n,r)$
where $n,r\in \Z_{>0}$. It corresponds to the case when $Q$ is a quiver with a single vertex and a single
arrow (that is a loop), with $v=n, w=r$. This space parameterizes torsion free sheaves of rank $r$ and degree $n$
on $\mathbb{P}^2$ trivialized at the line at infinity (but we will not need this description). The importance
of this case in our work is of the same nature as in the work of Maulik and Okounkov, \cite{MO}, on computing the
quantum cohomology of quiver varieties.

Now let us proceed to the quantum setting. We will work with quantizations of $\M^\theta(v,w),\M(v,w)$. Consider
the algebra $D(R)$ of differential operators on $R$. The group $G$ naturally acts on $D(R)$ with a quantum
comoment map $\Phi:\g\rightarrow D(R), x\mapsto x_R$. We can consider the quantum Hamiltonian reduction
$\A^0_\lambda(v,w)=[D(R)/D(R)\{x_R-\langle \lambda,x\rangle\}]^G$. It is a quantization of $\M^0(v,w)=\M(v,w)$
when $\nu$ is dominant. In the general case one can define a quantization of $\M(v,w)$ in two equivalent
way: as an algebra $\A^0_{\lambda'}(v',w)$ for suitable $\lambda'$ and $v'$ (thanks to quantized LMN isomorphisms
from \cite[2.2]{BL}) or as the algebra of global section of a suitable microlocal sheaf on $\M^\theta(v,w)$
(where $\theta$ is generic). Let us recall the second approach. We can microlocalize $D(R)$ to a sheaf in conical topology (i.e., the topology where ``open'' means ``Zariski open'' and $\C^\times$-stable) so that we can consider the restriction
of $D(R)$ to the $(T^*R)^{\theta-ss}$, let $\mathcal{D}^{\theta-ss}$ denote the restriction. Let $\pi$ stand
for the quotient morphism $\mu^{-1}(0)^{\theta-ss}\rightarrow\mu^{-1}(0)^{\theta-ss}/G=\M^\theta(v,w)$.
Let us notice that $\mathcal{D}^{\theta-ss}/\mathcal{D}^{\theta-ss}\{x_R-\langle \lambda,x\rangle\}$
is scheme-theoretically supported on $\mu^{-1}(0)^{\theta-ss}$ and so can be regarded as a sheaf in
conical topology on that variety. Set
$$\A^\theta_{\lambda}(v,w):=[\pi_*(\mathcal{D}^{\theta-ss}/\mathcal{D}^{\theta-ss}\{x_R-\langle \lambda,x\rangle\})]^G,$$
this is a sheaf (in  conical topology) of filtered algebras on $\M^\theta(v,w)$ such that $\gr\A^\theta_{\lambda}(v,w)=\mathcal{O}_{\M^\theta(v,w)}$.
By the Grauert-Riemenschneider theorem, $H^i(\mathcal{O}_{\M^\theta(v,w)})=0$ for $i>0$. It follows that $\A^\theta_\lambda(v,w)$ has no higher cohomology as well,
and $\gr \Gamma(\A^\theta_\lambda(v,w))=\C[\M(v,w)]$.  One can show, see \cite[3.3]{BPW} or \cite[2.2]{BL}, that $\Gamma(\A^\theta_\lambda(v,w))$
is independent of the choice of $\theta$. We will write $\A_\lambda(v,w)$ for $\Gamma(\A^\theta_\lambda(v,w))$.

In this paper we will be interested in the representation theory of the algebras $\A_\lambda(v,w)$ and, especially, of $\A_\lambda(n,r)$ (quantizations of $\M(n,r)$). Let us point out that the representations of $\A_\lambda^\theta(v,w)$ and of $\A_\lambda(v,w)$
are closely related. Namely, we can consider the category of coherent $\A_\lambda^\theta(v,w)$-modules to be denoted by
$\A_\lambda^\theta(v,w)\operatorname{-mod}$ and the category $\A_\lambda(v,w)\operatorname{-mod}$ of all finitely generated
$\A_\lambda(v,w)$-modules. When the homological dimension of $\A_\lambda(v,w)$ is finite, we get adjoint functors
$$R\Gamma_\lambda^\theta: D^b(\A_\lambda^\theta(v,w)\operatorname{-mod})\rightleftarrows D^b(\A_\lambda(v,w)\operatorname{-mod}):L\Loc_\lambda^\theta,$$ where $R\Gamma_\lambda^\theta$ is the derived global section
functor, and $L\operatorname{Loc}_\lambda^\theta:=\A_\lambda^\theta(v,w)\otimes^L_{\A_\lambda(v,w)}\bullet$.
It turns out that these functors are equivalences, \cite{MN_der}. In particular, they restrict to mutually inverse equivalences
\begin{equation}\label{eq:fin_dim_equi}
D^b_{\rho^{-1}(0)}(\A_\lambda^\theta(v,w)\operatorname{-mod})\rightleftarrows D^b_{fin}(\A_\lambda(v,w)\operatorname{-mod}),
\end{equation}
where on the left hand side we have the category of all complexes with homology supported on $\rho^{-1}(0)$, while on the right hand
side we have all complexes with finite dimensional homology.

\subsection{Results in the Gieseker cases}\label{SS_Gies_result}
In this paper we are mostly dealing with the algebras $\A_\lambda(n,r)$. Note that $R=\C\oplus \bar{R}$, where $\bar{R}:=\mathfrak{sl}_n(\C)\oplus \Hom(\C^n,\C^r)$ and the action of $G$ on $\C$ is trivial. So we have $\M^\theta(n,r)=\C^2\times \bar{\M}^\theta(n,r)$ and
$\A_\lambda(n,r)=D(\C)\otimes\bar{\A}_{\lambda}(n,r)$, where $\bar{\M}^\theta(n,r),\bar{\A}_\lambda(n,r)$ are the reductions associated
to the $G$-action on $\bar{R}$. We will consider the algebra $\bar{\A}_\lambda(n,r)$ rather than $\A_\lambda(n,r)$, all interesting
representation theoretic questions about $\A_\lambda(n,r)$ can be reduced to those about $\bar{\A}_\lambda(n,r)$.

There is one case that was studied very explicitly in the last decade: $r=1$. Here the variety $\M^\theta(n,1)$
is the Hilbert scheme $\operatorname{Hilb}^n(\C^2)$ of $n$ points on $\C^2$ and $\M(r,n)=\C^{2n}/\mathfrak{S}_n$
(the $n$th symmetric power of $\C^2$). The quantization $\bar{\A}_\lambda(n,r)$ is the spherical subalgebra in the
Rational Cherednik algebra $H_\lambda(n)$ for the pair $(\h,\mathfrak{S}_n)$, where $\h$ is the reflection representation
of $\mathfrak{S}_n$, see \cite{GG} for details.
The representation theory of $\bar{\A}_\lambda(n,1)$ was studied, for example, in \cite{BEG,GS1,GS2,rouqqsch,KR,BE,sraco,Wilcox}.
In particular, it is known
\begin{enumerate}
\item when (=for which $\lambda$) this algebra has finite homological dimension, \cite{BE},
\item how  to classify its  finite dimensional irreducible representations, \cite{BEG},
\item how to compute characters of irreducible
modules in the so called category $\mathcal{O}$, \cite{rouqqsch},
\item how to determine the supports of these modules, \cite{Wilcox},
\item how to describe the two-sided ideals of $\bar{\A}_\lambda(n,1)$, \cite{sraco},
\item when an analog of the Beilinson-Bernstein  localization theorem holds, \cite{GS1,KR}.
\end{enumerate}
We will address analogs of (1),(2),(5),(6)  for $\bar{\A}_\lambda(n,r)$ (as well as relatively easy parts of (3) and (4))
in the present paper. We plan to address an analog of (4) in a subsequent paper, while (3) is a work in progress.

Before we state  our main results, let us point out that there is yet another case when
the algebra $\bar{\A}_\lambda(n,r)$ is classical, namely when $n=1$. In this case, $\bar{\A}_\lambda(1,r)=D^\lambda(\mathbb{P}^{r-1})$,
the algebra of $\lambda$-twisted differential operators on $\mathbb{P}^{r-1}$.

First, let us give answers to (1) and (6).

\begin{Thm}\label{Thm:loc}
The following is true.
\begin{enumerate}
\item The algebra $\bar{\A}_\lambda(n,r)$ has finite global dimension (equivalently, $R\Gamma_\lambda^\theta$ is an equivalence) if and only if $\lambda$ is not of the form $\frac{s}{m}$,
where $1\leqslant m\leqslant n$ and $-rm<s<0$.
\item For $\theta>0$, the abelian localization holds for $\lambda$ (i.e., $\Gamma_\lambda^\theta$ is an equivalence) if $\lambda$
is not of the form $\frac{s}{m}$, where $1\leqslant m\leqslant n$ and $s<0$. For $\theta<0$, the abelian localization holds for
$\lambda$ if and only if $\lambda$ is not of the form $\frac{s}{m}$ with $1\leqslant m\leqslant n$ and $s> -rm$.
\end{enumerate}
\end{Thm}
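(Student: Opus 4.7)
The plan is to deduce both parts from an analysis of wall-crossing (shift) bimodules $\bar{\A}_{\lambda+\chi,\lambda}(n,r)$, combined with a parabolic restriction argument that passes from $\bar{\M}(n,r)$ to smaller Gieseker and Hilbert-scheme factors via transverse slices to symplectic leaves.

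For part (1), by \cite{MN_der} the finiteness of the global dimension of $\bar{\A}_\lambda(n,r)$ is equivalent to $R\Gamma_\lambda^\theta$ being an equivalence, which in turn is equivalent to the shift bimodule $\bar{\A}_{\lambda+\chi,\lambda}(n,r)$ being a Morita equivalence for some (equivalently every) generic integral $\chi$. To locate the singular hyperplanes I would exploit the symplectic stratification of $\bar{\M}(n,r)$: its leaves are indexed by partitions of $n$, and the transverse slice to a leaf of type $(m_1,\dots,m_k)$ decomposes as a product of lower-rank Gieseker varieties $\bar{\M}(m_i,r)$ together with a Hilbert-scheme factor. The associated parabolic restriction functors on Harish-Chandra bimodules are exact and preserve non-Morita-ness, so a parameter $\lambda$ can be singular for $\bar{\A}_\lambda(n,r)$ only if it is already singular for some $\bar{\A}_\lambda(m,r)$ with $m\leqslant n$; this produces the family of hyperplanes $\lambda = s/m$ by induction on $n$. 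The base case $m=n$ carries the hardest part of the argument and requires pinpointing the exact range $-rn < s < 0$: the \emph{if} direction constructs a singularity obstruction from a finite-dimensional irreducible $\bar{\A}_{s/n}(n,r)$-module whose existence is controlled by the Etingof-conjecture count established elsewhere in the paper, while the \emph{only if} direction uses a deformation-to-generic-$\lambda$ argument together with the absence of such obstructions to verify that the wall-crossing bimodule remains Morita outside the bad range.

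For part (2) the stronger statement of abelian localization follows from the same framework but with ample shifts. Following the BPW formalism of \cite{BPW}, abelian localization holds at $\lambda$ for a stability $\theta$ if and only if the shift bimodule $\bar{\A}_{\lambda+N\theta,\lambda}(n,r)$ is a Morita equivalence for all sufficiently large positive $N$, because only for such $N$ does the target parameter lie deep inside the ample chamber where abelian localization is automatic. For $\theta>0$ this forces $\lambda$ to avoid \emph{every} wall of the form $\lambda = s/m$ with $s<0$ along the entire ray $\lambda + \R_{\geqslant 0}\theta$ — not merely the finite-gldim walls $-rm<s<0$ of part (1) — which accounts for the asymmetric condition in the statement; the case $\theta<0$ is symmetric and gives $s>-rm$. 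The main remaining obstacle will be to rule out accidental t-exactness failures at parameters that are not singular in the derived sense of (1); I would resolve this by exploiting the $\C^\times$-equivariant structure on $\bar{\A}_\lambda^\theta(n,r)$ together with a direct analysis of the essential image of $L\Loc_\lambda^\theta$ on the category $\OCat$, where exactness of global sections can be checked at torus-fixed points and then propagated by the wall-crossing formalism.
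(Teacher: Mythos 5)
Your reduction scheme contains a false equivalence that would derail part (1): you assert that $R\Gamma_\lambda^\theta$ being an equivalence (finite global dimension, via \cite{MN_der}) is "equivalent to the shift bimodule $\bar{\A}_{\lambda+\chi,\lambda}(n,r)$ being a Morita equivalence for some generic integral $\chi$." Morita-ness of the shift bimodules to a dominant parameter is the criterion for \emph{abelian} localization, which is strictly stronger than derived localization; if the two were equivalent, parts (1) and (2) of the theorem would have the same answer, which they do not (e.g.\ $\lambda=-r-\tfrac12$ with $n\geqslant 2$ has finite global dimension but is expected to fail abelian localization for $\theta>0$). Your slice induction then propagates this conflation. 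The mechanism you propose for the hard base case is also wrong in one direction: the existence of a finite-dimensional irreducible $\bar{\A}_{s/n}(n,r)$-module is not a "singularity obstruction" — by Theorem \ref{Thm:fin dim} the regular parameters $\lambda=s/n$ with $s>0$ coprime to $n$ \emph{do} carry a finite-dimensional simple, and conversely the singular parameters in $(-r,0)$ carry none. The paper's actual argument for singularity of $\lambda\in(-r,0)$ with small denominator is entirely different: it uses the McGerty--Nevins exactness results \cite{MN_ab} on both sides of the wall to show that regularity would force the long wall-crossing functor to be an abelian equivalence, contradicting the existence of category $\mathcal{O}$ modules of non-maximal support. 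Your proposal never invokes \cite{MN_ab} at all, yet that input is also what makes part (2) "a straightforward consequence of (1)."

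The second, larger gap is the positive direction at denominator exactly $n$, which you dispose of with "a deformation-to-generic-$\lambda$ argument" and, for (2), "checking exactness at torus-fixed points and propagating by wall-crossing"; neither is an argument that closes the case, and the author explicitly flags this step as the crucial one that does not generalize. What is actually needed is the chain carried out in Section \ref{S_loc}: compare the Cartan subquotients $\Ca_\alpha(\bar{\A}_\lambda(n,r))$ and $\Gamma(\Ca_\alpha(\bar{\A}^\theta_\lambda(n,r)))$ for a generic one-parameter subgroup $\alpha\subset\GL(r)$, prove surjectivity onto the maximal finite-dimensional quotient (Proposition \ref{Prop:surject}, which requires explicit generators of the spherical Cherednik algebras), and prove Proposition \ref{Prop:simple_numbers} by localizing at the discriminant $\delta$ and using a KZ/monodromy argument; this yields equality of the numbers of simples in $\OCat(\bar{\A}^\theta_\lambda(n,r))$ and $\OCat(\bar{\A}_\lambda(n,r))$, so the quotient functor $\Gamma^\theta_\lambda$ is an equivalence on category $\mathcal{O}$. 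Even then one must upgrade this to abelian localization on all of $\bar{\A}_\lambda(n,r)\operatorname{-mod}$, which the paper does by showing the kernel and cokernel of $\mathcal{B}\rightarrow\bar{\A}_\lambda(n,r)$ (with $\mathcal{B}$ the composite shift bimodule) are finite dimensional and then killing them using the unique finite-dimensional simple and a $\operatorname{Tor}$ argument (Proposition \ref{Prop:loc_compl}). Your slice-induction reducing to denominator $n$ does match the paper's Subsection \ref{SS_loc_red_to_n} in spirit, but without the correct singularity mechanism, the exactness input from \cite{MN_ab}, and the counting-of-simples argument, the proposal does not prove either part of the theorem.
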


In fact part (2) is a straightforward consequence of (1) and results of McGerty and Nevins, \cite{MN_ab}.

Let us proceed to classification of finite dimensional representations.

\begin{Thm}\label{Thm:fin dim}
The following holds.
\begin{enumerate}
\item The sheaf $\bar{\A}_\lambda^\theta(n,r)$ has a representation supported on $\bar{\rho}^{-1}(0)$ if and only if $\lambda=\frac{s}{n}$
with $s$ and $n$ coprime. If that is the case, then the category $\bar{\A}_\lambda^\theta(n,r)\operatorname{-mod}_{\rho^{-1}(0)}$
is equivalent to $\operatorname{Vect}$.
\item The algebra $\bar{\A}_\lambda(n,r)$ has a finite dimensional representation  if and only if $\lambda=\frac{s}{n}$
with $s$ and $n$ coprime and the homological dimension of $\bar{\A}_\lambda(n,r)$ is finite. If that is the case, then the category $\bar{\A}_\lambda^\theta(n,r)\operatorname{-mod}_{\rho^{-1}(0)}$ is equivalent to $\operatorname{Vect}$.
\end{enumerate}
\end{Thm}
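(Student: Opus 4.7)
The plan is to prove part (1) and then deduce part (2) via the derived equivalence (\ref{eq:fin_dim_equi}). For part (1), I would use fixed-point localization. The framing torus $(\C^\times)^r$ (or a generic one-parameter subgroup thereof) acts Hamiltonianly on $\bar{\M}^\theta(n,r)$ with fixed-point components indexed by $r$-tuples of partitions $(\mu_1,\ldots,\mu_r)$ with $\sum|\mu_i|=n$, all lying in $\bar{\rho}^{-1}(0)$. A sheaf module on $\bar{\A}_\lambda^\theta(n,r)$ supported in the central fiber restricts via hyperbolic restriction to modules over the local quantizations at these fixed components, which (up to completion) are products of spherical rational Cherednik algebras of symmetric groups. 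The classification of central-fiber-supported simples for each such local algebra is known: such a simple exists iff the relevant Cherednik parameter equals $s/k$ in lowest terms with denominator equal to the size of the partition piece. The tightest constraint comes from the component where all $n$ points are concentrated in a single slot, forcing $\lambda = s/n$ with $\gcd(s,n)=1$. Conversely, for such $\lambda$, one constructs the required simple sheaf module by pushing the known Cherednik simple forward along hyperbolic induction, and a local-to-global $\Ext$ computation using the Lagrangian property of $\bar{\rho}^{-1}(0)$ yields the equivalence with $\operatorname{Vect}$.

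For part (2), when the homological dimension of $\bar{\A}_\lambda(n,r)$ is finite, the derived equivalence (\ref{eq:fin_dim_equi}) restricts to an equivalence between finite-dimensional algebra modules and central-fiber-supported sheaf modules, and combining with part (1) settles the case. When the homological dimension is infinite (i.e., $\lambda = s/m$ with $1 \leqslant m \leqslant n$, $-rm < s < 0$, in lowest terms), I would rule out finite-dimensional algebra modules in two subcases. If $m < n$, the denominator $m$ does not equal $n$, so part (1) gives no sheaf module on the central fiber; on the other hand, any finite-dim algebra module has associated graded supported at the cone point of $\bar{\M}(n,r)$ and would lift via an $\hbar$-adic deformation to a sheaf module on $\bar{\rho}^{-1}(0)$, a contradiction. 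If $m = n$, the sheaf module $L^\theta$ from part (1) exists, but the failure of abelian localization at such $\lambda$ (Theorem \ref{Thm:loc}(2)) implies that $R\Gamma^\theta_\lambda(L^\theta)$ is not concentrated in degree zero; by the same lifting argument combined with the uniqueness of the simple, a finite-dimensional algebra module would force $R\Gamma^\theta_\lambda(L^\theta)$ to be this module in degree zero, again a contradiction.

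The main obstacle is the fixed-point analysis in the necessity direction of part (1): one must identify the correct torus, compute the induced quantization parameter at each fixed component, and match the combinatorics of the resulting local denominator conditions to the single arithmetic condition $\gcd(s,n)=1$ with denominator exactly $n$. A secondary challenge is the deformation-lifting step in part (2), which requires care at the interface between sheaf-theoretic and algebra-level modules precisely when abelian localization fails, so that the usual derived-equivalence toolkit does not apply directly.
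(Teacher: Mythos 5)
Your necessity argument for (1) is close in spirit to what the paper does (it uses the Cartan/hyperbolic functor $\Ca_\alpha$ and Corollary \ref{Cor:shift_Gies}, which records exactly the parameter shift $\lambda\mapsto\lambda+i-1$ you flag), though two details need repair: the fixed components of the framing torus are products $\prod_i\M^\theta(n_i,1)$ of Hilbert schemes, not points, and they do \emph{not} lie in $\bar{\rho}^{-1}(0)$; and the reason only the ``single slot'' components matter is not that they give the ``tightest constraint'' but that on any component with two or more nonzero parts the extra $D(\C)$ factors force every category-$\mathcal{O}$ simple to have GK dimension $\geqslant 1$, so the restriction of a central-fiber-supported module must vanish there. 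The genuine gap is the existence direction of (1). Pushing the finite-dimensional Cherednik simple forward along hyperbolic (parabolic) induction does not produce a module supported on $\bar{\rho}^{-1}(0)$: the induced object is a Verma-type module whose support is the closure of the attracting set of the fixed component, a coisotropic subvariety of dimension $nr+n-1$ in $\bar{\M}^\theta(n,r)$, and controlling the support of its simple quotient is precisely the hard point (already for $r=1$ this is the content of \cite{BEG}). The paper instead argues indirectly: parabolic induction (Lemma \ref{Lem:parab_ind}, Proposition \ref{Prop:orders}) yields a nonzero map between Vermas, hence $\mathcal{O}$ is not semisimple; the semisimplicity criterion via Ringel duality (Lemma \ref{Lem:hw_techn}) together with the identification of Ringel duality with homological duality (Proposition \ref{Prop:contr_Ringel}) and Corollary \ref{Cor:full_supp} then forces a simple with support of dimension $<rn-1$, which the slice/restriction-functor argument shows is finite dimensional. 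Your proposal also does not address uniqueness: ``equivalent to $\operatorname{Vect}$'' needs that there is at most one such simple, which the paper gets from injectivity of the characteristic cycle map (Proposition \ref{Prop:CC_inject}) plus $\dim H_{2nr-2}(\M^\theta(n,r))=1$ (Lemma \ref{Lem:top_cohom}), and then $\operatorname{Ext}^1(L,L)=0$ in a highest weight category; a vague local-to-global Ext computation is not a substitute.

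In part (2) the finite-homological-dimension case is indeed the easy transfer you describe, but your treatment of the singular case is not correct as written. For $m<n$, the passage from a finite-dimensional $\A$-module to a nonzero sheaf module on $\bar{\rho}^{-1}(0)$ is not an automatic ``$\hbar$-adic lifting'': the localization of such a module may vanish, and the paper's mechanism is $L\Loc^\theta_\lambda$ together with $R\Gamma^\theta_\lambda\circ L\Loc^\theta_\lambda=\operatorname{id}$, which requires the McGerty--Nevins exactness of $\Gamma^\theta_\lambda$ (Proposition \ref{Prop:MN}) that you never invoke. For $m=n$ with $-r<\lambda<0$, the claim that failure of abelian localization forces $R\Gamma^\theta_\lambda(L^\theta)$ to sit in nonzero degrees is unjustified (with $\Gamma$ exact the opposite happens), and the paper's actual contradiction is different and substantially harder: assuming a finite-dimensional module exists, one shows $\Gamma^\theta_\lambda$ kills no simple in $\mathcal{O}$ (using that every simple is either central-fiber-supported or has support meeting the regular locus, via strict holonomicity and Theorem \ref{Thm:catO_str}), hence would be an equivalence on $\mathcal{O}$, contradicting the counting argument in the proof of Proposition \ref{Prop:loc_compl}. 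So both halves of your part (2) at singular parameters rest on steps that fail or are missing.
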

In fact, (2) is an easy consequence of (1) and Theorem \ref{Thm:loc}.

Now let us proceed to the description of two-sided ideals (in the finite homological dimension case).

\begin{Thm}\label{Thm:ideals}
Assume that $\bar{\A}_\lambda(n,r)$ has finite homological dimension and let $m$ stand for the denominator
of $\lambda$ (equal to $+\infty$ if $\lambda$ is not rational). Then there are $\lfloor n/m\rfloor$
proper two-sided ideals in $\bar{\A}_\lambda(n,r)$, all of them are prime, and they form a chain.
\end{Thm}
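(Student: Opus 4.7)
The plan is to classify proper two-sided ideals of $\bar{\A}_\lambda(n,r)$ through their associated varieties in $\bar{\M}(n,r)$, combining the standard Poisson-geometric dictionary with restriction functors for Harish-Chandra bimodules over quantizations of symplectic singularities. Since $\bar{\A}_\lambda(n,r)$ is a filtered quantization of $\C[\bar{\M}(n,r)]$, the associated variety $\VA(\I):=\Spec\gr(\bar{\A}_\lambda(n,r)/\I)$ of any two-sided ideal $\I$ is a closed Poisson subvariety, hence a union of closures of symplectic leaves; as $\bar{\A}_\lambda(n,r)$ is a domain (its associated graded is the coordinate ring of an irreducible variety), for $\I$ prime we get the closure of a single leaf. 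The first step is to record the symplectic stratification of $\bar{\M}(n,r)$: a leaf is indexed by a pair $(j,\pi)$ with $0\leqslant j\leqslant n$ and $\pi$ a partition of $n-j$, where the ``torsion part'' has length $n-j$ and is clustered according to $\pi$ modulo translation of $\C^2$.

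Next, I would attach to each leaf $\Leaf$ a slice algebra $\Walg_\Leaf$ quantizing its formal transverse slice in $\bar{\M}(n,r)$, and invoke the restriction functor $\bullet_\dagger:\HC(\bar{\A}_\lambda(n,r))\to \HC(\Walg_\Leaf)$. The key property is that $\bar{\Leaf}\subseteq \VA(\I)$ iff $\I_\dagger$ is a proper ideal of $\Walg_\Leaf$, which forces $\Walg_\Leaf$ to admit a finite-dimensional module. A quiver-variety slice calculation should identify the transverse slice to $\Leaf_{(j,\pi)}$, up to a symplectic vector space factor, as the product of $\bar{\M}(j,r)$ with a Kleinian-type slice determined by $\pi$; the induced quantum parameter on the Gieseker factor is $\lambda$ shifted by an integer (hence of the same denominator), while on the Kleinian factor it is a symplectic reflection parameter that admits no finite-dimensional modules unless $\pi=(1^{n-j})$. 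So only leaves with $\pi=(1^{n-j})$ can support ideals, and their slice algebras reduce to $\bar{\A}_\lambda(j,r)$.

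Combining this with Theorem \ref{Thm:fin dim}(2), the nonzero proper prime ideals of $\bar{\A}_\lambda(n,r)$ are parametrized by the integers $j\geqslant 1$, $j\leqslant n$, for which $\bar{\A}_\lambda(j,r)$ has a finite-dimensional representation -- i.e., the positive multiples of $m:=\operatorname{denom}(\lambda)$ that are at most $n$ -- giving exactly $\lfloor n/m\rfloor$ values. Uniqueness of the prime ideal $\I_j$ with $\VA(\I_j)=\overline{\Leaf_{jm}}$ follows from Theorem \ref{Thm:fin dim}(2): the category of finite-dimensional $\bar{\A}_\lambda(jm,r)$-modules is equivalent to $\operatorname{Vect}$, so the restriction functor and its left adjoint (well-behaved thanks to the finite homological dimension assumption and McGerty--Nevins/MN-type derived equivalence results) deliver a single prime. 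The chain property is automatic: the leaves $\Leaf_{jm}$ are totally ordered by closure with larger $j$ giving deeper degeneration, so the ideals satisfy $\I_1\subsetneq \I_2\subsetneq\ldots\subsetneq \I_{\lfloor n/m\rfloor}$; any nonzero proper ideal has $\VA(\I)$ equal to one of the $\overline{\Leaf_{jm}}$'s (since no other leaves can appear), forcing $\I=\I_j$.

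The main obstacle lies in Step 2: identifying $\Walg_\Leaf$ with the tensor product of a Weyl algebra, a Gieseker quantization $\bar{\A}_{\lambda'}(j,r)$, and a Kleinian-type algebra, and controlling the shift $\lambda\mapsto\lambda'$ in the slice formula to ensure it is integral (so that the denominator $m$ is preserved). This geometric analysis -- the quiver-variety slice theorem, suitably quantized, applied to the Gieseker case -- is where most of the real work sits; once it is in place the theorem reduces to the combinatorial count permitted by Theorem \ref{Thm:fin dim} and the standard dictionary between two-sided ideals and leaves.
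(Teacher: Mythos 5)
Your overall strategy (classify two-sided ideals by their associated varieties, which are unions of leaf closures, and detect admissible leaves by asking whether the slice algebra has a finite-dimensional representation) is the same as the paper's, but your key geometric input is wrong. For the Gieseker space the transverse slice to the leaf labelled by $(j,\pi)$ with $\pi=(n_1,\dots,n_k)$, $j+\sum n_i=n$, is (the formal neighborhood of $0$ in) $\prod_{i}\bar{\M}(n_i,r)$, quantized by $\bar{\A}_\lambda(n_1,r)\otimes\cdots\otimes\bar{\A}_\lambda(n_k,r)$ with the \emph{same} parameter $\lambda$ (Subsection \ref{SS_quant_slice}, via Proposition \ref{Prop:leaves}: the slice quiver of the one-loop quiver with framing $r$ is again a disjoint union of one-loop quivers with framing $r$, since the Cartan pairing of the Jordan quiver vanishes). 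There is no $\bar{\M}(j,r)$ factor and no Kleinian factor in the slice: the piece of charge $j$ runs \emph{along} the leaf, not transverse to it. Consequently the leaves that can support a proper ideal are not those with $\pi=(1^{n-j})$ but those with all parts of $\pi$ equal to $m$, i.e.\ $\pi=(m^i)$ for $i=1,\dots,\lfloor n/m\rfloor$, whose slice algebras are $\bar{\A}_\lambda(m,r)^{\otimes i}$; by Theorem \ref{Thm:fin dim} the algebra attached to your candidate leaves, $D^\lambda(\mathbb{P}^{r-1})^{\otimes (n-j)}$ (times a Gieseker factor sitting along the leaf), has no finite-dimensional modules for $m>1$. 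That your final count still comes out to $\lfloor n/m\rfloor$ is an accident of the labelling, not a consequence of the argument.

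Even after correcting the leaves, two steps you treat as automatic are genuine gaps. First, completeness: producing a unique \emph{prime} ideal for each admissible leaf closure does not exclude further non-semiprime ideals (e.g.\ products or powers of the $\J_i$) with the same associated variety; ``$\VA(\I)=\overline{\Leaf}$ forces $\I=\J_i$'' is exactly what has to be proved. The paper does this by showing the exact tensor functor $\bullet_{\dagger,x_q}$ at the deepest relevant leaf is faithful, hence embeds the ideal lattice of $\bar{\A}_\lambda(n,r)$ into that of $\bar{\A}_\lambda(m,r)^{\otimes q}$, and by classifying \emph{all} ideals of that tensor product and proving they are semiprime (Lemma \ref{Lem:ideals_next}); since the primes of $\bar{\A}_\lambda(n,r)$ form a chain, semiprime then implies prime. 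Second, the chain property: inclusion of associated varieties never by itself yields inclusion of ideals. The paper gets $\J_j\subsetneq\J_i$ from $(\J_j)_{\dagger,x_i}\subset(\J_i)_{\dagger,x_i}$ together with the adjunction $\J\subset(\J_{\dagger,x})^{\dagger_\A,x}$ (Proposition \ref{Prop:map_ideal_prop}); and the good behavior of the right adjoint $\bullet^{\dagger,x}$ -- that it lands in HC bimodules with associated variety exactly $\overline{\Leaf}$ -- is itself a nontrivial result (Proposition \ref{Prop:up_dag_prop}, resting on Namikawa's finiteness of $\pi_1^{alg}$ of leaves), not something that follows from finite homological dimension or McGerty--Nevins. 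Your uniqueness step (restriction plus a Borho--Kraft style comparison of associated varieties) is in the right spirit, but without the corrected slice computation and the two arguments above the proof does not close.
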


Finally, let us explain some partial results on a category $\mathcal{O}$ for $\bar{\A}^\theta_\lambda(n,r)$, we will
recall necessary definitions below in Subsection \ref{SS_Cat_O}. We use the notation $\mathcal{O}(\A^{\theta}_{\lambda}(n,r))$
for this category. What we need to know now is the following:
\begin{itemize}
\item The category $\mathcal{O}(\A^{\theta}_{\lambda}(n,r))$ is a highest weight category so it makes sense to speak about
standard objects $\Delta(p)$.
\item The labeling set for standard objects is naturally identified with the set of $r$-multipartitions of $n$.
\end{itemize}

\begin{Thm}\label{Thm:cat_O_easy}
If the denominator of $\lambda$ is bigger than $n$, then the category $\mathcal{O}(\A^\theta_\lambda(n,r))$ is semisimple.
If the denominator of $\lambda$ equals $n$, the category $\mathcal{O}(\A^\theta_\lambda(n,r))$ has only one nontrivial
block. That block is equivalent to the nontrivial block of $\mathcal{O}(\A^\theta_{1/nr}(nr,1))$.
\end{Thm}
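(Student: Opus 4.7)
The plan is to analyze both statements through the highest weight structure on $\mathcal{O}(\A^\theta_\lambda(n,r))$, whose standards $\Delta(p)$ are labeled by the $\C^\times$-fixed points on $\M^\theta(n,r)$, namely $r$-multipartitions $p$ of $n$. Blocks of $\mathcal{O}$ are controlled by a combinatorial invariant of the labels that refines the Euler-type central character acting on $\Delta(p)$; I expect this invariant to take the form of an ``$m$-core'', where $m$ is the denominator of $\lambda$, directly analogous to the classical $m$-core of an ordinary partition.

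For part (1), when the denominator $m$ of $\lambda$ exceeds $n$, no $r$-multipartition of $n$ can contain an $m$-ribbon, so every multipartition is its own $m$-core and each $\Delta(p)$ sits in its own block. A block containing a single standard is forced to be semisimple with $\Delta(p)=L(p)$, which proves the first assertion.

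For part (2), at $\lambda=s/n$ with $\gcd(s,n)=1$ I would identify the ``singular'' multipartitions whose $n$-core is strictly smaller than the multipartition itself; these fill the unique non-trivial block, while the remaining multipartitions sit in singleton semisimple blocks. A direct combinatorial count should yield exactly $nr$ singular $r$-multipartitions of $n$. On the other side, $\A^\theta_{1/nr}(nr,1)$ is (the spherical subalgebra of) the rational Cherednik algebra for $\mathfrak{S}_{nr}$ at $c=1/nr$; its non-trivial block consists of partitions of $nr$ with empty $nr$-core (i.e., hooks), and there are again exactly $nr$ such partitions, matching the Gieseker-side count.

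To upgrade this numerical coincidence to an equivalence of highest weight categories, I would invoke Rouquier's uniqueness theorem for highest weight covers: construct ``KZ-style'' quotient functors from each of the two non-trivial blocks to the module category of a common cyclotomic Hecke / KLR-algebra block, and verify that each side realizes the (essentially unique) $1$-faithful highest weight cover of that Hecke block. The main obstacle I anticipate is precisely this last step, namely establishing the necessary faithfulness of the quotient functor on standards on the quantized Gieseker side and arranging that the two Hecke targets coincide as quasi-hereditary covers; a secondary subtlety is performing the $m$-core/central-character analysis uniformly in $\lambda$, which I would handle by deforming $\A_\lambda$ flatly in the $\lambda$-direction and specializing at the critical values.
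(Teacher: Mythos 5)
Your combinatorial predictions agree with what the paper ultimately proves (the nontrivial block at denominator $n$ is indeed labeled by the $rn$ hooks $h_{i,d}$, cf. Theorem \ref{Thm:catO_str}(2)), but the two load-bearing steps of your argument are asserted rather than proved, and neither is available by known technology for $r>1$. First, the claim that blocks of $\mathcal{O}(\A^\theta_\lambda(n,r))$ are governed by an ``$m$-core'' linkage invariant is exactly the kind of statement that needs a KZ-type or central-character theory, which exists for cyclotomic Cherednik algebras (cyclic quiver) but not for the Gieseker quantizations (Jordan quiver with framing $r$); nothing in the highest weight structure coming from \cite{BLPW} gives you this for free, and the relevant combinatorics would in any case involve the shifts $\lambda,\lambda+1,\ldots,\lambda+r-1$ appearing in $\Ca_\alpha(\bar\A_\lambda(n,r))$ (Corollary \ref{Cor:shift_Gies}), not just cores of the multipartition. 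The paper never proves a linkage principle: semisimplicity for denominator $>n$ is obtained by showing (via induction on $n$, the restriction functors $\bullet_{\dagger,x}$, the Cartan functor computation and \cite{BEG}) that every simple has support of maximal dimension $rn-1$, and then invoking the identification of homological duality with Ringel duality (Proposition \ref{Prop:contr_Ringel}) together with the criterion of Lemma \ref{Lem:hw_techn}; the uniqueness of the finite-dimensional simple at denominator $n$ uses $\dim H_{2nr-2}(\M^\theta(n,r))=1$ and the injectivity of $\CC$ (Proposition \ref{Prop:CC_inject}), and the count $\#\Lambda=rn$ of simples in the nontrivial block is extracted homologically from $\operatorname{RHom}(L(p_{max}),T)$ being concentrated in degree $rn-1$ --- none of this is replaced by your core count, which at best matches the answer.

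Second, your route to the equivalence with the Cherednik block via Rouquier's uniqueness of $1$-faithful highest weight covers presupposes a KZ-style quotient functor from the Gieseker block to a block of a cyclotomic Hecke algebra; no such functor is constructed (or known) for $\A^\theta_\lambda(n,r)$ with $r>1$, and building one with the required $0$- and $1$-faithfulness would be a substantial new construction, not a verification. The paper instead proves the block is abstractly rigid: a unique finite-dimensional simple, all other simples of full support, a linearly ordered poset obtained from the map $\nu$ sending a tilting to the label of its simple socle, the identification of tiltings with injectives away from the extremes, and then an explicit normalization of basis elements $a_{i,i\pm 1}$, $a_{ii}$ in $\operatorname{End}(\bigoplus_i I(p_i))$ showing that any two categories with this structure and $rn$ simples are equivalent; this pins the block down as $\Cat_{rn}$, i.e.\ the nontrivial block of $\mathcal{O}(\A^\theta_{1/nr}(nr,1))$. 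Without either a proved linkage principle or a faithful cover of a common Hecke block, your argument establishes only a numerical coincidence of labels, so both halves of the theorem remain open in your proposal.
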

In some cases, we can say which simple objects belong to the nontrivial block, we will do this below.


\subsection{Counting result}
We are going to describe $K_0(\A^\theta_\lambda(v,w)\operatorname{-mod}_{\rho^{-1}(0)})$ (we always consider complexified
$K_0$) in the case when $Q$ is of affine type, confirming \cite[Conjecture 1.1]{BL} in this case. The dimension of this $K_0$ coincides with the number of finite dimensional irreducible representations of $\A_\lambda(v,w)$ provided $\lambda$
is regular, i.e., the homological dimension of $\A_\lambda(v,w)$ is finite.

Recall that, by \cite{Nakajima}, the homology group $H_{mid}(\M^\theta(v,w))$ (where ``mid'' stands for $\dim_\C \M^\theta(v,w)$)
is identified with the weight space $L_\omega[\nu]$ of weight $\nu$ (see Subsection \ref{SS_quiver_intro}) in  the irreducible integrable $\g(Q)$-module
$L_\omega$ with highest weight $\omega$. Further, by \cite{BarGin}, we have a natural inclusion $K_0(\A_\lambda(v,w)\operatorname{-mod}_{\rho^{-1}(0)})\hookrightarrow H_{mid}(\M^\theta(v,w))$ given by the characteristic cycle
map $\CC_\lambda$. We will elaborate on this below in Subsection \ref{SS_loc}. We want to describe the image of $\CC_\lambda$.

Following \cite{BL}, we define a subalgebra $\a(=\a_\lambda)\subset\g(Q)$ and an $\a$-submodule $L_\omega^{\a}\subset L_\omega$.
By definition, $\a$ is spanned by the Cartan subalgebra $\mathfrak{t}\subset \g(Q)$ and all root spaces $\g_\beta(Q)$
where $\beta=\sum_{i\in Q_0}b_i\alpha^i$ is a real root with $\sum_{i\in Q_0}b_i\lambda_i\in \Z$. For $L_\omega^{\a}$ we take
the $\a$-submodule of $L_\omega$ generated by the extremal weight spaces (those where the weight is conjugate to the highest one
under the action of the Weyl group).

\begin{Thm}\label{Thm:counting}
Let $Q$ be of affine type.  The image of $K_0(\A_\lambda(v,w)\operatorname{-mod}_{\rho^{-1}(0)})$ in $L_\omega[\nu]$
under $\CC_\lambda$ coincides with $L_\omega^\a\cap L_\omega[\nu]$.
\end{Thm}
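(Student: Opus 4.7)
The plan is to prove the two inclusions separately by combining the Gieseker classification (Theorem \ref{Thm:fin dim}) with the cross-wall machinery of \cite{BL} and Nakajima's geometric realization of $\g(Q)$ on $\bigoplus_v H_{mid}(\M^\theta(v,w))$. The first step is a reduction: via Nakajima reflection functors and the Crawley-Boevey trick, the general affine-type case reduces to a combination of (a) Gieseker cases $\M(n,r)$ handled by Theorem \ref{Thm:fin dim}, and (b) configurations adjacent to a Gieseker case through a single integral cross-wall. Both $\a$ and the category $\A^\theta_\lambda(v,w)\operatorname{-mod}_{\rho^{-1}(0)}$ behave functorially under these reductions, so it suffices to verify both inclusions on the reduced side.

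Next I would show that the total image $\bigoplus_v \CC_\lambda\bigl(K_0(\A^\theta_\lambda(v,w)\operatorname{-mod}_{\rho^{-1}(0)})\bigr)$ is an $\a$-submodule of $L_\omega$. For each real root $\beta$ of $\g(Q)$ with $\langle\beta,\lambda\rangle \in \Z$, the cross-wall derived equivalence of \cite{BL} restricts to $\rho^{-1}(0)$-supported modules on both sides and realizes the reflection $s_\beta$ on $K_0$, which yields $W_\lambda$-invariance of the image. The corresponding root vectors $e_\beta, f_\beta$ are themselves realized by Nakajima Hecke correspondences between $\M^\theta(v,w)$ and $\M^\theta(v\pm\alpha^i,w)$; their quantum counterparts, defined via hyperbolic restriction at the level of $\A^\theta_\lambda$-modules, intertwine the relevant module categories and are geometrically compatible with $\CC_\lambda$. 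Taken together these promote $W_\lambda$-invariance to full $\a$-stability of the total image.

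For the lower inclusion, Theorem \ref{Thm:fin dim} produces a one-dimensional nonzero class in $\CC_\lambda(K_0)$ for each Gieseker base case with $\lambda=s/n$, which under the reductions above corresponds to an extremal weight component of $L_\omega^\a$. Since every extremal weight of $L_\omega$ is reached by a base configuration after a suitable reduction, and $\a$-stability is now in hand, the image contains the $\a$-submodule generated by all extremal weight components, which by definition equals $L_\omega^\a$; intersecting with $L_\omega[\nu]$ yields the desired inclusion. For the upper inclusion, I would argue that the image, being $\a$-stable, is generated by its extremal-weight pieces — these sit inside the extremal weight spaces of $L_\omega$ and hence inside $L_\omega^\a$ by definition — so the $\a$-submodule they generate remains contained in $L_\omega^\a$.

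The hard part will be the $\a$-stability step. Cross-wall equivalences handle reflection invariance relatively cleanly, but upgrading this to full stability under $e_\beta, f_\beta$ for all integral real $\beta$ requires quantising Nakajima's Hecke correspondences in a way that is compatible with $\CC_\lambda$ uniformly across all dimension vectors $v$. The technical heart will be a base-change identity relating quantum hyperbolic restriction to the classical Nakajima operators, and verifying that this identity survives to the filtered setting of $\A^\theta_\lambda(v,w)$-modules. A secondary difficulty, in affine type specifically, is that $W_\lambda$ and the set of integral real roots can be infinite, so the reduction step must be organised carefully to avoid an inductive loop when reaching extremal weights far out in the Tits cone.
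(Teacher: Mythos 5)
Your proposal takes a genuinely different route from the paper, but as written it has two gaps that are fatal rather than cosmetic. First, the upper inclusion does not follow from $\a$-stability: an $\a$-stable subspace of $L_\omega$ need not be generated by its extremal-weight components ($L_\omega$ itself is $\a$-stable and in general strictly contains $L_\omega^\a$), so "the image is $\a$-stable, hence generated by its extremal pieces, hence inside $L_\omega^\a$" assumes exactly what has to be proved. In the paper the containment $\CC_\lambda(K_0)\subset L_\omega^\a$ is obtained by an entirely different mechanism: following \cite[Sections 8, 9.2]{BL}, Theorem \ref{Thm:counting} is reduced to the statement that the wall-crossing functor through the \emph{affine} wall $\langle\delta,\cdot\rangle=0$ is a perverse equivalence with homological shifts less than $\dim\M^\theta(v,w)$, the perversity filtration being given by the chain of two-sided ideals of Theorem \ref{Thm:perv}. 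Your scheme never engages with the $\delta$-wall at all — you only invoke real-root cross-walls and Hecke correspondences — whereas in affine type the $\delta$-wall is precisely the new, essential wall, and its analysis is where the Gieseker results actually enter: through the slice algebras $\bar{\A}_\lambda(n',r)^{\otimes k}$ at points of the leaf attached to multiples of $\delta$, via the restriction functor $\bullet_{\dagger,x}$, its right adjoint $\bullet^{\dagger,x}$, the ideal classification (Theorem \ref{Thm:ideals}), the Tor computation and the faithfulness statement of Section \ref{S_aff_wc_count}. Theorem \ref{Thm:fin dim} is used through slices, not through a global reduction of arbitrary $(v,w)$ of affine type to Gieseker moduli spaces; no such reduction is available, and your first step quietly assumes one.

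Second, the step you yourself identify as the technical heart — quantizing Nakajima's Hecke correspondences (or setting up a quantum hyperbolic restriction) so that $e_\beta,f_\beta$ act on the categories of $\rho^{-1}(0)$-supported modules compatibly with $\CC_\lambda$, uniformly in $v$ — is not established in \cite{BL} or here, and is a substantial construction rather than a verification; since both of your inclusions rest on the resulting $\a$-stability (the lower one needs it to propagate the one-dimensional Gieseker classes to all of $L_\omega^\a$, and the upper one has no independent argument), the proposal does not yet constitute a proof. To repair it along the paper's lines you would instead prove items (i)--(iv) of Section \ref{S_aff_wc_count}, deduce Theorem \ref{Thm:perv}, and then invoke the counting mechanism of \cite[Section 8]{BL}, which converts perversity of the affine wall-crossing (together with the finite-wall results already in \cite{BL}) into the equality of the image of $\CC_\lambda$ with $L_\omega^\a\cap L_\omega[\nu]$.
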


\subsection{Content of the paper}
Section \ref{S_prelim} contains some known results and construction. In Section \ref{S_parab_ind} we introduce
our main tool for inductive study of categories $\mathcal{O}$. In Section \ref{S_fin_dim} we will prove
Theorems \ref{Thm:fin dim} (most of it, in fact) \ref{Thm:cat_O_easy}. In Section \ref{S_loc}
we prove Theorem \ref{Thm:loc}. Finally, in Section \ref{S_aff_wc_count} we prove Theorem
\ref{Thm:ideals} and also complete the proof of Theorem \ref{Thm:counting}.
In the beginning of each section, its content is described in more detail.

{\bf Acknowledgments}. I would like to thank Roman Bezrukavnikov, Dmitry Korb, Davesh Maulik, Andrei Okounkov and Nick
Proudfoot for stimulating discussions. My work was supported by the NSF  under Grant  DMS-1161584.
\section{Preliminaries}\label{S_prelim}
This section basically contains no new results. We start with discussing conical symplectic resolutions.
Then, in Subsection \ref{SS_Gies}, we list some further properties of Gieseker moduli spaces.
Subsection \ref{SS_leaves} describes the symplectic leaves of the varieties $\M(v,w)$.

After that, we proceed to quantizations. We discuss some further properties, with emphasis on  the Gieseker case,
in Subsection \ref{SS_quant}.  We discuss (derived and abelian) localization theorems for quantized
quiver varieties, Subsection \ref{SS_loc}.   Then we proceed to the homological duality
and wall-crossing functors, one of our main tools to study the representation theory of quantized
quiver varieties, Subsection \ref{SS_dual_WC}. In Subsection \ref{SS_Cat_O}, we recall the definition
of categories $\mathcal{O}$ and list some basic properties. Then, in Subsection \ref{SS_HC_bimod}, we recall one more important
object in this representation theory, Harish-Chandra bimodules. Our main tool to study those is restriction (to so called
{\it quantum slices}) functors, defined in this context in \cite{BL}. We recall quantum slices in Subsection
\ref{SS_quant_slice} and the restriction functors  in Subsection \ref{SS_restr_fun}.

\subsection{Symplectic resolutions}\label{SS_sympl_res}
Although in this paper we are primarily interested in the case of Nakajima quiver varieties
for quiver of affine types (and, more specifically, Gieseker moduli spaces) some of our results easily generalize
to symplectic resolutions of singularities. Here we recall the definition and  describe some structural theory of
these varieties due to Namikawa, \cite{Namikawa}. Our exposition follows \cite[Section 2]{BPW}.

Let $X$ be a smooth symplectic algebraic variety.  By definition, $X$ is called
a symplectic resolution of singularities if $\C[X]$ is finitely generated and the natural morphism $X\rightarrow X_0:=\Spec(\C[X])$ is a resolution of singularities. In this paper we only consider symplectic resolutions $X$ that are projective over $X_0$. We also only care
about resolutions coming with additional structure, a $\C^\times$-action satisfying the following two conditions:
\begin{itemize}
\item The grading induced by the $\C^\times$-action on $\C[X]$ is positive, i.e., $\C[X]=\bigoplus_{i\geqslant 0}\C[X]_i$ and $\C[X]_0=\C$.
\item $\C^\times$ rescales the symplectic form $\omega$, more precisely, there is a positive integer $d$ such that $t.\omega=t^d\omega$
for all $t\in \C^\times$.
\end{itemize}
We call $X$ equipped with such a $\C^\times$-action a {\it  conical symplectic resolution}.

We remark that $X$ admits a universal Poisson deformation, $\widetilde{X}$, over $H^2(X)$. This deformation comes with a $\C^\times$-action and the $\C^\times$-action contracts $\tilde{X}$ to $X$, see \cite[2.2]{quant} or \cite[2.1]{BPW} for details. The generic fiber of $\widetilde{X}$ is affine.

Namikawa associated a Weyl group
$W$ to $X$ that acts on $H^2(X,\mathbb{R})$ as a crystallographic reflection group. We have
$\operatorname{Pic}X=H^2(X,\Z)$.  The (closure of the) movable cone of $X$ in $H^2(X,\mathbb{R})$ is a fundamental chamber for $W$.   Furthermore, there are open subset  $U\subset X, U'\subset X'$ with complements of codimension bigger than $1$ that are
isomorphic. So we get an isomorphism $\operatorname{Pic}(X)\cong\operatorname{Pic}(X')$ that preserves the movable cones.

Namikawa has shown that there are finitely many isomorphism classes of conical symplectic resolutions. Moreover, he proved there is a finite $W$-invariant union $\mathcal{H}$ of hyperplanes in $H^2(X,\mathbb{R})$ with the
the following properties:
\begin{itemize}
\item The union of the complexifications of the hyperplanes in $\mathcal{H}$ is precisely the locus in $H^2(X,\C)$
over which $\widetilde{X}\rightarrow \widetilde{X}_0$ is not an isomorphism.
\item The closure of the movable cone is the union of some chambers for $\mathcal{H}$.
\item Each chamber inside a movable cone is the nef cone of exactly one symplectic resolution.
\end{itemize}

For $\theta\in H^2(X,\mathbb{R})\setminus \mathcal{H}$, let $X^\theta$ be the resolution corresponding to the element of $W\theta$
lying in the movable cone.

We will not need to compute the Namikawa Weyl group. Let us point out that it is trivial provided $X_0$ has no leaves of codimension
2 (we remark that it is known that the number of leaves is always finite).

An example of  a conical symplectic resolution is provided by $\M^\theta(v,w)\rightarrow \M(v,w)$ in the case when $Q$
is an affine quiver ($d=2$ in this case). We always have a natural map $\param:=\C^{Q_0}\rightarrow H^2(\M^\theta(v,w))$ that is always
injective. In the case of an affine quiver, this map can be actually shown to be an isomorphism, but we will not need that: in the quiver variety setting one can retell the constructions above using $\param$ instead of $H^2(\M^\theta(v,w))$.

\subsection{Gieseker moduli spaces}\label{SS_Gies}
We will need some additional facts about varieties $\M^\theta(n,r)$. First of all, let us point out that $\dim \M^\theta(n,r)=2nr$.

Let us note that we have an isomorphism $\M^\theta(n,r)\cong \M^{-\theta}(n,r)$ (of symplectic varieties with $\C^\times$-actions).
 Define $R^\vee:=\End(V^*)^{\oplus 2}\oplus \Hom(V^*,W^*)\oplus \Hom(W^*,V^*)$.
We have an isomorphism $R\rightarrow R^\vee$ given by  $\iota:(A,B,i,j)\mapsto (-B^*, A^*,-j^*, i^*)$, here 
we write $i$ for an element in $\Hom(W,V)$ and $j$ for an element of $\Hom(V,W)$. This is a symplectomorphism.
Choosing bases in $V$ and $W$, we identify $R$ with $R^\vee$. Note that, under this identification, $\iota$ is not $G$-equivariant,
we have $\iota(g.r)=(g^t)^{-1}\iota(r)$, where the superscript ``t'' stands for the matrix transposition. Also note that $(A,B,i,j)$
is $\det$-stable (equivalently, there is no nonzero $A,B$-stable subspace in $\ker j$)
if and only if $(-B^*,A^*,-j^*,i^*)$ is $\det^{-1}$-stable (i.e., $C\langle B^*,A^*\rangle \operatorname{im}j^*=V^*$).
It follows that $\M^\theta(n,r)\cong \M^{-\theta}(n,r)$.

We will also need some information on the cohomology of $\M^\theta(n,r)$.

\begin{Lem}\label{Lem:top_cohom}
We have $H^i(\M^\theta(n,r))=0$ for odd $i$ or for $i\geqslant 2nr$ and $\dim H^2(\M^\theta(n,r))=\dim H^{2nr-2}(\M^\theta(n,r))=1$.
In particular, $\dim H_{2nr-2}(\M^\theta(n,r))=1$.
\end{Lem}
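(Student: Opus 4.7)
The plan is to apply the Bialynicki--Birula cell decomposition to $\M^\theta(n,r)$ for a generic contracting one-parameter subgroup $\chi$ of the standard torus $T = (\C^\times)^r \times (\C^\times)^2$ (the first factor scales the framing $W$; the second rescales the two endomorphisms of $V$), whose fixed points are isolated and, by Nakajima, indexed by $r$-multipartitions $\vec\lambda = (\lambda^{(1)}, \dots, \lambda^{(r)})$ of $n$. The theorem yields an algebraic paving $\M^\theta(n,r) = \bigsqcup_{\vec\lambda} \M^+_{\vec\lambda}$ with $\M^+_{\vec\lambda} \cong \mathbb{A}^{d_{\vec\lambda}}$ and $d_{\vec\lambda} = \dim T^+_{\vec\lambda}\M^\theta(n,r)$. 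This immediately gives $H^{\mathrm{odd}}(\M^\theta(n,r)) = 0$ and forces the (BM-)homology to be torsion-free (so $\dim H_i = \dim H^i$ for every $i$); combining the computation $H^{BM}_j(\M^\theta(n,r)) = \#\{\vec\lambda : 2 d_{\vec\lambda} = j\}$ with Poincar\'e duality on the smooth variety $\M^\theta(n,r)$ identifies $\dim H^i(\M^\theta(n,r)) = \#\{\vec\lambda : d_{\vec\lambda} = 2nr - i/2\}$.

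Next, to get $H^i(\M^\theta(n,r)) = 0$ for $i \geq 2nr$, I would show the equivalent statement $d_{\vec\lambda} \geq nr + 1$ for every $\vec\lambda$, i.e.\ $\dim T^-_{\vec\lambda} \leq nr - 1$. The point is that the opposite cells $\M^-_{\vec\lambda}$ all lie inside the central fiber $\rho^{-1}(0)$: if $x \notin \rho^{-1}(0)$ then $\chi(t).\rho(x)$ escapes to infinity on the affine $\M(n,r)$ as $t \to \infty$ (since $\chi$ contracts $\M(n,r)$ to its base point as $t \to 0$, so the only $\chi$-bounded point of $\M(n,r)$ at $t \to \infty$ is the base point itself), whence $\chi(t).x$ has no limit on $\M^\theta(n,r)$. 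Hence $\dim T^-_{\vec\lambda} \leq \dim \rho^{-1}(0)$. Using the splitting $\M^\theta(n,r) = \C^2 \times \bar\M^\theta(n,r)$ recalled in Subsection \ref{SS_Gies_result} (so that $\rho^{-1}(0) = \{0\} \times \bar\rho^{-1}(0)$ and $\dim \bar\M^\theta(n,r) = 2nr - 2$) together with isotropy of the central fiber of the symplectic resolution $\bar\M^\theta(n,r) \to \bar\M(n,r)$, I obtain $\dim \rho^{-1}(0) = \dim \bar\rho^{-1}(0) \leq \tfrac{1}{2}\dim \bar\M^\theta(n,r) = nr - 1$, as required.

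The identity $\dim H^2(\M^\theta(n,r)) = 1$ then follows directly from the isomorphism $\param = \C^{Q_0} \xrightarrow{\sim} H^2(\M^\theta(n,r))$ for affine quivers recalled at the end of Subsection \ref{SS_sympl_res}, because the Jordan quiver has $|Q_0| = 1$. For $\dim H^{2nr-2}(\M^\theta(n,r)) = 1$, the paving identifies this number with the count of top-dimensional (i.e.\ $(nr-1)$-dimensional) irreducible components of $\rho^{-1}(0)$, each being the closure of a maximal downward BB cell. The main obstacle is the uniqueness of such a component: for $r=1$ this is Brian\c{c}on's irreducibility of $\mathrm{Hilb}^n_0(\C^2)$; for general $r$ I would either cite the analogous irreducibility of the central Gieseker fiber (available in the literature on framed sheaves on $\mathbb{P}^2$), or use Nakajima's explicit formula for the tangent character at each $\vec\lambda$ to verify that only the multipartition $((1^n), \emptyset, \dots, \emptyset)$ attains $d_{\vec\lambda} = nr + 1$ for a suitable orientation of $\chi$.
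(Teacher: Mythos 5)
Your proposal is essentially correct, but it takes a genuinely different route from the paper. The paper does not build anything: odd vanishing is quoted from \cite{NY} (Theorem 3.7(4)), and the Poincar\'e polynomial formula of \cite{NY} (Theorem 3.8), $\sum_{\lambda}t^{\sum_{i=1}^r(r|\lambda^{(i)}|-i(\lambda^{(i)t})_1)}$, is then inspected directly: the top exponent $rn-1$ and the exponent $1$ are each attained by exactly one $r$-multipartition, which gives all the assertions at once. You instead reconstruct the Bialynicki--Birula paving and turn the degree bound into geometry: the downward cells lie in $\rho^{-1}(0)$, which is isotropic in the $\bar{\M}^\theta(n,r)$-factor, so every upward cell has dimension at least $nr+1$ and the cohomology stops at degree $2nr-2$; moreover $\dim H^{2nr-2}$ becomes the number of $(nr-1)$-dimensional components of the central fiber, which you close off by citing irreducibility of the punctual Quot scheme (Ellingsrud--Lehn, Baranovsky) --- a legitimate external input, while your fallback (tangent weights at the fixed points) is essentially the computation underlying the Nakajima--Yoshioka formula anyway. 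The paper's route buys brevity and all Betti numbers at once; yours buys independence from the explicit generating function and a geometric explanation of the two ``$1$''s.

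Two points need attention. First, the torus as you literally describe it does not act on $\M^\theta(n,r)$: under $(A,B,i,j)\mapsto(t_1A,t_2B,i,j)$ the moment map $[A,B]+ij$ does not rescale, so $\mu^{-1}(0)$ is not preserved, and even after restoring the standard twist (say $j\mapsto t_1t_2\,j$) you must choose $\chi=(d_1,\dots,d_r;a,b)$ with $a,b>0$ and $a+b$ large compared with $\max_{k,l}|d_k-d_l|$, so that all generators $\operatorname{tr}(w(A,B))$ and all matrix entries of $j\,w(A,B)\,i$ have positive weight. Only such a $\chi$ contracts $\M(n,r)$ to the cone point, and your argument uses this twice: it is what guarantees that all $t\to 0$ limits exist upstairs (so the plus-cells really pave $\M^\theta(n,r)$) and that the only $t\to\infty$-bounded point downstairs is $0$ (so the minus-cells sit in $\rho^{-1}(0)$). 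Second, for $\dim H^2=1$ you invoke the isomorphism $\param\xrightarrow{\sim}H^2(\M^\theta(n,r))$, which the paper explicitly says it will not rely on; the statement is true and has independent proofs (e.g.\ Kirwan surjectivity), so there is no circularity, but it is an outside input of the same calibre as quoting \cite{NY} directly. Alternatively, your own paving gives $\dim H^2=\#\{\vec\lambda:\ d^-_{\vec\lambda}=1\}$, which can be verified by the same fixed-point weight analysis you propose for the top degree.
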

\begin{proof}
That the odd cohomology groups vanish is \cite[Theorem 3.7,(4)]{NY} (or a general fact about symplectic resolutions, see \cite[Proposition 2.5]{BPW}). According to
\cite[Theorem 3.8]{NY}, we have
$$\sum_{i}\dim H^{2i}(\M^\theta(n,r))t^i=\sum_\lambda t^{\sum_{i=1}^r (r|\lambda^{(i)}|-i (\lambda^{(i)t})_1)},$$
where the summation is over the set of the $r$-multipartitions $\lambda=(\lambda^{(1)},\ldots,\lambda^{(r)})$.
The highest power of $t$ in the right hand side is $rn-1$, it occurs for a single $\lambda$, namely, for
$\lambda=((n),\varnothing,\ldots,\varnothing)$. This shows $\dim H^{2nr-2}(\M^\theta(n,r))=1$.
The equality $\dim H_{2nr-2}(\M^\theta(n,r))=1$ follows.  Also there is a  single $r$-multipartition of $n$ with
$\sum_{i=1}^r (r|\lambda^{(i)}|-i (\lambda^{(i)t})_1)=1$, it is $(\varnothing,\ldots,1,n-1)$.
This implies $\dim H^2(\M^\theta(n,r))=1$.
\end{proof}

It follows, in particular, that the universal deformation of $\M^\theta(n,r)$ coincides with
the ``universal quiver variety'' $\M^\theta_{\param}(n,r):=\mu^{-1}(\g^{*G})^{\theta-ss}/G$.
The isomorphism $\M^\theta(n,r)\cong \M^{-\theta}(n,r)$ extends
to $\M^{-\theta}_{\param}(n,r)\cong \M^\theta_\param(n,r)$ that is, however, not an isomorphism
of schemes over $\param$, but rather induces the multiplication by $-1$ on the base.

On $\M^\theta(n,r)$ we have an action of $\GL(r)\times \C^\times$ induced from the following action
on $T^*R$: $(X,t).(A,B,i,j)=(tA,t^{-1}B, Xi,jX^{-1})$. We will need a description of certain torus
fixed points. First, let $T$ denote the maximal torus in $\GL(r)$. Then, see \cite[Lemma 3.2, Section 7]{Nakajima_tensor}, we see  that
\begin{equation}\label{eq:fixed_pt_decomp}\M^\theta(n,r)^T=\bigsqcup_{n_1+\ldots+n_r=n}\prod_{i=1}^r \M^\theta(n_i,1),\end{equation}
The embedding $\prod_{i=1}^r \M^\theta(n_i,1)\hookrightarrow \M^\theta(n,r)$ is induced from
$\bigoplus_{i=1}^r T^*R(n_i,1)\hookrightarrow T^*R(n,r)$.

Now set $\tilde{T}:=T\times \C^\times$. Then $\M^\theta(n,r)^{\tilde{T}}$ is a finite set that
is in a natural bijection with the set of the $r$-multipartitions of $n$, this follows from (\ref{eq:fixed_pt_decomp})
and the classical fact that $\M^\theta(n_i,1)^{\C^\times}$ is identified with the set of the partitions
of $n_i$. More precisely, $\M^\theta(n_i,1)^{\C^\times}=\M^\theta(n_i,1)^{\C^\times\times \C^\times}$,
where the second copy of $\C^\times$ is contracting. When $\theta>0$, we label the fixed point corresponding to
$(A,B,i,j)$ (we automatically have $j=0$) by the partitions of $n_i$ into sizes of Jordan blocks of $B$.

\subsection{Symplectic leaves}\label{SS_leaves}
Here we want to describe the symplectic leaves of $\M^0_\lambda(v,w):=\mu^{-1}(\lambda)\quo G$ and study the
structure of the variety near a symplectic leaf.

Let us, first, study the leaf containing $0\in \M^0(v,w)$. Similarly to Subsection \ref{SS_Gies_result},
consider the space $\bar{R}$ that is
obtained similarly to $R$ but with assigning $\mathfrak{sl}(V_i)$ instead of $\gl(V_i)$ to any loop $a$ with $t(a)=h(a)=i$
so that $R=\overline{R}\oplus \C^k$, where $k$ is the total number of loops. Let $\bar{\M}^0(v,w)$ be the reduction
of $T^*\bar{R}$ so that $\M^0(v,w)=\bar{\M}^0(v,w)\times \C^{2k}$.

\begin{Lem}\label{Lem:0_leaf}
The point $0$ is a single leaf of $\bar{\M}^0(v,w)$.
\end{Lem}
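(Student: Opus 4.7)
My plan is to prove that the maximal ideal $\m_0$ of $0\in\bar{\M}^0(v,w)$ is a Poisson ideal; equivalently, every Hamiltonian vector field vanishes at $0$, which is exactly the statement that $\{0\}$ is a zero-dimensional symplectic leaf.

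The contracting $\C^\times$-action on $T^*\bar R$ descends to $\bar{\M}^0(v,w)$ and induces a non-negative grading on the coordinate ring with $\C[\bar{\M}^0(v,w)]_0=\C$. Since $t.\omega=t^2\omega$ in the quiver setting, the Poisson bracket has degree $-2$, i.e.\ $\{\C[\bar{\M}^0(v,w)]_m,\C[\bar{\M}^0(v,w)]_n\}\subset \C[\bar{\M}^0(v,w)]_{m+n-2}$. The key input is that $\C[\bar{\M}^0(v,w)]_1=0$. Because $\mu$ is quadratic the moment-map ideal is generated in degree~$2$, so the degree-one piece of $\C[\bar{\M}^0(v,w)]$ coincides with $\C[T^*\bar R]^G_1=\bigl((T^*\bar R)^*\bigr)^G$. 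I verify the vanishing of the latter summand by summand: on a non-loop piece $\Hom(V_{t(a)},V_{h(a)})$ the group $\GL(V_{t(a)})\times\GL(V_{h(a)})$ has no nonzero linear invariants; on a loop the switch from $\gl(V_i)$ to $\sl(V_i)$ is precisely what removes the trace, so that $\sl(V_i)^{\GL(V_i)}=0$; and on the framing pieces $\Hom(V_i,W_i)$, $\Hom(W_i,V_i)$ the $\GL(V_i)$-invariants are zero. Hence $\m_0=\bigoplus_{n\geqslant 2}\C[\bar{\M}^0(v,w)]_n$.

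Given this, the Poisson-ideal property is immediate. For $f\in\m_0$ and $g\in\C[\bar{\M}^0(v,w)]$, either $g$ is constant (so $\{f,g\}=0$) or $\deg g\geqslant 2$, in which case $\deg\{f,g\}=\deg f+\deg g-2\geqslant 2$ and $\{f,g\}\in\m_0$. Therefore $\m_0$ is a Poisson ideal and $\{0\}$ is a single symplectic leaf.

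There is no substantive obstacle in this argument; its whole content is the vanishing of degree-one $G$-invariants on $T^*\bar R$, which is exactly what is gained by passing from $R$ to $\bar R$. This also explains why the lemma is formulated for $\bar{\M}^0(v,w)$ and not $\M^0(v,w)$: had we kept the loop-trace summands, they would contribute a symplectic $\C^{2k}$ factor, and the leaf through $0\in\M^0(v,w)$ would be this $\C^{2k}$ rather than a point.
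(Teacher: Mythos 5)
Your proof is correct and is essentially the paper's argument: both show the maximal ideal of $0$ is Poisson by noting that the absence of a trivial $G$-summand in $\bar R$ forces the invariants to vanish in degree one, so the ideal is spanned in degrees $\geqslant 2$ and is preserved by the degree $-2$ bracket. Your summand-by-summand check and the remark on why one must pass from $R$ to $\bar R$ merely spell out details the paper leaves implicit.
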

\begin{proof}
It is enough to show that the maximal ideal of $0$ in $\C[T^*\bar{R}]^G$ is Poisson. Since $\bar{R}$ does not include
the trivial $G$-module as a direct summand, we see that all homogeneous elements in $\C[T^*\bar{R}]^G$ have degree $2$
or higher. It follows that the bracket of any two homogeneous elements also has degree 2 or higher and our claim is proved.
\end{proof}

Now let us describe the slices to symplectic leaves in $\M^0_\lambda(v,w)$, see, for example, \cite[2.1.6]{BL}.
Pick $x\in \M^0_\lambda(v,w)$. We can view $T^*R$ as the representation space of dimension $(v,1)$ for the double $DQ^w$
of the quiver $Q^w$ obtained from $Q$ by adjoining the additional vertex $\infty$ with $w_i$ arrows from $i$
to $\infty$. Pick a semisimple representation of $DQ^w$ lying over $x$. This representation decomposes
as $r^0\oplus r^1\otimes U_1\oplus\ldots\oplus r^k\otimes U_k$, where $r^0$ is an irreducible representation
of $DQ^w$ with dimension  $(v^0,1)$ and $r^1,\ldots,r^k$ are pairwise nonisomorphic irreducible representations of $DQ$
with dimension $v^1,\ldots,v^k$. All representations $r^0,\ldots,r^k$ are mapped to $\lambda$ under the moment map.
Consider the quiver $\underline{Q}:=\underline{Q}_x$ with vertices $1,\ldots,k$ and $-(v^i,v^j)$ arrows between vertices
$i,j$ with $i\neq j$ and $1-\frac{1}{2}(v^i,v^i)$ loops at the vertex $i$. We consider the dimension vector
$\underline{v}:=(\dim U_i)_{i=1}^k$ and the framing $\underline{w}=(\underline{w}_i)_{i=1}^k$ with $\underline{w}_i=w\cdot v^i-(v^0,v^i)$.

\begin{Prop}\label{Prop:leaves}
The following is true:
\begin{enumerate}
\item
The symplectic leaves of $\M^0_\lambda(v,w)$ are parameterized by the decompositions $v=v^0+\underline{v}_1 v^1\oplus \ldots \oplus \underline{v}_k v^k$ (we can permute summands with $\underline{v}_i=\underline{v}_j$ and $v^i=v^j$) subject to the following
conditions: there is an irreducible representation $r^0$ of $DQ^w$ of dimension $(v^0,1)$ and pairwise different irreducible
representations $r^1,\ldots,r^k$ of $DQ$ of dimensions $v^1,\ldots,v^k$, all of them mapping to $\lambda$ under the moment
map.
\item The leaf corresponding to the decomposition as above consists of the isomorphism classes of the representations
$r^0\oplus r^1\otimes U_1\oplus\ldots\oplus r^k\otimes U_k$, where $r^0,\ldots,r^k$ are as above.
\item There is a transversal slice to the leaf as above that is isomorphic to the formal
neighborhood of 0 in  the quiver variety $\underline{\bar{\M}}^0_0(\underline{v},\underline{w})$ for the quiver $\underline{Q}$.
\end{enumerate}
\end{Prop}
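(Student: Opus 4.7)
The plan is to identify points of $\M^0_\lambda(v,w)$ with isomorphism classes of semisimple $(v,1)$-dimensional representations of the doubled framed quiver $DQ^w$ that map to $\lambda$ under the moment map, and then to apply Luna's \'etale slice theorem at a point of a closed $G$-orbit. This realizes the proposition as the symplectic (Hamiltonian) analogue of Crawley-Boevey's local structure theorem for quiver varieties.

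For (1) and (2), I would first invoke standard GIT: the closed $G$-orbits in $\mu^{-1}(\lambda)$ correspond exactly to semisimple representations. Any such representation $M$ has a canonical decomposition $M=r^0\oplus r^1\otimes U_1\oplus\cdots\oplus r^k\otimes U_k$, where $r^0$ is the unique simple summand whose dimension at the framing vertex $\infty$ equals $1$ (so that $r^0$ has dimension vector $(v^0,1)$), while $r^1,\dots,r^k$ are pairwise non-isomorphic simples of $DQ$ of dimensions $v^1,\dots,v^k$. All summands automatically lie over $\lambda$. This sets up the parameterization in (1) and the explicit description in (2) on the level of sets; to upgrade it to a description of symplectic leaves, I would defer to (3), because Luna's theorem together with Lemma \ref{Lem:0_leaf} applied to the slice will show that the leaf through $M$ meets the slice in a single point.

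For (3), I would apply Luna's \'etale slice theorem to the closed orbit $G\cdot M$. The stabilizer is $H:=\prod_i\GL(U_i)$, and the $H$-equivariant symplectic normal slice $N$ to $G\cdot M$ inside $\mu^{-1}(\lambda)$ at $M$ must be identified with the representation space $T^*\bar{R}(\underline{v},\underline{w})$ of the Ext-quiver $\underline{Q}$. The identification uses the standard homological formulas for $DQ$: $\dim\Ext^1_{DQ}(r^i,r^j)=-(v^i,v^j)$ for $i\neq j$ and $\dim\Ext^1_{DQ}(r^i,r^i)=2-(v^i,v^i)$, yielding the numbers of arrows and loops at vertex $i$ of $\underline{Q}$; the framing dimension $\underline{w}_i=w\cdot v^i-(v^0,v^i)$ arises from the Ext-pairing between $r^i$ and $r^0$, exploiting the fact that $r^0$ has dimension $1$ at $\infty$. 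Under this identification the restriction of the moment map to $N$ becomes the moment map for the new quiver $\underline{Q}$ with dimensions $(\underline{v},\underline{w})$, and the target value is $0$ (since the affine parameter has been absorbed in the shift at $M$). Taking the affine GIT quotient by $H=\prod_i\GL(\underline{v}_i)$ produces the formal neighborhood of $0$ in $\underline{\bar{\M}}^0_0(\underline{v},\underline{w})$.

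The hard part is the detailed verification in (3): identifying the symplectic normal slice with the correct quiver representation space and checking that the moment map restricts to the moment map for $\underline{Q}$ with parameter $0$. This requires careful Euler-form bookkeeping and a use of the symplectic duality between the arrow and co-arrow contributions to $T^*\bar{R}$, and is where the real content of the proposition sits. Once (3) is in place, (1) and (2) follow because Lemma \ref{Lem:0_leaf} shows that $0$ is a symplectic leaf of $\underline{\bar{\M}}^0_0(\underline{v},\underline{w})$, so Luna's theorem forces the leaf through $M$ to consist exactly of points with the same decomposition type as $M$.
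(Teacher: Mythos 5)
Your route to (3) --- Luna's slice theorem at the closed orbit of the semisimple representation, plus the Ext-quiver bookkeeping identifying the symplectic normal slice with $T^*\bar{R}(\underline{v},\underline{w})$ --- is essentially the standard argument underlying the formal decomposition $\M^0_\lambda(v,w)^{\wedge_x}\cong D\times\underline{\bar{\M}}^0_0(\underline{v},\underline{w})^{\wedge_0}$, which the paper simply quotes (from \cite[2.1.6]{BL}); and combining that decomposition with Lemma \ref{Lem:0_leaf} is also exactly what the paper does. (A small aside: the slice parameter is $0$ not because it is ``absorbed in the shift'' but because the existence of $r^i$ over $\lambda$ forces $\lambda\cdot v^i=0$.)

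The genuine gap is in the passage from (3) to (1) and (2). The local statement only says that at every point of the locus $S$ of representations with a fixed decomposition type, the leaf through that point coincides with $S$ in a neighborhood of that point. Since leaves are connected, this yields that every leaf is contained in a single $S$ and is open in it --- in other words, the leaves are precisely the connected components of the loci $S$. Your sentence ``Luna's theorem forces the leaf through $M$ to consist exactly of points with the same decomposition type as $M$'' silently assumes the converse inclusion, namely that $S$ itself is connected; a priori $S$ could be disconnected, in which case it would be a union of several leaves and the bijection asserted in (1) would fail. The paper closes exactly this hole by invoking the irreducibility of the set of isomorphism classes of representations with a fixed decomposition type, \cite[Theorem 1.2]{CB_geom}. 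Some such global input (irreducibility, or at least connectedness, of each $S$) must be added to your argument; it does not follow from the slice analysis alone.
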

\begin{proof}
We have a decomposition $\M_\lambda^0(v,w)^{\wedge_x}\cong D\times \underline{\bar{\M}}^0(\underline{v},\underline{w})^{\wedge_0}$
of Poisson formal schemes, where $D$ stands for the symplectic formal disk and $\bullet^{\wedge_x}$ indicates
the formal neighborhood of $x$.  From Lemma \ref{Lem:0_leaf} it now follows
that the locus described in (2) is a union of leaves. So in order to prove the entire proposition, it remains
to show that the locus in (2) is irreducible. This follows from \cite[Theorem 1.2]{CB_geom}.
\end{proof}

Now assume that $X\rightarrow X_0$ is a  symplectic resolution (not necessarily conical). Then $X_0$ has finitely many symplectic leaves.
Pick a point $x\in X_0$ and consider its formal neighborhood $X_0^{\wedge_x}$. Then, according to Kaledin, \cite{Kaledin_sing},
the Poisson formal scheme $X_0^{\wedge_x}$ decomposes into the product of two formal schemes: the symplectic formal
disk $D$, and a ``slice'' $X_0'$ that is a Poisson formal scheme, where $x$ is a single symplectic leaf.

\subsection{Quantizations}\label{SS_quant}
Let $X=X^\theta$ be a conical symplectic resolution corresponding to a parameter $\theta$.
Now let us consider quantizations of $X$. We will work with microlocal quantizations. Those are sheaves $\A^\theta$ of algebras
in conical topology equipped with the following additional structures:
\begin{itemize}
\item a complete and separated ascending $\Z$-filtration, $\A^\theta=\bigcup_{i\in \Z}\A^\theta_{\leqslant i}$,
\item an action of $\Z/d\Z$ (where $d$ has the same meaning as above) on $\A^\theta$ by filtered algebra automorphisms
such that $1\in \Z/d\Z$ acts on $\A^\theta_{\leqslant i}/\A^\theta_{\leqslant i-1}$ by $\exp(2\pi i\sqrt{-1}/d)$.
\item an isomorphism $\gr\A^\theta\cong \mathcal{O}_X$ of sheaves of graded algebras.
\end{itemize}

Consider the subsheaf $R_\hbar(\A^\theta)$ of $\Z/d\Z$-invariants in the Rees sheaf $R_{\hbar^{1/d}}(\A)$.  Completing $R_\hbar(\A)$
with respect to the $\hbar$-adic topology, we get a homogeneous quantization of
$X$ in the sense of \cite[2.3]{quant}. To get back we take $\C^\times$-finite sections and mod out $\hbar-1$. It follows that the microlocal quantizations of $X$ are canonically parameterized
by $H^2(X,\C)$. We write $\A^\theta_{\hat{\lambda}}$ for the quantization corresponding to $\hat{\lambda}\in H^2(X)$
(and we call $\hat{\lambda}$ the {\it period} of the quantization $\A^\theta_{\hat{\lambda}}$).
In fact, we can also quantize the universal deformation $\tilde{X}$ by a microlocal sheaf
$\tilde{\A}^\theta$ of $\C[H^2(X)]$-algebras (the canonical quantization from \cite{BK,quant}). We remark that
$\A_{\hat{\lambda}}^{\theta,opp}\cong \A^\theta_{-\hat{\lambda}}$, as sheaves of algebras on $X$, this follows from the definition
of a canonical quantization.

We write $\A_{\hat{\lambda}}, \tilde{\A}$ for the global sections of $\A^\theta_{\hat{\lambda}},\tilde{\A}^\theta$, these algebras are independent of $\theta$ by \cite[3.3]{BPW}.

When $X$ is a quiver variety $\M^\theta(v,w)$, the quantization $\A^\theta_\lambda(v,w)$ satisfies the assumptions above.
As we have mentioned in Subsection \ref{SS_sympl_res}, we can embed $\param=\C^{Q_0}$ into $H^2(X)$.
We remark however, that $\A^\theta_{\hat{\lambda}}=\A^\theta_{\hat{\lambda}-\varrho}(v,w)$, where $\varrho$ is half the character of the
action of $G$ on $\bigwedge^{top}R^*$, see, e.g., \cite[2.2]{BL}.

Let us now consider the Gieseker case. Here $\varrho=r/2$. So we have
\begin{equation}\label{eq:opp_iso}
\A_{\lambda}(n,r)^{opp}\cong \A_{-\lambda-r}(n,r).
\end{equation}

\begin{Lem}\label{Lem:iso}
We have $\A_\lambda(n,r)\cong \A_{-\lambda-r}(n,r)$.
\end{Lem}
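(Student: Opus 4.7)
The plan is to lift the classical symplectomorphism $\iota : \M^\theta(n,r) \cong \M^{-\theta}(n,r)$ recalled in Subsection \ref{SS_Gies} to an isomorphism of microlocal quantizations, and then pass to global sections. Recall from Subsection \ref{SS_Gies} that $\iota$ extends to an isomorphism $\M^\theta_\param(n,r) \cong \M^{-\theta}_\param(n,r)$ of universal Poisson deformations which acts by multiplication by $-1$ on the base $\param \cong H^2(\M^\theta(n,r))$. The sign reflects the fact that the underlying linear symplectomorphism $T^*R \to T^*R^\vee$ is $G$-equivariant only after twisting $G$ by $g \mapsto (g^t)^{-1}$, and this twist negates characters of $G$.

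Since quantizations of the conical symplectic resolutions $\M^{\pm\theta}(n,r)$ are classified by their period in $H^2$ and the $\C^\times$-equivariant symplectomorphism $\iota$ induces $-1$ on $H^2$, the map $\iota$ lifts to an isomorphism of sheaves of quantizations
$$\iota^*\A^{-\theta}_{-\hat{\lambda}} \cong \A^\theta_{\hat{\lambda}}$$
on $\M^\theta(n,r)$ for every period $\hat{\lambda}$. Concretely, at the pre-reduction level this lift is realized by the partial Fourier transform on $D(R)$ which quantizes $\iota : T^*R \to T^*R^\vee$ and intertwines the quantum comoment maps for the two $G$-actions up to the sign twist, so that quantum Hamiltonian reduction at level $\lambda$ on one side matches reduction at level $-\lambda$ on the other.

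Applying $\Gamma$ and using the $\theta$-independence of $\Gamma(\A^\theta_{\hat{\lambda}}(n,r))$ recorded in Subsection \ref{SS_quiver_intro} gives $\A_{\hat{\lambda}} \cong \A_{-\hat{\lambda}}$. Converting periods to reduction parameters via $\hat{\lambda} = \lambda + \varrho$ with $\varrho = r/2$, the period $-\hat{\lambda}$ corresponds to the reduction parameter $-\hat{\lambda} - \varrho = -\lambda - 2\varrho = -\lambda - r$, yielding $\A_\lambda(n,r) \cong \A_{-\lambda - r}(n,r)$.

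The main technical obstacle is to verify that $\iota^*$ acts on periods by exactly $-1$, with no additional constant shift. This amounts to the assertion that the classification of microlocal quantizations by their $H^2$-period is functorial under $\C^\times$-equivariant symplectomorphisms, so that the induced action on periods is linear; equivalently, any Fourier-type correction to the quantum comoment map must be exactly absorbed into the shift $\varrho$ relating periods and reduction parameters. As a consistency check, note that combining the resulting isomorphism with (\ref{eq:opp_iso}) would produce an anti-involution of $\A_\lambda(n,r)$ for every $\lambda$, which for $r=1$ recovers the well-known anti-involution on the spherical rational Cherednik algebra.
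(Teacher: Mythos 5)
Your proposal is correct and follows essentially the same route as the paper: the paper's proof also uses the identification $\M^\theta_\param(n,r)\cong\M^{-\theta}_\param(n,r)$ inducing $-1$ on the base, the resulting isomorphism $\Gamma(\A^\theta_{\hat\lambda})\cong\Gamma(\A^{-\theta}_{-\hat\lambda})$ of global sections (which are independent of $\theta$), and the conversion between periods and reduction parameters via the shift by $\varrho=r/2$, giving $-\hat\lambda\leftrightarrow-\lambda-r$. The point you flag as the main technical obstacle is handled in the paper exactly as you suggest, by the canonical parameterization of quantizations by $H^2$ together with the isomorphism of universal deformations.
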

\begin{proof}
Recall that $\A_{\lambda}(n,r)\xrightarrow{\sim} \Gamma(\A_{\lambda}^{\pm \theta}(n,r))$.
Also recall the identification $\M^{\theta}_{\param}(n,r)\rightarrow \M^{-\theta}_{\param}(n,r)$
that induces $-1$ on $\param$. It follows that $\Gamma(\A_{\hat{\lambda}}^\theta)\cong \Gamma(\A^{-\theta}_{-\hat{\lambda}})$.
Since $\A_{\hat{\lambda}}^\theta\cong \A_{\hat{\lambda}-r/2}^\theta(n,r)$, our claim follows.
\end{proof}

We conclude that $\A_{\lambda}(n,r)^{opp}\cong \A_{\lambda}(n,r)$.

We remark that the isomorphism of Lemma \ref{Lem:iso} is similar in spirit to isomorphisms from \cite[Section 3]{BPW}
provided by the Namikawa Weyl group action. We would like to point out however that our isomorphism
does not reduce to that. Indeed, when $r>2$, the Namikawa Weyl group can  be shown trivial because there is no
symplectic leaf of codimension $2$ in $\M(n,r)$.

\subsection{Localization theorems}\label{SS_loc}
We  assume that $X$ is a conical symplectic resolution of $X_0:=\operatorname{Spec}(\C[X])$. Recall that we write $\rho:X\rightarrow X_0$
for the canonical morphism. Let $\A^\theta$ be a quantization of $X$ and $\A$ be its algebra of global sections.

Consider the categories of modules
$\A\operatorname{-Mod}\supset \A\operatorname{-mod}$ consisting of all and of finitely generated $\A$-modules.
Also consider the category $\A^\theta\operatorname{-Mod}$ of  all quasi-coherent $\A^\theta$-modules and $\A^\theta\operatorname{-mod}$
of all coherent $\A^\theta$-modules, i.e., modules that have a global good filtration (a filtration is called good if the associated graded
object is a coherent sheaf of $\mathcal{O}_X$-modules).

We have the global section, $\Gamma^\theta$, and localization, $\Loc^\theta:=\A^\theta\otimes_{\A}\bullet$, functors
$$\Gamma^\theta: \A^\theta\operatorname{-Mod}\rightleftarrows \A\operatorname{-Mod}:\Loc^\theta, \quad \Gamma^\theta: \A^\theta\operatorname{-mod}\rightleftarrows \A\operatorname{-mod}:\Loc^\theta.$$
For objects in $\A^\theta\operatorname{-mod}, \A\operatorname{-mod}$ we can define supports, those are closed
$\C^\times$-stable subvarieties in $X,X_0$, respectively. For a subvariety $Y_0\subset X_0$, we write $\A\operatorname{-mod}_{Y_0}$
for the full subcategory of $\A\operatorname{-mod}$ consisting of all modules supported inside $Y_0$. Similarly,
for $Y\subset X$, we consider the subcategory $\A^\theta\operatorname{-mod}_Y$. The functors $\Gamma^\theta, \Loc^\theta$
restrict to functors between the subcategories $\A^\theta\operatorname{-mod}_{\rho^{-1}(Y_0)}, \A\operatorname{-mod}_{Y_0}$.

The functors $\Gamma^\theta, \Loc^\theta$ admit derived functors $R\Gamma^\theta: D^b(\A^\theta\operatorname{-Mod})\rightarrow
D^b(\A\operatorname{-Mod})$ (given by taking the \v{C}ech complex for a cover by  affine open subsets)
and $L\Loc^\theta: D^-(\A\operatorname{-Mod})\rightarrow D^-(\A^\theta\operatorname{-Mod})$.
If the homological dimension of $\A$ is finite, we also have $L\Loc^\theta:D^b(\A\operatorname{-Mod})\rightarrow D^b(\A^\theta\operatorname{-Mod})$.
Clearly, the functors $R\Gamma^\theta, L\Loc^\theta$ preserve the subcategories $D^?(\A^\theta\operatorname{-mod}), D^?(\A\operatorname{-mod})$ (as in the case of usual coherent sheaves, the former is identified with the full subcategory in
$D^?(\A\operatorname{-Mod})$ with coherent homology). Also the functors $R\Gamma^\theta,L\Loc^\theta$ restrict to  functors between the subcategories $D^?_{Y_0}(\A\operatorname{-mod})$ and $ D^?_{\rho^{-1}(Y_0)}(\A^\theta\operatorname{-mod})$, consisting of all complexes
with homology supported on $Y_0, \rho^{-1}(Y_0)$.

Now let us suppose that $X:=\M^\theta(v,w)$  and so $X_0=\M(v,w)$. We will write $\Gamma_\lambda^\theta, R\Gamma_\lambda^\theta$ to indicate the dependence on the quantization parameter $\lambda$.

Let us recall some results on when (i.e., for which $\lambda$) the functors $\Gamma_\lambda^\theta, \Loc_\lambda^\theta$
are derived or abelian equivalences.

\begin{Prop}[\cite{MN_der}]\label{Prop:MN_der}
The functor $R\Gamma_\lambda^\theta:D^b(\A_\lambda^\theta(v,w)\operatorname{-mod})\rightarrow D^b(\A_\lambda(v,w)\operatorname{-mod})$
is an equivalence of triangulated categories if and only if $\A_\lambda(v,w)$ has finite homological dimension. In this case,
the inverse equivalence is given by $L\Loc_\lambda^\theta$.
\end{Prop}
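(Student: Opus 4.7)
The plan is to treat the two implications separately. For the ``only if'' direction, observe that $\A_\lambda^\theta(v,w)$ is a filtered sheaf of algebras on the smooth symplectic variety $\M^\theta(v,w)$ with $\gr \A_\lambda^\theta(v,w) = \Str_{\M^\theta(v,w)}$. Locally on an affine open cover compatible with the filtration, $\A_\lambda^\theta(v,w)$ thus has finite homological dimension bounded by $\dim \M^\theta(v,w)$, so every coherent $\A_\lambda^\theta(v,w)$-module is perfect. A triangulated equivalence with $D^b(\A_\lambda(v,w)\operatorname{-mod})$ then forces every finitely generated $\A_\lambda(v,w)$-module to be a perfect complex, which is equivalent to $\A_\lambda(v,w)$ having finite homological dimension.

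For the ``if'' direction, assume $\A_\lambda(v,w)$ has finite homological dimension, so that $L\Loc_\lambda^\theta$ is defined on all of $D^b(\A_\lambda(v,w)\operatorname{-mod})$. The pair $(L\Loc_\lambda^\theta, R\Gamma_\lambda^\theta)$ is adjoint and it suffices to prove that the unit and counit are isomorphisms. The unit $\id \to R\Gamma_\lambda^\theta \circ L\Loc_\lambda^\theta$ evaluated at $\A_\lambda(v,w)$ is an isomorphism by the Grauert-Riemenschneider vanishing recalled in Subsection \ref{SS_quiver_intro}, which gives $R\Gamma_\lambda^\theta(\A_\lambda^\theta(v,w)) = \A_\lambda(v,w)$ concentrated in degree zero. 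Using compatibility with shifts and cones, together with the finite homological dimension hypothesis (so that every object of $D^b$ is quasi-isomorphic to a bounded complex of free modules of finite rank), this propagates to all of $D^b(\A_\lambda(v,w)\operatorname{-mod})$.

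The substantive part is to verify that the counit $L\Loc_\lambda^\theta \circ R\Gamma_\lambda^\theta \to \id$ is an isomorphism on $D^b(\A_\lambda^\theta(v,w)\operatorname{-mod})$. Following the strategy of McGerty-Nevins, I would realise both target categories as quotients of the $G$-equivariant derived category of $\lambda$-twisted $D(R)$-modules: the sheaf side by restriction to $(T^*R)^{\theta-ss}$ followed by taking $G$-invariants, the algebraic side by quantum Hamiltonian reduction. These two quotient functors are intertwined by $R\Gamma_\lambda^\theta$, and the kernel of the sheaf-level quotient is generated by objects set-theoretically supported on the Kempf-Ness stratification of the unstable locus. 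The main obstacle is to identify the kernels of the two quotients precisely when $\A_\lambda(v,w)$ has finite homological dimension: one constructs, for each Kempf-Ness stratum, a Halpern-Leistner ``window'' subcategory of the equivariant category on which both reductions are fully faithful, and shows that finite homological dimension of $\A_\lambda(v,w)$ is exactly the condition ensuring that the windows collectively generate (equivalently, that the relevant $\Ext$ vanishings along the strata hold). Once the kernels match, the descent of the adjunction from the equivariant category yields that the counit is an isomorphism, completing the equivalence.
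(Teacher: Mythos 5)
First, a point of reference: the paper does not prove this statement at all — Proposition \ref{Prop:MN_der} is quoted from McGerty--Nevins \cite{MN_der} — so your proposal must be judged against that reference rather than against an internal argument. Your overall architecture (adjunction $L\Loc_\lambda^\theta\dashv R\Gamma_\lambda^\theta$; unit an isomorphism because $\Gamma(\A_\lambda^\theta(v,w))=\A_\lambda(v,w)$ with vanishing higher cohomology, propagated by finite projective resolutions; counit as the hard part, attacked by comparing the two Hamiltonian-reduction quotients of the equivariant category of twisted $D(R)$-modules) is sensible and in the spirit of McGerty--Nevins. But the counit step contains a genuine gap: the entire content of the theorem is compressed into the unproven assertion that ``finite homological dimension of $\A_\lambda(v,w)$ is exactly the condition ensuring that the windows collectively generate.'' You give no mechanism by which finite global dimension enters, and the formulation is doubtful: full faithfulness of a grade-restriction window onto the sheaf-side quotient, and the question of whether a single window covers everything, are governed by integrality/weight conditions on $\lambda$ along the Kempf--Ness strata (the kind of conditions appearing in \cite{MN_ab} and in Proposition \ref{Prop:MN} of this paper, which control abelian-level statements and can fail for a fixed $\theta$ even when derived localization holds), not directly by finiteness of the global dimension of $\A_\lambda(v,w)$. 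What actually has to be shown is the nontrivial inclusion $\ker\pi_\lambda\subset\ker\pi_\lambda^\theta$ (the reverse inclusion is formal from $R\Gamma_\lambda^\theta\circ\pi_\lambda^\theta\cong\pi_\lambda$), equivalently that no nonzero object of $D^b(\A_\lambda^\theta(v,w)\operatorname{-mod})$ is killed by $R\Gamma_\lambda^\theta$; your sketch never engages with how finite homological dimension forces this. Moreover, your final descent step silently assumes that $\pi_\lambda^\theta$ is essentially surjective at the derived level (a form of derived Kirwan surjectivity), itself a nontrivial input that you neither prove nor cite.

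The ``only if'' direction also has soft spots. An abstract equivalence of bounded derived categories does not automatically carry perfect complexes to perfect complexes; to transfer finiteness you need either an intrinsic characterization (uniform $\Ext$-vanishing, compactness after passing to suitably generated big categories) or an argument that the quasi-inverse of $R\Gamma_\lambda^\theta$ has bounded cohomological amplitude. And even granting that every finitely generated $\A_\lambda(v,w)$-module acquires finite projective dimension, concluding \emph{finite global dimension} requires a uniform bound — e.g. via the Auslander--Gorenstein property of $\A_\lambda(v,w)$ (coming from $\gr\A_\lambda(v,w)=\C[\M(v,w)]$ being Gorenstein), since in general ``all finitely generated modules have finite projective dimension'' does not imply finite global dimension. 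So while your plan correctly locates where the difficulty sits, the crucial implication from finite homological dimension to the vanishing of $\ker R\Gamma_\lambda^\theta$ is asserted rather than proved, and the converse direction needs the missing transfer and uniformity arguments.
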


In the situation of the previous proposition, we say that the derived localization holds (for $\lambda$), such parameters
are called {\it regular}.  \cite[Conjecture 9.1]{BL} describes a precise locus of the singular (=non-regular) parameters
$\lambda$ and we prove this conjecture, Theorem \ref{Thm:loc}.

We say that the abelian localization holds for $(\lambda,\theta)$ if $\Gamma_\lambda^\theta$  is an equivalence of abelian
categories. The following result  was proved in \cite[Corollary 5.12]{BPW} for arbitrary symplectic resolutions.

\begin{Prop}\label{Prop:abel_loc1}
For any $\lambda$ there is $k_0\in \Z_{>0}$ such that the abelian localization holds for $(\lambda+k\theta,\theta)$
whenever $k\geqslant k_0$.
\end{Prop}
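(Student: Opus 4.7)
By Proposition \ref{Prop:MN_der}, once $\A_{\lambda+k\theta}(v,w)$ has finite homological dimension the derived global sections functor $R\Gamma^\theta_{\lambda+k\theta}$ is an equivalence $D^b(\A^\theta_{\lambda+k\theta}\operatorname{-mod})\xrightarrow{\sim} D^b(\A_{\lambda+k\theta}\operatorname{-mod})$.  The set of $\lambda'\in\param$ for which $\A_{\lambda'}(v,w)$ fails to have finite homological dimension is (by standard arguments, later extended in Theorem \ref{Thm:loc} in the Gieseker case) contained in a finite union of affine hyperplanes in $\param$; since $\theta\neq 0$, the ray $\lambda+k\theta$ eventually avoids this bad locus, so for $k$ large the derived equivalence holds.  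Given the derived equivalence, abelian localization is equivalent to $t$-exactness of $\Gamma^\theta_{\lambda+k\theta}$, i.e., to the vanishing $R^i\Gamma^\theta_{\lambda+k\theta}(\mathcal{M})=0$ for all $i>0$ and all coherent $\mathcal{M}$.

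The vanishing I would prove by an ampleness argument.  Since $\M(v,w)$ is affine and $\rho:\M^\theta(v,w)\to\M(v,w)$ is projective with GIT polarization $\theta$, the line bundle $\mathcal{O}(\theta)$ on $\M^\theta(v,w)$ is $\rho$-ample, hence absolutely ample.  By Serre vanishing, for any coherent sheaf $\mathcal{F}$ on $\M^\theta(v,w)$ one has $H^i(\M^\theta(v,w),\mathcal{F}\otimes\mathcal{O}(k\theta))=0$ for $i>0$ and $k$ large.  To promote this to the quantum setting, I would use the shift (twisting) bimodule $\mathcal{T}_k$, an $(\A^\theta_{\lambda+k\theta},\A^\theta_\lambda)$-bimodule whose associated graded is $\mathcal{O}(k\theta)$; tensoring with $\mathcal{T}_k$ is invertible at the level of sheaves of modules because $\mathcal{O}(k\theta)$ is invertible.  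Equipping a coherent $\A^\theta_{\lambda+k\theta}$-module with a good filtration and comparing with a filtration on $\mathcal{T}_k\otimes(\cdot)$, a standard spectral sequence reduces the quantum cohomology vanishing to the classical Serre vanishing above.

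The main obstacle is uniformity: a priori the Serre bound depends on the associated graded of the coherent module in question, whereas we want a single $k_0=k_0(\lambda,\theta)$ that works for all coherent $\mathcal{M}$.  This is where one exploits the structure of the quantization sheaf itself.  One checks first that $\A^\theta_{\lambda+k\theta}$ is a progenerator of $\A^\theta_{\lambda+k\theta}\operatorname{-mod}$ (e.g.\ via the equivalence with modules over an endomorphism algebra after shifting), with associated graded $\mathcal{O}_{\M^\theta(v,w)}$.  The Grauert--Riemenschneider theorem (already invoked for $\mathcal{O}_{\M^\theta(v,w)}$ in Subsection \ref{SS_quiver_intro}), combined with Serre vanishing for twists by $\mathcal{O}(k\theta)$ of the structure sheaf, yields $R^{>0}\Gamma(\A^\theta_{\lambda+k\theta})=0$ uniformly for $k\gg 0$ — and this uniform statement on a progenerator propagates to all coherent modules by taking presentations.

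Combining the three ingredients — derived equivalence for $k\gg 0$, cohomology vanishing on a progenerator, and invertibility of the shift bimodule — yields the $t$-exactness of $\Gamma^\theta_{\lambda+k\theta}$, and hence abelian localization, for all $k\geqslant k_0$.  The genuinely delicate point is the uniformity in $\mathcal{M}$, handled through the progenerator / Noetherian step above.
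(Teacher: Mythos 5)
The paper does not actually prove Proposition \ref{Prop:abel_loc1}; it is imported from \cite[Corollary 5.12]{BPW}, whose proof is a $\Z$-algebra argument in the style of Gordon--Stafford \cite{GS1}. Measured against that, your proposal has two genuine gaps. First, your reduction of the derived-equivalence half to "the infinite-homological-dimension locus is a finite union of hyperplanes which the ray $\lambda+k\theta$ eventually leaves" is unsupported and, inside this paper, circular: for Gieseker varieties that finiteness statement is exactly Theorem \ref{Thm:loc}(1), which is proved in Section \ref{S_loc} using Proposition \ref{Prop:abel_loc1} (e.g.\ via Corollary \ref{Cor:sing} and Proposition \ref{Prop:loc_compl}, both of which use translation bimodules at $\chi\gg 0$), and in general the standard way to see that $\A_{\lambda+k\theta}(v,w)$ has finite global dimension for $k\gg0$ is as a \emph{consequence} of abelian localization at those parameters (see the beginning of Subsection \ref{SS_dual_WC}). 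Note also that a weaker "outside countably many hyperplanes" genericity would not help: each such hyperplane can meet the ray once, so the set of bad $k$ need not be bounded. So the input to Proposition \ref{Prop:MN_der} is not available by your route.

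Second, and more seriously, your treatment of the uniformity in $\mathcal{M}$ --- which you correctly identify as the crux --- does not work. The sheaf $\A^\theta_{\lambda+k\theta}$ is not a projective generator of the category of coherent $\A^\theta_{\lambda+k\theta}$-modules: on the non-affine variety $\M^\theta(v,w)$ a coherent module is in general not a quotient of a finite direct sum of copies of the quantization (one must twist by negative powers of $\mathcal{O}(\theta)$, which changes the quantization parameter), and invoking "the equivalence with modules over an endomorphism algebra after shifting" presupposes the abelian localization being proved. Even granting a generator, vanishing of $R^{>0}\Gamma$ on it does not propagate to arbitrary coherent modules by taking presentations: the long exact sequences only trade $R^i\Gamma(\mathcal{M})$ for $R^{i+1}\Gamma$ of a syzygy and the descent never closes --- indeed $R^{>0}\Gamma(\A^\theta_\mu)=0$ holds for \emph{every} $\mu$ by Grauert--Riemenschneider, while $\Gamma^\theta_\mu$ certainly fails to be exact for singular $\mu$. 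The actual mechanism in \cite{BPW} is different: one assembles the global sections of the twisting bimodules $\A^{\theta}_{\lambda+n\theta,\theta}$ into a single $\Z$-algebra whose associated graded is the section ring of the ample line bundle $\mathcal{O}(\theta)$; Noetherianity and ampleness of this one bigraded object yield surjectivity of the relevant composition maps simultaneously for all large $n$, hence the Morita-type identities (as in (\ref{eq:nat_homs})) that constitute abelian localization, with a single $k_0$. Your Serre-vanishing-plus-progenerator scheme does not substitute for this, so the proof as proposed is incomplete. (Your remaining formal point is fine: derived equivalence together with exactness of $\Gamma$ does imply equivalence of the hearts, exactly as used after Theorem \ref{Thm:loc} and in Corollary \ref{Cor:sing}; the problem is that neither input is established for $k\gg 0$ by your argument.)
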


There are also results of McGerty and Nevins, \cite{MN_ab}, that provide a sufficient condition for the functor $\Gamma_\lambda^\theta$
to be exact. We will elaborate on these results applied to the special case of $\A_\lambda(n,r)$ below. We will see that for $\A_\lambda(n,r)$ this sufficient condition is also necessary.

To conclude this section let us mention the characteristic cycle map. Suppose we are in the general case of a projective symplectic
resolution $X\rightarrow X_0$. Then to a module $M'\in \mathcal{\A}^\theta\operatorname{-mod}_{\rho^{-1}(0)}$ we can assign
its characteristic cycle. By definition, it coincides with the sum of irreducible components of $\rho^{-1}(0)$
with multiplicities, where the multiplicity
of the component equals to the generic rank of $\gr M'$ on the component. Of course, the characteristic cycle defines a group
homomorphism $\operatorname{CC}^\theta:K_0(\A^\theta\operatorname{-mod}_{\rho^{-1}(0)})\rightarrow H_{mid}(X)$.
Now to a module $M\in \A\operatorname{-mod}_{fin}$ we can assign $\operatorname{CC}^\theta(\operatorname{Loc}_\lambda^\theta M)$.  It was shown in \cite[3.2]{BL} that, in the quiver variety case, this map is actually
independent of $\theta$. Here is another result that will be of crucial importance for us. This was proved in an unpublished work of
Baranovsky and Ginzburg.

\begin{Prop}\label{Prop:CC_inject}
The map $\operatorname{CC}^\theta$ is injective.
\end{Prop}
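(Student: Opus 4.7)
The strategy is to factor $\operatorname{CC}^\theta$ through the associated-graded construction, use coisotropicity of the characteristic variety to force the support of any nonzero module into the top-dimensional stratum of $\rho^{-1}(0)$, and then reduce injectivity on $K_0$ to the linear independence of characteristic cycles of simple objects.

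Fix $M'\in \A^\theta\operatorname{-mod}_{\rho^{-1}(0)}$ together with a good filtration, and consider $\gr M'$, a coherent $\mathcal{O}_X$-module on the characteristic variety $\operatorname{Ch}(M')\subseteq \rho^{-1}(0)$. By the Gabber-type integrability theorem for quantizations of a symplectic variety, $\operatorname{Ch}(M')$ is coisotropic, hence of dimension at least $\tfrac12\dim_{\C} X$. Since $\rho^{-1}(0)$ is Lagrangian of exactly this dimension (being the contracting locus of the conical $\C^\times$-action), $\operatorname{Ch}(M')$ must be a union of top-dimensional irreducible components $L_1,\ldots,L_N$ of $\rho^{-1}(0)$. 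Thus $\operatorname{CC}^\theta(M')=\sum_i m_i(M')\,[L_i]$, where $m_i(M')$ is the generic rank of $\gr M'$ along $L_i$; this is additive in short exact sequences and independent of the good filtration, so it descends to a map on $K_0$.

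Since $K_0(\A^\theta\operatorname{-mod}_{\rho^{-1}(0)})$ (with $\C$-coefficients) is spanned by the classes of its simples, it suffices to prove that the $\operatorname{CC}^\theta(L)$ for simple $L$ are linearly independent in $H_{mid}(X)$. The support of any simple $L$ is irreducible and hence is a single $L_{i(L)}$, so $\operatorname{CC}^\theta(L)=m_L[L_{i(L)}]$ with $m_L>0$. The fundamental classes of distinct top-dimensional irreducible components of $\rho^{-1}(0)$ are linearly independent in $H_{mid}(X)$ (a standard property of top Borel--Moore cycles). So the question is whether the assignment $L\mapsto L_{i(L)}$ is injective: that is, whether for each component $L_i$ there is at most one simple $\A^\theta$-module with generic support $L_i$. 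To see this, pick a generic smooth point of $L_i$, where $L_i\subset X$ is a smooth Lagrangian; a Darboux-type normal form (made genuine, not merely formal, thanks to the contracting $\C^\times$-action) identifies a transversal slice to $L_i$ with a symplectic vector space and a neighborhood of $\A^\theta$ with a Weyl algebra on it. That Weyl algebra admits a unique simple module supported on its zero section, which pins down $L$ uniquely from $L_i$.

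The main obstacle is precisely this uniqueness step. In the commutative analogue ($\D$-modules on a smooth variety), it fails: distinct local systems on the smooth locus of a Lagrangian $L_i$ yield distinct simples sharing the characteristic cycle $[L_i]$. The rigidity we exploit is specific to the non-commutative Weyl-algebra picture around a generic Lagrangian point, together with the $\C^\times$-equivariance that upgrades the local statement from a formal to a Zariski-open neighborhood; making this normal form and the resulting classification of simples supported on $L_i$ precise is the technical heart of the proof.
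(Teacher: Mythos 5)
First, a point of reference: the paper does not actually prove this proposition --- it is quoted from the unpublished work of Baranovsky and Ginzburg ([BaGi] in the bibliography), so there is no in-paper argument to compare with; your proposal has to stand on its own.

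On its own it has a genuine gap, and it sits exactly where you say it does. The reduction is fine up to a point: $\operatorname{Ch}(M')$ is coisotropic by Gabber, $\rho^{-1}(0)$ is isotropic, so characteristic cycles live in the span of the Lagrangian components and it suffices to show the cycles of the simples are linearly independent. But you then try to prove the much stronger statement that each component $L_i$ carries at most one simple (and, implicitly, that the support of a simple is a single component --- also unjustified). The local rigidity you invoke does not hold in the form you need: near a generic smooth point of $L_i$ the quantization is microlocally a ring of (twisted) differential operators and simple modules supported on $L_i$ there are classified by irreducible (twisted) local systems on the smooth locus of $L_i$; if that locus has nontrivial fundamental group, several non-isomorphic simples can have the very same characteristic cycle $[L_i]$, exactly as in the $\D$-module analogue you mention. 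The $\C^\times$-equivariance is not shown to kill this monodromy (equivariant rank-one local systems with nontrivial monodromy exist whenever the relevant $\pi_1$ is nontrivial), so the ``technical heart'' is not a technicality --- it is the whole content, and the route through it is doubtful. The known argument avoids uniqueness-per-component entirely: using the duality of Subsection 2.6 one pairs a simple $L$ over $\A^\theta_{\hat\lambda}$ with a simple over the opposite quantization and proves an index formula $\sum_i(-1)^i\dim\operatorname{Ext}^i(L,L')=\pm\langle \operatorname{CC}(L),\operatorname{CC}(L')\rangle$, where the right-hand side is the topological intersection form on $H_{mid}(X)$ restricted to the classes of the Lagrangian components of $\rho^{-1}(0)$; nondegeneracy of that pairing, together with $\operatorname{Ext}^\bullet(L,L')$ detecting $L\cong L'$, gives linear independence of the $\operatorname{CC}(L)$ and hence injectivity on $K_0$. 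If you want to complete a proof rather than cite [BaGi], that Euler-characteristic/intersection-form mechanism is the step to develop, not a Darboux normal form along a single component.
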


\subsection{Duality and wall-crossing functor}\label{SS_dual_WC}
We still consider a conical projective resolution $X$ of $X_0$. Let us take a quantization $\A^\theta$ of $X$
that satisfies the abelian localization theorem. In this case the homological dimension of $\A$ (equivalently, of $\A^\theta$)
does not exceed the homological dimension of $\mathcal{O}_X$ equal to $\dim X$.

It turns out that dimension of support for certain modules can be computed via a suitable functor: the homological duality.
Namely, recall the functor $D:=\operatorname{RHom}_{\A}(\bullet, \A)[N]$ where $N=\frac{1}{2}\dim X$,
defines an equivalence $D^b(\A\operatorname{-mod})\rightarrow D^b(\A^{opp}\operatorname{-mod})^{opp}$.
Moreover, for a simple object $L$ in $\A\operatorname{-mod}$ we have $H_i(DL)=0$ if $i>N$
or $i<N-\operatorname{dim}\operatorname{Supp}L$, see \cite[4.2]{BL}. In particular, $L$ is finite dimensional if and only
if $H_i(DL)=0$ for $i<N$.

In the case when $\A=\A_{\hat{\lambda}}$, we can view $D$ as a functor $D^b(\A_{\hat{\lambda}}\operatorname{-mod})
\rightarrow D^b(\A_{-\hat{\lambda}}\operatorname{-mod})^{opp}$.

We will need a technical property of $D$.

\begin{Lem}\label{Lem:D_higher_small_supp}
Let $L$ be a simple $\A_{\hat{\lambda}}$-module with $\dim \operatorname{Supp}L=\frac{1}{2}\dim X$. Then $H^i(DL)$ is supported on
the complement of the open symplectic leaf of $X_0$ for $i>0$.
\end{Lem}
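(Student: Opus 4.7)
The plan is to reduce the statement to a local one on the open symplectic leaf $U \subset X_0$ and then invoke holonomic duality on the smooth symplectic variety $U$, which has dimension $2N$.

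Since the abelian localization theorem holds for $\A = \A_{\hat{\lambda}}$, $\mathcal{L} := \Loc^\theta L$ is a simple coherent $\A^\theta_{\hat{\lambda}}$-module with support $\rho^{-1}(\operatorname{Supp} L)$, and $\dim \operatorname{Supp} L = N$. To prove that $H^i(DL)$ is supported on $X_0 \setminus U$ for $i > 0$, it suffices to show that for every $\C^\times$-stable affine open $V \subset U$, the microlocalization $\A(V) \otimes_{\A_{\hat{\lambda}}} H^i(DL)$ vanishes, where $\A(V) := \Gamma(V, \A^\theta_{\hat{\lambda}})$. Since $\rho$ is an isomorphism over $U$, $V$ can be viewed as a $\C^\times$-stable affine open of $X$. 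Using right $\A_{\hat{\lambda}}$-flatness of $\A(V)$ together with Hom-tensor adjunction and the fact that $\Loc^\theta$ is inverse to $\Gamma^\theta$, one computes
\[
\A(V) \otimes^L_{\A_{\hat{\lambda}}} DL \;\cong\; \operatorname{RHom}_{\A(V)}\bigl(\Gamma(V,\mathcal{L}|_V),\, \A(V)\bigr)[N].
\]
Here $\Gamma(V, \mathcal{L}|_V)$ is a finitely generated $\A(V)$-module whose support in $V$ has dimension at most $N$.

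The remaining ingredient is a holonomic duality for quantizations of smooth symplectic varieties: if $\B$ is a filtered quantization of a smooth affine symplectic variety $V$ of dimension $2N$ (so $\gr \B = \C[V]$) and $M$ is a finitely generated $\B$-module with $\dim \operatorname{Supp} M \leqslant N$, then $\Ext^j_\B(M, \B) = 0$ for $j \neq N$. Applied to $\B = \A(V)$ and $M = \Gamma(V, \mathcal{L}|_V)$, this shows that the right-hand side of the above isomorphism is concentrated in cohomological degree $0$. Consequently $\A(V) \otimes_{\A_{\hat{\lambda}}} H^i(DL) = 0$ for every such $V$ and every $i > 0$, which is precisely the content of the lemma.

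The main technical step is thus the invoked holonomic duality. This is essentially local on $V$: on the formal neighborhood of any closed point, $\B$ becomes isomorphic to a formal Weyl algebra of rank $N$ via a quantum Darboux-Weinstein argument, and the vanishing of $\Ext^j$ for $j \neq N$ on modules with Lagrangian support reduces to the classical Bernstein duality for holonomic modules on affine space. Faithfully flat descent along the family of formal completions recovers the statement for $\B$ itself. Once this input is in place, the rest of the argument is formal bookkeeping via flat base change and adjunction.
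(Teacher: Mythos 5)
Your guiding idea --- that everything should follow from holonomicity over the open leaf, where $\rho$ is an isomorphism --- is the same as the paper's, and the duality statement you invoke (for a quantization $\B$ of a smooth affine symplectic variety of dimension $2N$ and a finitely generated module $M$ with $\dim\operatorname{Supp}M\leqslant N$ one has $\Ext^j_{\B}(M,\B)=0$ for $j\neq N$) is a correct, standard fact. The gap is in the reduction to it. First, $H^i(DL)$ is a \emph{right} $\A_{\hat{\lambda}}$-module (i.e.\ an $\A_{-\hat{\lambda}}$-module), so its microlocalization to $V$ is $H^i(DL)\otimes_{\A}\A(V)$, not $\A(V)\otimes_{\A}H^i(DL)$; this is not mere notation, because the two sides require different flatness properties. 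Abelian localization for $(\hat{\lambda},\theta)$ gives flatness of $\A(V)$ as a \emph{right} $\A$-module (exactness on left modules), which is indeed what identifies $\A(V)\otimes^L_{\A}L$ with $\Gamma(V,\mathcal{L}|_V)$. But to pass from ``$DL\otimes^L_{\A}\A(V)$ is concentrated in degree $0$'' to ``$H^i(DL)\otimes_{\A}\A(V)=0$ for $i>0$'' --- and likewise to know that the vanishing of these microlocalizations is equivalent to $\operatorname{Supp}H^i(DL)\cap U=\varnothing$, which is defined via $\gr$ --- you need $-\otimes_{\A}\A(V)$ to be exact on right modules, i.e.\ $\A(V)$ flat as a \emph{left} $\A$-module. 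That is an abelian-localization-type statement for the opposite parameter $-\hat{\lambda}$ with the \emph{same} $\theta$; it is not among the hypotheses and genuinely fails in the situations of this paper (cf.\ Theorem \ref{Thm:loc}: when $(\hat{\lambda},\theta)$ satisfies abelian localization, $-\hat{\lambda}$ typically satisfies it only with $-\theta$). Without this exactness, the spectral sequence $\operatorname{Tor}^{\A}_{-q}(H^p(DL),\A(V))\Rightarrow H^{p+q}(DL\otimes^L_{\A}\A(V))$ has differentials involving $H^0(DL)$ (whose support does meet $U$), so vanishing of the abutment in positive degrees does not force the terms $H^i(DL)\otimes_{\A}\A(V)$, $i>0$, to vanish.

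The paper avoids localization functors altogether: choose a good filtration on $L$, use the standard spectral sequence to realize $\gr H^i(DL)$ as a subquotient of $\Ext^{N+i}_{\C[X_0]}(\gr L,\C[X_0])$, restrict these coherent sheaves to the open leaf (restriction to an open subset is exact --- no flatness is needed), where the commutative-algebra codimension bound gives components of dimension $<N$ when $i>0$, and then conclude by Gabber's involutivity theorem that no nonzero $\A_{-\hat{\lambda}}$-module can have such support. These are precisely the two ingredients packaged inside your ``holonomic duality,'' so the natural repair of your argument is to run it at the level of associated graded modules on $U$ rather than through $\A(V)$ --- at which point it becomes the paper's proof, and as a bonus the hypothesis of abelian localization is not needed at all.
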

\begin{proof}
As we know from Commutative Algebra, the irreducible components of the support of  $\operatorname{Ext}^i(\operatorname{gr}L, \C[X_0])$ intersecting  the open leaf have dimension smaller than $\frac{1}{2}\dim X$ provided $i>\frac{1}{2}\dim X$.
Thanks to a standard spectral sequence for the homology of a filtered quotient, we see that the irreducible components of
$\operatorname{Supp}H^i(DL)$ for $i>0$ intersecting the open leaf have dimension less than $\frac{1}{2}\dim X$. However,
no nonzero $\A_{-\hat{\lambda}}$-module can have this property as the support of any $\A_{-\hat{\lambda}}$-module
is a coisotropic subvariety by Gabber's theorem.
\end{proof}

Now let us consider a different family of functors: wall-crossing functors. Below we will recall a connection of some of those
functors with the duality introduce above that was discovered in \cite[Section 4]{BL}.
We will make some
additional assumptions. Let us assume that the all conical projective symplectic resolutions
of $X_0$ are strictly semismall. Recall that $\rho:X\rightarrow X_0$ is called strictly semismall,
if all components of $X^i:=\{x\in X| \dim \rho^{-1}(x)=i\}$ have codimension $2i$ and all components
of $\rho^{-1}(x)$ have the same dimension.

Pick $\chi\in \operatorname{Pic}(X)$. We can uniquely quantize the corresponding line bundle $\mathcal{O}(\chi)$ on $X$
to a $\A^\theta_{\hat{\lambda}+\chi}$-$\A^\theta_{\hat{\lambda}}$-bimodule to be denoted by $\A^{\theta}_{\hat{\lambda},\chi}$.
Let $\A^{(\theta)}_{\hat{\lambda},\chi}$ denote the global sections. We remark that taking the tensor product with
$\A^{\theta}_{\hat{\lambda},\chi}$ gives rise to an equivalence $\mathcal{T}^\theta_{\hat{\lambda},\chi}:\A_{\hat{\lambda}}^\theta\operatorname{-Mod}\rightarrow
\A^{\theta}_{\hat{\lambda}+\chi}\operatorname{-Mod}$.

Pick a different stability condition $\theta'$ and $\hat{\lambda}'\in \hat{\lambda}+\Z^{Q_0}$ such that the abelian localization holds for $(\hat{\lambda}',\theta')$.  We consider the  wall-crossing functor $$\WC_{\hat{\lambda}\rightarrow \hat{\lambda}'}:=\Gamma^{\theta'}_{\hat{\lambda}'}\circ
\mathcal{T}^{\theta'}_{\hat{\lambda},\hat{\lambda}'-\hat{\lambda}}\circ L\operatorname{Loc}^{\theta'}_{\hat{\lambda}}:D^b(\A_{\hat{\lambda}}\operatorname{-mod})
\rightarrow D^b(\A_{\hat{\lambda}'}\operatorname{-mod}).$$ Here we write $\Gamma^{\theta'}_{\hat{\lambda}'}$ for the global
section functor $\A_{\hat{\lambda}'}^{\theta'}\operatorname{-Mod}\rightarrow \A_{\hat{\lambda}'}\operatorname{-Mod}$.   According to \cite[Proposition 6.29]{BPW},
we have $\WC_{\hat{\lambda}\rightarrow \hat{\lambda}'}=\A_{\hat{\lambda},\hat{\lambda}'-\hat{\lambda}}^{(\theta')}\otimes^L_{\A_{\hat{\lambda}}}\bullet$.
In the case of quiver varieties, we will write $\WC_{\lambda\rightarrow \lambda'}$ for a functor
$D^b(\A_\lambda(v,w)\operatorname{-Mod})\rightarrow D^b(\A_{\lambda'}(v,w)\operatorname{-Mod})$.

By a long wall-crossing functor we mean $\WC_{\hat{\lambda}\rightarrow \hat{\lambda}'}$, where $(\hat{\lambda},\theta),(\hat{\lambda}',-\theta)$
satisfy the abelian localization.
A connection between the contravariant duality and the long wall-crossing functor is as follows.
We say that an $\A_{\hat{\lambda}}$-module $M$ is {\it strongly holonomic} if every nonempty
intersection of $\operatorname{Supp}M$ with a symplectic leaf in $X_0$ is lagrangian
in that leaf. Thanks to the assumption that
$X\rightarrow X_0$ is strictly semismall, this is equivalent to the condition that $\rho^{-1}(\operatorname{Supp}M)
\subset X$ is lagrangian. Set $\hat{\lambda}^-:=\hat{\lambda}-n\theta$ for sufficiently large $n$. The choice of $n$ guarantees that
the abelian localization holds for $(\hat{\lambda}^-,-\theta)$.

Consider the subcategory $D^b_{shol}(\A_{\hat{\lambda}}\operatorname{-mod})\subset D^b(\A_{\hat{\lambda}}\operatorname{-mod})$
of all complexes with strongly equivariant homology. It is easy to see that $D$ restricts to an equivalence
$D^b_{shol}(\A_{\hat{\lambda}}\operatorname{-mod})\xrightarrow{\sim} D^b_{shol}(\Af_{-\hat{\lambda}}\operatorname{-mod})^{opp}$.
On the other hand, $\WC_{\hat{\lambda}\rightarrow \hat{\lambda}^-}$ restricts to an equivalence
$D^b_{shol}(\A_{\hat{\lambda}}\operatorname{-mod})\xrightarrow{\sim} D^b_{shol}(\A_{\hat{\lambda}^-}\operatorname{-mod})$.
The following result was proved in \cite[Section 4]{BL}.

\begin{Prop}\label{Prop:WC_vs_D}
There is an equivalence $\iota: D^b_{shol}(\A_{\hat{\lambda}^-}\operatorname{-mod})\xrightarrow{\sim} D^b_{shol}(\A_{-\hat{\lambda}}\operatorname{-mod})^{opp}$ preserving the natural $t$-structures
such that $\iota\circ \WC_{\hat{\lambda}\rightarrow \hat{\lambda}^-}=D$.
\end{Prop}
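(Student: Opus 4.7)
The plan is to define $\iota$ explicitly as an $\operatorname{RHom}$ into the wall-crossing bimodule, deduce the intertwining identity $\iota\circ\WC_{\hat{\lambda}\to\hat{\lambda}^-}=D$ formally from tensor-hom adjunction, and then concentrate the technical work on the $t$-exactness. Concretely, I set $\iota := \operatorname{RHom}_{\A_{\hat{\lambda}^-}}(\bullet,\B)[N]$, where $\B$ denotes the wall-crossing bimodule $\A_{\hat{\lambda},\hat{\lambda}^- - \hat{\lambda}}^{(-\theta)}$ viewed as an $\A_{\hat{\lambda}^-}$-$\A_{\hat{\lambda}}$-bimodule. The derived $\operatorname{Hom}$ produces a complex of right $\A_{\hat{\lambda}}$-modules, which via $\A_{\hat{\lambda}}^{opp}\cong\A_{-\hat{\lambda}}$ we interpret as an object of $D^b(\A_{-\hat{\lambda}}\operatorname{-mod})^{opp}$. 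Tensor-hom adjunction gives $\iota\circ\WC_{\hat{\lambda}\to\hat{\lambda}^-}(M)\cong\operatorname{RHom}_{\A_{\hat{\lambda}}}(M,\operatorname{RHom}_{\A_{\hat{\lambda}^-}}(\B,\B))[N]$, so the desired identity reduces to the canonical isomorphism $\operatorname{RHom}_{\A_{\hat{\lambda}^-}}(\B,\B)\cong\A_{\hat{\lambda}}$. Since $(\hat{\lambda}^-,-\theta)$ satisfies abelian localization, this derived $\operatorname{Hom}$ can be computed on the resolution $X^{-\theta}$, where the microlocal bimodule $\B^{-\theta}$ is invertible and the identification is tautological.

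Once the intertwining identity is in hand, the equivalence portion of the statement is automatic: both $D$ and $\WC_{\hat{\lambda}\to\hat{\lambda}^-}$ are equivalences of bounded derived categories and each restricts to an equivalence on the strongly holonomic subcategories. Indeed, under the strictly semismall hypothesis, strong holonomicity is equivalent to the preimage of the support in $X$ being lagrangian, a condition preserved by tensoring with the invertible microlocal bimodule that realizes $\WC$ and by $D$ on modules with support of dimension exactly $N=\tfrac12\dim X$.

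The real content is the $t$-exactness of $\iota$ on strongly holonomic modules, i.e.\ that for a strongly holonomic simple $L\in\A_{\hat{\lambda}^-}\operatorname{-mod}$ the object $\iota(L)=D(\WC_{\hat{\lambda}^-\to\hat{\lambda}}(L))$ is concentrated in degree zero. Two competing constraints are in play. On one hand, the strictly semismall hypothesis bounds the cohomological amplitude of $\WC_{\hat{\lambda}^-\to\hat{\lambda}}(L)$ in terms of the codimensions of the strata of $\rho^{-1}(\operatorname{Supp}L)$. On the other hand, Lemma \ref{Lem:D_higher_small_supp}, together with the standard support estimate $H^i(DL')=0$ for $i\notin[N-\dim\operatorname{Supp}L',N]$, constrains the cohomology of $D$ applied to each simple constituent to a complementary range. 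The hard part is to show that these two bounds collapse \emph{exactly}: the upper cohomology produced by $D$ must be cancelled by the lower cohomology produced by $\WC$. I would handle this by induction on the closure order of symplectic leaves in $X_0$. On the open leaf both $\WC$ and $D$ are $t$-exact for degree reasons, and at each inductive step the Kaledin decomposition $X_0^{\wedge_x}\cong D\times X_0'$ recalled at the end of Subsection \ref{SS_leaves} (specialized to the quiver variety slices of Proposition \ref{Prop:leaves} when applicable) reduces the problem to a smaller conical symplectic resolution to which the inductive hypothesis applies, leaving only the contribution on the open stratum, which is already under control by Lemma \ref{Lem:D_higher_small_supp}.
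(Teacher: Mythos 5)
The paper offers no argument for this proposition beyond the citation to \cite[Section 4]{BL}, so your proposal has to stand on its own. Its formal layer does stand: setting $\iota:=\operatorname{RHom}_{\A_{\hat{\lambda}^-}}(\bullet,\B)[N]$ with $\B=\A^{(-\theta)}_{\hat{\lambda},\hat{\lambda}^--\hat{\lambda}}$, and deducing $\iota\circ\WC_{\hat{\lambda}\rightarrow\hat{\lambda}^-}\cong D$ from tensor-hom adjunction together with $\operatorname{RHom}_{\A_{\hat{\lambda}^-}}(\B,\B)\cong\A_{\hat{\lambda}}$, is correct once one knows $\B$ is a two-sided tilting bimodule (derived localization at $\hat{\lambda}$ plus abelian localization at $(\hat{\lambda}^-,-\theta)$), and this is surely the right skeleton.

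The part you yourself call the real content, the $t$-exactness of $\iota$, is not proved, and the plan you sketch has concrete defects. First, you rewrite $\iota(L)$ as $D(\WC_{\hat{\lambda}^-\rightarrow\hat{\lambda}}(L))$, i.e., you treat $\WC_{\hat{\lambda}^-\rightarrow\hat{\lambda}}$ as inverse to $\WC_{\hat{\lambda}\rightarrow\hat{\lambda}^-}$; it is not. The two bimodules quantize line bundles on the two opposite resolutions, and their composition is the bimodule $\mathcal{D}$ from the proof of Corollary \ref{Cor:full_supp}, whose map to $\A_{\hat{\lambda}}$ is an isomorphism only after microlocalization to the open leaf; the actual inverse is the adjoint bimodule $\operatorname{RHom}_{\A_{\hat{\lambda}^-}}(\B,\A_{\hat{\lambda}^-})$. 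Second, and more seriously, you invoke an a priori bound on the cohomological amplitude of the wall-crossing functor ``in terms of the codimensions of the strata of $\rho^{-1}(\operatorname{Supp}L)$''. No such bound is available at this stage: producing exactly such perversity bounds for the long wall-crossing functor is what this proposition is later used for (here and in \cite{BL}), so assuming them makes the argument circular. What must actually be proved is the vanishing $\operatorname{Ext}^j_{\A_{\hat{\lambda}^-}}(L',\B)=0$ for $j\neq N$ and every strongly holonomic $L'$ --- a statement that is in general false with $\B$ replaced by $\A_{\hat{\lambda}^-}$ (e.g., for finite dimensional $L'$), so it must exploit specific homological properties of the wall-crossing bimodule; this is where the work in \cite[Section 4]{BL} lies (a good filtration on $\B$ whose associated graded is Cohen-Macaulay over $\C[X_0]$, plus a duality computation on the resolutions). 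Finally, the proposed induction ``via Kaledin's decomposition'' cannot be run as stated: the formal decomposition of $X_0^{\wedge_x}$ acts on nothing by itself; one would need the Harish-Chandra restriction functors $\bullet_{\dagger,x}$ of Subsection \ref{SS_restr_fun}, the fact that $\B_{\dagger,x}$ is again a long wall-crossing bimodule for the slice quantization, and a base case for modules supported at a point, none of which is set up. So the proposal establishes the easy identity but not the assertion that $\iota$ preserves the $t$-structures, which is the substance of the proposition.
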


We will need a corollary of this proposition.

\begin{Cor}\label{Cor:full_supp}
Let $M$ be a simple strongly holonomic $\A$-module. The following are equivalent:
\begin{enumerate}
\item $\dim \operatorname{Supp}M<\frac{1}{2}\dim X$.
\item $M$ is annihilated by a proper ideal of $\A$.
\end{enumerate}
\end{Cor}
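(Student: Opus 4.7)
The plan is to prove the two implications separately, using Proposition \ref{Prop:WC_vs_D} to translate the support condition on $M$ into a condition on the long wall-crossing $\WC_{\hat{\lambda}\rightarrow \hat{\lambda}^-}(M)$.

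For $(2)\Rightarrow(1)$, suppose $M$ is annihilated by a non-zero proper two-sided ideal $I\subset \A_{\hat{\lambda}}$. By Gabber's theorem, the associated variety $V(\gr I)\subset X_0$ is a closed Poisson subvariety, and it is properly contained in $X_0$ since $I\neq 0$. Hence $V(\gr I)$ is a union of symplectic leaves $L$ of codimension at least $2$, so $\dim L\leqslant 2N-2$. Strong holonomicity of $M$ gives $\dim(\operatorname{Supp}(M)\cap L)=\dim L/2\leqslant N-1$ for each such leaf, and since $\operatorname{Supp}(M)\subseteq V(\gr I)$ we conclude $\dim\operatorname{Supp}(M)\leqslant N-1<N$.

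For $(1)\Rightarrow(2)$, assume $\dim\operatorname{Supp}(M)<N$. The Ext-vanishing estimate recalled before Lemma \ref{Lem:D_higher_small_supp} gives $H_i(DM)=0$ for $i<N-\dim\operatorname{Supp}(M)$, so in particular $H_0(DM)=0$. Proposition \ref{Prop:WC_vs_D} together with the $t$-exactness of $\iota$ then yields $H_0(\WC_{\hat{\lambda}\rightarrow\hat{\lambda}^-}(M))=0$, so the long wall-crossing shifts $M$ out of cohomological degree zero. To upgrade this to the algebraic statement $\operatorname{Ann}_{\A_{\hat{\lambda}}}(M)\neq 0$, I invoke the Bernstein-type inequality---a standard consequence of Gabber's coisotropicity theorem for filtered quantizations of affine symplectic singularities---which asserts that the Gelfand--Kirillov dimension of the primitive quotient $\A_{\hat{\lambda}}/\operatorname{Ann}_{\A_{\hat{\lambda}}}(M)$ is at most $2\dim\operatorname{Supp}(M)$. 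Since $\A_{\hat{\lambda}}$ itself has Gelfand--Kirillov dimension $\dim X_0=2N$, the possibility $\operatorname{Ann}_{\A_{\hat{\lambda}}}(M)=0$ would give $2N\leqslant 2\dim\operatorname{Supp}(M)<2N$, a contradiction.

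The main obstacle is this last step of $(1)\Rightarrow(2)$: passing from the geometric condition that $\operatorname{Supp}(M)$ lies in a proper Poisson subvariety to the algebraic condition that $\operatorname{Ann}_{\A_{\hat{\lambda}}}(M)$ is non-zero. The Bernstein-type inequality handles this cleanly; equivalently, one may appeal to the principle that for simple strongly holonomic modules, $\WC_{\hat{\lambda}\rightarrow\hat{\lambda}^-}(M)$ is concentrated in degree zero exactly when $\operatorname{Ann}_{\A_{\hat{\lambda}}}(M)=0$, which is the content of Proposition \ref{Prop:WC_vs_D} combined with the Ext dimension estimate.
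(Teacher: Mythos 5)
Your argument for (2)$\Rightarrow$(1) is the same as the paper's and is fine: the associated graded of a nonzero proper ideal is a Poisson ideal, so its zero locus avoids the open leaf, and strong holonomicity converts this into the dimension bound.

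The implication (1)$\Rightarrow$(2) has a genuine gap. The ``Bernstein-type inequality'' you invoke, namely $\operatorname{GKdim}\bigl(\A_{\hat{\lambda}}/\operatorname{Ann}M\bigr)\leqslant 2\dim\operatorname{Supp}M$ for simple $M$, is not a standard consequence of Gabber's theorem: Gabber gives coisotropicity of $\operatorname{Supp}M$, i.e.\ a \emph{lower} bound on $\dim\operatorname{Supp}M$ inside each leaf it meets, and says nothing about how large the primitive quotient can be when the support sits entirely in the singular locus. For $U(\mathfrak{g})$ the inequality you want is proved using local finiteness of the adjoint action (Duflo--Joseph), which has no analogue for a general quantization $\A_{\hat{\lambda}}$; for quantized symplectic singularities it is a substantial theorem in its own right and, in the present setting, it is essentially equivalent to the implication you are trying to prove (small support forces a nonzero annihilator), so quoting it begs the question. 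Your fallback claim that ``$\WC_{\hat{\lambda}\rightarrow\hat{\lambda}^-}M$ concentrated in degree zero exactly when $\operatorname{Ann}M=0$'' is the content of Proposition \ref{Prop:WC_vs_D} plus the Ext estimate is also not correct: those results relate degree-zero (non)vanishing to $\dim\operatorname{Supp}M$, not to the annihilator. The missing ingredient, which is the heart of the paper's proof, is the bimodule $\mathcal{D}:=\A_{\hat{\lambda}^-,N\theta}^{(\theta)}\otimes_{\A_{\hat{\lambda}^-}}\A_{\hat{\lambda},-N\theta}^{(-\theta)}$, so that $H_0(\WC_{\hat{\lambda}^-\rightarrow\hat{\lambda}}\circ\WC_{\hat{\lambda}\rightarrow\hat{\lambda}^-}M)=\mathcal{D}\otimes_{\A_{\hat{\lambda}}}M$, together with the natural map $\mathcal{D}\rightarrow\A_{\hat{\lambda}}$ which is an isomorphism after microlocalization to the open leaf (since $\rho$ is an isomorphism there); its image is therefore a nonzero ideal $J$. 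If $JM\neq 0$ then $\mathcal{D}\otimes_{\A_{\hat{\lambda}}}M\neq 0$, hence $H_0(\WC_{\hat{\lambda}\rightarrow\hat{\lambda}^-}M)\neq 0$, and only now do Proposition \ref{Prop:WC_vs_D} and the estimate $H_i(DM)=0$ for $i<N-\dim\operatorname{Supp}M$ force $\dim\operatorname{Supp}M=\frac{1}{2}\dim X$; the contrapositive yields (1)$\Rightarrow$(2). Without constructing $J$ (or proving the Bernstein-type inequality from scratch), your argument does not close.
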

\begin{proof}
Let us note that the associated graded of a proper ideal in $\A$ is a Poisson ideal in $\C[X_0]$.
So its associated variety does not intersect the open leaf in $X_0$. Obviously, the support of $M$ is contained
in that associated variety. Since $M$ is strongly holonomic, the implication (2)$\Rightarrow$(1) follows.

Let us prove (1)$\Rightarrow$(2). Consider the $\A=\A_{\hat{\lambda}}$-bimodule $\mathcal{D}:=\A_{\hat{\lambda}^-, N\theta}^{(\theta)}
\otimes_{\A_{\hat{\lambda}^-}}\A_{\hat{\lambda},-N\theta}^{(-\theta)}$. Obviously, $H_0(\WC_{\lambda^-\rightarrow \lambda}\circ
\WC_{\lambda\rightarrow \lambda^-}M)=\mathcal{D}\otimes_{\A_{\hat{\lambda}}}M$. There is a natural homomorphism $\mathcal{D}\rightarrow \A_{\hat{\lambda}}$ (compare to \cite[(5.15)]{BL}) that becomes an isomorphism after microlocalization to the open leaf of $X_0$ because $\rho$ is an isomorphism over
the open leaf. So the image is a nonzero ideal in $\A_{\hat{\lambda}}$, say $J$. If $M$ is not annihilated by that
ideal, we see that $H_0(\WC_{\lambda\rightarrow \lambda^-}M)\neq 0$. It follows that $\dim\operatorname{Supp}M=\frac{1}{2}\dim X$.
(1)$\Rightarrow$(2) is proved.
\end{proof}

We will also need a straightforward corollary of Lemma \ref{Lem:D_higher_small_supp} for strongly holonomic modules.

\begin{Cor}\label{Cor:D_higher_codim}
Let $L$ be a simple strongly holonomic $\A_{\hat{\lambda}}$-module. Then $\dim \operatorname{Supp}H^i(DL)<\frac{1}{2}\dim X$
provided $i>0$.
\end{Cor}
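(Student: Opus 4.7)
The plan is to dichotomize on whether $\dim\operatorname{Supp}L<N$ or $\dim\operatorname{Supp}L=N$, where $N:=\tfrac{1}{2}\dim X$. Since $L$ is strongly holonomic, its support has dimension at most $N$, so these two cases are exhaustive.

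In the case $\dim\operatorname{Supp}L<N$, the conclusion is immediate and does not actually use strong holonomicity. Fix a good filtration on $L$; the associated graded $\gr L$ is a coherent $\mathcal{O}_{X_0}$-module with support $\operatorname{Supp}L$, and $\operatorname{Ext}^{j}_{\C[X_0]}(\gr L,\C[X_0])$ is supported inside $\operatorname{Supp}L$ for every $j$. The standard spectral sequence computing the cohomology of the filtered complex $DL$ (the same one invoked in the proof of Lemma \ref{Lem:D_higher_small_supp}) then bounds $\operatorname{Supp}H^{i}(DL)$ above by $\operatorname{Supp}L$, yielding $\dim\operatorname{Supp}H^{i}(DL)<N$ for every $i$, and in particular for $i>0$.

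In the case $\dim\operatorname{Supp}L=N$, I would first apply Lemma \ref{Lem:D_higher_small_supp} directly to obtain $\operatorname{Supp}H^{i}(DL)\subset X_0\setminus\mathcal{L}_{0}$ for every $i>0$, where $\mathcal{L}_{0}$ denotes the open symplectic leaf of $X_0$. To upgrade this containment to the desired dimension bound, I would use that $DL$ actually lies in $D^{b}_{shol}(\A_{-\hat{\lambda}}\operatorname{-mod})$: by Proposition \ref{Prop:WC_vs_D} the duality factors as $D=\iota\circ\WC_{\hat{\lambda}\rightarrow\hat{\lambda}^{-}}$, where both factors preserve the strongly holonomic subcategories and $\iota$ is $t$-exact, so every $H^{i}(DL)$ is itself strongly holonomic. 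Now let $Z$ be an irreducible component of $\operatorname{Supp}H^{i}(DL)$, and let $\mathcal{L}$ be a symplectic leaf of $X_0$ of maximal dimension among those meeting $Z$; then $Z\cap\mathcal{L}$ is open and dense in $Z$, and strong holonomicity forces $Z\cap\mathcal{L}$ to be lagrangian in $\mathcal{L}$, so $\dim Z=\tfrac{1}{2}\dim\mathcal{L}$. Since $\mathcal{L}\neq\mathcal{L}_{0}$, we have $\dim\mathcal{L}<\dim X_0=2N$, whence $\dim Z<N$.

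The only delicate part is the half-dimensional case: Lemma \ref{Lem:D_higher_small_supp} by itself merely excludes the open leaf, and since the complement of the open leaf can have dimension as large as $\dim X-2$, the bound $<N$ is not automatic from the lemma alone. The extra ingredient that closes the gap is the strong holonomicity of $H^{i}(DL)$ supplied by Proposition \ref{Prop:WC_vs_D}, which collapses each leaf intersection down to its lagrangian size.
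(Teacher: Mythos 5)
Your proof is correct and is precisely the argument the paper leaves implicit when it calls this a "straightforward corollary" of Lemma \ref{Lem:D_higher_small_supp}: handle small support by the support bound $\operatorname{Supp}H^i(DL)\subset\operatorname{Supp}L$ from the filtered spectral sequence, and in the half-dimensional case combine the lemma with the fact (stated just before Proposition \ref{Prop:WC_vs_D}) that $D$ preserves $D^b_{shol}$, so each $H^i(DL)$ is strongly holonomic and its components, meeting only proper leaves, have dimension at most half the dimension of such a leaf, hence $<\frac{1}{2}\dim X$. No gaps.
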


\subsection{Categories $\mathcal{O}$}\label{SS_Cat_O}
Basically, all results of this section can be found in \cite{BLPW}.

First of all, let $\A$ be an associative algebra equipped with a rational action $\alpha$ of $\C^\times$ by algebra automorphism.
Then we can consider the eigendecomposition $\A=\bigoplus_{i\in \Z}\A_i$
and set $\A_{\geqslant 0}:=\bigoplus_{i\geqslant 0}\A_i, \A_{>0}:=\bigoplus_{i>0}\A_i$ and $\Ca_\alpha(A):=\A_{\geqslant 0}/(\A_{\geqslant 0}\cap \A\A_{>0})$. We remark that $\Ca_\alpha(\A)$ is an algebra because $\A\A_{>0}\cap \A_{\geqslant 0}$ is a two-sided ideal in $\A_{\geqslant 0}$. We remark that $\Ca_\alpha$ is a functor from the category of algebras equipped with a  $\C^\times$-action to the category of algebras, we call it the {\it Cartan functor}. This name is justified by the observation that if $\A=U(\g)$ for a semisimple Lie algebra $\g$ and $\alpha$ comes from a regular one-parametric subgroup of $\operatorname{Ad}(\g)$, then $\Ca_\alpha(\A)$ is the universal enveloping algebra of the corresponding Cartan subalgebra.

Let $X$ be a symplectic resolution of $X_0$ and $\A^\theta$ be a quantization of $X$. We assume that that  $X$
is equipped with compatible Hamiltonian $\C^\times$-action $\alpha$. We require that the action on
$X$ commutes with the contracting $\C^\times$-action. It is not difficult to see that $\alpha$
lifts to a Hamiltonian $\C^\times$-action on $\A^\theta$ again denoted by $\alpha$. This action
preserves the filtration and the $\Z/d\Z$-grading. Let $h_\alpha\in \A$ denote the image of $1$ under the
quantum comoment map for $\alpha$.

Consider the category $\mathcal{O}(\A)$ consisting of all modules with locally finite action of $h_\alpha,\A_{>0}$. The action of $\A_{>0}$ is automatically locally nilpotent. If $\alpha(\C^\times)$ has finitely many fixed points, this definition coincides with the definition of the category $\mathcal{O}_a$ from \cite[3.2]{BLPW} (this because the algebra $\Ca_\alpha(\A)$ is finite dimensional, which is proved analogously to \cite[3.1.4]{GL}).

The category $\mathcal{O}(\A)$ has analogs of Verma modules. More precisely, there is an induction functor $\Ca_\alpha(\A)\operatorname{-mod}\rightarrow \OCat(\A), M^0\mapsto \Delta(M^0):=\A\otimes_{\A_{\geqslant 0}}M^0$. By a Verma module, we mean $\Delta(M^0)$ with simple $M^0$.

Now let us consider the case when $\alpha(\C^\times)$ has finitely many fixed points. For $\hat{\lambda}\in H^2(X)$ lying in a Zariski open subset, the algebra $\Ca_\alpha(\A_{\hat{\lambda}})$ is naturally isomorphic to $\C[X^{\alpha(\C^\times)}]$, see \cite[5.1]{BLPW}. In this case, for $p\in X^{\alpha(\C^\times)}$, we will write  $\Delta_{\hat{\lambda}}(p)$ for the corresponding Verma module.

Now let us define the category $\mathcal{O}$ for $\A^\theta$ following \cite[3.3]{BLPW}. Let $Y$ stand for the
contracting locus for $\alpha$, i.e, the subvariety of all points $x\in X$ such that $\lim_{t\rightarrow 0} \alpha(t)x$
exists (and automatically lies in $X^{\alpha(\C^\times)}$). We remark that $Y$ is a  lagrangian subvariety in $X$
stable with respect to the contracting $\C^\times$-action.  We also remark
that $Y=\rho^{-1}(Y_0)$, where $Y_0$ stands for the contracting  locus of the $\C^\times$-action on $X_0$
induced by $\alpha$. This is because, under our assumptions on $X^{\alpha(\C^\times)}$, the fixed point set
$X_0^{\alpha(\C^\times)}$ is a single point and because $\rho$ is proper.
We remark that if $X$ is strictly semismall, then every module in $\mathcal{O}(\A)$
is strongly holonomic. This is because $Y=\rho^{-1}(Y_0)$ is lagrangian.

By definition, the category $\mathcal{O}(\A^\theta)$ consists of all modules $\M\in \A^\theta\operatorname{-mod}_Y$
that admit a global good $h_\alpha$-stable filtration.

We write $D^b_{\mathcal{O}}(\A_{\hat{\lambda}}), D^b_{\mathcal{O}}(\A_{\hat{\lambda}}^\theta)$ for the categories of all complexes (in the corresponding derived categories)  with homology in the categories $\mathcal{O}$.

Let us summarize some properties of categories $\mathcal{O}(\A^\theta),\mathcal{O}(\A)$.

\begin{Prop}\label{Prop:O_prop}
Assume that the action $\alpha$ has finitely many fixed points.
\begin{enumerate}
\item We have $\Gamma^\theta(\OCat(\A^\theta))\subset \OCat(\A),\Loc^\theta(\OCat(\A))\subset \OCat(\A^\theta), R\Gamma^\theta(D^b_{\mathcal{O}}(\A^\theta\operatorname{-mod}))\subset D^b_{\mathcal{O}}(\A\operatorname{-mod}),$ $L\operatorname{Loc}^\theta(D^b_{\mathcal{O}}(\A\operatorname{-mod}))
    \subset D^b_{\mathcal{O}}(\A^\theta\operatorname{-mod})$.
\item The functor $\A_{\hat{\lambda},\chi}^{\theta}\otimes_{\A_{\hat{\lambda}}^\theta}\bullet$ maps $\OCat(\A_{\hat{\lambda}}^\theta)$ to
$\OCat(\A_{\hat{\lambda}+\chi}^\theta)$.
\item The categories $\OCat(\A),\OCat(\A^\theta)$ are length categories, i.e., all objects have finite length.
\item $\OCat(\A)\subset \A\operatorname{-mod}, \OCat(\A^\theta)\subset \A^\theta\operatorname{-mod}$
are Serre subcategories.
\item All modules in $\OCat(\A), \OCat(\A^\theta)$ can be made weakly $\alpha(\C^\times)$-equivariant.
\item
Conversely, all weakly $\alpha(\C^\times)$-equivariant modules in $\A\operatorname{-mod}_{Y_0}$ (resp., $\A^\theta\operatorname{-mod}_{Y}$)
are in $\mathcal{O}(\A)$ (resp., in $\mathcal{O}(\A^\theta)$).
\end{enumerate}
\end{Prop}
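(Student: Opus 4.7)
The plan is to verify the six statements by systematically exploiting that the Hamiltonian $\C^\times$-action $\alpha$ lifts compatibly to $\A^\theta$ (with quantum comoment $h_\alpha\in \A$) and by observing that the defining data of category $\OCat$—the support condition and the locally finite $h_\alpha$-action—are preserved under all functors in play. Parts (1) and (2) are compatibility statements, (3) and (4) are internal structural assertions, and (5), (6) equate the Cartan-type definition with weak equivariance.

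For (1), both $\Gamma^\theta$ and $\Loc^\theta$ are $\alpha$-equivariant because $\alpha$ acts on $\A^\theta$ by filtered algebra automorphisms and on $\A=\Gamma(\A^\theta)$ compatibly. Given $\M\in\OCat(\A^\theta)$, the fact that $Y=\rho^{-1}(Y_0)$ (noted just before the proposition) ensures the support condition transfers through $\Gamma^\theta$, and the locally finite $h_\alpha$-action passes to global sections because each generalized $h_\alpha$-eigenspace of $\Gamma^\theta(\M)$ is a coherent sheaf on $X_0$ (using that $\M$ is coherent with a good filtration). The reverse direction for $\Loc^\theta$ is analogous via the adjunction. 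The derived versions then follow from the abelian statements together with the Serre property (4). For (2), the line bundle $\mathcal{O}(\chi)$ is automatically $\alpha(\C^\times)$-equivariant since $\alpha(\C^\times)$ is connected, and by uniqueness of the canonical quantization this lifts to an $\alpha$-compatible structure on $\A^\theta_{\hat{\lambda},\chi}$, so tensoring preserves both the $Y$-support condition and the locally finite $h_\alpha$-action (up to a shift of $h_\alpha$-weights).

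For (3) and (4), I would use that $\Ca_\alpha(\A)$ is finite dimensional (the excerpt records that this is proved by an argument analogous to \cite[3.1.4]{GL}). Any $M\in\OCat(\A)$ decomposes into generalized $h_\alpha$-eigenspaces, and the local nilpotence of $\A_{>0}$ together with the positive grading forces the $h_\alpha$-spectrum of $M$ to be bounded below in each $\Z$-coset; the bottom weight space is a finite-dimensional $\Ca_\alpha(\A)$-module that generates $M$, which gives finite length and proves (3). The Serre property (4) is then immediate: each of the three conditions—support contained in $Y_0$ (resp.\ $Y$), local finiteness of $h_\alpha$, local nilpotence of $\A_{>0}$—is closed under subobjects, quotients and extensions in the ambient abelian category.

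For (5) and (6), on a simple module in $\OCat(\A)$ all generalized $h_\alpha$-eigenvalues differ by integers, so after a scalar shift of the moment map the weights become integral and the formal formula $t\mapsto t^{h_\alpha}$ exponentiates $h_\alpha$ to a weak $\alpha(\C^\times)$-equivariant structure; a general object in $\OCat(\A)$ is handled by Jordan--H\"older filtration together with the Serre property. Conversely, given a weakly $\alpha(\C^\times)$-equivariant coherent module $M$ supported in $Y_0$ (resp.\ $Y$), the element $h_\alpha$ acts semisimply with integer eigenvalues, each weight space is a coherent sheaf on $X_0^{\alpha(\C^\times)}$ (resp.\ $X^{\alpha(\C^\times)}$) and hence finite-dimensional by the fixed-point assumption, which gives local finiteness of $h_\alpha$; local nilpotence of $\A_{>0}$ then follows because otherwise a nontrivial sequence of positive-weight sections would produce an unbounded set of $h_\alpha$-weights on a coherent module. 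The main obstacle is establishing the finite dimensionality of $\Ca_\alpha(\A)$ in sufficient generality, which controls (3) and indirectly several other parts; this is a local model calculation at the $\alpha$-fixed points of $X_0$ along the lines of \cite[3.1.4]{GL}, with all remaining assertions being formal consequences of the compatibility of $\alpha$ with the functors in play.
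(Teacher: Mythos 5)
There is a genuine gap, concentrated in parts (5) and, to a lesser extent, the derived half of (1). For (5) you only treat the algebra-level category $\OCat(\A)$; the sheaf-level statement for $\OCat(\A_{\hat{\lambda}}^\theta)$ is where the real work is, and your proposal says nothing about it. An object of $\OCat(\A_{\hat{\lambda}}^\theta)$ is only required to carry a global good $h_\alpha$-stable filtration, and when abelian localization fails for $\hat{\lambda}$ such an object need not be the localization of anything in $\OCat(\A_{\hat{\lambda}})$, so equivariance of $\Loc^\theta$ alone does not produce a weak $\alpha(\C^\times)$-equivariant structure. The paper's proof handles exactly this point: it first gets (5) for those $\hat{\lambda}$ where abelian localization holds (via equivariance of $\Loc^\theta$), then transports the statement to arbitrary $\hat{\lambda}$ using part (2), the $\alpha(\C^\times)$-equivariance of the translation bimodule $\A^\theta_{\hat{\lambda},\chi}$, and Proposition \ref{Prop:abel_loc1}. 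That twisting step is the missing idea. In addition, your extension step at the algebra level is not valid as stated: weak equivariance is not inherited by an extension of weakly equivariant modules by any formal "Serre subcategory plus Jordan--H\"older" argument. The correct (and simpler) route, which is what the paper does, is to split any $M\in\OCat(\A)$ as a direct sum of submodules according to the class mod $\Z$ of the generalized $h_\alpha$-eigenvalues and then use the generalized eigenspace decomposition itself as the grading; this works for every object at once, with no induction on length and no restriction to simples.

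Two further points. First, in (1) you claim the derived statements "follow from the abelian statements together with the Serre property (4)"; they do not, since the higher cohomology $H^i(R\Gamma^\theta\M)$ (resp. the higher Tor's computing $L\Loc^\theta$) is not computed by the abelian functor, so membership of its homology in $\OCat$ requires a separate argument (the paper simply invokes \cite[Section 3]{BLPW} for (1)--(4)). Second, in (6) the assertion that $h_\alpha$ "acts semisimply with integer eigenvalues" on a weakly equivariant module is incorrect: $h_\alpha$ differs from the grading operator by a module endomorphism, so what you actually get is that $h_\alpha$ preserves the graded pieces; the substance of the paper's argument is that the support condition forces the weights to be bounded on one side and finite generation forces the graded pieces to be finite dimensional, whence local finiteness of $h_\alpha$ and local nilpotence of $\A_{>0}$. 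Your sketch gestures at this but the justification of finite dimensionality of the weight spaces (via "coherent sheaves on the fixed locus") is not accurate as written.
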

\begin{proof}
(1)-(4) were established in \cite[Section 3]{BLPW}. The proof of (5) for $\OCat(\A)$ is standard:
we decompose a module in $\OCat(\A)$ into the direct sum of submodules according to the class of eigenvalues of $h_\alpha$ modulo
$\Z$. It is easy to introduce a weakly equivariant structure on each summand.

Since the localization functor is $\alpha(\C^\times)$-equivariant, we see that $\operatorname{Loc}^\theta(M)$ can be made weakly $\alpha(\C^\times)$-equivariant. So (5) for $\OCat(\A_{\hat{\lambda}}^\theta)$ is true provided the abelian localization holds for $\hat{\lambda}$. So it also holds for $\hat{\lambda}+\chi$ for any integral $\chi$. This is because of (2) and the observation that $\A_{\hat{\lambda},\chi}^\theta$ is $\alpha(\C^\times)$-equivariant. Now our claim follows from Proposition \ref{Prop:abel_loc1}.

Let us prove (6). Again, thanks to the equivariance of the localization functor, it is enough to prove the claim for $\A$.
Pick a weakly $\alpha(\C^\times)$-equivariant module $M\in \A\operatorname{-mod}_{Y_0}$ and let $M=\bigoplus_{i\in \Z}M_i$
be the  eigen-decomposition for the $\alpha(\C^\times)$-action. Since $M$ is supported on $Y_0$, we see that $M_i=0$ for $i\gg 0$.
Since $M$ is finitely generated, it is easy to see that all weight spaces are finite dimensional. Our claim follows.
\end{proof}

Now let us discuss highest weight structure on $\OCat(\A^\theta)$. Consider the so called geometric order on $X^{\alpha(\C^\times)}$ defined as follows. For $p\in X^{\alpha(\C^\times)}$ set $Y_p:=\{x\in X| \lim_{t\rightarrow 0}\alpha(t)x=p\}$, the contracting locus of $p$
so that we have $Y=\bigsqcup_{p\in X^{\alpha(\C^\times)}}Y_p$.
We consider the relation $\leqslant^\theta$ on $X^{\alpha(\C^\times)}$ that is the transitive order
of the pre-order $p\leqslant^\theta p'$ if $p\in \overline{Y}_{p'}$. We write $Y_{\leqslant p'}$ for $\bigsqcup_{p\leqslant^\theta p'}Y_p$
and $Y_{<p'}$ for $Y_{\leqslant p'}\setminus Y_{p'}$.
It was shown in \cite[Section 5]{BLPW} that $\OCat(\A^\theta)$ is a highest weight category with respect to this order. The
standard object $\Delta_{\hat{\lambda}}^\theta(p)$ corresponding to $p$ is a unique indecomposable projective in
$\OCat(\A^\theta)\cap \A^\theta\operatorname{-mod}_{Y_{\leqslant p}}$ that is not contained
in $\A^\theta\operatorname{-mod}_{Y_{<p}}$.

If the abelian localization holds for $(\hat{\lambda},\theta)$, then $\OCat(\A_{\hat{\lambda}})$ is also a highest weight category. Further, if we assume, in addition, that $\hat{\lambda}$ lies in a Zariski open subset, then the standard objects in $\OCat(\A_{\hat{\lambda}})$ are precisely the  Verma modules, see \cite[5.2]{BLPW}.

Now let us examine a connection between various derived categories associated to $\OCat(\A_{\hat{\lambda}}),\OCat(\A_{\hat{\lambda}}^\theta)$.
This is basically the appendix to \cite{BLPW} by the author.

\begin{Lem}\label{Lem:Ext_coinc}
The natural functor $D^b(\mathcal{O}(\mathcal{A}_{\hat{\lambda}}^\theta))\rightarrow D^b_{\mathcal{O}}(\A_{\hat{\lambda}}^\theta\operatorname{-mod})$
is an equivalence. Furthermore, if $\hat{\lambda}\in H^2(X)$ lies in a suitable Zariski open subset and
the abelian localization holds for
$(\hat{\lambda},\theta)$, then $D^b(\mathcal{O}(\mathcal{A}_{\hat{\lambda}}))\rightarrow  D^b_{\mathcal{O}}(\A_{\hat{\lambda}}^\theta)$
is an equivalence.
\end{Lem}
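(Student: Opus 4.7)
The plan is to establish the first assertion directly and deduce the second via the abelian localization equivalence.

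For the first assertion, by Proposition~\ref{Prop:O_prop}(4) the category $\OCat(\A_{\hat\lambda}^\theta)$ is a Serre subcategory of $\A_{\hat\lambda}^\theta\operatorname{-mod}$, so the natural functor $\Phi\colon D^b(\OCat(\A_{\hat\lambda}^\theta))\to D^b_\OCat(\A_{\hat\lambda}^\theta\operatorname{-mod})$ is well defined and exact. Essential surjectivity of $\Phi$ follows by standard dévissage: for $C\in D^b_\OCat$, truncation triangles together with induction on cohomological amplitude reduce the claim to $C$ concentrated in a single degree, which is tautologically in the image.

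Full faithfulness of $\Phi$ is equivalent to the natural map
\[
\Ext^i_{\OCat(\A_{\hat\lambda}^\theta)}(M,N)\longrightarrow \Ext^i_{\A_{\hat\lambda}^\theta\operatorname{-mod}}(M,N)
\]
being an isomorphism for all $M,N\in \OCat(\A_{\hat\lambda}^\theta)$ and $i\geq 0$. Since $\OCat(\A_{\hat\lambda}^\theta)$ is highest weight with finitely many standards, it has enough projectives and finite global dimension, so $M$ admits a finite projective resolution $P_\bullet\to M$ inside $\OCat(\A_{\hat\lambda}^\theta)$. A standard comparison argument then shows that it is enough to verify $\Ext^i_{\A_{\hat\lambda}^\theta\operatorname{-mod}}(P,N)=0$ for $i>0$, any projective $P\in \OCat(\A_{\hat\lambda}^\theta)$ and any $N\in \OCat(\A_{\hat\lambda}^\theta)$; once this vanishing is in place, $P_\bullet$ computes both sides of the displayed map. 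Using the standard filtration of $P$ by the $\Delta_{\hat\lambda}^\theta(p)$'s, this reduces further to the vanishing $\Ext^i_{\A_{\hat\lambda}^\theta\operatorname{-mod}}(\Delta_{\hat\lambda}^\theta(p),N)=0$ for all $i>0$ and all $N\in \OCat(\A_{\hat\lambda}^\theta)$.

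I expect this last Ext vanishing to be the main technical obstacle. The plan is to exploit the weakly $\alpha(\C^\times)$-equivariant structure on objects of $\OCat(\A_{\hat\lambda}^\theta)$ provided by Proposition~\ref{Prop:O_prop}(5). Decomposing with respect to $h_\alpha$-weights, one reformulates $\Ext^\bullet_{\A_{\hat\lambda}^\theta}(\Delta_{\hat\lambda}^\theta(p),N)$ in terms of $\Ext$-groups over the subsheaf of $\alpha$-non-negatively-graded sections, where $\Delta_{\hat\lambda}^\theta(p)$ is simply the induction of a simple $\Ca_\alpha(\A_{\hat\lambda}^\theta)$-module. A Koszul-type resolution of this simple, combined with a \v{C}ech computation on an $\alpha(\C^\times)$-stable affine cover of $X$ and the fact that $\operatorname{Supp} N$ sits in the lagrangian contracting locus $Y$, then yields the desired vanishing. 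This is essentially the strategy of the appendix to \cite{BLPW}.

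For the second assertion, by Proposition~\ref{Prop:O_prop}(1) the abelian localization equivalence $\Gamma_{\hat\lambda}^\theta\colon \A_{\hat\lambda}^\theta\operatorname{-mod}\rightleftarrows \A_{\hat\lambda}\operatorname{-mod}\colon \Loc_{\hat\lambda}^\theta$ is exact and restricts to an equivalence $\OCat(\A_{\hat\lambda}^\theta)\simeq \OCat(\A_{\hat\lambda})$. This induces equivalences both on $D^b$ of the $\OCat$-subcategories and on the $D^b_\OCat$ of the ambient module categories, and composing with the first assertion gives the claim. The Zariski-openness hypothesis on $\hat\lambda$ enters through the identification $\Ca_\alpha(\A_{\hat\lambda})\cong \C[X^{\alpha(\C^\times)}]$, ensuring that the highest-weight labelings on the two sides match under the equivalence.
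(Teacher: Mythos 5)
Your reduction of full faithfulness collapses at the last step. You reduce to the claim that $\Ext^i_{\A_{\hat{\lambda}}^\theta\operatorname{-mod}}(\Delta_{\hat{\lambda}}^\theta(p),N)=0$ for all $i>0$ and \emph{all} $N\in\OCat(\A_{\hat{\lambda}}^\theta)$, and this is false in general. Since $\OCat(\A_{\hat{\lambda}}^\theta)$ is a Serre subcategory of $\A_{\hat{\lambda}}^\theta\operatorname{-mod}$ (your own Proposition \ref{Prop:O_prop}(4)), the map $\Ext^1_{\OCat}(\Delta^\theta_{\hat{\lambda}}(p),N)\rightarrow\Ext^1_{\A_{\hat{\lambda}}^\theta\operatorname{-mod}}(\Delta^\theta_{\hat{\lambda}}(p),N)$ is an isomorphism; so your vanishing would force $\Ext^1_{\OCat}(\Delta^\theta_{\hat{\lambda}}(p),N)=0$ for all $N$, i.e. every standard object would be projective in $\OCat$. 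That fails exactly in the situations this paper cares about (e.g. the nontrivial block of Theorem \ref{Thm:catO_str}, where the standards are glued nontrivially and are not projective). In other words, you have reduced a true statement to a false one, so the proposed Koszul/\v{C}ech argument for that vanishing cannot succeed. The flaw is structural: a one-sided d\'evissage (resolving only $M$ by projectives of $\OCat$) requires acyclicity of those projectives against arbitrary $N\in\OCat$ in the ambient category, which is essentially as strong as the lemma itself and is not accessible directly.

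The paper does not reprove the lemma; it invokes the author's appendix to \cite{BLPW}, and the shape of that argument is visible in the proof of Proposition \ref{Prop:Tor} here: one takes $\Delta=\A/\A\A_{>0}$ and $\nabla$ the restricted dual of $\nabla^\vee=\A/\A_{<0}\A$, and proves the \emph{two-sided} statements $\Ext^i_{\A}(\Delta,\nabla)=0$ for $i>0$ together with $\Hom_{\A}(\Delta,\nabla)=(\nabla^\vee\otimes_{\A}\Delta)^*$. One then resolves the first argument by projectives of $\OCat$ (standardly filtered) \emph{and} coresolves the second argument by injectives of $\OCat$ (costandardly filtered); the vanishing of ambient $\Ext$'s between standardly and costandardly filtered objects is what makes these finite complexes compute $\Hom$ in $D^b(\A^{(\theta)}\operatorname{-mod})$, and since they also compute $\Ext_{\OCat}$, full faithfulness follows. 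So the key input you need is $\Ext^{>0}$-vanishing against costandard objects only, not against all of $\OCat$, and the genericity of $\hat{\lambda}$ enters precisely in establishing that vanishing (by a degeneration to the associated graded, where $\C[X_0]\C[X_0]_{>0}$ is generically a complete intersection), not merely in identifying $\Ca_\alpha(\A_{\hat{\lambda}})$ with $\C[X^{\alpha(\C^\times)}]$. Your essential surjectivity argument and the deduction of the second assertion from the first via abelian localization are fine, but the core of the proof has to be rebuilt along the lines above.
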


We are going to identify $D^b(\mathcal{O}(\A_{\hat{\lambda}}^\theta))$ with $D^b_{\mathcal{O}}(\A_{\hat{\lambda}}^\theta\operatorname{-mod})$
and $D^b(\mathcal{O}(\A_{\hat{\lambda}}))$ with $D^b_{\mathcal{O}}(\A_{\hat{\lambda}})$.

In the case of $\A_{\lambda}(n,r)$ we consider the categories $\mathcal{O}$ defined for the action of a generic
one-dimensional subtorus in $\GL(r)\times \C^\times$. Then the fixed point are in one-to-one correspondence with the
$r$-multipartitions of $n$. Different choices of a generic torus may give rise to different categories $\mathcal{O}$.

\subsection{Harish-Chandra bimodules}\label{SS_HC_bimod}
Let us recall some basics on Harish-Chandra (shortly, HC) bimodules. Let $\A^\theta,\A'^\theta$ be two quantizations of
$X$ in the sense of Subsection \ref{SS_quant} and $\A,\A'$ be their global sections. For technical reasons, we make a restriction on the contracting $\C^\times$-action $\beta$ on $X$. Namely, we require that there are commuting actions $\underline{\beta}$ and $\gamma$ of $\C^\times$ on $X$ with the following properties:
\begin{itemize}
\item $\beta=\beta'^d \gamma$,
\item and $\gamma$ is Hamiltonian.
\end{itemize}
Then $\gamma$ lifts to $\A^\theta,\A'^\theta$ and these sheaves acquire new filtrations, coming from $\underline{\beta}$.
In the remainder of the section we consider $\A,\A'$ with these new filtrations. Let us point out that $X=\M^\theta(v,w)$
does satisfy our additional condition: we can take $\underline{\beta}$ induced by the action $t.(r,\alpha)=(r,t^{-1}\alpha)$.

Let us recall the definition
of a HC $\A$-$\A'$-bimodule. By definition, this is a finitely generated
$\A$-$\A'$-bimodule $\B$  with a filtration that is compatible with those on $\A,\A'$ such that $\gr\B$ is a $\C[X_0]$-module. By a homomorphism of Harish-Chandra bimodules we mean a  bimodule homomorphism. The category of HC $\A'$-$\A$-bimodules is denoted by $\HC(\A'\text{-}\A)$. We also consider the full subcategory $D^b_{HC}(\A'-\A)$ of the derived category of  $\A'$-$\A$-bimodules with Harish-Chandra homology.

By the associated variety of a HC bimodule $\B$ (denoted by $\VA(\B)$) we mean the support in $X_0$ of the coherent sheaf $\gr\B$, where the associated graded is taken with respect to a filtration as in the previous paragraph (below we call such filtrations {\it good}). It is easy to see that $\gr\B$ is a Poisson $\C[X_0]$-module so $\VA(\B)$ is the union of symplectic leaves.

Using associated varieties and the finiteness of the number of the leaves it is easy to prove the following standard result.

\begin{Lem}\label{Lem:fin_length}
Any HC bimodule has finite length.
\end{Lem}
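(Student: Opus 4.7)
The plan is to introduce a nonnegative integer-valued additive invariant on the category $\HC(\A'\text{-}\A)$ that vanishes exactly on the zero bimodule, and conclude by induction on this invariant.

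For a HC bimodule $\B$ with a good filtration, $\gr\B$ is a finitely generated Poisson $\C[X_0]$-module, and its support $\VA(\B)$ is a union of symplectic leaves of $X_0$. For each symplectic leaf $\Leaf\subset X_0$, let $r_\Leaf(\B)$ denote the generic rank of $\gr\B$ along $\Leaf$ (i.e.\ the length of the generic fiber of $\gr\B$ over $\overline{\Leaf}$). A standard comparison of any two good filtrations (for instance, by realizing both as Rees modules of Rees bimodules differing by $\hbar$-torsion) shows that $r_\Leaf(\B)$ is independent of the choice of good filtration, so is an intrinsic invariant of $\B$. Define
\[
f(\B):=\sum_{\Leaf} r_\Leaf(\B),
\]
where the sum is over all symplectic leaves of $X_0$; it is finite by the finiteness of the number of leaves recalled in Subsection~\ref{SS_leaves}.

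Next I verify that $f$ is additive in short exact sequences and strictly positive on nonzero objects. Given $0\to \B'\to \B\to \B''\to 0$ in $\HC(\A'\text{-}\A)$, choose a good filtration on $\B$; the induced (intersected, respectively quotient) filtrations on $\B'$ and $\B''$ are good, and the associated graded sequence $0\to \gr\B'\to\gr\B\to\gr\B''\to 0$ is exact. Since the generic rank along $\Leaf$ is additive in exact sequences of coherent sheaves on $\overline{\Leaf}$, this gives $f(\B)=f(\B')+f(\B'')$. On the other hand, if $\B\neq 0$ then $\gr\B\neq 0$, so $\operatorname{Supp}\gr\B$ is nonempty; since every point of $X_0$ lies on some leaf, at least one $r_\Leaf(\B)$ is positive, whence $f(\B)>0$.

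The lemma is now immediate by induction on $f(\B)$: for any proper sub-bimodule $\B'\subsetneq \B$, both $\B'$ and $\B/\B'$ are nonzero, so $0<f(\B'),\,f(\B/\B')<f(\B)$; hence any strictly increasing chain of sub-bimodules of $\B$ has length at most $f(\B)$. The only mildly subtle point, and the main place where care is needed, is the well-definedness of $r_\Leaf(\B)$ as an invariant independent of the good filtration; once this is in place the rest of the argument is a formal length-function argument combined with the finiteness of the number of symplectic leaves.
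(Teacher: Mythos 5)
The central step of your argument --- that $f(\B)=\sum_{\Leaf}r_\Leaf(\B)$ is simultaneously finite, additive and strictly positive on nonzero objects --- is false, and this is a genuine gap rather than the technicality you flag (independence of the good filtration, which is indeed standard). If $r_\Leaf(\B)$ means the dimension of the fiber $\gr\B\otimes_{\C[X_0]}\kappa(\eta_\Leaf)$ at the generic point $\eta_\Leaf$ of $\overline{\Leaf}$ (your ``length of the generic fiber''), then it is finite but not additive: $\bullet\otimes\kappa(\eta_\Leaf)$ is only right exact. Concretely, take $\B=\A$ (so $\gr\B=\C[X_0]$, and $X_0$ is irreducible) and a nonzero proper two-sided ideal $\J$ with $\VA(\A/\J)\subsetneq X_0$ (such ideals exist, e.g.\ by Theorem \ref{Thm:ideals} in the Gieseker case), and let $\Leaf$ be a leaf open in $\VA(\A/\J)$, so $\overline{\Leaf}$ is properly contained in $X_0$. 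The fiber of $\C[X_0]$ at $\eta_\Leaf$ is one-dimensional, while the fibers of $\gr\J$ and of $\gr(\A/\J)$ at $\eta_\Leaf$ are both nonzero (Nakayama over the local ring, using that $\gr\J$ is a nonzero ideal in a domain and that $\eta_\Leaf$ lies in $\operatorname{Supp}\gr(\A/\J)$). Hence $f(\J)+f(\A/\J)>f(\A)$; in particular neither $f(\B')<f(\B)$ nor $f(\B/\B')<f(\B)$ is guaranteed, and the induction collapses. If instead you take the invariant that really is additive, namely the length of the localization $(\gr\B)_{\eta_\Leaf}$ over the local ring at $\eta_\Leaf$, then it is infinite for every leaf whose closure is properly contained in an irreducible component of $\operatorname{Supp}\gr\B$, so the sum over all leaves is no longer finite. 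No reading of your definition has all three properties at once.

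The standard argument --- the one the paper alludes to with ``associated varieties and finiteness of the number of leaves'' --- uses the additive (localization-length) multiplicity, but only at the generic points of the irreducible components of $\VA(\B)$, i.e.\ at the leaves open in the support. This bounds the number of successive quotients in any chain whose associated variety still contains one of these components, but it does not finish the proof: one must separately rule out infinite descending chains all of whose successive quotients have strictly smaller associated variety. This residual problem is not formal --- for $U(\g)$-bimodules the chain $(z-c)^nU(\g)$, $z$ central, is an infinite descending chain of HC bimodules, and it is excluded in the present setting only because $X_0$ has finitely many symplectic leaves. Closing it requires an induction over the finitely many possible associated varieties together with an extra ingredient controlling subquotients supported on the smaller leaves, for instance the restriction functors of Proposition \ref{Prop:dagger_prop} at points of the open leaves (which are exact, kill exactly the bimodules with smaller support, and there take finite-dimensional values). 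So your proposal uses the right ingredients and is in the spirit of the paper's (omitted, ``standard'') proof, but the single global length function you build does not exist in the form you need, and the concluding ``formal length-function argument'' therefore does not go through as written.
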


For $\B^1\in \HC(\A'\text{-}\A)$ and $\B^2\in \HC(\A''\text{-}\A')$ we can take their tensor product $\B^2\otimes_{\A'}\B^1$. This is easily
seen to be a HC $\A''$-$\A$-bimodule. Also the derived tensor product of the objects from $D^b_{HC}(\A''\text{-}\A'),D^b_{HC}(\A'\text{-}\A)$
lies in $D^-_{HC}(\A''\text{-}\A)$ (and in $D^b_{HC}(\A''\text{-}\A)$ provided $\A'$ has finite homological dimension).

\subsection{Quantum slices}\label{SS_quant_slice}
Let $X\rightarrow X_0$, where the contracting $\C^\times$-action satisfies the additional assumptions
imposed  in the previous subsection. Pick a quantization $\A^\theta$ of $X$.
We write $\A_\hbar^\theta$ for the $\hbar$-adic completion of the Rees sheaf $R_\hbar(\A^\theta)$ of $\A^\theta$ (with respect to the action $\underline{\beta}$ so that $t.\hbar=t\hbar$). We write $\A_\hbar$ for the algebra of the global sections of $\A^\theta_\hbar$, this is the $\hbar$-adic completion of the Rees algebra of $\A$.

Pick a point $x\in X_0$.  Then we can form the completions $\A_{\hbar}^{\wedge_x}$ of $\A_\hbar$ at $x$
and $\A_\hbar^{\theta\wedge_x}$ of $\A_\hbar^{\wedge_x}$ at $\rho^{-1}(x)$. Consider the homogenized Weyl algebra
$\mathbb{A}_\hbar$ for the tangent space to the symplectic leaf in $x$. Then  we have an embedding $\mathbb{A}_\hbar^{\wedge_0}\hookrightarrow \A_\hbar^{\wedge_x}$, see \cite[2.1]{W-prim} for  a proof. It was checked in \cite[2.1]{W-prim} that we have the tensor product decomposition
$\A_{\hbar}^{\wedge_x}=\mathbb{A}_\hbar^{\wedge_0}\widehat{\otimes}_{\C[[\hbar]]}\A_\hbar'$ that lifts
the decomposition $X^{\wedge_x}\cong D\times X'$ mentioned in Subsection  \ref{SS_leaves}.
The algebra $\A_\hbar'$ is independent of the choices
up to an isomorphism, as was explained in \cite[2.1]{W-prim}. For a similar reason, we have a decomposition
$\A_{\hbar}^{\theta\wedge_x}\cong \mathbb{A}_\hbar^{\wedge_0}\widehat{\otimes}_{\C[[\hbar]]}{\A_\hbar^{\theta}}'$,
where $\A_\hbar'^{\theta}$ is a formal quantization of the slice $X'$. By the construction, $\A_\hbar'=\Gamma(\A_\hbar'^{\theta})$.

Now suppose that $\A^\theta$ has period $\hat{\lambda}$. Then the period of $\A_\hbar^{\theta\wedge_x}$ coincides
with the image $\hat{\lambda}'$ of $\hat{\lambda}$ under the natural map between the \v{C}ech-De Rham cohomology groups $H^2(X)\rightarrow H^2(X^{\wedge_x})=H^2(X')$. It follows that ${\A'^\theta_\hbar}$ also has period $\hat{\lambda}'$.

Assume that $X'$ is again equipped with a contracting $\C^\times$ action with the same integer $d$ satisfying
the additional restriction in Subsection \ref{SS_HC_bimod}, this holds in the quiver variety setting,
for example. So $X'$ is the formal
neighborhood at $0$ of a conical symplectic resolution $\underline{X}$.
The formal quantization $\A'^\theta_\hbar$ is homogeneous (by the results
of \cite[2.3]{quant}). It follows that it is obtained by completion at $0$ of $\underline{\A}_\hbar^\theta$ for some
quantization $\underline{\A}^\theta$ of $\underline{X}$.

So the product $\A_{\hbar}^{\theta\wedge_x}=\mathbb{A}_\hbar^{\wedge_0}\widehat{\otimes}_{\C[[\hbar]]}{\A_\hbar'^{\theta}}$ comes equipped
with a $\C^\times$-action by algebra automorphisms satisfying $t.\hbar=t\hbar$.
On the other hand, the $\C^\times$-action on $\A_\hbar^{\theta}$ produces a derivation of $\A_\hbar^{\theta\wedge_x}$.
The difference between this derivation and the one produced by the $\C^\times$-action on $\A_\hbar^{\theta\wedge_x}$
has the form $\frac{1}{\hbar}[a,\cdot]$ for some $a\in \A_\hbar^{\wedge_x}$, see \cite[Lemma 5.7]{BL}.

Let us now elaborate on the Gieseker case, which was already considered (in a more general quiver variety case)
in \cite[5.4]{BL}. Recall that the symplectic leaves in $\bar{\M}(n,r)$ are parameterized by partitions
$(n_1,\ldots,n_k)$ with $n_1+\ldots+n_k\leqslant n$. For $x$ in the corresponding leaf, we have $$\underline{\bar{\A}}_{\lambda}(n,r)=
\bar{\A}_{\lambda}(n_1,r)\otimes\bar{\A}_{\lambda}(n_2,r)\otimes\ldots\otimes\bar{\A}_{\lambda}(n_k,r).$$
Also we have a similar decomposition for $\underline{\bar{\A}}_{\lambda}^\theta(n,r)$.

\subsection{Restriction functors for HC bimodules}\label{SS_restr_fun}
In this subsection, we will recall restriction functors $\bullet_{\dagger,x}:\HC(\A'\text{-}\A)\rightarrow \HC(\underline{\A}'\text{-}\underline{\A})$,
where $\underline{\A}',\underline{\A}$ are slice algebras for $\A',\A$, respectively (it is in order for these functors
to behave nicely that we have introduced our additional technical assumption on the contracting $\C^\times$-action). These functors were defined in \cite[Section 5]{BL}
in the case when $\A',\A$ are of the form $\A_\lambda(v,w)$, but the general case (under the assumption that the slice
to $x$ is conical and the contracting action satisfies the additional assumption) is absolutely analogous. We will need several facts about the restriction functors established in
\cite[Section 5]{BL}.

\begin{Prop}\label{Prop:dagger_prop}
The following is true.
\begin{enumerate}
\item The functor $\bullet_{\dagger,x}$ is exact and $H^2(X)$-linear.
\item
The associated variety $\VA(\B_{\dagger,x})$ is uniquely characterized by the property $D\times \VA(\B_{\dagger,x})^{\wedge_0}=\VA(\B)^{\wedge_x}$.
\item The functor $\bullet_{\dagger,x}$ intertwines the Tor's: for $\B^1\in \HC(\A''\text{-}\A')$ and $\B^2\in \HC(\A'\text{-}\A)$
we have a natural isomorphism $\operatorname{Tor}_i^{\A'}(\B^1,\B^2)_{\dagger,x}=\operatorname{Tor}_i^{\underline{\A}'}(\B^1_{\dagger,x}, \B^2_{\dagger,x})$.
\end{enumerate}
\end{Prop}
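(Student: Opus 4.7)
The plan is to construct $\bullet_{\dagger,x}$ explicitly and then verify (1)--(3) by tracking the construction through each step, following \cite[Section 5]{BL} in the slightly more general setup of Subsection \ref{SS_HC_bimod}. Given $\B\in\HC(\A'\text{-}\A)$ with a good filtration, pass to the Rees bimodule, complete $\hbar$-adically, and complete at $x\in X_0$ to obtain $\B_\hbar^{\wedge_x}$, a bimodule over $\A_\hbar'^{\wedge_x}$ and $\A_\hbar^{\wedge_x}$. Under the product decompositions of Subsection \ref{SS_quant_slice}, both $\A_\hbar'^{\wedge_x}$ and $\A_\hbar^{\wedge_x}$ contain copies of $\mathbb{A}_\hbar^{\wedge_0}$; take the joint centralizer of the two actions on $\B_\hbar^{\wedge_x}$ to extract a bimodule over the two slice algebras (by the Harish-Chandra condition, the adjoint action of $\mathbb{A}_\hbar^{\wedge_0}$ is locally finite, so nothing is lost). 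Finally, pass to $\C^\times$-finite vectors for the contracting action on $\underline{X}$ and specialize $\hbar=1$ to produce $\B_{\dagger,x}\in\HC(\underline{\A}'\text{-}\underline{\A})$.

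For (1), each step above is exact: Rees and $\hbar$-adic completion (goodness of the filtration makes the Rees module flat over $\C[\hbar]$); formal completion at $x$ of a finitely generated module (Artin--Rees); the $\mathbb{A}_\hbar^{\wedge_0}$-centralizer, by a Kashiwara-type lemma for formal Weyl algebras (applicable because $\ad\mathbb{A}_\hbar^{\wedge_0}$ acts locally finitely on $\B_\hbar^{\wedge_x}$); passage to $\C^\times$-finite vectors; specialization at $\hbar=1$. The $H^2(X)$-linearity follows because tensoring with a line-bundle bimodule $\A_{\hat\lambda,\chi}^\theta$ commutes with every step and corresponds, on the slice side, to tensoring with the analogous bimodule built from the image of $\chi$ under the natural map $H^2(X)\to H^2(\underline{X})$ recalled in Subsection \ref{SS_quant_slice}. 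For (2), pass to associated gradeds: $\gr\B$ is a coherent Poisson $\C[X_0]$-module with support $\VA(\B)$, and its completion at $x$ decomposes, under the Kaledin product $X_0^{\wedge_x}\cong D\times\underline{X}_0^{\wedge_0}$, as $\mathcal{O}_D\widehat{\otimes}(\gr\B_{\dagger,x})^{\wedge_0}$; comparing supports yields the claimed identity.

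For (3), the first three steps (Rees, $\hbar$-adic completion, formal completion at $x$) are flat on HC bimodules and hence preserve Tor. The decisive step is the $\mathbb{A}_\hbar^{\wedge_0}$-centralizer: this functor is an equivalence from the category of $\A_\hbar^{\wedge_x}$-bimodules with locally finite $\ad\mathbb{A}_\hbar^{\wedge_0}$-action to the category of bimodules over the slice algebra (a quantum Kashiwara theorem), and the factorization $\A_\hbar^{\wedge_x}\cong\mathbb{A}_\hbar^{\wedge_0}\widehat{\otimes}\underline{\A}_\hbar^{\wedge_0}$ guarantees that this equivalence intertwines tensor product over $\A_\hbar'^{\wedge_x}$ with tensor product over the slice algebra from the $\A'$-side. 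Assembling yields the desired Tor-intertwining. I expect the main obstacle to be the bookkeeping in the derived world: verifying that $\hbar$-adic and formal completions commute with derived tensor products of HC bimodules in a sense strong enough to reduce everything to the centralizer equivalence. This is the technical heart of \cite[Section 5]{BL}, where one reduces to flatness statements that hold because the good filtration makes all the associated graded modules finitely generated over the Noetherian ring $\C[X_0]$.
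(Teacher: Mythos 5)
Your reconstruction matches the paper's treatment: the paper gives no proof of this proposition, deferring entirely to \cite[Section 5]{BL}, and the argument there is exactly what you describe --- pass to the completed Rees bimodule at $x$, split off the formal Weyl algebra factor by taking the centralizer of $\mathbb{A}_\hbar^{\wedge_0}$ (using local finiteness of its adjoint action, guaranteed by the HC condition), then take $\C^\times$-finite vectors and set $\hbar=1$, with (1)--(3) verified step by step as you indicate (for (3) via flatness of the completions on bimodules with good filtrations and the compatibility of the splitting with tensor products). Two minor points to align with the source: the passage to $\C^\times$-finite vectors uses that the induced derivation differs from the natural one on the slice only by an inner derivation $\frac{1}{\hbar}[a,\cdot]$ (\cite[Lemma 5.7]{BL}), and ``$H^2(X)$-linear'' in the proposition refers to linearity over the parameter algebra $\C[H^2(X)]$ when one works with the universal quantization $\A_{\param}$ (as in Section \ref{S_aff_wc_count}), rather than to compatibility with the twist bimodules $\A^\theta_{\hat\lambda,\chi}$ --- though the latter also holds and follows from the same commutation of the construction with the extra structure.
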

\section{Parabolic induction}\label{S_parab_ind}
The first goal of this section is to elaborate on the Cartan functor that appeared in Subsection \ref{SS_Cat_O}.
There we were basically dealing with the case when the Hamiltonian action only has finitely many fixed points.
Here we consider a more general case and our goal is to better understand the structure of $\Ca_\alpha(\A)$.
The second goal is to introduce parabolic induction  for categories $\mathcal{O}$.

Not surprisingly, the case of actions on smooth symplectic (even non-affine) varieties is easier to understand.
We extend the definition of $\Ca_\alpha$ to sheaves in Subsection \ref{S_Cart_fun}. There we show that
if $\A^\theta$ is a quantization of $X$, then $\Ca_\alpha(\A^\theta)$ is a quantization of $X^{\alpha(\C^\times)}$.
In Subsection \ref{SS_Ca_sheaf_vs_alg} we compare $\Ca_\alpha(\A)$ (an algebra which is hard to understand directly)
with $\Gamma(\Ca_\alpha(\A^\theta))$ in the case of symplectic resolutions. We will see that, for a Zariski generic quantization
parameter, the two algebras coincide. Next, in Subsection \ref{SS_Ca_param}, we determine the quantization
parameter (=period) of $\Ca_\alpha(\A^\theta)$ from that of $\A^\theta$. Subsection \ref{SS_Ca_Gies} applies
this result to some particular action $\alpha$ in the Gieseker case.

Finally, in Subsection \ref{SS_parab_induc} we introduce parabolic induction.

\subsection{Cartan functor for sheaves}\label{S_Cart_fun}
We start with a symplectic variety $X$ equipped with a $\C^\times$-action that rescales the symplectic form
and also with a commuting Hamiltonian action $\alpha$. Of course, it still makes sense to speak about quantizations
of $X$ that are Hamiltonian for $\alpha$.
We want to construct a quantization $\Ca_\alpha(\A^{\theta})$ of $X^{\alpha(\C^\times)}$ starting from a Hamiltonian quantization
$\A^\theta$ of $X$.

The variety $X$ can be covered by $(\C^{\times})^{2}$-stable open affine subvarieties.
Pick such a subvariety $X'$ with $(X')^{\alpha(\C^\times)}\neq \varnothing$.
Define $\Ca_\alpha(\A^\theta)(X')$ as $\Ca_\alpha(\A^\theta(X'))$.
We remark that the open subsets of the form $(X')^{\alpha(\C^\times)}$ form a base of the Zariski topology on $X^{\alpha(\C^\times)}$.

The following proposition defines the sheaf $\Ca_\alpha(\A^\theta)$.
%

\begin{Prop}\label{Prop:A0}
The following holds.
\begin{enumerate}
\item Suppose that the contracting $\alpha$-locus in $X'$ is a complete intersection defined by homogeneous (for $\alpha(\C^\times)$) equations of positive weight. Then the algebra $\Ca_\alpha(\A^\theta(X'))$ is a  quantization of $\C[X'^{\alpha(\C^\times)}]$.
\item There is a unique sheaf $\Ca_\alpha(\A^\theta)$ of $X^{\alpha(\C^\times)}$ whose sections on $X'^{\alpha(\C^\times)}$
with $X'$ as above coincide with $\Ca_\alpha(\A^\theta(X'))$. This sheaf is a quantization of $X^{\alpha(\C^\times)}$.
\item If $X'$ is a $(\C^\times)^2$-stable affine subvariety, then $\Ca_\alpha(\A^\theta)(X')=\Ca_\alpha(\A^\theta(X'))$.
\end{enumerate}
\end{Prop}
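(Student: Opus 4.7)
The strategy is to pass to the Rees sheaf $R_\hbar(\A^\theta)$, where all operations are $\C[\hbar]$-linear and grading-preserving, and to recover the filtered statement by taking $\C^\times$-finite vectors modulo $\hbar-1$. For part (1), I would first verify the classical analogue: under the complete intersection hypothesis, $\Ca_\alpha(\C[X']) = \C[X'^{\alpha(\C^\times)}]$. This reduces to checking that $\C[X'] \cdot \C[X']_{>0}$ coincides with the ideal $(f_1,\dots,f_k)$ cutting out the contracting locus $X'^+$ (the containment $(f_1,\dots,f_k) \subseteq \C[X']\cdot\C[X']_{>0}$ is clear since the $f_i$ are of positive weight, and the reverse holds because the $f_i$ generate the full ideal of $X'^+$), followed by the standard fact that on a smooth contracting $\C^\times$-variety, the weight-zero part of the coordinate ring coincides with the coordinate ring of the fixed locus.

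For the quantum version, let $A := \A^\theta(X')$, $R := R_\hbar(A)$, and lift $f_1,\dots,f_k$ to $\alpha$-homogeneous elements $\tilde f_1,\dots,\tilde f_k \in R$ of the same positive weights; such lifts exist because the $\alpha$-action preserves the filtration on $A$. The next step is to show that $\tilde f_1,\dots,\tilde f_k$ is a regular sequence in $R$ and that the left ideal $J := \sum_i R \tilde f_i$ equals $R \cdot R_{>0}$. Both facts follow by a Nakayama-type lift from mod $\hbar$, using flatness of $R$ over $\C[\hbar]$ and the classical statements above: the Koszul complex on the $\tilde f_i$ reduces mod $\hbar$ to an exact classical Koszul complex, and for the second equality any $b \in R_{>0}$ has image in $\C[X']_{>0}$ expressible as a combination of the $f_i$, so iterating lifts expresses $b$ in $\sum_i R \tilde f_i$ using $\hbar$-adic completeness. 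Taking weight-$0$ parts of the Koszul resolution of $R/J$ then exhibits $\Ca_\alpha(R) = (R/J)_0$ as a flat $\C[\hbar]$-module with special fibre $\C[X'^{\alpha(\C^\times)}]$, which after specialisation proves (1).

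For (2), I would first check that opens of the form $X'^{\alpha(\C^\times)}$, with $X'$ as in (1), form a base of the Zariski topology on $X^{\alpha(\C^\times)}$: near any $p \in X^{\alpha(\C^\times)}$, Bialynicki--Birula applied to a small $(\C^\times)^2$-stable affine neighbourhood of $p$, together with smoothness of $X$, produces such an $X'$ with the contracting locus transversally cut out. To glue, the key compatibility is that $\Ca_\alpha$ commutes with localisation at weight-zero elements: for $g \in A_0$ one verifies directly that $\Ca_\alpha(A[g^{-1}]) = \Ca_\alpha(A)[g^{-1}]$, since localising at an $\alpha$-invariant element preserves the grading decomposition and commutes with the formation of the left ideal generated by $A_{>0}$. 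This produces coherent restriction maps between sections on overlaps of base opens, and by (1) the resulting sheaf on $X^{\alpha(\C^\times)}$ is locally a quantisation.

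Part (3) is a \v{C}ech computation: cover $X'^{\alpha(\C^\times)}$ for an arbitrary $(\C^\times)^2$-stable affine $X'$ by basic opens $X'_\beta{}^{\alpha(\C^\times)}$ coming from (1), with double intersections of the same form; affineness of $X'^{\alpha(\C^\times)}$ kills higher \v{C}ech cohomology of the quasi-coherent sheaf $\Ca_\alpha(\A^\theta)|_{X'^{\alpha(\C^\times)}}$, so its global sections are computed by the \v{C}ech equaliser of the $\Ca_\alpha(\A^\theta(X'_\beta))$, which by the localisation compatibility from (2) equals $\Ca_\alpha(\A^\theta(X'))$. The main obstacle throughout is the regular-sequence/Nakayama argument in part (1): ensuring that a quantum lift of the classical regular sequence remains regular and that $J = R \cdot R_{>0}$ is the heart of the proof and is precisely where the positive-weight complete intersection hypothesis must be used in an essential way.
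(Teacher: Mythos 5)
Your proposal follows essentially the same route as the paper's proof: for (1) lift the positive-weight regular sequence cutting out the contracting locus to homogeneous elements $\tilde f_1,\ldots,\tilde f_k$ of the homogenized quantization, show the left ideal they generate coincides with the ideal generated by the positive-weight part and is $\hbar$-saturated (equivalently flat, which gives $\gr$ of the quotient equal to the classical one) by reducing to the classical syzygies, and for (2)--(3) produce a base of good opens $X'^{\alpha(\C^\times)}$ by linearizing at $\alpha$-fixed points and identify/glue sections via compatibility of $\Ca_\alpha$ with (micro)localization at invariant elements. The only divergences are technical points that the paper handles slightly differently and that you would need to patch: the Rees algebra $R_\hbar(\A^\theta(X'))$ is not $\hbar$-adically complete, so the successive-approximation argument must be run in its $\hbar$-adic completion (or degreewise in the Rees grading); since the lifts $\tilde f_i$ do not commute there is no literal Koszul complex on them, and the paper instead lifts only the first syzygies and absorbs the error terms using $[\tilde f_\ell,\tilde f_j]\in\hbar\A_{\hbar,>0}$ (which is exactly where positivity of the weights enters); and sections of a quantization on principal invariant opens are microlocalizations rather than plain Ore localizations, so your claim $\Ca_\alpha(A[g^{-1}])=\Ca_\alpha(A)[g^{-1}]$ requires the closedness/completeness of the ideal $A A_{>0}$, which is how the paper argues in (2) and (3).
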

\begin{proof}
Let us prove (1). To simplify the notation, we write $\A$ for $\A^\theta(X')$.
The algebra $\A$ is Noetherian because it is complete and separated with respect to
a filtration whose associated graded is Noetherian.   Let us show that
$\operatorname{gr}\A\A_{>0}=\C[X']\C[X']_{>0}$, this will complete the proof of (1).

In the proof it is more convenient to deal with $\hbar$-adically completed homogenized quantizations. Namely,
let $\A_\hbar$ stand for the $\hbar$-adic completion of $R_\hbar(\A)$. The claim that
$\operatorname{gr}\A\A_{>0}=\C[X']\C[X']_{>0}$ is equivalent to the condition that
$\A_\hbar\A_{\hbar,>0}$ is $\hbar$-saturated meaning that $\hbar a\in \A_{\hbar}\A_{\hbar,>0}$
implies that $a\in \A_{\hbar}\A_{\hbar,>0}$.

Recall that we assume that there are $\alpha$-homogeneous elements $f_1,\ldots,f_k\in \C[X']_{>0}$ that form a regular sequence
generating the ideal $\C[X']\C[X']_{>0}$. We can lift those elements to homogeneous $\tilde{f}_1,\ldots, \tilde{f}_k\in \A_{\hbar,>0}$.
We claim that these elements still generate $\A_\hbar\A_{\hbar,>0}$. Indeed, it is enough to check that $\A_{\hbar,>0}\subset \operatorname{Span}_{\A_\hbar}(\tilde{f}_1,\ldots,\tilde{f}_k)$. For a homogeneous element $f\in \A_{\hbar,>0}\setminus \hbar\A_{\hbar}$
we can find homogeneous elements $g_1,\ldots,g_k$
such that $f-\sum_{i=1}^k g_i\tilde{f}_i$ still has the same $\alpha(\C^\times)$-weight and is divisible by
$\hbar$. Divide by $\hbar$ and  repeat the
argument. Since the $\hbar$-adic topology is complete and separated, we see that $f\in \operatorname{Span}_{\A_\hbar}(\tilde{f}_1,\ldots,\tilde{f}_k)$. So it is enough to check that $\operatorname{Span}_{\A_\hbar}(\tilde{f}_1,\ldots,\tilde{f}_k)$ is $\hbar$-saturated.

Pick elements $\tilde{h}_1,\ldots,\tilde{h}_k$ such that $\sum_{j=1}^k \tilde{h}_j\tilde{f}_j$ is divisible by $\hbar$.
Let $h_j\in \C[X']$ be congruent to $\tilde{h}_j$ modulo $\hbar$
so that $\sum_{j=1}^k h_j f_j=0$. Since $f_1,\ldots,f_k$ form a regular sequence, we see that
there are  elements $h_{ij}\in \C[X']$ such that $h_{j'j}=-h_{jj'}$ and $h_j=\sum_{\ell=1}^k h_{j\ell}f_\ell$.
Lift the elements $h_{jj'}$ to $\tilde{h}_{jj'}\in \A_\hbar$ with $\tilde{h}_{jj'}=-\tilde{h}_{j'j}$.
So we have $\tilde{h}_j=\sum_{\ell=1}^k \tilde{h}_{j\ell}\tilde{f}_\ell+\hbar\tilde{h}'_j$ for
some $\tilde{h}'_j\in \A_\hbar$.
It follows that $\sum_{j=1}^k \tilde{h}_j \tilde{f}_j= \hbar\sum_{j=1}^k \tilde{h}'_j \tilde{f}_j+
\sum_{j,\ell=1}^k \tilde{h}_{j\ell}\tilde{f}_\ell\tilde{f}_j$. But $\sum_{j,\ell=1}^k \tilde{h}_{j\ell}\tilde{f}_\ell\tilde{f}_j=
\sum_{j<\ell} \tilde{h}_{j\ell}[\tilde{f}_\ell,\tilde{f}_j]$. The bracket is divisible by $\hbar$.
But $\frac{1}{\hbar}[\tilde{f}_\ell, \tilde{f}_j]$ is still in $\A_{\hbar,>0}$ and so in $\operatorname{Span}_{\A_\hbar}(\tilde{f}_1,\ldots,\tilde{f}_k)$.
This finishes the proof of (1).

Let us proceed to the proof of (2).  Let us show that we can choose a covering of $X^{\alpha(\C^\times)}$ by
$X'^{\alpha(\C^\times)}$, where $X'$ is as in (1). This is easily reduced to
the affine case. Here the existence of such a covering is deduced from the Luna slice theorem applied to a fixed point for  $\alpha$.
In more detail, for a fixed point $x$, we can choose an open affine neighborhood $U$ of $x$ in $X\quo \alpha(\C^\times)$
with an \'{e}tale morphism $U\rightarrow T_xX\quo \alpha(\C^\times)$ such that $\pi^{-1}(U)\cong U\times_{T_xX\quo \alpha(\C^\times)}T_xX$,
where $\pi$ stands for the quotient morphism for the action $\alpha$. The subset $\pi^{-1}(U)$ then obviously satisfies
the requirements in (1).

It is easy to see that  the algebras $\Ca_\alpha(\A^\theta(X'))$ form a presheaf with respect to the covering
$X'^{\alpha(\C^\times)}$ (obviously, if $X',X''$ satisfy our assumptions, then their intersection does).
Since the subsets $X'^{\alpha(\C^\times)}$ form a base of topology on $X^{\alpha(\C^\times)}$, it is enough
to show that they form a sheaf with respect to the covering. This is easily deduced from the two straightforward claims:
\begin{itemize}
\item $\Ca_\alpha(\A^\theta(X'))$ is complete and separated with respect to the filtration (here we use an easy claim
that, being finitely generated, the ideal $\A^\theta(X')\A^{\theta}(X')_{>0}$ is closed).
\item The algebras $\operatorname{gr}\Ca_\alpha(\A^\theta(X'))=\C[X'^{\alpha(\C^\times)}]$ do form a sheaf --
the structure sheaf $\mathcal{O}_{X^{\alpha(\C^\times)}}$.
\end{itemize}
The proof of (2) is now complete.


To prove (3) it is enough to assume that $X$ is affine. Let $\pi$ denote the categorical quotient map
$X\rightarrow X\quo \alpha(\C^\times)$. It is easy to see that, for  every open $(\C^\times)^2$-stable affine subvariety $X'$
that intersects $X^{\alpha(\C^\times)}$ non-trivially, and any point $x\in X'^{\alpha(\C^\times)}$,
there is some $\C^\times$-stable open affine subvariety $Z\subset X\quo \alpha(\C^\times)$ with $x\in \pi^{-1}(Z)\subset X'$. So we can assume, in addition, that all covering
affine subsets $X^i$ are of the form $\pi^{-1}(?)$. Moreover, we can assume that they are all principal
(and so are given by non-vanishing of $\alpha(\C^\times)$-invariant and $\C^\times$-semiinvariant elements of $\A^\theta(X)$).
Then all algebras $\Ca_\alpha(\A^\theta(X'))$ are obtained from $\Ca_\alpha(\A^\theta(X))$ by microlocalization.
Our claim follows from standard properties of  microlocalization.
\end{proof}

\subsection{Comparison between algebra and sheaf levels}\label{SS_Ca_sheaf_vs_alg}
Now let us suppose that $X$ is a conical symplectic resolution of $X_0$. We write $\A^\theta_\lambda$
for the quantization of $X$ corresponding to $\lambda$ and $\A_\lambda$
for its algebra of global sections. By the construction, for any $\lambda\in H^2(X)$,
there is a natural homomorphism $\Ca_\alpha(\A_{\lambda})\rightarrow \Gamma(\Ca_\alpha(\A_\lambda^\theta))$.
Our goal in this section is to prove the following result.

\begin{Prop}\label{Prop:A0_descr}
Suppose that $H^i(X^{\alpha(\C^\times)},\mathcal{O})=0$ for $i>0$.  There is a Zariski open subset subset $Z\subset H^2(X)$ such
that the homomorphism $\Ca_\alpha(\A_{\lambda})\rightarrow \Gamma(\Ca_\alpha(\A^\theta_\lambda))$ is an isomorphism provided $\lambda\in Z$.
\end{Prop}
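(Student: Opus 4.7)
The plan is to work with the Rees completion of the quantization and run a semicontinuity argument degenerating the universal quantization to the commutative sheaf $\mathcal{O}_X$, where the statement becomes a consequence of Grauert--Riemenschneider together with the hypothesis $H^i(X^{\alpha(\C^\times)},\mathcal{O})=0$ for $i>0$.

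First, I would pass to the homogeneous picture: replace $\A_\lambda^\theta$ by its $\hbar$-adically complete Rees sheaf $\A_{\lambda,\hbar}^\theta$ and consider the universal Rees family $\tilde{\A}_\hbar^\theta$ of sheaves on $X$ over $H^2(X)\times \Spec\C[\hbar]$. Since $\alpha$ commutes with the contracting action and lifts canonically, $\tilde{\A}_\hbar^\theta$ carries an $\alpha$-weight decomposition. Let $\tilde{\A}_{\hbar,\geqslant 0}^\theta$ be the subsheaf of sections of nonnegative $\alpha$-weight, and let $\tilde{\J}^\theta$ be the intersection with $\tilde{\A}_{\hbar,\geqslant 0}^\theta$ of the two-sided ideal of $\tilde{\A}_\hbar^\theta$ generated by the strictly positive weight part. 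By Proposition~\ref{Prop:A0}, the quotient $\tilde{\A}_{\hbar,\geqslant 0}^\theta/\tilde{\J}^\theta$ is supported on $X^{\alpha(\C^\times)}$ and is the Rees version of the sheaf $\Ca_\alpha(\tilde{\A}^\theta)$.

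Second, I would establish the desired cohomology vanishing at the classical point. Setting $\hbar=0$ and $\lambda=0$ reduces $\tilde{\A}_{\hbar,\geqslant 0}^\theta$ to the subsheaf $\mathcal{O}_{X,\geqslant 0}\subset\mathcal{O}_X$ of nonnegative-weight functions and $\tilde{\A}_{\hbar,\geqslant 0}^\theta/\tilde{\J}^\theta$ to (the pushforward of) $\mathcal{O}_{X^{\alpha(\C^\times)}}$. Grauert--Riemenschneider gives $H^i(X,\mathcal{O}_X)=0$ for $i>0$, and the hypothesis on $X^{\alpha(\C^\times)}$ gives the corresponding vanishing on the fixed locus. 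Decomposing $\mathcal{O}_X$ into $\alpha$-isotypic coherent pieces and using that $\alpha(\C^\times)$ is reductive, one deduces $H^i(X,\mathcal{O}_{X,\geqslant 0})=0$ and (via the long exact sequence from $0\to \J_0\to \mathcal{O}_{X,\geqslant 0}\to \iota_*\mathcal{O}_{X^{\alpha(\C^\times)}}\to 0$) $H^i(X,\J_0)=0$ for $i>0$. By semicontinuity of cohomology in the flat family $\tilde{\A}_\hbar^\theta$, applied isotypic-component-by-isotypic-component, these vanishings persist for $\lambda$ in a Zariski open subset $Z\subset H^2(X)$, uniformly in $\hbar$.

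Third, for $\lambda\in Z$, I would take the long exact sequence of cohomology associated with
\[
0\to \tilde{\J}^\theta\to \tilde{\A}_{\hbar,\geqslant 0}^\theta\to \Ca_\alpha(\tilde{\A}^\theta)_\hbar\to 0,
\]
work weight-by-weight, and conclude that the natural map $\Gamma(\A_{\lambda,\hbar,\geqslant 0}^\theta)/\Gamma(\tilde{\J}^\theta|_\lambda)\to \Gamma(\Ca_\alpha(\A_\lambda^\theta))_\hbar$ is an isomorphism. The source is visibly the Rees version of $\Ca_\alpha(\A_\lambda)$ (local finiteness of $\alpha$ on $\A_\lambda$ ensures that taking global sections commutes with the $\alpha$-weight decomposition and with forming the $\geqslant 0$ part and the ideal generated by $>0$ elements). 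Setting $\hbar=1$ gives the desired identification $\Ca_\alpha(\A_\lambda)\xrightarrow{\sim}\Gamma(\Ca_\alpha(\A_\lambda^\theta))$.

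The main obstacle is carrying out the semicontinuity step uniformly across the infinitely many $\alpha$-weight components: a priori each component produces its own Zariski open set, and one must reduce to a single open subset $Z$. This is handled by Noetherianity of $\C[H^2(X)]$ together with the observation that only finitely many weights contribute at each finite step of the Rees filtration modulo $\hbar^n$, so the defining conditions organize into finitely many closed subsets whose complement is the required $Z$. Once this is in place, every other step is the standard diagram chase.
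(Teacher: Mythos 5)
There is a genuine gap, and it sits exactly at the step you dismiss as ``visibly'' true. Granting your cohomology vanishing, the long exact sequence only gives $\Gamma(\Ca_\alpha(\A^\theta_\lambda))\cong \Gamma(\A^\theta_{\lambda,\geqslant 0})/\Gamma(\mathcal{J}^\theta_\lambda)$, where $\mathcal{J}^\theta_\lambda$ is the sheaf-theoretic ideal generated by local sections of positive $\alpha$-weight. To identify this with $\Ca_\alpha(\A_\lambda)=\A_{\lambda,\geqslant 0}/(\A_\lambda\A_{\lambda,>0}\cap\A_{\lambda,\geqslant 0})$ you need $\Gamma(\mathcal{J}^\theta_\lambda)=\A_\lambda\A_{\lambda,>0}\cap\A_{\lambda,\geqslant 0}$, i.e., that the ideal sheaf generated by positive-weight \emph{local} sections is generated by (and has global sections equal to the ideal generated by) positive-weight \emph{global} sections. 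Taking $\Gamma$ commutes with the weight decomposition, but it does not commute with forming the two-sided ideal generated by the positive part; this comparison is precisely the content of the proposition and is where the genericity of $\lambda$ must enter. Your mechanism for producing the open set $Z$ --- semicontinuity of higher cohomology degenerating to $\hbar=0$, $\lambda=0$ --- cannot detect this: the higher-cohomology vanishing holds for essentially all $\lambda$ (it only uses $\gr\Ca_\alpha(\A^\theta_\lambda)=\mathcal{O}_{X^{\alpha(\C^\times)}}$ and the hypothesis on $X^{\alpha(\C^\times)}$), yet the map $\Ca_\alpha(\A_\lambda)\rightarrow\Gamma(\Ca_\alpha(\A^\theta_\lambda))$ genuinely fails to be an isomorphism at special parameters --- this failure is what makes Section 5 of the paper (e.g.\ the surjectivity statement onto the finite-dimensional quotient for denominator-$n$ parameters) nontrivial. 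Worse, your chosen degeneration point is one where the classical analogue of the statement breaks: for the singular affinization $X_0$, the map $\Ca_\alpha(\C[X_0])\rightarrow\C[X^{\alpha(\C^\times)}]$ need not be an isomorphism, since positive-weight global functions on $X_0$ cut out on $X$ something larger than the locus cut out by positive-weight local functions.

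The paper's actual source of genericity is geometric, not cohomological: it passes to the universal deformation $\tilde{X}\rightarrow H^2(X)$ with its canonical quantization $\tilde{\A}^\theta$, observes that $\Ca_\alpha(\C[\tilde{X}_0])\rightarrow\C[\tilde{X}^{\alpha(\C^\times)}]$ is an isomorphism away from the hyperplane locus $\mathcal{H}_\C$ because there $\tilde{X}\rightarrow\tilde{X}_0$ is an isomorphism (the fibers are affine, so the sheaf-level and algebra-level constructions tautologically agree), and then uses an associated-graded/generic-flatness argument over $\C[H^2(X)]$ to conclude that the kernel and cokernel of $\Ca_\alpha(\tilde{\A})\rightarrow\Gamma(\Ca_\alpha(\tilde{\A}^\theta))$ are supported on a proper closed subset of $H^2(X)$, with specializations behaving correctly on both sides. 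If you want to salvage your argument, you would have to replace the semicontinuity step by an argument of this kind showing that, generically in $\lambda$, the sheaf ideal $\mathcal{J}^\theta_\lambda$ is generated by its positive-weight global sections; your cohomological setup by itself does not provide that.
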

\begin{proof}
Let $\tilde{X}$ be the universal deformation of $X$ over $H^2(X)$ and $\tilde{X}_0$ be its affinization. Consider
the natural homomorphism $\Ca_\alpha(\C[\tilde{X}_0])\rightarrow \C[\tilde{X}^{\alpha(\C^\times)}]$. It is an isomorphism outside
of $\mathcal{H}_\C$ (the union of singular hyperplanes)
since $\tilde{X}\rightarrow \tilde{X}_0$ is an isomorphism precisely outside that locus.
Now consider the canonical quantization $\tilde{\A}^\theta$ of $\tilde{X}$. Similarly to the previous
section, $\Ca_\alpha(\tilde{\A}^{\theta})$ is a quantization of $\tilde{X}^{\alpha(\C^\times)}$. The cohomology vanishing for
$X^{\alpha(\C^\times)}$ implies that for $\tilde{X}^{\alpha(\C^\times)}$.
It follows that $\gr \Gamma(\Ca_\alpha(\tilde{\A}^\theta))=\C[\tilde{X}^{\alpha(\C^\times)}]$.
Also there is a natural epimorphism $\Ca_\alpha(\C[\tilde{X}_0])\rightarrow \gr\Ca_\alpha(\tilde{\A})$ and a natural homomorphism
$\gr\Ca_\alpha(\tilde{\A})\rightarrow \gr \Gamma(\Ca_\alpha(\tilde{\A}^{\theta}))$. The resulting homomorphism $\gr\Ca_\alpha(\tilde{\A})\rightarrow \gr\Gamma(\Ca_\alpha(\tilde{\A}^{\theta}))$ is, on one hand, the associated graded of the homomorphism $\Ca_\alpha(\tilde{\A})\rightarrow\Gamma(\Ca_\alpha(\tilde{\A}^{\theta}))$ and on the other hand, an isomorphism over the complement of $\mathcal{H}_\C$. We deduce that the supports of the associated graded modules of the kernel and the cokernel of
$\Ca_\alpha(\tilde{\A})\rightarrow\Gamma(\Ca_\alpha(\tilde{\A}^{\theta}))$ are supported on $\mathcal{H}_{\C}$ as $\C[H^2(X)]$-modules.
It follows that the support of the kernel and of the cokernel of $\Ca_\alpha(\tilde{\A})\rightarrow\Gamma(\Ca_\alpha(\tilde{\A}^{\theta}))$
are Zariski closed subvarieties of $H^2(X)$. We note that $\Gamma(\Ca_\alpha(\tilde{\A}^{\theta}))$ is flat over $H^2(X)$
and the specialization at $\lambda$ coincides with $\Gamma(\Ca_\alpha(\A_{\lambda}^{\theta}))$, this is because of the
vanishing assumption on the structure sheaf. So $\Ca_\alpha(\tilde{\A})$
is generically flat over $H^2(X)$, while the specialization at $\lambda$ always coincides with $\Ca_\alpha(\A_\lambda)$.
This implies the claim of the proposition.
\end{proof}

\subsection{Correspondence between parameters}\label{SS_Ca_param}
Our next goal is to understand how to recover the periods of the direct summands $\Ca_\alpha(\A^{\theta})$ from that of $\A^\theta$.
We will assume that $X^{\alpha(\C^\times)}$ satisfies the cohomology vanishing conditions on  the structure sheaf,
but we will not require that of $X$, the period map still makes sense, see \cite{BK}.
Consider the decomposition $X^{\alpha(\C^\times)}=\bigsqcup_i X^0_i$ into connected components. Let $Y_i$ denote the
contracting locus of $X^0_i$ and let $\A_{i}^{\theta 0}$ be the restriction of $\Ca_\alpha(\A^\theta)$ to $X^0_i$.
To determine the period of $\A_i^{\theta 0}$, we will quantize $Y_i$ and then use results from  \cite{BGKP}
on quantizations of line bundles on lagrangian subvarieties.

First of all, let us consider the case when $X$ is affine and so is quantized by a single algebra, $\A$. We will quantize the contracting
locus $Y$ by a single $\A$-$\A^0$-bimodule (where $\A^0$ stands for $\Ca_\alpha(\A)$), this bimodule is $\A/\A\A_{>0}$.

\begin{Lem}\label{Lem:repel_quant_affine}
Under the above assumptions, the associated graded of $\A/\A\A_{>0}$ is $\C[Y]$.
\end{Lem}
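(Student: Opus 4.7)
The argument is a direct adaptation of the Rees-algebra strategy used in the proof of Proposition \ref{Prop:A0}(1). Pass to the $\hbar$-adic completion $\A_\hbar$ of the Rees algebra of $\A$; the $\alpha$-action lifts, giving a positive part $\A_{\hbar,>0}$. The assertion of the lemma is equivalent to the $\hbar$-saturation of the left ideal $\A_\hbar\A_{\hbar,>0}$ in $\A_\hbar$: indeed, $\hbar$-saturation makes $\A_\hbar/\A_\hbar\A_{\hbar,>0}$ a flat $\C[[\hbar]]$-module whose special fiber is $\C[X]/\C[X]\C[X]_{>0} = \C[Y]$ (the classical description of the contracting locus as the vanishing scheme of positive-weight functions), so specialising at $\hbar=1$ yields $\gr(\A/\A\A_{>0}) = \C[Y]$.

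First I would choose $\alpha$-homogeneous elements $f_1,\ldots,f_k\in\C[X]_{>0}$ generating the defining ideal of $Y$ and, crucially, forming a regular sequence on $\C[X]$. This is the main non-formal input: it is available here because $Y$ is Lagrangian in the affine symplectic variety $X$ and is cut out set-theoretically by $\C[X]_{>0}$, so it is a complete intersection of the expected codimension. Lift the $f_i$ to $\alpha$-homogeneous $\tilde f_1,\ldots,\tilde f_k\in\A_{\hbar,>0}$. The same successive-approximation argument as in Proposition \ref{Prop:A0}(1), using completeness and separatedness of the $\hbar$-adic topology, shows that these lifts generate $\A_\hbar\A_{\hbar,>0}$ as a left $\A_\hbar$-module.

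For the saturation step, take an element $\sum_j\tilde h_j\tilde f_j\in\hbar\A_\hbar$, reduce modulo $\hbar$ to obtain a classical syzygy $\sum_j h_jf_j=0$, and use regularity of $f_1,\ldots,f_k$ to write $h_j=\sum_\ell h_{j\ell}f_\ell$ with $h_{j\ell}=-h_{\ell j}$. Lifting to antisymmetric $\tilde h_{j\ell}\in\A_\hbar$, one finds $\tilde h_j = \sum_\ell \tilde h_{j\ell}\tilde f_\ell + \hbar\tilde h_j'$, whence
\begin{equation*}
\sum_j \tilde h_j\tilde f_j
= \hbar\sum_j\tilde h_j'\tilde f_j
+ \sum_{j<\ell}\tilde h_{j\ell}\,[\tilde f_\ell,\tilde f_j].
\end{equation*}
Each commutator $[\tilde f_\ell,\tilde f_j]$ lies in $\hbar\A_\hbar$, and the quotient $\hbar^{-1}[\tilde f_\ell,\tilde f_j]$ retains positive $\alpha$-weight (the Poisson bracket is $\alpha$-equivariant), hence sits in $\A_{\hbar,>0}$. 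Every term on the right thus belongs to $\hbar\A_\hbar\A_{\hbar,>0}$, yielding the desired $\hbar$-saturation.

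The principal obstacle is producing the positive-weight regular sequence of generators for $I(Y)$. In the applications of interest (quiver varieties, the Gieseker case, and the slice algebras thereof) this follows from the Lagrangian property of the contracting locus; in greater generality one must replace the elementary Koszul manipulation above by a lift of the full graded Koszul resolution of $\C[Y]$ to $\A_\hbar$ and iterate the commutator trick on higher syzygies, but the mechanism -- quantum commutators are $\hbar$-divisible and preserve positivity of $\alpha$-weight -- is the same.
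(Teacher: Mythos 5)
Your $\hbar$-saturation computation is exactly the paper's mechanism: the proof of this lemma in the paper simply refers back to the proof of Proposition \ref{Prop:A0}(1), which is the same successive-approximation and commutator argument you reproduce. The gap is in how you produce the regular sequence. You claim the ideal of $Y$ is generated globally by a regular sequence of positive-weight functions ``because $Y$ is Lagrangian in the affine symplectic variety $X$''. In the setting of this lemma that premise is false in general: the fixed locus $X^{\alpha(\C^\times)}$ is allowed to be positive-dimensional (this is the whole point of the section --- in the Gieseker case the fixed components are products of smaller quiver varieties), so the contracting locus $Y$ is only coisotropic in $X$, of codimension $\frac{1}{2}(\dim X-\dim X^{\alpha(\C^\times)})$; it is Lagrangian only after embedding into $X\times X^0$ as in Proposition \ref{Prop:shift}. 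Moreover, even with the correct codimension, nothing forces the ideal $\C[X]\C[X]_{>0}$ to be generated by codimension-many $\alpha$-semiinvariant elements \emph{globally} on an affine $X$; that is a genuinely local statement.

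The paper closes this gap not by strengthening the Koszul argument but by localizing: using the Luna slice theorem at $\alpha$-fixed points one covers the affine $X$ by $(\C^\times)^2$-stable principal open subsets of the form $\pi^{-1}(Z)$, $\pi:X\rightarrow X\quo\alpha(\C^\times)$, on which $Y$ \emph{is} a complete intersection cut out by positive-weight semiinvariants (this is the covering constructed in the proof of Proposition \ref{Prop:A0}(2)); on each such piece your computation applies, and one glues via microlocalization exactly as in the proof of Proposition \ref{Prop:A0}(3), since the assertion $\gr(\A/\A\A_{>0})=\C[Y]$ can be checked after such localization. Your proposed fallback --- lifting ``the full graded Koszul resolution of $\C[Y]$'' and iterating the commutator trick on higher syzygies --- does not repair this: without the complete intersection property there is no Koszul resolution, and the flatness argument for an arbitrary graded free resolution is not spelled out. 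So the proposal is correct in the complete intersection case but is missing the reduction of the general case to it.
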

\begin{proof}
This was established in the proof of Proposition \ref{Prop:A0}. More precisely, the case when $Y$ is a complete intersection
given by $\alpha(\C^\times)$-semiinvariant elements of positive weight follows from the proof of assertion (1), while the general case follows similarly to the proof of (3).
\end{proof}

Now let us consider the non-affine case. Let us cover $X\setminus \bigcup_{k\neq i}X^0_k$ with $(\C^\times)^2$-stable open affine subsets $X^j$. We may assume that $X^j$ either does not intersect $Y_i$ or its intersection with $Y_i$ is of the form $\pi_i^{-1}(X^j\cap X_i^0)$, where $\pi_i:Y_i\rightarrow X_i^0$
is the projection. For this we first choose some covering by $(\C^\times)^2$-stable open affine subsets.
Then we delete $Y_i\setminus \pi_i^{-1}(X^j\cap X_i^0)$ from each $X^j$, we still have a covering. We cover the remainder of each $X^j$
by subsets that are preimages of open affine subsets on $X^j\quo \alpha(\C^\times)$, it is easy to see that this covering has required
properties. Let us replace $X$ with the union of $X^j$ that intersect $Y_i$.

After this replacement, we can quantize $Y_i$ by a $\A^\theta$-$\Ca_\alpha(\A^\theta)$-bimodule. We have natural $\A^\theta(X^j)$-$\Ca_\alpha(\A^\theta)(X^j\cap X_i^0)$-bimodule structures on $\A^\theta(X^j)/\A^\theta(X^j)\A^\theta(X^j)_{>0}$
and glue the bimodules corresponding to different $j$ together
along the intersections $X^i\cap X^j$ (we have homomorphisms $\A^\theta(X^i)\rightarrow \A^\theta(X^i\cap X^j)$
that give rise to $\A^\theta(X^i)/\A^\theta(X^i)\A^\theta(X^i)_{>0}\rightarrow \A^\theta(X^i\cap X^j)/\A^\theta(X^i\cap X^j)\A^\theta(X^i\cap X^j)_{>0}$ and to $\Ca_\alpha(\A^\theta(X^i))\rightarrow \Ca_\alpha(\A^\theta(X^i\cap X^j))$). Similarly to the proof of (2) in Proposition \ref{Prop:A0}, we get a sheaf of $\A^\theta$-$\Ca_\alpha(\A^{\theta})$-bimodules on $Y_i$ that we denote by $\A^\theta/\A^\theta \A^\theta_{>0}$. The following is a direct consequence of  the construction.

\begin{Lem}\label{Lem:repel_gener}
The associated graded of $\A^\theta/\A^{\theta}\A_{>0}^\theta$  coincides with the
$\mathcal{O}_X$-$\mathcal{O}_{X^0_i}$-bimodule $\mathcal{O}_{Y_i}$.
\end{Lem}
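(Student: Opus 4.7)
The plan is to reduce the claim to Lemma \ref{Lem:repel_quant_affine}, which already handles the affine case. By the construction of the sheaf $\A^\theta/\A^{\theta}\A^\theta_{>0}$ just before the lemma, we have a covering of $Y_i$ by the open subsets $X^j \cap Y_i$ of the form $\pi_i^{-1}(X^j \cap X_i^0)$, and on each piece the sheaf equals $\A^\theta(X^j)/\A^\theta(X^j)\A^\theta(X^j)_{>0}$. Since the filtration on the global sheaf is inherited from that on $\A^\theta$ and all the gluing homomorphisms in the construction are filtered, taking $\gr$ commutes with restriction to $X^j$. Thus it suffices to identify the associated graded of $\A^\theta(X^j)/\A^\theta(X^j)\A^\theta(X^j)_{>0}$, as an $\A^\theta(X^j)$-$\Ca_\alpha(\A^\theta(X^j))$-bimodule, with $\mathcal{O}_{Y_i}(X^j \cap Y_i)$.

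On each affine piece $X^j$, Lemma \ref{Lem:repel_quant_affine} (together with the argument in the proof of Proposition \ref{Prop:A0}(1), specifically the identity $\gr \A\A_{>0} = \C[X']\C[X']_{>0}$ established there) gives
\[
\gr\bigl(\A^\theta(X^j)/\A^\theta(X^j)\A^\theta(X^j)_{>0}\bigr) = \C[X^j]/\C[X^j]\C[X^j]_{>0} = \C[X^j \cap Y_i],
\]
where in the last equality we use that $X^j \cap Y_i$ is precisely the contracting locus of $\alpha$ in $X^j$ and is cut out set-theoretically (hence, by the regularity assumption used in Proposition \ref{Prop:A0}(1), scheme-theoretically) by $\C[X^j]_{>0}$. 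The induced left $\gr \A^\theta(X^j) = \mathcal{O}_{X^j}$-action is restriction of functions, and the induced right $\gr \Ca_\alpha(\A^\theta(X^j)) = \mathcal{O}_{X^j \cap X_i^0}$-action is pullback under $\pi_i$ followed by multiplication. This is exactly the bimodule structure on the sheaf $\mathcal{O}_{Y_i}$ restricted to $X^j \cap Y_i$.

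Finally, the gluing of these local identifications is automatic: the transition isomorphisms used to define $\A^\theta/\A^\theta \A^\theta_{>0}$ on overlaps $X^j \cap X^k$ come from the microlocalization homomorphisms of $\A^\theta$ and $\Ca_\alpha(\A^\theta)$, and passing to associated graded these become the ordinary restriction maps for $\mathcal{O}_X$ and $\mathcal{O}_{X_i^0}$, hence they agree with the gluing maps for $\mathcal{O}_{Y_i}$. Therefore the local isomorphisms assemble into a global bimodule isomorphism $\gr(\A^\theta/\A^\theta\A^\theta_{>0}) \cong \mathcal{O}_{Y_i}$.

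The only genuine content is the affine statement handled in Lemma \ref{Lem:repel_quant_affine}; the main potential obstacle here would be if taking $\gr$ failed to commute with the gluing, but this is automatic because the construction of both the filtration and the gluing are parallel to, and compatible with, the classical construction of $\mathcal{O}_{Y_i}$ at the level of associated graded. The regularity hypothesis needed to apply Proposition \ref{Prop:A0}(1) is built into the choice of cover, so no additional geometric input is required.
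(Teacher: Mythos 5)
Your proof is correct and takes essentially the same route as the paper, which records the lemma as a direct consequence of the construction: the affine identification via Lemma \ref{Lem:repel_quant_affine} on each piece $X^j$, and compatibility of the gluing maps after passing to associated graded. The only small imprecision is your remark that the regularity (complete intersection) hypothesis is ``built into the choice of cover'' --- the cover is not chosen that way; rather the general affine case is already covered by Lemma \ref{Lem:repel_quant_affine} itself (via the microlocalization argument as in part (3) of Proposition \ref{Prop:A0}), which is what you cite anyway.
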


Now we want to realize $Y_i$ a bit differently (we still use $X$ as in the paragraph preceding Lemma \ref{Lem:repel_gener}, and so can
write $Y$ instead of $Y_i$ and $X^0$ instead of $X^0_i$).  Namely, let $\iota$ denote the inclusion $Y\hookrightarrow X$
and $\pi$ be the projection $Y\rightarrow X^0$. We embed $Y$ into $X\times X^0$ via $(\iota,\pi)$. We equip $X\times X^0$
with the symplectic form $(\omega,-\omega^0)$, where $\omega^0$ is the restriction of $\omega$ to $X^0$.
With respect to this symplectic form $Y$ is a lagrangian subvariety. Further, $\A^\theta\widehat{\otimes}\Ca_\alpha(\A^\theta)^{opp}$
is a quantization of $X\times X^0$ with period $(\lambda,-\lambda^0)$, where $\lambda,\lambda^0$ are periods of
$\A^\theta, \Ca_\alpha(\A^\theta)$.

\begin{Prop}\label{Prop:shift}
The period  $\lambda^0$ coincides with  $\iota^{0*}(\lambda+c_1(K_Y)/2)\in H^2(X^0)=H^2(Y)$,
where $K_Y$ denotes the canonical class of $Y$ and $\iota^0$ is the inclusion $X^0\hookrightarrow X$.
\end{Prop}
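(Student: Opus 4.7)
The plan is to apply the main theorem of \cite{BGKP} on quantizations of line bundles on Lagrangian subvarieties to the Lagrangian embedding $(\iota,\pi):Y\hookrightarrow X\times X^0$ equipped with the symplectic form $(\omega,-\omega^0)$, together with the quantization of $\mathcal{O}_Y$ provided by Lemma \ref{Lem:repel_gener}.

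First I would assemble the inputs. The sheaf $\A^\theta\widehat{\otimes}\Ca_\alpha(\A^\theta)^{opp}$ is a quantization of $X\times X^0$; since passing to the opposite algebra reverses the sign of the period, and since reversing the sign of the symplectic form on $X^0$ is compatible with this, the period of this product quantization is $(\lambda,-\lambda^0)\in H^2(X)\oplus H^2(X^0)=H^2(X\times X^0)$. Next, by Lemma \ref{Lem:repel_gener}, the bimodule $\A^\theta/\A^\theta\A^\theta_{>0}$, viewed as a sheaf on $Y$, is a quantization of the trivial line bundle $\mathcal{O}_Y$ over the restriction of $\A^\theta\widehat{\otimes}\Ca_\alpha(\A^\theta)^{opp}$ to a formal neighborhood of $Y$. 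Thus we are exactly in the situation to which the \cite{BGKP} criterion applies.

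The criterion says that the existence of such a quantization of a line bundle $\mathcal{L}$ on a smooth Lagrangian $Y\subset Z$ is equivalent to the restriction of the period of the ambient quantization to $Y$ being $c_1(\mathcal{L})+c_1(K_Y)/2$ (up to the sign convention for periods used in the excerpt). Applied in our situation with $\mathcal{L}=\mathcal{O}_Y$ this yields the identity
\begin{equation*}
\iota^{*}\lambda-\pi^{*}\lambda^0=\tfrac{1}{2}c_1(K_Y)\quad\text{in }H^2(Y),
\end{equation*}
where $\iota\colon Y\hookrightarrow X$ and $\pi\colon Y\to X^0$ are the inclusion and the projection. (Any sign in the \cite{BGKP} formula is absorbed by the choice of period convention already in force; this will need to be verified when writing the proof.)

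To finish, I would use that the zero section $s\colon X^0\hookrightarrow Y$ of the affine-bundle contraction $\pi$ is a $\C^\times$-equivariant deformation retraction, so that $\pi^{*}\colon H^2(X^0)\to H^2(Y)$ is an isomorphism with inverse $s^{*}$. Applying $s^{*}$ to the displayed equation and noting that $\iota\circ s=\iota^0$, I would obtain
\begin{equation*}
\lambda^0=\iota^{0*}\lambda-\tfrac{1}{2}s^{*}c_1(K_Y),
\end{equation*}
which after identifying $H^2(X^0)$ with $H^2(Y)$ via $\pi^{*}$ is exactly the claim of the proposition (the class $c_1(K_Y)$ in the statement being interpreted through this identification).

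The main obstacle I anticipate is purely bookkeeping: verifying that the sign conventions for periods used in this paper, in \cite{BK,BGKP} and in the identification of the period of $\A^{\theta,opp}$ with $-\lambda$ all line up to produce the specific sign and factor $+c_1(K_Y)/2$ displayed in the proposition, rather than its negative. The geometric content, namely the applicability of \cite{BGKP} to the bimodule from Lemma \ref{Lem:repel_gener}, is straightforward once the setup is in place.
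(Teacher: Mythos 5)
Your proposal follows essentially the same route as the paper: the product quantization $\A^\theta\widehat{\otimes}\Ca_\alpha(\A^\theta)^{opp}$ of $X\times X^0$ with period $(\lambda,-\lambda^0)$, the lagrangian embedding $(\iota,\pi):Y\hookrightarrow X\times X^0$, the quantization of $\mathcal{O}_Y$ by the bimodule $\A^\theta/\A^\theta\A^\theta_{>0}$ from Lemma \ref{Lem:repel_gener}, and then \cite[(1.1.3), Theorem 1.1.4]{BGKP} followed by restriction to $X^0$. The only discrepancy is the sign you yourself flagged: with the conventions in force the BGKP relation reads $\iota^*(\lambda)-\pi^*(\lambda^0)=-\tfrac{1}{2}c_1(K_Y)$, so your displayed identity (and hence your final formula $\lambda^0=\iota^{0*}\lambda-\tfrac{1}{2}c_1(K_Y)$) carries the wrong sign; correcting it gives $\lambda^0=\iota^{0*}(\lambda+c_1(K_Y)/2)$ as in the proposition, and otherwise the argument is the paper's.
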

\begin{proof}
The period of $\A^\theta\widehat{\otimes}\Ca_\alpha(\A^\theta)^{opp}$ coincides with $p_1(\lambda)-p_2(\lambda^0)$,
where $p_1:X\times X^0\rightarrow X, p_2:X\times X^0\rightarrow X^0$ are the projections.
So the pull-back of the period to $Y$ is $\iota^*(\lambda)-\pi^*(\lambda^0)$. The structure sheaf of $Y$
admits a quantization to a $\A^\theta\widehat{\otimes}\Ca_{\alpha}(\A^\theta)^{opp}$-bimodule,
By \cite[(1.1.3),Theorem 1.1.4]{BGKP}, we have $\iota^*(\lambda)-\pi^*(\lambda^0)=-\frac{1}{2}c_1(K_Y)$.
Restricting this equality to $X^0$, we get the equality required in the proposition.
%
\end{proof}

\subsection{Gieseker case}\label{SS_Ca_Gies}
Now we want to apply Proposition \ref{Prop:shift} to the case when $X=\M^\theta(n,r)$ and $\alpha$ comes from a generic
one-dimensional torus in $\GL(w)$ given by $t\mapsto (t^{d_1},\ldots,t^{d_r})$ with $d_1\gg d_2\gg\ldots\gg d_r$. Recall
that the fixed point components are parameterized by partitions of $n$. Let $X^0_\mu$ denote the component corresponding
to a partition $\mu$ of $n$ and let $Y_\mu$ be its contracting locus. So $Y_\mu\rightarrow X^0_\mu$ is a vector bundle.
We will need to describe this vector bundle. The description is a slight ramification of \cite[Proposition 3.13]{Nakajima_tensor}.

First, consider the following situation. Set $V:=\C^n, W=\C^r$. Choose a decomposition $W=W^1\oplus W^2$
with $\dim W^i=r_i$
and consider the one-dimensional torus in $\GL(w)$ acting trivially on $W^2$ and by $t\mapsto t$ on $W^1$.
The components of the  fixed points in $\M^\theta(n,r)$ are in one-to-one correspondence with decompositions
on $n$ into the sum of two parts. Pick such a decomposition $n=n_1+n_2$ and consider the splitting $V=V^1\oplus V^2$
into the sum of two spaces of the corresponding dimensions and let $X^0_1=\M^\theta(n_1,r_1)\times \M^\theta(n_2,r_2)\subset \M^\theta(n,r)^{\alpha(\C^\times)}$
be the corresponding component. We assume that $\theta>0$.

Nakajima has described the contracting bundle $Y_1\rightarrow X^0_1$. This is the bundle on $X^0_1=\M^\theta(n_1,r_1)\times
\M^\theta(n_2,r_2)$ that is induced from the $\GL(n_1)\times \GL(n_2)$-module $\ker \beta^{12}/\operatorname{im}\alpha^{12}$,
where $\alpha^{12},\beta^{12}$ are certain $\GL(n_1)\times \GL(n_2)$-equivariant linear maps
$$\Hom(V^2,V^1)\xrightarrow{\alpha^{12}}\Hom(V^2,V^1)^{\oplus 2}\oplus \operatorname{Hom}(W^2,V^1)\oplus \Hom(V^2,W^1)\xrightarrow{\beta^{12}}\Hom(V^2,V^1)
$$
We do not need to know the precise form of the maps $\alpha^{12},\beta^{12}$, what we need is that $\alpha^{12}$ is injective
while $\beta^{12}$ is surjective. So $\ker \beta^{12}/\operatorname{im}\alpha^{12}\cong \operatorname{Hom}(W^2,V^1)\oplus \Hom(V^2,W^1)$,
an isomorphism of $\GL(n_1)\times \GL(n_2)$-modules.

It is easy to see that if $\alpha':\C^\times \rightarrow \GL(r_2)$ is a homomorphism of the form $t\mapsto \operatorname{diag}(t^{d_1},\ldots,t^{d_k})$ with $d_1,\ldots,d_k\gg 0$, then the contracting bundle
for the one-parametric subgroup $(\alpha',1):\C^\times \rightarrow \GL(W^1)\times \GL(W^2)$
coincides with the sum of the contracting bundles for $\alpha'$ and for $(t,1)$. So we get the following result.

\begin{Lem}\label{Lem:contr_Gies}
Consider $\alpha:\C^\times\rightarrow \GL(r)$ of the form $t\mapsto \operatorname{diag}(t^{d_1},\ldots,t^{d_r})$ with
$d_1\gg d_2\gg \ldots\gg d_r$. Consider the irreducible component of $\M^\theta(n,r)^{\alpha(\C^\times)}$
corresponding to the decomposition $n=n_1+\ldots+n_r$. Then its contracting bundle is induced from the
following $\prod_{i=1}^r \GL(n_i)$-module: $\sum_{i=1}^r \left((\C^{n_i})^{\oplus r-i}\oplus (\C^{n_i*})^{\oplus i-1}\right)$.
\end{Lem}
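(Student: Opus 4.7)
The plan is to peel off the weights of $\alpha$ one at a time via the two-step splitting recalled in the paragraph preceding the lemma (following Nakajima \cite[Proposition 3.13]{Nakajima_tensor}), and then induct on $r$. Split $W=W^1\oplus W^2$ with $W^1=\C$ carrying the weight $d_1$ and $W^2=\C^{r-1}$ carrying $d_2,\ldots,d_r$, and decompose $\alpha=\alpha_1\cdot\alpha_2$ where $\alpha_1$ acts only on $W^1$ (by $t^{d_1}$) and $\alpha_2$ only on $W^2$ (by $\diag(t^{d_2},\ldots,t^{d_r})$). Because $d_1\gg d_2,\ldots,d_r$, on the relevant weight spaces of $T^*R$ at a fixed point the sign of the $\alpha$-weight agrees with the sign of the $\alpha_1$-weight whenever the latter is nonzero; consequently the positive-weight part of the tangent space for $\alpha$ on the finer fixed component $(n_1,\ldots,n_r)$ splits as the sum of the positive-weight part for $\alpha_1$ (restricted to the coarse component $\M^\theta(n_1,1)\times\M^\theta(n-n_1,r-1)$) and the positive-weight part for $\alpha_2$ acting on $\M^\theta(n-n_1,r-1)$ (restricted to its fixed component indexed by $n-n_1=n_2+\ldots+n_r$).

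The two-step description gives the $\alpha_1$-piece: on $\M^\theta(n_1,1)\times\M^\theta(n-n_1,r-1)$ it is induced from the $\GL(n_1)\times\GL(n-n_1)$-module
$$\Hom(W^2,V^1)\oplus\Hom(V^2,W^1)\cong(\C^{n_1})^{\oplus r-1}\oplus((\C^{n-n_1})^*)^{\oplus 1},$$
and on the finer component the second summand refines along $V^2=V_2\oplus\ldots\oplus V_r$ as $\bigoplus_{j=2}^r(\C^{n_j*})^{\oplus 1}$. For the $\alpha_2$-piece I would induct on $r$. The base case $r=1$ is vacuous: $\diag(t^{d_1})\in\GL(W)=\GL(1)$ acts on $(A,B,i,j)$ as the corresponding central element of $\GL(V)$ and so trivially on $\M^\theta(n_1,1)$, matching the empty sum. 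The inductive hypothesis yields
$$\sum_{j=2}^r\Bigl((\C^{n_j})^{\oplus r-j}\oplus(\C^{n_j*})^{\oplus j-2}\Bigr)$$
for the $\alpha_2$-piece, and adding the two pieces telescopes to $\sum_{i=1}^r\bigl((\C^{n_i})^{\oplus r-i}\oplus(\C^{n_i*})^{\oplus i-1}\bigr)$, as required.

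The main obstacle is not conceptual but the bookkeeping needed to justify the weight-separation additivity and to track the $\prod_i\GL(n_i)$-equivariant structure through each stage. Both amount to direct analysis of $\alpha$-weights on the tangent complex $\g\to T^*R\to\g^*$ at a torus fixed point, where the differences $d_b-d_a$ determine the sign of the weight on each block $\Hom(V^a,V^b)$, $\Hom(W_a,V^b)$, $\Hom(V^a,W_b)$; organizing these consistently through the induction is the only mildly delicate part.
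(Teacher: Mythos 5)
Your proposal is correct and follows essentially the same route as the paper: split off the dominant weight via $W=W^1\oplus W^2$, quote Nakajima's two-block description $\Hom(W^2,V^1)\oplus\Hom(V^2,W^1)$ of the contracting bundle, observe that for widely separated weights the contracting bundle of the combined one-parameter subgroup is the sum of the two pieces, and iterate (the paper phrases the iteration implicitly where you make it an explicit induction on $r$, with the same trivial base case and the same telescoping of multiplicities).
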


For $\A_\lambda^\theta(n_1,\ldots,n_r)$ denote the summand of $\Ca_\alpha(\A^\theta_\lambda(n,r))$ corresponding to
the decomposition $n=n_1+\ldots+n_r$. Let us recall that the value of the period for $\A_\lambda^\theta(n,r)$
is $\lambda+\frac{r}{2}$. Using Lemma \ref{Lem:contr_Gies} and Proposition \ref{Prop:shift}, we deduce the following claim.

\begin{Cor}\label{Cor:shift_Gies}
We have $\A_\lambda^\theta(n_1,\ldots,n_r)=\bigotimes_{i=1}^r \A^\theta_{\lambda+(i-1)}(n_i,1)$.
\end{Cor}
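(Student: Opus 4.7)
The plan is to determine the period of the summand $\A^\theta_\lambda(n_1,\ldots,n_r)$ of $\Ca_\alpha(\A^\theta_\lambda(n,r))$ on the component $X^0 := \prod_{i=1}^r \M^\theta(n_i,1)$ and match it with the period of $\bigotimes_{i=1}^r \A^\theta_{\lambda+(i-1)}(n_i,1)$. The Cartan sheaf is a quantization of $X^0$, and because $X^0$ is a product of factors each with one-dimensional $H^2$, any quantization whose period lies in $\bigoplus_i H^2(\M^\theta(n_i,1))$ must factor as a tensor product of quantizations of the individual $\M^\theta(n_i,1)$'s. Thus the problem reduces to identifying the $i$-th component of the period in $H^2(X^0) = \bigoplus_i H^2(\M^\theta(n_i,1))$.

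Proposition \ref{Prop:shift} expresses this component as $\iota^{0*}(\lambda + r/2) + \tfrac{1}{2}\iota^{0*} c_1(K_Y)$, where $\lambda + r/2$ is the period of $\A^\theta_\lambda(n,r)$ (by the period-versus-reduction convention of Subsection \ref{SS_quant}, since $\varrho = r/2$ here). Since $X^0$ is symplectic, $K_{X^0}$ is trivial, and for the contracting vector bundle $\pi : Y \to X^0$ with fibre $E$ one has $K_Y \cong \pi^*(K_{X^0}) \otimes \pi^*(\det E^*)$; restricting to the zero section yields $\iota^{0*} c_1(K_Y) = -c_1(E)$. Lemma \ref{Lem:contr_Gies} identifies $E$ as the associated bundle to the $\prod_j \GL(n_j)$-module $\bigoplus_j \bigl((\C^{n_j})^{\oplus r-j}\oplus(\C^{n_j*})^{\oplus j-1}\bigr)$; the $i$-th summand descends to a bundle on $\M^\theta(n_i,1)$ whose determinant character is $\det^{r-2i+1}$, so its first Chern class is $(r-2i+1)$ times the standard Kirwan-map generator of $\Pic(\M^\theta(n_i,1))$.

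Combining these contributions, the period of the Cartan summand restricted to the $i$-th factor becomes $(\lambda + r/2) - (r-2i+1)/2 = \lambda + i - 1/2$. Comparing with the period $\mu + 1/2$ of $\A^\theta_\mu(n_i,1)$ (using $\varrho = 1/2$ for the $r=1$ case on each factor) gives $\mu = \lambda + i - 1$, which is exactly the shift asserted in the corollary.

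The main obstacle I expect is keeping the sign conventions consistent: one must check that the Kirwan-map embedding $\param = \C \hookrightarrow H^2(X^0)$ restricts to the positive generator of each $H^2(\M^\theta(n_i,1))$ (and not its negative), and that the sign in $K_Y \cong \pi^*(\det E^*) \otimes \pi^* K_{X^0}$ matches the convention for $c_1(K_Y)$ used in Proposition \ref{Prop:shift}. Once these calibrations are in place, the factorization of the quantization over the factors of $X^0$ follows from the additive decomposition of the period in $H^2(X^0) = \bigoplus_i H^2(\M^\theta(n_i,1))$.
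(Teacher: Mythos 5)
Your argument is exactly the paper's: the Corollary is deduced by combining Proposition \ref{Prop:shift} with Lemma \ref{Lem:contr_Gies}, using that the period of $\A^\theta_\lambda(n,r)$ is $\lambda+\frac{r}{2}$, and your computation of $c_1(K_Y)$ via the determinant character $\det^{r-2i+1}$ of the contracting bundle, giving the shift $\lambda+i-\frac{1}{2}=(\lambda+i-1)+\frac{1}{2}$ on the $i$-th factor, fills in precisely the calculation the paper leaves implicit. The proposal is correct and follows the same route.
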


\subsection{Parabolic induction}\label{SS_parab_induc}
Let $X$ be a conical symplectic resolution of $X_0$. We assume that $X$ comes with a Hamiltonian action of a torus
$T$ such that $X^T$ is finite. Let $\mathfrak{C}$ stand for $\operatorname{Hom}(\C^\times,T)$. We introduce
a  pre-order $\prec^\lambda$ on $\mathfrak{C}$ as follows: $\alpha\prec^\lambda \alpha'$ if
$\A_\lambda \A_{\lambda,>0,\alpha}\subset \A_\lambda\A_{\lambda,>0,\alpha'}$.
This gives an equivalence relation $\sim^\lambda$ on $\mathfrak{C}$. Both extend
naturally to $\mathfrak{C}_{\mathbb{Q}}:=\mathbb{Q}\otimes_{\Z}\mathfrak{C}$.

The following lemma explains why this ordering is important.

\begin{Lem}\label{Lem:parab_ind}
Suppose $\alpha\prec\alpha'$. Then $\Ca_{\alpha'}(\Ca_\alpha(\A_\lambda))=\Ca_{\alpha'}(\A_\lambda)$. Further, let
$\Delta_{\alpha'}: \Ca_{\alpha'}(\A_\lambda)\operatorname{-mod}\rightarrow \A_\lambda\operatorname{-mod}, \Delta_{\alpha}:
\Ca_\alpha(\A_\lambda)\operatorname{-mod}\rightarrow \A_\lambda\operatorname{-mod}, \underline{\Delta}:\Ca_{\alpha'}(\A)\operatorname{-mod}
\rightarrow \Ca_\alpha(\A_\lambda)\operatorname{-mod}$ be the Verma module functors. We have $\Delta_{\alpha'}=\Delta_\alpha\circ\underline{\Delta}$.
\end{Lem}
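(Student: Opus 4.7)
The plan is to establish the algebra identity $\Ca_{\alpha'}(\Ca_\alpha(\A_\lambda)) = \Ca_{\alpha'}(\A_\lambda)$ first, and then deduce the Verma composition from it. Write $I_\alpha := \A_\lambda \cdot \A_{\lambda,>0,\alpha}$ and $I_{\alpha'} := \A_\lambda \cdot \A_{\lambda,>0,\alpha'}$, so that $\alpha \prec \alpha'$ translates to $I_\alpha \subseteq I_{\alpha'}$. Since $\alpha$ and $\alpha'$ come from a common torus, the algebra $\A_\lambda$ is bi-graded by $(\alpha,\alpha')$. I plan to realize both $\Ca_{\alpha'}(\A_\lambda)$ and $\Ca_{\alpha'}(\Ca_\alpha(\A_\lambda))$ as quotients of the bi-graded subalgebra $B := \A_{\lambda,\geq 0,\alpha,\geq 0,\alpha'}$ by the same two-sided ideal, forcing them to coincide.

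For the map $B \to \Ca_{\alpha'}(\Ca_\alpha(\A_\lambda))$ I would restrict the projection $\A_{\lambda,\geq 0,\alpha} \twoheadrightarrow \Ca_\alpha(\A_\lambda)$ to $B$ (its image lands in $\Ca_\alpha(\A_\lambda)_{\geq 0,\alpha'}$) and postcompose with $\Ca_\alpha(\A_\lambda)_{\geq 0,\alpha'} \twoheadrightarrow \Ca_{\alpha'}(\Ca_\alpha(\A_\lambda))$; its kernel is $B \cap \bigl(\A_{\lambda,\geq 0,\alpha}\cdot \A_{\lambda,\geq 0,\alpha,>0,\alpha'} + I_\alpha\bigr)$. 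The map $B \to \Ca_{\alpha'}(\A_\lambda)$ is the restriction of $\A_{\lambda,\geq 0,\alpha'} \twoheadrightarrow \Ca_{\alpha'}(\A_\lambda)$. Surjectivity requires that every $y \in \A_{\lambda,\geq 0,\alpha'}$ is congruent modulo $I_{\alpha'}$ to an element of $B$: decomposing $y$ by $\alpha$-weight, pieces of positive $\alpha$-weight lie in $\A_{\lambda,>0,\alpha}\subseteq I_\alpha \subseteq I_{\alpha'}$ by $\alpha \prec \alpha'$, while the remaining pieces already lie in $B$. Equality of the two kernels reduces to the identity
\begin{equation*}
I_{\alpha'} \cap B \;\subseteq\; \A_{\lambda,\geq 0,\alpha}\cdot \A_{\lambda,\geq 0,\alpha,>0,\alpha'} + (I_\alpha \cap B),
\end{equation*}
which I would prove by writing an element of the left side as $\sum b_\ell c_\ell$ with $c_\ell \in \A_{\lambda,>0,\alpha'}$ homogeneous: for terms where $b_\ell$ has positive $\alpha$-weight one uses $b_\ell c_\ell = c_\ell b_\ell + [b_\ell,c_\ell]$, with $c_\ell b_\ell \in I_\alpha$ and $[b_\ell,c_\ell]$ of strictly smaller filtration degree, so one iterates on the filtration.

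For the Verma composition, applying part one any $\Ca_{\alpha'}(\A_\lambda)$-module $N$ acquires a $\Ca_\alpha(\A_\lambda)_{\geq 0,\alpha'}$-action, giving
\[
\Delta_\alpha \circ \underline{\Delta}(N) = \A_\lambda \otimes_{\A_{\lambda,\geq 0,\alpha}} \Ca_\alpha(\A_\lambda) \otimes_{\Ca_\alpha(\A_\lambda)_{\geq 0,\alpha'}} N.
\]
The key identification $\A_\lambda \otimes_{\A_{\lambda,\geq 0,\alpha}} \Ca_\alpha(\A_\lambda) \cong \A_\lambda/I_\alpha$ holds because $\A_{\lambda,>0,\alpha} \subseteq I_\alpha \cap \A_{\lambda,\geq 0,\alpha}$ already generates $I_\alpha$ as a left ideal. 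Thus $\Delta_\alpha \circ \underline{\Delta}(N) = (\A_\lambda/I_\alpha) \otimes_{\Ca_\alpha(\A_\lambda)_{\geq 0,\alpha'}} N$, while $\Delta_{\alpha'}(N) = (\A_\lambda/I_{\alpha'}) \otimes_{\Ca_{\alpha'}(\A_\lambda)} N$. The natural map from the former to the latter, induced by $\A_\lambda/I_\alpha \twoheadrightarrow \A_\lambda/I_{\alpha'}$ and $\Ca_\alpha(\A_\lambda)_{\geq 0,\alpha'} \twoheadrightarrow \Ca_{\alpha'}(\A_\lambda)$, is an isomorphism because the kernel $I_{\alpha'}/I_\alpha$ is generated as a left $\A_\lambda/I_\alpha$-module by the image of $\Ca_\alpha(\A_\lambda)_{>0,\alpha'}$ (using the same identity from paragraph two), and elements of $\Ca_\alpha(\A_\lambda)_{>0,\alpha'}$ act by zero on $N$.

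The main obstacle is the algebraic identity in paragraph two, asserting that modulo $I_\alpha$, the $\alpha'$-positive image in $\Ca_\alpha(\A_\lambda)$ generates the image of $I_{\alpha'}$. Geometrically this reflects the fact that the defining ideal of $X^{\alpha'}$ inside $X^\alpha$ is cut out by the restrictions of $\alpha'$-positive functions on $X$; converting this picture into a clean non-commutative argument requires the commutator bookkeeping and filtration-degree induction outlined above.
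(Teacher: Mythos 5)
The decisive problem is your surjectivity claim for $B\to \Ca_{\alpha'}(\A_\lambda)$. Decomposing $y\in\A_{\lambda,\geqslant 0,\alpha'}$ by $\alpha$-weight, the pieces of positive $\alpha$-weight do lie in $I_\alpha\subseteq I_{\alpha'}$, but the remaining pieces have $\alpha$-weight $\leqslant 0$, and those of strictly negative $\alpha$-weight do \emph{not} lie in $B=\A_{\lambda,\geqslant 0,\alpha,\geqslant 0,\alpha'}$; what you actually need is the inclusion $\A_{\lambda,<0,\alpha}\cap\A_{\lambda,\geqslant 0,\alpha'}\subseteq I_{\alpha'}$, and this is not a formal consequence of $I_\alpha\subseteq I_{\alpha'}$. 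Indeed, your argument uses only the bigrading, the filtration with commutative associated graded, and $I_\alpha\subseteq I_{\alpha'}$, but with only these hypotheses the statement is false: for the commutative bigraded algebra $A=\C[u,v]$ with $u$ of biweight $(-1,0)$ and $v$ of biweight $(1,1)$ one has $I_\alpha=I_{\alpha'}=(v)$, yet $u\in A_{<0,\alpha}\cap A_{0,\alpha'}$ is not in $(v)$, $\Ca_\alpha(A)=\C$, and $\Ca_{\alpha'}(\Ca_\alpha(A))=\C\neq\C[u]=\Ca_{\alpha'}(A)$. So any correct proof must inject the geometric input that is special to the Hamiltonian situation: the $T$-weights on tangent spaces at fixed points come in opposite pairs, so the order $\prec$ forces weights of negative $\alpha$-weight to have negative $\alpha'$-weight, and one must then transport this from the tangent/associated-graded level into $\A_\lambda$ (this is exactly the kind of passage carried out in the proof of Proposition \ref{Prop:orders} via $\gr$, Luna slices and genericity of the parameter). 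Your closing paragraph acknowledges this geometric picture, but the proof as written never uses it; the commutator bookkeeping cannot substitute for it.

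Two secondary gaps. First, in the key inclusion of your second paragraph, after the swap the remainders $[b_\ell,c_\ell]$ are only known to lie in $I_{\alpha'}$; to iterate you must re-express them as sums $\sum b'c'$ with $c'\in\A_{\lambda,>0,\alpha'}$, and nothing bounds the filtration degrees of such an expression, so the induction ``on the filtration'' is not well-founded as stated — controlling this is precisely the sort of saturation issue the paper treats with Rees-algebra arguments (cf.\ the proof of Proposition \ref{Prop:A0} and Step 2 of Proposition \ref{Prop:orders}), and there only generically in the parameter. Second, the last step of your third paragraph needs $I_{\alpha'}=I_\alpha+\A_\lambda\cdot\A_{\lambda,\geqslant 0,\alpha,>0,\alpha'}$, i.e.\ again a treatment of generators of negative $\alpha$-weight and positive $\alpha'$-weight; the identity from your second paragraph only concerns the intersection with $B$ and does not give this. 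For comparison, the paper gives no argument at all here (``the proof is straightforward''), so there is no route to measure yours against; but as written your route does not close the statement.
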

The proof is straightforward.

The lemma shows that the Verma module functor can be studied in stages. This is what we mean by the parabolic induction.

Our goal now is to describe the pre-order $\prec^\lambda$ for $\lambda$ Zariski generic. We say that $\alpha\prec\alpha'$
if, for each $x\in X^T$, we have $T_xX_{>0,\alpha}\subset T_xX_{>0,\alpha'}$. This automatically implies
$T_xX_{\geqslant 0,\alpha}\supset T_xX_{\geqslant 0,\alpha'}$ (via taking the skew-orthogonal complement)
and $T_xX_{<0,\alpha}\subset T_xX_{<0,\alpha'}$.

\begin{Prop}\label{Prop:orders}
Fix $\alpha,\alpha'$. For $\lambda$ in a Zariski open subset, $\alpha\prec^\lambda\alpha'$ is equivalent $\alpha\prec\alpha'$.
\end{Prop}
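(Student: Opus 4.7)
The plan is to reduce both preorders to a common local condition at each fixed point $x\in X^T$, using the completion of the sheaf $\A_\lambda^\theta$ and a PBW computation in the Weyl algebra.

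Local step. Since $X$ is smooth, for $x\in X^T$ the slice at $x$ is trivial, so the homogenized completion $\A_{\lambda,\hbar}^{\theta,\wedge_x}$ of the Rees sheaf is canonically identified with the completed Weyl algebra $\mathbb{A}_\hbar^{\wedge_0}$ of the symplectic vector space $T_xX$, compatibly with the Hamiltonian $T$-action. In $\mathbb{A}_\hbar^{\wedge_0}$ an inductive PBW argument shows that the left ideal generated by all positive $\alpha$-weight elements equals the left ideal generated by the linear subspace $T_xX_{>0,\alpha}$: any positive-weight monomial in a PBW basis can be pulled so as to have a positive-weight linear factor on the right, with the commutator correction of strictly lower PBW degree but still of positive $\alpha$-weight, and $\hbar$-adic completeness lets one iterate. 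Extracting the degree-one part of such a left ideal recovers $T_xX_{>0,\alpha}$, so containment of these two left ideals is equivalent to the linear inclusion $T_xX_{>0,\alpha}\subset T_xX_{>0,\alpha'}$.

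Global step. Let $\mathcal{J}_\alpha:=\A_\lambda^\theta\cdot\A_{\lambda,>0,\alpha}^\theta$ and analogously $\mathcal{J}_{\alpha'}$; these are $T$-equivariant coherent left ideal subsheaves of $\A_\lambda^\theta$. Lemma \ref{Lem:repel_gener} says $\A_\lambda^\theta/\mathcal{J}_\bullet$ quantizes $\mathcal{O}_{Y_\bullet}$, so its filtration is separated and its support is $Y_\bullet$. The inclusion $\mathcal{J}_\alpha\subset\mathcal{J}_{\alpha'}$ is automatic outside $Y_{\alpha'}$ because there $\mathcal{J}_{\alpha'}=\A_\lambda^\theta$; inside $Y_{\alpha'}$ both sheaves are $T$-equivariant and, via the commuting contracting $\C^\times$-action, every point contracts into an arbitrarily small $T$-stable neighborhood of $X^T$, so the containment can be detected on the completions at $X^T$ and by the local step becomes exactly $\alpha\prec\alpha'$. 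Finally, running the deformation/cohomology argument from the proof of Proposition \ref{Prop:A0_descr} with the universal quantization $\tilde{\A}^\theta$ in place of $\A_\lambda^\theta$, one shows that the natural map $\Gamma(\mathcal{J}_\bullet)\to\A_\lambda\A_{\lambda,>0,\bullet}$ is an isomorphism for $\lambda$ in a Zariski open subset; this is where the genericity of $\lambda$ enters and converts the sheaf inclusion back into $\alpha\prec^\lambda\alpha'$.

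Main obstacle. The delicate point is the identification $\Gamma(\mathcal{J}_\bullet)=\A_\lambda\A_{\lambda,>0,\bullet}$ for generic $\lambda$, which amounts to showing that a natural map of coherent $\C[H^2(X)]$-modules on the universal family becomes an isomorphism outside a Zariski closed locus; this is entirely parallel to Proposition \ref{Prop:A0_descr} and pins down the Zariski open subset appearing in the statement. A secondary subtlety is that the tangent space condition is imposed at $X^T$ rather than at the potentially larger sets $X^{\alpha(\C^\times)}$ and $X^{\alpha'(\C^\times)}$; the $T$-equivariance of $\mathcal{J}_\alpha,\mathcal{J}_{\alpha'}$ together with the contracting $\C^\times$-action guarantees that any additional fixed components contribute no independent conditions, since they are moved by $T/\alpha(\C^\times)$ into neighborhoods of $T$-fixed points.
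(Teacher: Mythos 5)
Your architecture is genuinely different from the paper's, and its skeleton is plausible. The paper never sheafifies: it passes to the universal deformation over $H^2(X)$, shows that the discrepancy between $\tilde{\A}/\tilde{\A}\tilde{\A}_{>0,\alpha'}$ and its classical counterpart $\C[\tilde{X}]/\C[\tilde{X}]\C[\tilde{X}]_{>0,\alpha'}$ is $\C[H^2(X)]$-torsion via an $\hbar$-flatness/complete-intersection argument, and then proves the needed commutative inclusion $\C[X_z]_{>0,\alpha}\subset \C[X_z]\C[X_z]_{>0,\alpha'}$ directly on the generic \emph{affine} fiber $X_z$ by the Luna slice theorem at $\alpha'(\C^\times)$-fixed points, reversing the argument for the converse. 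You instead stay on the resolution, analyze completed Weyl algebras at $T$-fixed points (your local PBW step is fine and parallels the proof of Proposition \ref{Prop:A0}(1)), and push all the genericity in $\lambda$ into a comparison between the algebra ideal $\A_\lambda\A_{\lambda,>0,\bullet}$ and a sheaf ideal $\mathcal{J}_\bullet$.

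That deferred comparison, however, is not a routine rerun of Proposition \ref{Prop:A0_descr}; it is where the entire content of the proposition sits, and it is missing. For the direction $\alpha\prec\alpha'\Rightarrow\alpha\prec^\lambda\alpha'$ you need $\Gamma(\mathcal{J}_{\alpha'})=\A_\lambda\A_{\lambda,>0,\alpha'}$ for generic $\lambda$ (note the natural map goes from the algebra ideal into the sections, not in the direction you wrote), which requires a cohomology-vanishing/flatness input for the quotient quantizing $\mathcal{O}_{Y_{\alpha'}}$, analogous to the hypothesis $H^i(X^{\alpha(\C^\times)},\mathcal{O})=0$ imposed in Proposition \ref{Prop:A0_descr}, together with the classical identification on generic fibers; for the converse direction you additionally need that $\mathcal{J}_\alpha$ is generated by its global sections for generic $\lambda$, a localization-type statement you never address. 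Separately, your justification for detecting the sheaf containment at $X^T$ is wrong as stated: points of $X$ do not flow into neighborhoods of $X^T$ under the conical action, and a closed $T$-stable subvariety of $X$ need not contain a $T$-fixed point. The correct argument considers the coherent quotient $(\mathcal{J}_\alpha+\mathcal{J}_{\alpha'})/\mathcal{J}_{\alpha'}$: its support is closed, stable under $T$ and the contracting $\C^\times$, hence its image under the proper map $\rho$ contains $0$, so the support meets the projective variety $\rho^{-1}(0)$ and Borel's fixed point theorem yields a $T$-fixed point in it, where your local step applies; this also disposes of your worry about components of $X^{\alpha(\C^\times)}$ not meeting $X^T$. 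With these repairs the route could likely be completed, but as written the step where the Zariski-open condition on $\lambda$ actually enters is asserted rather than proved.
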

\begin{proof}
The proof is in  several steps. Suppose $\alpha\prec\alpha'$ and let us check that $\alpha\prec^\lambda\alpha'$.

{\it Step 1}.
We need to check that, for a Zariski generic $\lambda$, we have $\A_{\lambda,>0,\alpha}\subset \A_\lambda\A_{\lambda,>0,\alpha'}$
or, equivalently, $\alpha$ has no positive weights on $\A_\lambda/\A_{\lambda}\A_{\lambda,>0,\alpha'}$. This will follow if
we check that the $\tilde{\A}$-submodule in $\tilde{\A}/\tilde{\A}\tilde{\A}_{>0,\alpha'}$ generated by the elements of positive
weight for $\alpha$ is torsion over $\C[H^2(X)]$ (here, as usual, $\tilde{\A}$ stands for the algebra of global sections
of the canonical quantization $\tilde{\A}^\theta$ of $\tilde{X}$). This, in turn, will follow if we prove an analogous
statement for $\gr\tilde{\A}/\tilde{\A}\tilde{\A}_{>0,\alpha'}$.

{\it Step 2}.
We have an epimorphism $\C[\tilde{X}]/\C[\tilde{X}]\C[\tilde{X}]_{>0,\alpha'}\twoheadrightarrow \gr\tilde{\A}/\tilde{\A}\tilde{\A}_{>0,\alpha'}$. We claim that its kernel is again torsion over
$\C[H^2(X)]$, in fact, it is supported on $\mathcal{H}_{\C}$. Consider the $\hbar$-adic completion
$\tilde{\A}_\hbar$ of $R_\hbar(\A)$. Let $\tilde{\A}_\hbar^{reg}$ denote the (completed) localization
of $\tilde{\A}_\hbar$ to $H^2(X)\setminus \mathcal{H}_{\C}$. Then $\tilde{\A}_{\hbar}^{reg}/\tilde{\A}_\hbar^{reg}\tilde{\A}_{\hbar,>0,\alpha'}^{reg}$ coincides
with the  localization of $\tilde{\A}_{\hbar}/\tilde{\A}_\hbar\tilde{\A}_{\hbar,>0,\alpha'}$.
On the other hand, over $H^2(X)\setminus \mathcal{H}_{\C}$, the ideal $\C[\tilde{X}]\C[\tilde{X}]_{>0,\alpha'}$
is a locally complete intersection (given by elements of positive $\alpha'$-weight), compare to the proof
of (2) in Proposition \ref{Prop:A0}. As in the proof of (1) of Proposition \ref{Prop:A0}, this implies that
$\tilde{\A}_{\hbar}^{reg}/\tilde{\A}_\hbar^{reg}\tilde{\A}_{\hbar,>0,\alpha'}^{reg}$ is flat over $\C[\hbar]$.
So the $\hbar$-torsion in  $\tilde{\A}_{\hbar}/\tilde{\A}_\hbar\tilde{\A}_{\hbar,>0,\alpha'}$ is supported
on $\mathcal{H}_{\C}$. This implies the claim in the beginning of this step.

{\it Step 3}.  So we need to check, that under the assumption $\alpha\prec\alpha'$, the submodule
in $\C[\tilde{X}]/\C[\tilde{X}]\C[\tilde{X}]_{>0,\alpha'}$ generated by the elements of positive $\alpha$-weight
is supported on $\mathcal{H}_{\C}$. This is equivalent to the claim that $\C[X_z]_{>0,\alpha}\subset
\C[X_z]\C[X_z]_{>0,\alpha'}$ for $z\not\in \mathcal{H}_{\C}$. Here we write $X_z$ for the fiber of
$\tilde{X}\rightarrow H^2(X)$ over $z$. Note that we still have $(T_xX_z)_{>0,\alpha}\subset (T_xX_z)_{>0,\alpha'}$
for all $x\in X_z^{\alpha'(\C^\times)}$. The inclusion $\C[X_z]_{>0,\alpha}\subset \C[X_z]\C[X_z]_{>0,\alpha'}$ now follows from the Luna slice theorem (for $\alpha(\C^\times)\alpha'(\C^\times)$ applied to $\alpha'(\C^\times)$-fixed points, we would like to
point out that such points are automatically $\alpha(\C^\times)$-fixed).

The proof of $\alpha\prec\alpha'\Rightarrow \alpha\prec^\lambda\alpha'$ is now complete. We can reverse the argument
to see that if $\alpha\prec^\lambda \alpha'$ for Zariski generic $\lambda$, then $\alpha\prec\alpha'$.
\end{proof}

The equivalence classes for $\prec$ are cones in $\mathfrak{C}_{\mathbb{Q}}$ and the pre-order is by inclusion
of the closures. In particular, there are finitely many equivalence classes. So there is a Zariski open subset
where $\prec^\lambda$ refines $\prec$.

Sometimes we will need to determine when $\alpha\prec^\lambda \alpha'$ for a fixed (non Zariski generic) $\lambda$.
Pick one-parameter subgroups $\alpha,\beta:\C^\times\rightarrow T$.

\begin{Lem}\label{Lem:prec_spec}
For $m\gg 0$, we have $\alpha\prec^\lambda m\alpha+\beta$ for all $\lambda$.
\end{Lem}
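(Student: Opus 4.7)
My plan is to use Noetherianity of $\A_\lambda$ to reduce the containment $\A_\lambda\A_{\lambda,>0,\alpha}\subset\A_\lambda\A_{\lambda,>0,m\alpha+\beta}$ to a finite check on $T$-weights of a generating set.

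First, I would record that $\A_\lambda$ is Noetherian: its associated graded $\C[X_0]$ is a finitely generated commutative algebra, so the usual filtered-Noetherian argument applies. I would also note that the left ideal $I_\lambda:=\A_\lambda\A_{\lambda,>0,\alpha}$ is $T$-stable, since the positive-$\alpha$-weight subspace $\A_{\lambda,>0,\alpha}$ is a sum of $T$-weight spaces (as $\alpha$ factors through $T$, positivity of the $\alpha$-weight is a condition on the $T$-weight lattice).

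Second, combining Noetherianity with the standard torus-equivariant decomposition argument, I would choose a finite generating set $x_1,\ldots,x_n\in\A_{\lambda,>0,\alpha}$ of $I_\lambda$ as a left module, consisting of $T$-weight vectors. Let $a_i>0$ denote the $\alpha$-weight and $b_i$ the $\beta$-weight of $x_i$. Since there are only finitely many indices, one can pick an integer $m$ with $ma_i+b_i>0$ for every $i$. Then each $x_i$ lies in $\A_{\lambda,>0,m\alpha+\beta}$, so
$$I_\lambda=\sum_{i=1}^n\A_\lambda x_i\subset\A_\lambda\A_{\lambda,>0,m\alpha+\beta},$$
which is precisely $\alpha\prec^\lambda m\alpha+\beta$.

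If one wants $m$ uniform in $\lambda$, I would run the same argument on the universal canonical quantization $\tilde{\A}$ over $H^2(X)$: it is Noetherian (as $\C[\tilde X_0]$ is finitely generated commutative), and its $T$-stable left ideal $\tilde{\A}\tilde{\A}_{>0,\alpha}$ has a finite $T$-weight-vector generating set. The surjective, $T$-equivariant specialization $\tilde{\A}\twoheadrightarrow\A_\lambda$ carries this set to a generating set of $I_\lambda$ with the same $T$-weights, so a single $m$ works for every $\lambda$. There is no serious obstacle; the one point requiring attention is the extraction of a finite $T$-weight-vector generating set, which is the standard torus-equivariant incarnation of Noetherianity.
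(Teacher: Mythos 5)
Your proof is correct and takes essentially the same route as the paper: both arguments come down to choosing finitely many $T$-semiinvariant elements of positive $\alpha$-weight that generate $\A_\lambda\A_{\lambda,>0,\alpha}$ and then picking $m$ large enough that all their $(m\alpha+\beta)$-weights are positive. The only cosmetic difference is where the finiteness (and the uniformity in $\lambda$) comes from: the paper lifts $T$-semiinvariant generators of the ideal $\C[X_0]_{>0,\alpha}\subset \C[X_0]_{\geqslant 0,\alpha}$, which are independent of $\lambda$, whereas you invoke Noetherianity of $\A_\lambda$, respectively of the universal quantization $\tilde{\A}$.
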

\begin{proof}
Clearly, $\alpha\sim^\lambda m\alpha$ for all $m$. The algebra $\gr\A_{\geqslant 0,\alpha}=\C[X_0]_{\geqslant 0,\alpha}$
is finitely generated, as in the proof of \cite[Lemma 3.1.2]{GL}. So we can choose finitely many $T$-semiinvariant generators of the ideal $\C[X_0]_{>0,\alpha}$ in $\C[X_0]_{\geqslant 0,\alpha}$, say $f_1,\ldots,f_k$.
Let $\tilde{f}_1,\ldots,\tilde{f}_k$ denote their lifts to $T$-semiinvariant elements in $\A:=\A_\lambda$,
these lifts are generators of the ideal $\A_{>0,\alpha}$ in $\A_{\geqslant 0,\alpha}$.
Let $a_1,\ldots,a_k>0$ be their weights for $\alpha$ and $b_1,\ldots,b_k$ be their weights for $\beta$.
Take $m\in \Z_{>0}$ such that $ma_i+b_i>0$ for all $i$. The elements $\tilde{f}_1,\ldots,\tilde{f}_k$ then
lie in $\A_{>0,m\alpha+\beta}$ and so $\A\A_{>0,\alpha}\subset \A\A_{>0,m\alpha+\beta}$.
\end{proof}

\section{Finite dimensional representations in the Gieseker case}\label{S_fin_dim}
In this section we will prove (1) of Theorem \ref{Thm:fin dim} and Theorem \ref{Thm:cat_O_easy}.
First, we prove that the homological duality realizes the Ringel duality of highest weight
categories, Subsection \ref{SS_Hom_vs_Ring}.

Then, in Subsection \ref{SS_fin_dim_proof}, we prove part (1) of Theorem \ref{Thm:fin dim}.
The ideas of the proof are as follows: we use the Cartan construction to show that we cannot have finite dimensional
representations when the denominator is different from $n$ and also that, in the denominator $n$ case,
the category $\mathcal{O}$ is not semisimple. Thanks to Subsection \ref{SS_Hom_vs_Ring}, this means
that there is a module with support of dimension $<\frac{1}{2}\dim X$ in $\mathcal{O}$ (for the algebra
$\bar{\A}_\lambda(n,r)$ with Zariski generic $\lambda$). Using the restriction
functors, we see that this module is finite dimensional. Proposition \ref{Prop:CC_inject} then implies
that there is a unique finite dimensional module.

Finally, in Subsection \ref{SS_cat_O_thm} we prove Theorem \ref{Thm:cat_O_easy}. The main idea is to recover
the category from the homological shifts produced by the Ringel duality.

\subsection{Homological duality vs Ringel duality}\label{SS_Hom_vs_Ring}
We start by proving that the homological duality functor $D$ realizes the contravariant Ringel duality
on categories $\mathcal{O}$.

Here we deal with the case when $X\rightarrow X_0$ is a conical symplectic resolution (satisfying the additional
assumption from Subsection \ref{SS_HC_bimod}). We assume that $X$ comes
equipped with a Hamiltonian $\C^\times$-action $\alpha$ that has finitely many fixed points. We choose a period $\hat{\lambda}$
such that
\begin{itemize}
\item[(i)] $\Ca_\alpha(\pm \hat{\lambda})\cong \C[X^{\alpha(\C^\times)}]$
the categories $\OCat(\A_{\hat{\lambda}}), \OCat(\A_{-\hat{\lambda}})$ are highest weight
with standard objects being Verma modules.
\item[(ii)] $D^b(\OCat(\A_{\hat{\lambda}}))\xrightarrow {\sim} D^b_{\OCat}(\A_{\hat{\lambda}}),
D^b(\OCat(\A_{-\hat{\lambda}}))\xrightarrow {\sim} D^b_{\OCat}(\A_{-\hat{\lambda}})$.
\end{itemize}
We recall that these two conditions hold for a Zariski generic $\hat{\lambda}$.

Let us recall the definition of the (contravariant) Ringel duality. Let $\mathcal{C}_1,\mathcal{C}_2$ be two highest weight categories.
Suppose we have a contravariant equivalence $R: \mathcal{C}_1^\Delta\xrightarrow{\sim}\mathcal{C}_2^\Delta$
(the superscript $\Delta$ means the full subcategories of standardly filtered objects).
Then  it restricts to a contravariant duality between $\mathcal{C}_1\operatorname{-proj}$  and $\mathcal{C}_2\operatorname{-tilt}$.
The former denotes the category of the projective objects in $\mathcal{C}_1$, while the latter is the category
of tilting objects in $\mathcal{C}_2$, i.e., objects that are both standardly and costandardly filtered.
The  equivalence $R$ extends to an equivalence $D^b(\mathcal{C}_1)\xrightarrow{\sim} D^b(\mathcal{C}_2)^{opp}$.
Moreover, the category $\mathcal{C}_2$ gets identified with $\operatorname{End}(T)\operatorname{-mod}$ and, under this identification,
the derived equivalence above is $\operatorname{RHom}_{\mathcal{C}_1}(\bullet,T)$. Here $T$ is the tilting generator of $\mathcal{C}_1$,
i.e., the direct sum of all indecomposable tiltings.
For the proofs of the claims above in this paragraph see \cite[Proposition 4.2]{GGOR}.

We say that $\mathcal{C}_2$ is a Ringel dual of $\mathcal{C}_1$ and write $\mathcal{C}_1^\vee$ for $\mathcal{C}_2$.

\begin{Prop}\label{Prop:contr_Ringel}
Take  $\hat{\lambda}$ in a Zariski open set and such that the abelian localization holds for $(\hat{\lambda},\theta),
(-\hat{\lambda},-\theta)$. Then there is an equivalence $\mathcal{O}(\A_{-\hat{\lambda}})\xrightarrow{\sim} \OCat(\A_{\hat{\lambda}})^\vee$ that intertwines the homological duality functor $D:D^b(\mathcal{O}(\A_{\hat{\lambda}}))\rightarrow
D^b(\mathcal{O}(\A_{-\hat{\lambda}}))^{opp}$ and the contravariant Ringel duality functor $\operatorname{RHom}_{\OCat(\A_{\hat{\lambda}})}(\bullet,T): D^b(\OCat(\A_{\hat{\lambda}}))\rightarrow D^b(\OCat(\A_{\hat{\lambda}})^\vee)^{opp}$.
\end{Prop}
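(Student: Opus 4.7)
The plan is to reduce the statement to Proposition \ref{Prop:WC_vs_D}, which factors $D$ as the composition $\iota\circ\WC_{\hat{\lambda}\to\hat{\lambda}^-}$, and then to identify each factor with a natural piece of the contravariant Ringel duality. Since $X$ is strictly semismall (this is the hypothesis needed for Proposition \ref{Prop:WC_vs_D}), every module in $\OCat(\A_{\pm\hat{\lambda}})$ and $\OCat(\A_{\hat{\lambda}^-})$ is strongly holonomic, so both functors restrict to the derived categories of these $\OCat$'s.

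First I would verify that the long wall-crossing functor $\WC_{\hat{\lambda}\to\hat{\lambda}^-}\colon D^b(\OCat(\A_{\hat{\lambda}}))\to D^b(\OCat(\A_{\hat{\lambda}^-}))$ is a covariant Ringel equivalence, yielding $\OCat(\A_{\hat{\lambda}^-})\cong \OCat(\A_{\hat{\lambda}})^\vee$. The defining check is that $\WC$ sends the tilting generator $T$ of $\OCat(\A_{\hat{\lambda}})$ to the projective generator of $\OCat(\A_{\hat{\lambda}^-})$ in degree zero. Under the abelian localization assumed at $(\hat{\lambda},\theta)$ and $(\hat{\lambda}^-,-\theta)$, both $T$ and this projective generator have explicit geometric descriptions in terms of tilting resp.\ standardly supported sheaves on $X^\theta$ and $X^{-\theta}$; the wall-crossing bimodule $\A_{\hat{\lambda},\hat{\lambda}^--\hat{\lambda}}^{(\theta^-)}$ quantizes a line-bundle module implementing the flop, and one analyzes its derived tensor product with Verma modules to match the two.

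Next I would observe that $\iota$, being a $t$-exact contravariant equivalence, restricts to an anti-equivalence of hearts $\iota_0\colon \OCat(\A_{\hat{\lambda}^-})\xrightarrow{\sim}\OCat(\A_{-\hat{\lambda}})^{opp}$. Tracking the construction of $\iota$ in \cite[Section 4]{BL}, this anti-equivalence preserves the labeling of simples by the common set $X^{\alpha(\C^\times)}$ (both categories are highest weight over the same poset), so it acts as the canonical highest-weight duality and interchanges $\Delta$ with $\nabla$. Composing this duality with the covariant Ringel equivalence from the previous step produces the desired equivalence $\OCat(\A_{-\hat{\lambda}})\xrightarrow{\sim}\OCat(\A_{\hat{\lambda}})^\vee$ intertwining $D=\iota\circ\WC_{\hat{\lambda}\to\hat{\lambda}^-}$ with $\operatorname{RHom}_{\OCat(\A_{\hat{\lambda}})}(\bullet,T)$.

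The main obstacle is the first step: proving that $\WC_{\hat{\lambda}\to\hat{\lambda}^-}$ is Ringel duality rather than merely a derived equivalence. Beyond the equivalence (which follows from \cite[Proposition 6.29]{BPW} combined with abelian localization on both sides), what is needed is the cohomological vanishing forcing $\WC(T)$ to lie in degree zero as a projective generator; equivalently, that $\WC$ is right $t$-exact on standardly filtered objects and sends them to costandardly filtered ones in the heart. This is the technical heart of the argument and is handled by a perversity analysis of the wall-crossing functor in the spirit of \cite{BL}, together with the explicit description of the wall-crossing bimodule recalled in Subsection \ref{SS_dual_WC}.
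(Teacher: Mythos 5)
Your reduction has the logical order backwards, and the step you label as ``the technical heart'' is a genuine gap. The claim that the long wall-crossing functor $\WC_{\hat{\lambda}\to\hat{\lambda}^-}$ realizes (the inverse of) Ringel duality --- equivalently, that it sends the tilting generator $T$ of $\OCat(\A_{\hat{\lambda}})$ to a projective generator concentrated in degree zero --- is not something you can quote or obtain from a generic ``perversity analysis in the spirit of \cite{BL}'': in this paper that statement is exactly what is \emph{deduced} from Proposition \ref{Prop:contr_Ringel} together with Proposition \ref{Prop:WC_vs_D} (see the remark following the proposition, where this is identified with a part of \cite[Conjecture 8.27]{BLPW}), and the perversity statement for the affine wall-crossing (Theorem \ref{Thm:perv}) is itself established much later, using the present results. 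So your first step assumes essentially the content of the proposition being proved. Your second step is also underdeveloped: that $\iota$ from \cite[Section 4]{BL} acts as ``the canonical highest-weight duality'' interchanging $\Delta$ and $\nabla$ with the stated labeling is asserted, not verified, and extracting this from the construction of $\iota$ is nontrivial.

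The paper proves the proposition directly, without wall-crossing. The key input is Lemma \ref{Lem:dual_hom_vanish}: for $\hat{\lambda}$ Zariski generic, $D(\Delta_{\hat{\lambda}}(p))$ is concentrated in homological degree $0$, proved by passing to the universal deformation $\tilde{X}$, using generic flatness and the fact that over generic points of $H^2(X)$ the ideal $\C[\widetilde{X}]\C[\widetilde{X}]_{>0}$ is a locally complete intersection, so $\operatorname{Ext}^i(\Delta,\A_{\hat{\lambda}})$ vanishes outside degree $\frac{1}{2}\dim X$; the same argument computes the characteristic cycle of $D(\Delta_{\hat{\lambda}}(p))$ as the degenerated contracting component. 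One then checks $\operatorname{End}(D\Delta_{\hat{\lambda}}(p))=\C$, the Ext-vanishing $\Ext^i(D\Delta_{\hat{\lambda}}(p),D\Delta_{\hat{\lambda}}(p'))=0$ for $i>0$ or $p\leqslant^\theta p'$, uses that $\leqslant^{\theta}$ and $\leqslant^{-\theta}$ refine to opposite orders, and matches characteristic cycles with those of $\Delta_{-\hat{\lambda}}(p)$; this exhibits $D$ as a contravariant equivalence on standardly filtered subcategories, i.e.\ as the contravariant Ringel duality. Note also that the paper's statement does not assume strict semismallness, whereas your route through Proposition \ref{Prop:WC_vs_D} (and the strong holonomicity of category $\mathcal{O}$ objects) requires it, so even if completed your argument would prove a weaker statement.
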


Let $\Delta_{\hat{\lambda}}$ denote the sum of all standard objects in $\OCat(\A_{\hat{\lambda}})$.
Of course, $\Delta_{\hat{\lambda}}=\A_{\hat{\lambda}}/\A_{\hat{\lambda}}\A_{\hat{\lambda},>0}$.

We write $\theta$ for an element of the ample cone of $X$.

\begin{Lem}\label{Lem:dual_hom_vanish}
For a parameter $\hat{\lambda}$ in a Zariski open subset,
the object $D(\Delta_{\hat{\lambda}}(p))$ is concentrated in homological degree $0$
and, moreover, its characteristic cycle (an element of the vector space with basis formed by the irreducible
components of the contracting variety $Y$) coincides with the class of (the degeneration of) the contracting component of $p$
at a generic fiber of $\tilde{X}\rightarrow H^2(X)$.
\end{Lem}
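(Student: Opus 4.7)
The plan is to reduce the computation of $D(\Delta_{\hat\lambda}(p))$ to a generic fiber of the universal deformation $\tilde X\to H^2(X)$, where $\tilde X_z$ is smooth affine and classical Serre duality applies. First I would lift the construction to the universal quantization $\tilde\A$ and form the universal Verma module $\tilde\Delta(p):=\tilde\A/\tilde\A\tilde\A_{>0,p}$, with $\tilde\A_{>0,p}$ the direct summand of $\tilde\A_{>0}$ corresponding to $p$ under the splitting $\Ca_\alpha(\tilde\A)_{\hat\lambda}\cong\C[X^{\alpha(\C^\times)}]$ available on the Zariski open subset from Proposition~\ref{Prop:A0_descr}. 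Using the contracting-locus bimodule of Lemma~\ref{Lem:repel_gener} together with the same flatness/cohomology vanishing argument as in the proof of Proposition~\ref{Prop:A0_descr}, I would then identify $\gr\tilde\Delta(p)_z=\C[\tilde Y_{p,z}]$ for $z$ in a Zariski open subset of $H^2(X)$, where $\tilde Y_{p,z}\subset\tilde X_z$ is the (smooth, Lagrangian) contracting component of $p$ at the fiber $\tilde X_z$.

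The core homological input is that for generic $z$ the variety $\tilde X_z$ is smooth affine, so $\tilde\A_z$ has finite homological dimension, and a good filtration on $\tilde\Delta(p)_z$ produces a convergent spectral sequence whose $E_1$-page is $\Ext^{\bullet}_{\C[\tilde X_z]}(\C[\tilde Y_{p,z}],\C[\tilde X_z])$ and whose abutment is $\Ext^{\bullet}_{\tilde\A_z}(\tilde\Delta(p)_z,\tilde\A_z)$. Because $\tilde Y_{p,z}$ is a smooth subvariety of codimension $N=\tfrac12\dim X$ of the smooth affine $\tilde X_z$, the $E_1$-page vanishes outside degree $N$, the spectral sequence collapses, and $D(\tilde\Delta(p)_z)$ is concentrated in cohomological degree $0$. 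Flatness of $\tilde\A$ and $\tilde\Delta(p)$ over the parameter (guaranteed by further shrinking the Zariski open set) then transports both the vanishing and the flatness of the surviving $\Ext^N$ to the specialization at $\hat\lambda$, yielding the concentration statement of the lemma for $D(\Delta_{\hat\lambda}(p))$.

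To identify the characteristic cycle, I would invoke Serre duality for the codimension-$N$ smooth closed embedding $\tilde Y_{p,z}\hookrightarrow\tilde X_z$: this yields $\Ext^N_{\C[\tilde X_z]}(\C[\tilde Y_{p,z}],\C[\tilde X_z])\cong\det N_{\tilde Y_{p,z}/\tilde X_z}$, which via the symplectic identification of $N_{\tilde Y_{p,z}/\tilde X_z}$ with $T^\ast_{\tilde Y_{p,z}}$ equals $\omega_{\tilde Y_{p,z}}$, a line bundle on the smooth Lagrangian $\tilde Y_{p,z}$. Hence the associated graded of the unique nonvanishing $\Ext$ is a rank-one coherent sheaf on $\tilde Y_{p,z}$ with no other support, so the characteristic cycle of $D(\tilde\Delta(p)_z)$ in $H_{\mathrm{mid}}(\tilde X_z)$ is $[\tilde Y_{p,z}]$ with multiplicity one, and specializing $z\to 0$ along the family $\{\tilde Y_{p,z}\}$ produces the promised degeneration class in $H_{\mathrm{mid}}(X)$.

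The main obstacle is the middle step: making the spectral sequence and good-filtration argument rigorous in the sheaf-of-filtered-algebras setting so that the $\Ext$ over $\tilde\A_z$ is genuinely computed by a globally defined Koszul-type resolution whose associated graded is the classical Koszul resolution of $\C[\tilde Y_{p,z}]$. The affineness and smoothness of $\tilde X_z$ for $z$ generic are precisely what allow one to globalize the local Koszul resolutions, and once this is in place the concentration and the CC identification follow from the classical commutative-algebra inputs plus the flatness reduction of Proposition~\ref{Prop:A0_descr}.
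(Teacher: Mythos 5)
Your proposal follows essentially the same route as the paper's proof: lift to the universal quantization over $H^2(X)$, use a lifted (graded free) resolution and generic flatness to reduce the $\operatorname{Ext}$-computation to the classical associated graded over a generic fiber $X_z$ (smooth and affine), obtain concentration in degree $N=\frac12\dim X$ from a codimension argument there, and read off the characteristic cycle as the degeneration of the generic-fiber contracting component. The only divergence is that where you assert smoothness/reducedness of $\tilde Y_{p,z}$ and invoke Serre duality to get multiplicity one, the paper only uses the weaker statement that the ideal $\C[\widetilde{X}]\C[\widetilde{X}]_{>0}$ is generically a locally complete intersection (Koszul resolution), which already yields both the vanishing outside degree $N$ and the cycle with its natural multiplicities; your stronger reducedness claim would need an extra justification but is not required for the lemma.
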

\begin{proof}
Let us prove the first claim. What we need to prove is that $\operatorname{Ext}^i(\Delta_{\hat{\lambda}},\A_{\hat{\lambda}})=0$
provided $i\neq \frac{1}{2}\dim X$ for $\hat{\lambda}$ in a Zariski open space.
Our claim will follow follow if we show that the support of $\operatorname{Ext}^i(\tilde{\Delta}, \widetilde{\A})$ in $H^2(X)$
is not dense in $H^2(X)$ and that the $\C[H^2(X)]$-module $\operatorname{Ext}^i(\tilde{\Delta}, \widetilde{\A})$ is generically flat.
Here we write $\widetilde{\Delta}=\widetilde{\A}/\widetilde{\A}\widetilde{\A}_{>0}$.

We can take a graded free resolution of $\gr\widetilde{\Delta}$ and lift it to a free resolution of $\widetilde{\Delta}$.
It follows that the right $\widetilde{\A}$-modules
$\operatorname{Ext}^i(\widetilde{\Delta}, \widetilde{\A})$ are naturally filtered and that the associated
graded modules are subquotients of  $\operatorname{Ext}^i(\gr\widetilde{\Delta}, \C[\widetilde{X}])$.
The claim about generic flatness follows (compare with \cite[Lemma 5.5, Corollary 5.6]{BL}). Also to prove
that claim in the previous paragraph that the support is  not dense
it is enough to prove a similar claim for  $\operatorname{Ext}^i(\gr\widetilde{\Delta}, \C[\widetilde{X}])$.

Set $\widetilde{\Delta}_{cl}:=\C[\widetilde{X}]/\C[\widetilde{X}]\C[\widetilde{X}]_{>0}$.
We have $\widetilde{\Delta}_{cl}\twoheadrightarrow \gr\widetilde{\Delta}$. Moreover, the support of the kernel in $H^2(X)$
is contained in $\mathcal{H}_{\C}$, see Step 2 of the proof of Proposition \ref{Prop:orders}. So it is enough to
show that the support of $\operatorname{Ext}^i(\widetilde{\Delta}_{cl}, \C[\widetilde{X}])$
is not dense when $i\neq \frac{1}{2}\dim X$. This follows  from the observation  that, generically over $H^2(X)$, the ideal  $\C[\widetilde{X}]\C[\widetilde{X}]_{>0}$ is a locally complete intersection in a smooth variety.

The argument above also implies that the associated graded of $D(\Delta_{\hat{\lambda}}(p))$ coincides with
that of $\operatorname{Ext}^{\frac{1}{2}\dim X}(\Delta_{cl,\underline{\lambda}}(p),\C[X_{\underline{\lambda}}])$
for a Zariski generic element $\underline{\lambda}\in H^2(X)$. The latter is just the class of the contracting component
$Y_{\underline{\lambda},p}$ (defined as the sum of components of $X\cap \overline{\C^\times Y_{\underline{\lambda},p}}$
with obvious multiplicities).
%
\end{proof}

\begin{proof}[Proof of Proposition \ref{Prop:contr_Ringel}]
We write $\Delta_{\hat{\lambda}}(p)^\vee$ for $D(\Delta_{\hat{\lambda}}(p))$, thanks to Lemma \ref{Lem:dual_hom_vanish}, this is an object in
$\OCat(\A_{-\hat{\lambda}})$ (and not just a complex in its derived category).  We have $\operatorname{End}(\Delta_{\hat{\lambda}}(p)^\vee)=\C$
and $\Ext^i(\Delta_{\hat{\lambda}}(p)^\vee, \Delta_{\hat{{\lambda}}}(p')^\vee)=0$ if $i>0$ or $p\leqslant^\theta p'$.
We remark that the orders $\leqslant^\theta$ and $\leqslant^{-\theta}$ can be refined
to opposite partial orders (first we refine them to the orders coming by the values of the real moment
maps for the actions of $\mathbb{S}^1\subset \alpha(\C^\times)$, and then refine those),
compare with \cite[5.4]{Gordon}. So it only
remains to prove that the characteristic cycle of $\Delta_{\hat{\lambda}}(p)^\vee$ consists
of the contracting components $Y_{p'}$ with $p'\leqslant^{-\theta}p$.
The characteristic cycle of $\Delta_{\hat{\lambda}}(p)^\vee$ coincides with $\overline{\C^\times Y_{\underline{\lambda},p}}\cap X$,
by Lemma \ref{Lem:dual_hom_vanish}. But the
characteristic cycle of $\Delta_{-\hat{\lambda}}(p)$ is the same. Our claim follows.
\end{proof}

\begin{Rem}
We also have covariant Ringel duality given by $\operatorname{RHom}(T,\bullet)$, it maps costandard objects
to standard ones. Under the assumption that the conical symplectic resolutions of $X_0$ are strictly semismall,
Propositions \ref{Prop:contr_Ringel} and \ref{Prop:WC_vs_D} imply that the long wall-crossing functor
is inverse of the covariant Ringel duality. This proves a part of  \cite[Conjecture 8.27]{BLPW}.
\end{Rem}

\subsection{Proof of Theorem \ref{Thm:fin dim}}\label{SS_fin_dim_proof}
Here we prove (1) of Theorem \ref{Thm:fin dim}. The proof is in several steps.

{\it Step 1}. Let us establish a criterium for the semisimplicity of a highest weight category via the Ringel duality.

\begin{Lem}\label{Lem:hw_techn}
Let $\mathcal{C}$ be a highest weight category and $R:D^b(\mathcal{C})\rightarrow D^b(\mathcal{C}^{\vee})^{opp}$
denote the contravariant Ringel duality. The following conditions are equivalent:
\begin{enumerate}
\item $\mathcal{C}$ is semisimple.
\item We have $H^0(R(L))\neq 0$ for every simple object $L$.
\item every simple  lies in the socle of a standard object.
\end{enumerate}
\end{Lem}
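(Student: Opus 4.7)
The plan is to verify the cycle (1)$\Rightarrow$(3)$\Rightarrow$(1), together with (1)$\Rightarrow$(2) and (2)$\Rightarrow$(3). The implications (1)$\Rightarrow$(2) and (1)$\Rightarrow$(3) are immediate: if $\mathcal{C}$ is semisimple, then every simple $L(p)$ coincides with its standard $\Delta(p)$ (so $L(p)$ sits in the socle of $\Delta(p)$ itself) and with its tilting $T(p)$, which is a direct summand of the tilting generator $T$, so $\Hom(L(p), T)\neq 0$.

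For (2)$\Rightarrow$(3), recall that $R=\operatorname{RHom}_{\mathcal{C}}(\bullet, T)$, so $H^0(R(L))=\Hom(L, T)\neq 0$ provides an embedding $L\hookrightarrow T(q)$ into some indecomposable tilting. Since $T(q)$ carries a $\Delta$-filtration $0=F_0\subset F_1\subset\dots\subset F_n=T(q)$ with $F_i/F_{i-1}\cong \Delta(q_i)$, take the smallest $i$ with $L\subset F_i$. The composition $L\hookrightarrow F_i\twoheadrightarrow F_i/F_{i-1}\cong \Delta(q_i)$ is then nonzero, hence injective since $L$ is simple, so $L$ embeds in the standard $\Delta(q_i)$.

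The main content is (3)$\Rightarrow$(1), which I would prove by reverse induction on the highest weight poset (assumed to be finite, as in all the cases of interest in the paper). The key observation is that if $L(p)\hookrightarrow \Delta(p)$, then since $[\Delta(p):L(p)]=1$ (the simple $L(p)$ occurs exactly once in $\Delta(p)$, namely as its head), the composition factor realized by the socle embedding must coincide with the head, forcing $\Delta(p)=L(p)$. Now for a maximal $p$, (3) supplies $L(p)\hookrightarrow \Delta(q)$; the composition-factor constraint gives $p\leq q$, maximality forces $q=p$, and the observation yields $\Delta(p)=L(p)$. For general $p$, assume inductively that $\Delta(p')=L(p')$ for all $p'>p$; by (3), $L(p)\hookrightarrow \Delta(q)$ with $p\leq q$. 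If $q>p$ then $\Delta(q)=L(q)$ by induction, and $L(p)\hookrightarrow L(q)$ forces $p=q$, contradicting $q>p$. Hence $q=p$ and $\Delta(p)=L(p)$, completing the induction.

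The only slightly delicate point is the bookkeeping in (3)$\Rightarrow$(1), which rests on the fact that $L(p)$ occurs with multiplicity exactly one in its own standard; everything else reduces to a direct unpacking of the definitions and standard properties of standardly filtered modules.
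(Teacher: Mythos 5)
Your implications (1)$\Rightarrow$(2), (1)$\Rightarrow$(3) and (2)$\Rightarrow$(3) are fine; in fact your (2)$\Rightarrow$(3) argument (embedding $L$ into an indecomposable tilting and passing to the lowest layer of a $\Delta$-filtration that meets it) is a more careful version of the paper's one-line justification. The genuine gap is the final inference in (3)$\Rightarrow$(1). Your downward induction does correctly show that (3) forces $\Delta(p)=L(p)$ for every label $p$ (the splitting-off-the-head observation is correct, since a socle copy of $L(p)$ cannot lie in $\operatorname{rad}\Delta(p)$ and $\Delta(p)$ is indecomposable). But ``every standard object is simple'' does not imply that $\mathcal{C}$ is semisimple, and you give no argument for this step. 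Concretely, let $\mathcal{C}$ be finite-dimensional representations of the quiver $1\rightarrow 2$ with the poset $1<2$: then $\Delta(2)=P(2)=L(2)$ and $\Delta(1)=L(1)$, so all standards are simple, yet there is a non-split extension $0\rightarrow L(2)\rightarrow P(1)\rightarrow L(1)\rightarrow 0$, so $\mathcal{C}$ is not semisimple. (In this example the tilting generator is $T=L(1)\oplus P(1)$, and one checks directly that $\Hom(L,T)\neq 0$ for both simples and that each simple is the socle of its own standard; so conditions (2) and (3) alone, interpreted for a bare highest weight category, genuinely do not suffice --- some input about costandard/injective objects, or a duality, has to enter.)

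This is also where your route diverges from the paper's. The paper's proof of (3)$\Rightarrow$(1) does not run an induction producing $\Delta(p)=L(p)$ for all $p$; it takes a maximal label $\lambda$, deduces $\Delta(\lambda)=L(\lambda)$ exactly as you do, and then uses that this simple projective object spans a block, after which the maximal labels are stripped off and one inducts down the poset. The assertion that the simple projective $L(\lambda)$ spans a block (equivalently, that it is also injective) is precisely the ingredient missing from your write-up, and it is not a formal consequence of all standards being simple --- in the example above $L(2)$ is simple and projective but has a nontrivial extension with $L(1)$ on top. To repair your argument you must add this step: for instance, show that the simples you have identified with their standards are also costandard (using the $\nabla$-filtration of $T$, or running the dual argument in the opposite/Ringel-dual category, or whatever duality is available in the intended setting), rather than stopping at ``all $\Delta(p)$ are simple.''
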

\begin{proof}
The implication (1)$\Rightarrow$(2) is clear. The implication (2)$\Rightarrow$(3) follows from the fact that every standard
object in a highest weight category is included into an indecomposable tilting.

Let us prove (3)$\Rightarrow$(1). Let $\lambda$ be a maximal (with respect to the coarsest highest weight ordering) label. Then the simple $L(\lambda)$ lies in the socle of some standard, say $\Delta(\mu)$. But all simple constituents of $\Delta(\mu)$ are $L(\nu)$ with $\nu\leqslant \mu$. It follows that $\mu=\lambda$. Since $L(\lambda)$ lies in the socle of $\Delta(\lambda)$ and also coincides with the head, we see that $\Delta(\lambda)=L(\lambda)$. So $L(\lambda)$ is projective and therefore spans
a block in the category. Since this holds for any maximal $\lambda$, we deduce that the category $\mathcal{C}$ is semisimple.
\end{proof}

Let us remark that for the category $\mathcal{O}(\bar{\A}_\lambda(n,r))$ condition (2) is equivalent to every simple
having support of dimension $rn-1$. This follows from Subsection \ref{SS_dual_WC} and Proposition \ref{Prop:contr_Ringel}.

Below in this proof we assume that $\lambda$ is chosen as in Proposition \ref{Prop:contr_Ringel}, in particular,
the categories $\OCat(\bar{\A}_\lambda(n,r))$ and $\OCat(\bar{\A}^\theta_\lambda(n,r))$ are equivalent. In the definition of categories
$\mathcal{O}$ we choose the torus of the form $(\alpha,1)$, where $\alpha:\C^\times\rightarrow \GL(r)$
is given by $t\mapsto (t^{d_1},\ldots,t^{d_r})$ with $d_1\gg d_2\gg\ldots\gg d_r$.

{\it Step 2}. Let us prove that the category $\mathcal{O}(\bar{\A}^\theta_{\lambda}(n,r))$ is semisimple, when $\lambda\not\in \mathbb{Q}$ or
the denominator of $\lambda$ is bigger than $n$. The proof is by induction on $n$ (for $n=0$ the claim is vacuous).

By Corollary \ref{Cor:full_supp}, we see that a simple $\bar{\A}_\lambda(n,r)$-module whose support has dimension
$<rn-1$ is annihilated by a proper ideal of $\bar{\A}_\lambda(n,r)$. We claim that any such ideal has finite codimension
under our assumption on $\lambda$. Indeed, otherwise some proper slice algebra has an ideal of finite codimension, see Proposition \ref{Prop:dagger_prop}, which contradicts our inductive assumption. So the support of  a simple has dimension either $rn-1$
or $0$.


If we know that all simple modules have support of dimension $rn-1$, we are done.
But thanks to Corollary \ref{Cor:shift_Gies}, Proposition \ref{Prop:A0_descr} and known results on finite dimensional
$\bar{\A}_\lambda(n,1)$-modules, \cite{BEG}, we see that $\Ca_\alpha(\bar{\A}_\lambda(n,r))$ has no finite dimensional modules
(we obviously have $\Ca_\alpha(\A_\lambda(n,r))=D(\C)\otimes \Ca_\alpha(\bar{\A}_\lambda(n,r))$ and none of the summands of
$\A_\lambda(n,r)^0$ has simple of GK dimension $1$ in category $\mathcal{O}$).

{\it Step 3}. The description of $\Ca_\alpha(\bar{\A}_\lambda(n,r))$ shows that there are no finite dimensional $\bar{\A}_\lambda(n,r)$-modules in the case when the denominator of $\lambda$ is less than $n$.

{\it Step 4}. Now consider the case of denominator $n$. Similarly to Step 2, all simples are either finite dimensional or have support
of dimension $rn-1$. By Lemma \ref{Lem:top_cohom}, the dimension of the middle homology of $\bar{\M}^\theta(n,r)$ is $1$.
Thanks to Proposition \ref{Prop:CC_inject}, the number of finite dimensional irreducibles is 0 or $1$. If there is one such module,
then the category of finite dimensional modules is semisimple because $\mathcal{O}(\bar{\A}_\lambda(n,r))$ is a highest weight category.
Thanks to Step 1, we only need to show that   $\mathcal{O}(\bar{\A}_\lambda(n,r))$ is not semisimple.

One-parameter subgroups $\alpha:t\mapsto \operatorname{diag}(t^{d_1},\ldots, t^{d_r})$ with $d_1\gg\ldots \gg d_r$ form one equivalence
class for the pre-order $\prec$. This cone is a face of the equivalence class containing $(\alpha,1)$. Proposition \ref{Prop:orders}
implies that $\alpha\prec^\lambda (\alpha,1)$. Now we can use Lemma \ref{Lem:parab_ind}.
%


Let us write $\Delta^0,\underline{\Delta}$ for the Verma module functors $\Delta^0:\C[\mathsf{P}_r(n)]\rightarrow \mathcal{O}(\Ca_\alpha(\A_\lambda(n,r)))$
and $\underline{\Delta}:\mathcal{O}(\Ca_\alpha(\A_\lambda(n,r)))\rightarrow \mathcal{O}(\A_\lambda(n,r))$,
here we write $\mathsf{P}_r(n)$ for the set of the $r$-multipartitions of $n$.
By Lemma \ref{Lem:parab_ind}, we have $\Delta=\underline{\Delta}\circ \Delta^0$.
The  category $\mathcal{O}(\Ca_\alpha(\A_\lambda(n,r)))$ is not semisimple:
there is a nonzero homomorphism $\varphi:\Delta^0(p_2)\rightarrow \Delta^0(p_1)$, where $p_1=(\varnothing^{r-1}, (n)),
p_2=(\varnothing^{r-1}, (n-1,1))$. So we get a homomorphism $\underline{\Delta}(\varphi):\Delta(p_2)=\underline{\Delta}(\Delta^0(p_2))
\rightarrow \underline{\Delta}(\Delta^0(p_1))=\Delta(p_1)$. The highest $\alpha$-weight components
of $\Delta(p_2),\Delta(p_1)$ coincide with $\Delta^0(p_2),
\Delta^0(p_1)$, respectively, by the construction. The homomorphism $\Delta^0(p_2)\rightarrow \Delta^0(p_1)$ induced
by $\underline{\Delta}(\varphi)$ coincides with $\varphi$. It follows that $\underline{\Delta}(\varphi)\neq 0$. We conclude that $\mathcal{O}(\A_\lambda(n,r))$ is not semisimple.

This completes the proof of all claims of the theorem but the claim that the category of modules supported on $\rho^{-1}(0)$
is semisimple. The latter is an easy consequence of the observation that, in a highest weight category, we have
$\operatorname{Ext}^1(L,L)=0$. We would like to point out that the argument of the previous paragraph generalizes
to the denominators less than $n$. So in those cases there are also simple $\bar{\A}_\lambda(n,r)$-modules of support with
dimension $<rn-1$.

\subsection{Proof of Theorem \ref{Thm:cat_O_easy}}\label{SS_cat_O_thm}
In this subsection we will prove Theorem \ref{Thm:cat_O_easy}. We have already seen in the previous subsection that if the denominator
is bigger than $n$, then the category $\mathcal{O}$ is semisimple. The case of denominator $n$ will follow from a more precise statement,
Theorem \ref{Thm:catO_str}.

Let us introduce a certain model category. Let $\Cat_n$ denote the nontrivial block for the category $\mathcal{O}$ for the Rational Cherednik algebra $\mathcal{H}_{1/n}(n)$
for the symmetric group $\mathfrak{S}_n$. Let us summarize some properties of this category.
\begin{itemize}
\item[(i)] Its coarsest highest weight poset is linearly ordered: $p_n<p_{n-1}<\ldots<p_1$.
\item[(ii)] The  objects $I(p_i)$ for $i>1$ are universal extensions $0\rightarrow \nabla(p_{i})\rightarrow
I(p_i)\rightarrow \nabla(p_{i-1})\rightarrow 0$. Here we write $\nabla(p_i),I(p_i)$ for the costandard
and the indecomposable injective objects of $\Cat_n$ labeled by $p_i$.
\item[(iii)] The indecomposable tilting objects $T(p_{i-1})$ for $i>1$ coincide with $I(p_i)$.
\item[(iv)] The simple objects $L(p_i)$ with $i>1$ appear in the socles of tiltings, while
$\operatorname{RHom}_{\Cat_n}(L(p_1),T)$
is concentrated in homological degree $n$.
\item[(v)] There is a unique simple in $\Cat_n^\vee$ that appears in the higher cohomology of $\operatorname{RHom}_{\Cat_n}(\bullet,T)$.
\end{itemize}

\begin{Thm}\label{Thm:catO_str}
Consider a parameter of the form $\lambda=\frac{q}{n}$ with coprime $q,n$. Then  the following is true.
\begin{enumerate}
\item The category $\mathcal{O}(\bar{\A}_\lambda^\theta(n,r))$ has only one nontrivial block that is equivalent
to $\Cat_{rn}$. This block contains an irreducible representation supported on $\bar{\rho}^{-1}(0)$.
\item Suppose the one parameter torus used to define the category $\mathcal{O}$ is of the form $t\mapsto (\alpha(t),t)$, where
$\alpha(t)=\operatorname{diag}(t^{d_1},\ldots, t^{d_r})$ with
$d_i-d_{i+1}>n$ for all $i$. Then the labels in the non-trivial block of
$\mathcal{O}(\bar{\A}_\lambda^\theta(n,r))$ are hooks $h_{i,d}=(\varnothing,\ldots, (n+1-d, 1^{d-1}),\ldots, \varnothing)$
(where $i$ is the number of the diagram where the hook appears) ordered by $h_{1,n}>h_{1,n-1}>\ldots>h_{1,1}>h_{2,n}>\ldots>h_{2,1}>\ldots>h_{r,1}$.
\end{enumerate}
\end{Thm}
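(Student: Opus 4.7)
The plan is to apply parabolic induction via the Cartan functor for the subtorus $\alpha:\C^\times\to\GL(r)$, $t\mapsto\diag(t^{d_1},\ldots,t^{d_r})$. By Lemma \ref{Lem:prec_spec} and Proposition \ref{Prop:orders}, with $d_1\gg\ldots\gg d_r$ we obtain $\alpha\prec^\lambda(\alpha,1)$, and Lemma \ref{Lem:parab_ind} factors the Verma functor as $\Delta=\underline{\Delta}\circ\Delta^0$ through $\OCat(\Ca_\alpha(\bar{\A}_\lambda(n,r)))$. The barred analogue of Corollary \ref{Cor:shift_Gies}, combined with Proposition \ref{Prop:A0_descr}, identifies $\Ca_\alpha(\bar{\A}_\lambda(n,r))$ with the direct sum, over $r$-compositions $n=n_1+\ldots+n_r$, of the tensor products $\bigotimes_{i=1}^r\bar{\A}_{\lambda+(i-1)}(n_i,1)$. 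Each shifted parameter $\lambda+(i-1)=(q+(i-1)n)/n$ still has denominator $n$, so by the known theory of type~$A$ Cherednik categories, the factor $\bar{\A}_{\lambda+(i-1)}(n_i,1)$ contributes a non-trivial block (necessarily $\Cat_n$, labeled by hooks of $n$) exactly when $n_i=n$; otherwise it is semisimple. Since $\sum n_i=n$, the surviving compositions are the $r$ sequences with one $n_i=n$ and the rest zero, producing a total of $rn$ candidate labels in $\OCat(\bar{\A}_\lambda^\theta(n,r))$, identified with the hooks $h_{i,d}$ via the fixed-point description from Subsection \ref{SS_Gies}.

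For the ordering in (2), the geometric order on fixed points pulls back along $\underline{\Delta}$ to a refinement of the highest weight order. Within a single position this is the $\Cat_n$-order on hooks. The assumption $d_i-d_{i+1}>n$ makes the $\alpha$-weight gap between adjacent positions strictly exceed the internal $\alpha$-variation within one position, so the entire chain at position $i$ dominates that at position $i+1$, yielding the prescribed order $h_{1,n}>\ldots>h_{1,1}>h_{2,n}>\ldots>h_{r,1}$.

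To prove (1), I first argue there is at most one non-semisimple block. Theorem \ref{Thm:fin dim}(1) gives a unique simple supported on $\bar{\rho}^{-1}(0)$. An inductive use of Proposition \ref{Prop:dagger_prop} — proper slice algebras correspond to compositions of $n$ whose parts are strictly less than the denominator $n$, hence admit no finite-dimensional representations at parameter $\lambda$ — combined with Corollary \ref{Cor:full_supp} shows that every simple has support either a point or the whole leaf of dimension $rn-1$. Consequently every non-semisimple block contains the unique finite-dimensional simple, forcing uniqueness of the non-semisimple block. Combined with the previous paragraph, this block has exactly $rn$ simples in the claimed linear order.

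To complete the identification with $\Cat_{rn}$, I would use Proposition \ref{Prop:contr_Ringel}: the block is determined up to equivalence by its highest weight poset together with the pattern of higher cohomology of the contravariant Ringel dual applied to simples. Parabolic induction transports the chain structure (i)--(v) of $\Cat_n$ to each of the $r$ position-subblocks. The main obstacle will be showing that the chain connects across the boundaries between positions, i.e.\ producing nonzero $\Ext^1(L(h_{i,1}),L(h_{i+1,n}))$ for each $i$. This is invisible to the Cartan/parabolic machinery alone and will require additional input: the one-dimensionality of $H_{2rn-2}(\bar{\M}^\theta(n,r))$ from Lemma \ref{Lem:top_cohom} together with Proposition \ref{Prop:CC_inject} constrains the possible characteristic cycles of simples with small support, and the identification of the long wall-crossing with the Ringel duality from Proposition \ref{Prop:WC_vs_D} pins down the precise extensions threading the $r$ position-subblocks into a single chain equivalent to $\Cat_{rn}$.
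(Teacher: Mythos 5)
There is a genuine gap, in two places, and both are at the heart of the theorem. First, your count of the labels in the nontrivial block is not justified. The Cartan functor computation only tells you about the block structure of $\OCat(\Ca_\alpha(\bar{\A}_\lambda(n,r)))$; parabolic induction $\underline{\Delta}$ does not control the block decomposition of $\OCat(\bar{\A}_\lambda(n,r))$ itself, and blocks upstairs can (and here do) merge: every Cartan-level nontrivial block has $n$ simples, while the upstairs block has $rn$. So ``the factor is semisimple at the Cartan level, hence the corresponding simples upstairs lie in semisimple blocks'' is exactly the implication you are not entitled to, and with it you lose both the upper bound $\#\Lambda\leqslant rn$ and the claim that only hooks occur. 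Second, you yourself flag the cross-position extensions $\Ext^1(L(h_{i,1}),L(h_{i+1,n}))$ as the missing ingredient, and the tools you point to do not supply them: Proposition \ref{Prop:CC_inject} and Lemma \ref{Lem:top_cohom} only constrain modules supported on $\bar{\rho}^{-1}(0)$, and Proposition \ref{Prop:contr_Ringel} does not say that a highest weight block is determined by its poset together with a pattern of higher Ringel-dual cohomology; so the identification with $\Cat_{rn}$ is not actually proved.

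The paper's argument runs differently and never computes any individual $\Ext^1$ between named simples. Working with $\lambda$ Zariski generic satisfying abelian localization, one first gets (as in Theorem \ref{Thm:fin dim}) that all simples outside one block have maximal support, so there is a single nontrivial block $\mathcal{C}$ containing the finite dimensional simple $L$. Using the coincidence of the homological and Ringel dualities (Proposition \ref{Prop:contr_Ringel}) and Corollary \ref{Cor:D_higher_codim}, exactly one simple has higher cohomology under the duality; from this one deduces that all indecomposable tiltings but one are injective, that the socle of every tilting is simple, and hence that a ``successor'' map $\nu$ exists which forces the highest weight poset of $\mathcal{C}$ to be a chain. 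Its length is computed to be $rn$ because $\operatorname{RHom}(L,T)$ sits in homological degree $rn-1$ (as $D(L)$ is concentrated in degree $\frac{1}{2}\dim\bar{\M}^\theta(n,r)=rn-1$). That the $rn$ hooks all lie in $\mathcal{C}$ is shown by producing, via parabolic induction exactly as in Step 4 of the proof of Theorem \ref{Thm:fin dim}, nonzero homomorphisms between hook-labeled Vermas; equality then follows by counting. Finally the equivalence $\mathcal{C}\cong\Cat_{rn}$ is obtained by explicitly reconstructing the multiplication table of $\operatorname{End}(\bigoplus_i I(p_i))$ from the chain structure, and the order in (2) is taken from Korb's computation of the geometric order on torus fixed points rather than from a weight-gap estimate. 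If you want to salvage your outline, you would need to replace the Cartan-level block bookkeeping by this duality/tilting analysis (or some substitute for it), since that is what yields both the number $rn$ and the single-chain structure.
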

\begin{proof}
The proof is in several steps. We again deal with the realization of our category as $\OCat(\bar{\A}_\lambda(n,r))$, where
$\lambda$ is Zariski generic and such that $(\lambda,\theta)$ satisfies the abelian localization.

{\it Step 1}. As we have seen in Step 4 of the proof of Theorem \ref{Thm:fin dim}, all simples have maximal dimension of support,
except one, let us denote it by $L$, which is finite dimensional. So all blocks but one consist of modules with support of maximal
dimension. Now arguing as in the first two steps of the proof of Theorem \ref{Thm:fin dim}, we see that the blocks
that do not contain $L$ are simple. Let $\mathcal{C}$ denote the nontrivial block. The label of $L$, denote it by $p_{max}$,
is the largest in any highest weight ordering. For all other labels $p$ the simple $L(p)$ lies in the socle
of the tilting generator $T$.  In other words an analog of (iv) above holds for $\mathcal{C}$ with $rn$ instead of
$n$. In the subsequent steps we will show that $\mathcal{C}\cong \mathcal{C}_{rn}$.

{\it Step 2}. Let us show that an analog of (v) holds for $\mathcal{C}$.  By Corollary \ref{Cor:D_higher_codim},
the higher cohomology of $D(L)$ cannot have support of maximal dimension. It follows that the higher cohomology is
finite dimensional and so are direct sums of a single simple in $\mathcal{O}(\bar{\A}_{-r-\lambda}(n,r))$. Since the Ringel duality is the same as the
homological duality (up to an equivalence of abelian categories, see Proposition \ref{Prop:contr_Ringel}), we are done.

{\it Step 3}.  Let us show that there is a unique minimal label for $\mathcal{C}$, say $p_{min}$. This is equivalent to $\mathcal{C}^\vee$ having a unique maximal label because the orders on $\mathcal{C}$ and $\mathcal{C}^\vee$ are opposite.
But $\mathcal{C}^\vee$ is equivalent to the nontrivial block in $\OCat(\bar{\A}_{-r-\lambda}(n,r))$.
So we are done by Step 1 (applied to $-r-\lambda$ instead of $\lambda$) of this proof.

{\it Step 4}. Let us show that (v) implies that any tilting in $\mathcal{C}$ but one is injective.
Let $R^\vee$ denote the Ringel duality equivalence $D^b(\mathcal{C}^\vee)\rightarrow D^b(\mathcal{C})^{opp}$.
Let us label the tiltings by the label of the top costandard in a filtration with costandard subsequent quotients.
We have $\Ext^i(L(p'),T(p))=\Hom(L(p')[i],T(p))=\Hom((R^\vee)^{-1}T(p)[i], (R^\vee)^{-1}L(p'))$. The objects $(R^{\vee})^{-1}T(p)$ are projective so $\Ext^i(L(p'),T(p))=\Hom((R^\vee)^{-1} T(p), H^i((R^\vee)^{-1} L(p')))$. Similarly to the
previous step (applied to  $\mathcal{C}^\vee$ instead of $\mathcal{C}$ and $(R^{\vee})^{-1}$ instead of $R$),
there is a unique indecomposable projective $P^\vee(p^\vee)$ in $\Cat^\vee$ that can map nontrivially to a higher
homology of $(R^{\vee})^{-1} L(p)$. So if $(R^\vee)^{-1} T(p)\neq P^\vee(p^\vee)$, then $T(p)$ is injective.

{\it Step 5}. We remark that $\nabla(p_{max})$ is injective but not tilting, while $\nabla(p_{min})$ is tilting
but not injective. So the injectives in $\mathcal{C}$ are $\nabla(p_{max})$ and $T(p)$ for $p\neq p_{min}$.
Similarly, the tiltings are $I(p), p\neq p_{max}$, and $\nabla(p_{min})$.

{\it Step 6}. Let $\Lambda$ denote the highest weight poset for $\mathcal{C}$.
Let us define a map $\nu: \Lambda\setminus \{p_{min}\}\rightarrow \Lambda\setminus \{p_{max}\}$.
It follows from  Step 5 that  the socle of any tilting in $\mathcal{C}$ is simple. By definition,
$\nu(p)$ is such that $L(\nu(p))$
is the socle of $T(p)$. We remark that $\nu(p)\leqslant p$ for any highest weight order.

{\it Step 7}. Let us show that any element $p\in \Lambda$ has the form $\nu^i(p_{max})$. Assume the converse
and let us pick the maximal element not of this form, say $p'$. Since $p'\neq p_{max}$, we see that $L(p')$ lies
in the socle of some tilting. But the socle of any indecomposable tilting is simple. So $\nabla(p')$ is a bottom term of a
filtration   with constandard subsequent quotients. By the definition of $\nu$ and the choice of $p'$, $\nabla(p')$ is tilting itself.
Any indecomposable tilting but $\nabla(p_{min})$ is injective and we cannot have a costandard that is injective
and tilting simultaneously. So $p'=p_{min}$. But let us pick a minimal element $p''$ in $\Lambda\setminus \{p_{min}\}$. By above in this step, $\nu(p'')<p''$. So $\nu(p'')=p_{min}$. The claim in the beginning of
the step is established. This proves (i) for $\mathcal{C}$.

{\it Step 8}. (ii) for $\mathcal{C}$ follows from Step 7 and (iii) follows from (ii) and Step 5.

{\it Step 9}. Let us show that $\#\Lambda=rn$. The minimal injective resolution for $\nabla(p_{min})$
has length $\#\Lambda$, all injectives there are different, and the last term is $\nabla(p_{max})$. It follows that
$\operatorname{RHom}(L(p_{max}),\nabla(p_{min}))$ is concentrated in homological degree $\#\Lambda-1$.
The other tiltings are injectives and $\operatorname{RHom}$'s with them amount to $\Hom$'s. Since $\operatorname{RHom}(L(p_{max}),T)$
is concentrated in homological degree $rn-1$ (because of the coincidence of the Ringel and the homological dualities), we are done.

{\it Step 10}. Let us complete the proof of (1). Let us order the labels in $\Lambda$  decreasingly, $p_1>\ldots>p_{rn}$.
Using (ii) we get the following claims.
\begin{itemize}
\item $\operatorname{End}(I(p_i))=\C[x]/(x^2)$ for $i>1$ and $\operatorname{End}(I(p_1))=\C$.
\item $\operatorname{Hom}(I(p_i),I(p_j))$ is 1-dimensional if
$|i-j|=1$ and is $0$ if $|i-j|>1$.
\end{itemize}
Choose some basis elements $a_{i,i+1}, i=1,\ldots,rn-1$  in $\operatorname{Hom}(I(p_i), I(p_{i+1}))$
and also basis elements $a_{i+1,i}\in \operatorname{Hom}(I(p_{i+1}),I(p_i))$. We remark that
the image of the composition map $\Hom(I(p_i),I(p_{i+1}))\times \Hom(I(p_{i+1}),I(p_i))
\rightarrow \operatorname{End}(I(p_i))$ spans the maximal ideal. Choose  generators
$a_{ii}$ in the maximal ideals of $\End(I(p_i)), i=2,\ldots,rn$.
Normalize $a_{21}$ by requiring that $a_{21}a_{12}=a_{22}$, automatically, $a_{12}a_{21}=0$. Normalize $a_{32}$
by $a_{23}a_{32}=a_{22}$ and then normalize $a_{33}$ by $a_{33}=a_{32}a_{23}$. We continue normalizing $a_{i+1,i}$
and $a_{i+1,i+1}$ in this way. We then recover the multiplication table in $\operatorname{End}(\bigoplus I(\lambda_i))$
in a unique way. This completes the proof of (1).

{\it Step 11}. Now let us prove (2).
Let us check that the labeling set  $\Lambda$ for the nontrivial block of $\OCat(\bar{\A}^\theta_\lambda(n,r))$ consists of hooks.
For this, it is enough to check that $\Delta(h_{i,d})$ does not form a block. This in turn, will follow
if we check that there is a nontrivial homomorphism between $\Delta(h_{i,d})$ and some other $\Delta(h_{i,d'})$.
This is done similarly to the second paragraph of Step 4 in the proof of Theorem \ref{Thm:fin dim}.
Now, according to \cite{Korb},
the hooks are ordered as specified in (2) with respect to the geometric order
on the torus fixed points in $\M^\theta(n,r)$ (note that the sign conventions here and in \cite{Korb}
are different).
\end{proof}

\begin{Rem}\label{Rem:simpl_label}
We can determine the label of the simple supported on $\bar{\rho}^{-1}(0)$ in the category $\OCat$ corresponding to
an arbitrary generic torus. Namely, note that $\bar{\rho}^{-1}(0)$ coincides with the closure of a single contracting component
and that contracting component corresponds to the maximal point. Now we can use results of \cite{Korb} to find a label
of the point: it always has only one nontrivial partition and this partition is either $(n)$ or $(1^n)$.
\end{Rem}

\section{Localization theorems in the Gieseker case}\label{S_loc}
In this section we prove Theorem \ref{Thm:loc}. The proof is in the following steps.
\begin{itemize}
\item We apply results of McGerty and Nevins, \cite{MN_ab}, to show that, first, if the abelian localization  fails for $(\lambda,\theta)$, then $\lambda$ is a rational number with denominator not exceeding $n$, and, second,  the parameters $\lambda=\frac{q}{m}$
with $m\leqslant n$ and $-r<\lambda<0$ are indeed singular and the functor $\Gamma_\lambda^\theta$ is
exact when $\lambda>-r, \theta>0$ or $\lambda<0,\theta<0$. Thanks to an isomorphism $\A_{\lambda}^\theta(n,r)\cong
\A_{-\lambda-r}^{-\theta}(n,r)$, this reduces the conjecture to checking that the abelian localization
holds for $\lambda=\frac{q}{m}$ with $q\geqslant 0, m\leqslant n$.
\item Then we reduce the proof to the case when the denominator is precisely $n$ and $\lambda,\theta>0$.
\item Then we will study a connection between the algebras $\Ca_\alpha(\bar{\A}_{\lambda}(n,r)),
\Gamma(\Ca_\alpha(\bar{\A}^\theta_{\lambda}(n,r)))$. We will show that the numbers of simples in the categories
$\OCat$ for these algebras coincide.  We deduce the localization theorem from there.
\end{itemize}

The last step is a crucial one and it does not generalize to other quiver varieties.

\subsection{Results of McGerty and Nevins and consequences}\label{SS_MN_appl}

In \cite{MN_ab}, McGerty and Nevins found a sufficient condition for the functor $\Gamma_\lambda^\theta:\A_\lambda^\theta(n,r)\operatorname{-mod}\rightarrow
\A_\lambda(n,r)\operatorname{-mod}$ to be exact (they were dealing with more general
Hamiltonian reductions but we will only need the Gieseker case). Let us explain what their
result give in the case of interest for us. Consider the quotient functors
$\pi_\lambda: D_R\operatorname{-mod}^{G,\lambda}\twoheadrightarrow \A_\lambda(n,r)\operatorname{-mod}$
and $\pi_\lambda^\theta:D_R\operatorname{-mod}^{G,\lambda}\twoheadrightarrow \A_\lambda^\theta(n,r)\operatorname{-mod}$.

\begin{Prop}\label{Prop:MN}
The inclusion $\ker \pi_{\lambda}^{\det}\subset \ker \pi_\lambda$ holds  provided $\lambda>-r$. Similarly, $\ker\pi_{\lambda}^{\det^{-1}}
\subset \pi_{\lambda}$  provided $\lambda< 0$.
\end{Prop}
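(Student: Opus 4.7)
The plan is to apply the main theorem of McGerty--Nevins \cite{MN_ab} directly. Recall that, for a quantum Hamiltonian reduction of $D(R)$ by a reductive group $G$ with moment-map parameter $\lambda$ and stability condition $\theta$, their result gives a sufficient condition for the kernel inclusion $\ker\pi_\lambda^\theta \subset \ker\pi_\lambda$: it takes the form of a finite list of linear non-resonance inequalities on $\lambda$, one for each $\theta$-destabilizing one-parameter subgroup $\nu:\C^\times\rightarrow G$ (modulo conjugation and positive rescaling), with coefficients determined by the weights of $\nu$ on $R$ together with a shift coming from $\varrho$, half the character of the top exterior power of $R^*$.

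In our setting $G=\GL(V)$ with $V=\C^n$ acts on $R = \End(V)\oplus \Hom(V,W)$ where $W=\C^r$. Up to conjugation we may take a one-parameter subgroup to have the form $\nu = \diag(\nu_1,\ldots,\nu_n)$, and it is $\det$-destabilizing precisely when $\sum_i \nu_i \leqslant 0$. The weights of $\nu$ on $\End(V)$ are $\nu_i-\nu_j$, while on $\Hom(V,W)$ they are $-\nu_i$, each with multiplicity $r$. Computing the positive-weight trace $\tr(\nu|R^\nu_{>0}) = \sum_{\nu_i>\nu_j}(\nu_i-\nu_j) + r\sum_{\nu_i<0}(-\nu_i)$ and substituting into the McGerty--Nevins inequality yields a linear condition on $\lambda$ of slope $-\sum_i\nu_i$. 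The extremal rays of the $\det$-unstable cone are represented by $\nu_k = \diag(0,\ldots,0,-1,\ldots,-1)$ with $k$ entries equal to $-1$, for $k=1,\ldots,n$; a direct check shows that for each such $\nu_k$ the resulting inequality reduces to $\lambda > -r$, which is therefore enough to guarantee $\ker\pi_\lambda^{\det}\subset\ker\pi_\lambda$.

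The second assertion follows symmetrically: the $\det^{-1}$-destabilizing extremal cocharacters are $\nu_k^+ = \diag(1,\ldots,1,0,\ldots,0)$, and the analogous computation specializes the non-resonance inequality to $\lambda < 0$. Alternatively, one can deduce it from the first assertion by combining (\ref{eq:opp_iso}) with Lemma \ref{Lem:iso}, which identify $\A_\lambda^\theta(n,r)\cong \A_{-\lambda-r}^{-\theta}(n,r)$. The main technical obstacle is the bookkeeping of the $\varrho$-shift when translating between the moment-map parameter $\lambda$ (in which the proposition is phrased) and the period of the quantization (in which the McGerty--Nevins criterion is cleanest); once this shift is accounted for, the reduction to extremal cocharacters is elementary and $\lambda > -r$ (respectively $\lambda < 0$) emerges as the tightest inequality in the finite family produced by the criterion.
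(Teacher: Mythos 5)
Your proposal follows essentially the same route as the paper: both are direct applications of the McGerty--Nevins criterion to the same finite family of Kempf--Ness one-parameter subgroups (up to conjugation and rescaling, those with tangent vector $e_1+\cdots+e_k$, $1\leqslant k\leqslant n$; your $\operatorname{diag}(0,\ldots,0,-1,\ldots,-1)$ list is the same family up to sign conventions), the only new input relative to the $r=1$ case being that the framing contributes its weights $r$ times. The differences are in execution. The paper does not recompute the criterion on $R=\End(V)\oplus\Hom(V,W)$: it first applies a partial Fourier transform in the framing directions, giving $D(R)\operatorname{-mod}^{G,\lambda}\cong D(R')\operatorname{-mod}^{G,\lambda+r}$ with $R'=\End(V)\oplus\Hom(W,V)$, so that the torus weights and hence the Kempf--Ness subgroups are literally those of the Hilbert-scheme computation in [MN, Section 8] and the only change is the shift $\frac{rk}{2}$; this step is precisely the ``$\varrho$-shift bookkeeping'' you identify as the main obstacle but leave undone, and it is where the actual content of the verification lies. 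Be aware also that the output of the criterion is not an inequality but a non-resonance condition: for each $k$ one gets $k(-\tfrac{r}{2}-\lambda)\notin\tfrac{rk}{2}+\Z_{\geqslant 0}$, i.e. $\lambda\notin -r-\tfrac{1}{k}\Z_{\geqslant 0}$, of which $\lambda>-r$ is only the clean sufficient consequence, so saying each $\nu_k$ ``reduces to $\lambda>-r$'' is imprecise though harmless for the statement being proved. For the $\det^{-1}$ half the paper argues exactly as in your alternative, but via the sheaf-level isomorphism $\A^{-\theta}_{\lambda}(n,r)\cong\A^{\theta}_{-r-\lambda}(n,r)$ coming from the symplectomorphism of Subsection \ref{SS_Gies} (as in the proof of Lemma \ref{Lem:iso}), which is the form of the identification compatible with the quotient functors $\pi_\lambda$, $\pi_\lambda^{\pm\det}$; quoting (\ref{eq:opp_iso}) and Lemma \ref{Lem:iso} alone only gives an isomorphism of the global-section algebras, which by itself does not transfer the kernel inclusion.
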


I would like to thank Dmitry Korb for explaining me the required modifications to \cite[Section 8]{MN_ab}.

\begin{proof}
We will consider the case $\theta=\det$, the opposite case follows from $\A_\lambda^{-\theta}(n,r)\cong \A_{-r-\lambda}^\theta(n,r)$.
The proof closely follows \cite[Section 8]{MN_ab}, where the case of $r=1$ is considered. Instead of $R=\operatorname{End}(V)\oplus
\operatorname{Hom}(V,W)$ they use $R'=\operatorname{End}(V)\oplus \operatorname{Hom}(W,V)$, then, thanks to the partial
Fourier transform, we have $D(R)\operatorname{-mod}^{G,\lambda}\cong D(R')\operatorname{-mod}^{G,\lambda+r}$. The set of weights in $R'$ for a maximal torus $T\subset \GL(V)$
is independent of $r$ so we have the same Kempf-Ness subgroups as in the case $r=1$:  it is enough to consider the subgroups
$\beta$ with tangent vectors (in the notation of \cite[Section 8]{MN_ab}) $e_1+\ldots+e_k$. The shift in {\it loc.cit.} becomes
$\frac{rk}{2}$ (in the computation of {\it loc.cit.} we need to take the second summand $r$ times, that is all that changes). So we get
that $\ker \pi_{\lambda}^{\det}\subset \ker \pi_\lambda$ provided $k(-\frac{r}{2}-\lambda)\not\in \frac{rk}{2}+\Z_{\geqslant 0}$
for all possible $k$ meaning $1\leqslant k\leqslant n$ (the number $-\frac{r}{2}-\lambda$ is $c'$ in {\it loc.cit.}). The condition
simplifies to $\lambda\not\in -r-\frac{1}{k}\Z_{\geqslant 0}$. This implies the claim of the proposition.
\end{proof}

\subsection{Reduction to denominator $n$ and singular parameters}\label{SS_loc_red_to_n}
Proposition \ref{Prop:MN} allows us to show that certain parameters are singular.
\begin{Cor}\label{Cor:sing}
The parameters $\lambda$ with denominator $\leqslant n$ and $-r<\lambda<0$ are singular.
\end{Cor}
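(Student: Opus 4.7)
The plan is to argue by contradiction: suppose $\lambda=s/m$ with $1\leqslant m\leqslant n$ and $-rm<s<0$ (so $-r<\lambda<0$ and the denominator of $\lambda$ is at most $n$) is regular, meaning $\bar{\A}_\lambda(n,r)$ has finite global dimension. Since $\lambda>-r$ and $\lambda<0$, both clauses of Proposition~\ref{Prop:MN} apply: $\Gamma_\lambda^{\det}$ and $\Gamma_\lambda^{\det^{-1}}$ are both exact. Combining with Proposition~\ref{Prop:MN_der} (derived equivalence from regularity), I upgrade each to an equivalence of abelian categories, so that abelian localization holds simultaneously at $(\lambda,\det)$ and at $(\lambda,\det^{-1})$.

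The next move is to restrict both equivalences to categories~$\OCat$ via Proposition~\ref{Prop:O_prop}, obtaining
\[
\OCat(\bar{\A}_\lambda^{\det}(n,r))\xrightarrow{\sim}\OCat(\bar{\A}_\lambda(n,r))\xleftarrow{\sim}\OCat(\bar{\A}_\lambda^{\det^{-1}}(n,r)).
\]
Each outer category is highest weight with ordering the geometric order on the torus fixed points of $\alpha$ in $\bar{\M}^{\pm\det}(n,r)$; using the isomorphism $\bar{\M}^{\det}(n,r)\cong\bar{\M}^{\det^{-1}}(n,r)$ of Subsection~\ref{SS_Gies}, which intertwines the $\alpha$-action with its inverse, I identify contracting loci for $\alpha$ on one resolution with expanding loci for $\alpha$ on the other. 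Thus the two geometric orders on the common fixed-point set are mutually opposite, so $\OCat(\bar{\A}_\lambda(n,r))$ carries two highest-weight structures with opposite orderings. A category admitting such a pair of structures must be semisimple, the slick proof being: for each label $p$ that is maximal in one of the orderings, the corresponding standard is simple and projective, and by iterating this removal of maxima (passing to Serre subquotients, which inherit both highest-weight structures) every simple becomes projective; equivalently, the $\operatorname{Ext}^1$-vanishing identities for standards vs.\ costandards applied under both orderings force $\operatorname{Ext}^1_{\OCat}(L(p),L(q))=0$ for all $p\neq q$.

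The contradiction then comes from showing $\OCat(\bar{\A}_\lambda(n,r))$ is \emph{not} semisimple when $\lambda$ has denominator $m\leqslant n$, reusing the argument of Step~4 of the proof of Theorem~\ref{Thm:fin dim} with minor adjustments. Taking $\alpha(t)=\diag(t^{d_1},\ldots,t^{d_r})$ with $d_1\gg\cdots\gg d_r$, Corollary~\ref{Cor:shift_Gies} identifies the summand of $\Ca_\alpha(\bar{\A}_\lambda(n,r))$ corresponding to the decomposition $n=0+\cdots+0+n$ with the spherical Cherednik algebra $\bar{\A}_{\lambda+r-1}(n,1)$; since $\lambda+r-1$ still has denominator $m\leqslant n$, its category~$\OCat$ is non-semisimple, yielding a nonzero Verma hom $\Delta^0(p_2)\to\Delta^0(p_1)$. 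Because $\alpha\prec^\lambda(\alpha,1)$ by Proposition~\ref{Prop:orders}, Lemma~\ref{Lem:parab_ind} applies and parabolic induction $\underline{\Delta}$ transports this to a nonzero $\Delta(p_2)\to\Delta(p_1)$ in $\OCat(\bar{\A}_\lambda(n,r))$, contradicting semisimplicity.

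The main obstacle is the step ``two opposite highest-weight orderings imply semisimplicity''. Making this rigorous requires carefully using the $\operatorname{Ext}^i(\Delta(p),\nabla(q))=\delta_{p,q}\delta_{i,0}\mathbb{C}$ identity under both orderings (so that the standard with respect to one ordering plays the role of the costandard with respect to the opposite ordering at extremal labels), along with an inductive reduction over the poset of labels via Serre subcategories. A parallel subtlety is to pin down precisely how the isomorphism $\bar{\M}^{\det}(n,r)\cong\bar{\M}^{\det^{-1}}(n,r)$ from Subsection~\ref{SS_Gies} acts on the set of $\alpha$-fixed points and on the stratification by contracting loci; once this order-reversal is in hand, the semisimplicity conclusion and hence the corollary will follow.
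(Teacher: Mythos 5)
Your reduction to ``abelian localization holds simultaneously for $(\lambda,\det)$ and $(\lambda,\det^{-1})$'' is fine and matches the paper's starting point (Proposition \ref{Prop:MN} plus Proposition \ref{Prop:MN_der}), but the step you yourself flag as the main obstacle is a genuine gap: it is simply not true that an abelian category carrying two highest weight structures with opposite orderings must be semisimple. The modules over the path algebra of the $A_2$ quiver give a counterexample: with simples $L(1),L(2)$ and $P(1)=L(1)$, $P(2)=[L(2);L(1)]$, the category is highest weight for the order $1<2$ (standards $L(1),P(2)$) and also for the opposite order $2<1$ (standards $L(1),L(2)$), yet it is not semisimple. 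Your ``slick proof'' conflates the two structures: for a label $p$ maximal in one order, the standard \emph{of that structure} is projective, while the object that is simple is the standard \emph{of the other structure}; these are different objects in general, so no contradiction with non-semisimplicity arises. The argument would work only if you knew that the two structures on $\OCat(\bar{\A}_\lambda(n,r))$ have the \emph{same} standard objects (e.g.\ both given by the Verma modules, which are defined independently of $\theta$); but the identification of standards with Vermas is guaranteed in the paper only for $\hat{\lambda}$ in a Zariski open subset, and the parameters of Corollary \ref{Cor:sing} are exactly the special rational ones where this cannot be assumed. A secondary issue of the same kind: your non-semisimplicity input (Step 4 of the proof of Theorem \ref{Thm:fin dim}, Corollary \ref{Cor:shift_Gies}, Proposition \ref{Prop:A0_descr}, Proposition \ref{Prop:orders}) is proved there under Zariski-genericity hypotheses, so quoting it at the fixed $\lambda\in(-r,0)$ requires extra care (e.g.\ Lemma \ref{Lem:prec_spec} instead of Proposition \ref{Prop:orders}).

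For comparison, the paper avoids category-$\mathcal{O}$ combinatorics at this point: from the exactness of both $\Gamma_\lambda^{\pm\theta}$ and the inclusions $\ker\pi_\lambda^{\pm\theta}\subset\ker\pi_\lambda$ it deduces that the quotient functors $\pi_\lambda^{\theta}$ and $\pi_\lambda^{-\theta}$ are isomorphic, hence that the long wall-crossing functor is an equivalence of abelian categories; this contradicts Proposition \ref{Prop:WC_vs_D} together with the existence (end of the proof of Theorem \ref{Thm:fin dim}) of simples in $\OCat(\bar{\A}_\lambda(n,r))$ with support of dimension $<rn-1$, which the long wall-crossing necessarily shifts homologically. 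If you want to keep your route, you would have to supply the missing ingredient that the two highest weight structures share their standard objects at the given $\lambda$, which is essentially as hard as what is being proved.
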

\begin{proof}
Assume the converse. Since $R\Gamma_\lambda^{\pm \theta}$ are equivalences and $\Gamma_\lambda^{\pm \theta}$
are exact, we see that $\Gamma_{\lambda}^{\pm \theta}$ are equivalences of abelian categories. From the
inclusions $\ker \pi_{\lambda}^{\pm \theta}\subset \ker\pi_\lambda$, we deduce that the functors
$\pi_{\lambda}^{\pm \theta}$ are isomorphic. So the wall-crossing functor $\mathfrak{WC}_{\lambda\rightarrow \lambda^-}=\pi_{\lambda^-}^{-\theta}\circ (\C_{\lambda^--\lambda}\otimes\bullet)\circ L\pi_{\lambda}^{\theta*}$
(see \cite[(2.8)]{BL} for the equality)
is an equivalence of abelian categories (where we modify $\lambda$ by adding a sufficiently large integer). However, we have already seen that it does shift some modules,
since not all modules in $\mathcal{O}(\bar{\A}_\lambda(n,r))$ have support of maximal dimension (see the end of the proof of Theorem \ref{Thm:fin dim}).
\end{proof}

Now let us observe that it is enough to check that the abelian localization holds for $\lambda\geqslant 0$
and $\theta>0$. This follows from an isomorphism $\A_{\lambda}^\theta(n,r)\cong \A_{-\lambda-r}^{-\theta}(n,r)$.
This an isomorphism of sheaves on $\M^\theta(n,r)\cong \M^{-\theta}(n,r)$ (see the proof of Lemma \ref{Lem:iso}).

Now let us reduce the proof of Theorem \ref{Thm:loc} to the case when $\lambda$ has denominator $n$.
Let the denominator $n'$ be less then $n$.
As we have seen in \cite[Section 5]{BL}, the abelian localization holds for $(\lambda,\theta>0)$ if and only if the bimodules
$\A^0_{\lambda,\chi}(n,r):=[D(R)/D(R)\{x_R-\langle\lambda,x\rangle\}]^{G,\chi}, \A^0_{\lambda+\chi,-\chi}(n,r)$ with
 $\chi\gg 0$ define mutually dual Morita equivalences, equivalently, the natural homomorphisms
\begin{equation}\label{eq:nat_homs}
\begin{split}&\A^0_{\lambda,\chi}(n,r)\otimes_{\A_{\lambda}(n,r)} \A^0_{\lambda+\chi,-\chi}(n,r)\rightarrow \A_{\lambda+\chi}(n,r),\\
&\A^0_{\lambda+\chi,-\chi}(n,r)\otimes_{\A_{\lambda+\chi}(n,r)}\A^0_{\lambda,\chi}(n,r)\rightarrow \A_{\lambda}(n,r)
\end{split}
\end{equation}
are isomorphisms.

Assume the converse. Let $K^1,C^1,K^2,C^2$ denote the kernel and the cokernel of the first and of the
second homomorphism, respectively. If one of these bimodules is nontrivial, then we can find $x\in \M(n,r)$
such that $K^i_{\dagger,x},C^i_{\dagger,x}$ are finite dimensional, and, at least one of these bimodules
is nonzero. From the classification of finite dimensional irreducibles, we see that the slice algebras
must be of the form $\bar{\A}_{?}(n',r)^{\otimes k}$. But then $\A^0_{\lambda+\chi,-\chi}(n,r)_{\dagger,x}=
\bar{\A}^0_{\lambda+\chi,-\chi}(n',r)^{\otimes k}, \A^0_{\lambda,\chi}(n,r)_{\dagger,x}=
\bar{\A}^0_{\lambda,\chi}(n',r)^{\otimes k}$. Further, applying $\bullet_{\dagger,x}$ to (\ref{eq:nat_homs}) we again get  natural
homomorphisms. But the localization theorem holds for the algebra $\bar{\A}_{\lambda}(n',r)$ thanks to our inductive
assumption, so the homomorphisms of the $\bar{\A}_\lambda(n',r)^{\otimes k}$-bimodules are isomorphisms. This
contradiction justifies the reduction to denominator $n$.

\subsection{Number of simples in $\mathcal{O}(\A_\lambda(n,r))$}\label{SS_loc_simpl_numb}
So we need to prove that the localization theorem holds for positive parameters $\lambda$ with denominator $n$
(the case $\lambda=0$ occurs only if $n=1$ and in that case this is a classical localization theorem for
differential operators on projective spaces).
We will derive the proof from the claim that the number of simple objects in the categories $\OCat(\bar{\A}_\lambda(n,r))$
and $\OCat(\bar{\A}_\lambda^\theta(n,r))$ is the same. For this we will need to study the natural homomorphism
$\varphi:\Ca_\alpha(\bar{\A}_\lambda(n,r))\rightarrow \Gamma(\Ca_\alpha(\bar{\A}^\theta_\lambda(n,r)))$. Here, as before,
$\alpha:\C^\times \rightarrow \GL(r)$ is of the form $t\mapsto (t^{d_1},\ldots,t^{d_r})$, where $d_1\gg d_2\gg\ldots\gg d_r$.

Recall that $\Gamma(\Ca_\alpha(\bar{\A}^\theta_\lambda(n,r)))=\bigoplus \bar{\A}_\lambda(n_1,\ldots,n_r;r)$, where the summation
is taken over all compositions $n=n_1+\ldots+n_r$ and  $\bar{\A}_\lambda(n_1,\ldots,n_r;r)\otimes D(\C)=\bigotimes_{i=1}^r \A_{\lambda+i-1}(n_i,1)$ (the factor $D(\C)$ is embedded into the right hand side ``diagonally'').
Let $\mathcal{B}$ denote the maximal
finite dimensional quotient of $\Gamma(\Ca_\alpha(\bar{\A}_\lambda^\theta(n,r)))$.

\begin{Prop}\label{Prop:surject}
The composition of $\varphi$ with the projection $\Gamma(\Ca_\alpha(\bar{\A}^\theta_\lambda(n,r)))\twoheadrightarrow \mathcal{B}$
is surjective.
\end{Prop}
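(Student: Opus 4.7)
The plan is to identify $\mathcal{B}$ explicitly, split the source compatibly via central idempotents, and then reduce the surjectivity to irreducibility of specific finite-dimensional modules, which is established by parabolic induction combined with Theorem~\ref{Thm:catO_str}.

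First I identify $\mathcal{B}$. By Corollary~\ref{Cor:shift_Gies}, $\bar{\A}_\lambda(n_1,\ldots,n_r;r)\otimes D(\C)=\bigotimes_{i=1}^r\A_{\lambda+i-1}(n_i,1)=D(\C)^{\otimes r}\otimes\bigotimes_i\bar{\A}_{\lambda+i-1}(n_i,1)$. Since each $D(\C)$ factor is simple with no proper finite-dimensional quotients, a summand contributes to $\mathcal{B}$ only when exactly one $n_i=n$ (all others zero); in that case $\bar{\A}_\lambda(0,\ldots,n,\ldots,0;r)=\bar{\A}_{\lambda+i-1}(n,1)$, the spherical rational Cherednik algebra of $S_n$ at a parameter of denominator $n$, which by Berest--Etingof--Ginzburg has a unique finite-dimensional irreducible $V_i$ with maximal finite-dimensional quotient $\operatorname{End}_{\C}(V_i)$. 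Hence $\mathcal{B}=\bigoplus_{i=1}^r\operatorname{End}_{\C}(V_i)$.

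Next, the decomposition of $X^{\alpha(\C^\times)}$ into connected components produces central idempotents in $\gr\Ca_\alpha(\bar{\A}_\lambda(n,r))=\C[X^{\alpha(\C^\times)}]$ that lift to central idempotents of $\Ca_\alpha(\bar{\A}_\lambda(n,r))$ (standard lifting in filtered complete algebras), yielding $\Ca_\alpha(\bar{\A}_\lambda(n,r))=\bigoplus_\mu A_\mu$ compatibly under $\varphi$ with the decomposition of the target. Surjectivity onto $\mathcal{B}$ reduces, by Jacobson density applied to the matrix algebra $\operatorname{End}_{\C}(V_i)$, to showing that for each $i$ the module $V_i$ is irreducible as an $A_i$-module, where $A_i$ is the summand corresponding to the composition $(0,\ldots,n,\ldots,0)$ with $n$ in position $i$.

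For irreducibility I use parabolic induction. Take $\alpha'=(\alpha,1)$; after rescaling $\alpha$ we have $\alpha\prec^\lambda\alpha'$ by Lemma~\ref{Lem:prec_spec}, and Lemma~\ref{Lem:parab_ind} gives $\Delta_{\alpha'}=\Delta_\alpha\circ\underline{\Delta}$. The hook $h_{i,n}=(\varnothing,\ldots,(1^n),\ldots,\varnothing)$ (with $(1^n)$ in position $i$) is an $\alpha'$-fixed point in the $\alpha$-component indexed by $(0,\ldots,n,\ldots,0)$. Let $W_i:=\underline{\Delta}(h_{i,n})$, an $A_i$-module. Its pullback through $\varphi_i:A_i\to\bar{\A}_{\lambda+i-1}(n,1)$ is the Cherednik Verma at partition $(1^n)$, which has $V_i$ as its unique simple quotient, so $W_i\twoheadrightarrow V_i$ as $A_i$-modules. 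Applying $\Delta_\alpha$, the standard $\Delta_{\alpha'}(h_{i,n})=\Delta_\alpha(W_i)$ surjects onto $\Delta_\alpha(V_i)$. A proper non-zero $A_i$-submodule $V_i'\subsetneq V_i$ would give a proper non-zero submodule of $\Delta_\alpha(V_i)\subset\Delta_{\alpha'}(h_{i,n})$; by Theorem~\ref{Thm:catO_str}, the nontrivial block containing $h_{i,n}$ is equivalent to $\Cat_{rn}$, whose uniserial Verma structure allows $\Delta_{\alpha'}(h_{i,n})$ at most two composition factors ($L(h_{i,n})$ as head and $L(h_{i,n-1})$ as socle). A careful $\alpha'$-weight-multiplicity comparison between the Cherednik character of $V_i$ and this two-factor structure then rules out $V_i'$, giving the required simplicity.

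The main obstacle is this final weight-multiplicity bookkeeping: one must check that no $A_i$-submodule of $V_i$ can correspond to the single possible ``extra'' composition factor $L(h_{i,n-1})$ compatibly with the known Cherednik character of the finite-dimensional irreducible; the hook-block description from Theorem~\ref{Thm:catO_str}(2) is what makes this pigeonhole argument work. The reward is surjectivity of $\varphi_i$ onto $\operatorname{End}_{\C}(V_i)$ for each $i$, which assembled gives the claim of the proposition.
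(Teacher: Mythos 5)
Your overall strategy is genuinely different from the paper's and is not unreasonable: you identify $\mathcal{B}\cong\bigoplus_{i=1}^r\End_\C(V_i)$ and try to reduce surjectivity, via Burnside/Jacobson density, to the claim that each finite-dimensional simple $V_i$ remains irreducible over (the image of) $\Ca_\alpha(\bar{\A}_\lambda(n,r))$. The paper instead proves the stronger statement that each component $\varphi_i$ hits a known generating set of the whole spherical Cherednik algebra $\bar{\A}_{\lambda+i}(n,1)$: the images of $S(\g)^G$ and $S(\g^*)^G$ generate by Etingof--Ginzburg, natural candidates for their preimages are written down inside $\Ca_\alpha(\bar{\A}_\lambda(n,r))$, and the compatibility is checked after passing to associated graded algebras using the explicit description of the fixed-point components as Hilbert schemes; the pairwise distinct dimensions of the $V_i$ then assemble the $r$ components. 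That route avoids any appeal to category $\mathcal{O}$ structure at the given parameter, which is exactly where your argument runs into trouble.

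The gaps in your execution are concrete. First, the splitting $\Ca_\alpha(\bar{\A}_\lambda(n,r))=\bigoplus_\mu A_\mu$ by lifting central idempotents assumes $\gr\Ca_\alpha(\bar{\A}_\lambda(n,r))\cong\C[X^{\alpha(\C^\times)}]$; for the algebra-level Cartan functor this identification (and its compatibility with $\Gamma(\Ca_\alpha(\bar{\A}^\theta_\lambda(n,r)))$) is only known for Zariski generic parameters (Proposition \ref{Prop:A0_descr}), whereas here $\lambda$ has denominator $n$ -- indeed the failure of this comparison is the whole reason Proposition \ref{Prop:surject} is needed. Second, and more seriously, the irreducibility step is not established: identifying $\underline{\Delta}(h_{i,n})$ with the $\varphi_i$-restriction of the Cherednik Verma presupposes control over the image of $\varphi_i$, which is essentially the statement being proved; moreover, since $\Delta_\alpha$ is right exact, $W_i\twoheadrightarrow V_i$ gives that $\Delta_\alpha(V_i)$ is a \emph{quotient} of $\Delta_{\alpha'}(h_{i,n})$, not a submodule, and applying $\Delta_\alpha$ to $V_i'\hookrightarrow V_i$ need not stay injective or proper, so the passage from a putative $A_i$-submodule of $V_i$ to a submodule of the Verma collapses. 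Third, invoking Theorem \ref{Thm:catO_str} to constrain composition series of the algebra-level Verma $\Delta_{\alpha'}(h_{i,n})$ over $\bar{\A}_\lambda(n,r)$ at this specific $\lambda$ implicitly assumes that $\OCat(\bar{\A}_\lambda(n,r))$ has the block structure of the sheaf-level category, i.e. essentially abelian localization at $\lambda$ -- but this proposition is a step in the proof of Theorem \ref{Thm:loc}, so the argument threatens circularity. Finally, the ``weight-multiplicity bookkeeping'' you defer is precisely where the mathematical content would have to live, and it is not carried out.
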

\begin{proof}
The proof is in several steps.

{\it Step 1}. We claim that it is sufficient to prove that the composition $\varphi_i$ of $\varphi$ with the projection
$\Gamma(\Ca_\alpha(\bar{\A}_\lambda^\theta(n,r)))\rightarrow \bar{\A}_{\lambda+i}(n,1)$ is surjective. Indeed, each $\bar{\A}_{\lambda+i}(n,1),
i=0,\ldots,r-1$ has a unique finite dimensional representation. The dimensions of these representations are
pairwise different, see \cite{BEG1}. Namely, if $\lambda=\frac{q}{n}$, then
the dimension is $\frac{(q+n-1)!}{q!n!}$. So $\mathcal{B}$ is the sum of $r$
pairwise non-isomorphic matrix algebras. Therefore the surjectivity of the homomorphism $\Ca_\alpha(\bar{\A}_\lambda(n,r))\rightarrow \mathcal{B}$ follows from the surjectivity of all its $r$ components. We remark that the other summands of
$\Ca_\alpha(\bar{\A}_\lambda(n,r))$ have no finite dimensional representations.

{\it Step 2}. Generators of $\bar{A}_{\lambda+i}(n,1)$ are known. Namely, recall that $\bar{A}_{\lambda+i}(n,1)$ is the spherical
subalgebra in the Cherednik algebra $H_c(n)$ for the reflection representation $\h$ of $\mathfrak{S}_n$ with $c=\lambda+i$. The
latter is generated by $\h,\h^*$. Then algebra $eH_c(n)e$ is generated by $S(\h)^W,S(\h^*)^W$,
see \cite{EG}. On the level of quantum Hamiltonian reduction, $S(\h)^W$ coincides with the image of $S(\g)^G$, while
$S(\h^*)^W$ coincides with the image of $S(\g^*)^G$. Here we write $\g$ for $\mathfrak{sl}_n$. We will show that these images lie in the image of $\varphi_i:\Ca_\alpha(\bar{\A}_{\lambda}(n,r))\rightarrow \bar{\A}_{\lambda+i}(n,1)$, this will establish the surjectivity in Step 1.

{\it Step 3}. Let us produce a natural homomorphism $S(\g^*)^G\rightarrow \Ca_\alpha(\bar{\A}_{\lambda}(n,r))$. First of all, recall that
$\bar{\A}_{\lambda}(n,r)$ is a quotient of $D(\g\oplus (\C^{*n})^r)^{G}$.
The algebra $S(\g^*)^G$ is included into $D(\g\oplus (\C^{*n})^{\oplus r})^G$
as the algebra of invariant functions on $\g$. So we get a homomorphism $S(\g)^G\rightarrow
\bar{\A}_{\lambda}(n,r)$. Since the $\C^\times$-action $\alpha$
used to form $\Ca_\alpha(\bar{\A}_\lambda(n,r))$ is nontrivial only on $(\C^{*n})^{\oplus r}$, we see that
the image of $S(\g^*)^G$ lies in $\bar{\A}_{\lambda}(n,r)^{\alpha(\C^\times)}$. So  we get a
homomorphism $\iota: S(\g^*)^G\rightarrow \Ca_\alpha(\bar{\A}_\lambda(n,r))$.

{\it Step 4}. We claim that $\varphi_i\circ \iota$ coincides with the inclusion $S(\g^*)^G\rightarrow \bar{\A}_{\lambda+i}(n,1)$.
We can filter  the algebra $D(\g\oplus (\C^{*n})^{\oplus r})$ by the order of a differential operator. This induces
filtrations on $\bar{\A}_{\lambda}(n,r),\bar{\A}^\theta_\lambda(n,r)$. We have similar filtrations on the algebras
$\bar{\A}_{\lambda+i}(n,1)$. The filtrations on $\bar{\A}_\lambda(n,r),\bar{\A}_\lambda^\theta(n,r)$ are preserved by $\alpha$
and hence we have filtrations on $\Ca_\alpha(\bar{\A}_\lambda(n,r)),\Gamma(\Ca_\alpha(\bar{\A}_\lambda^\theta(n,r)))$.
It is clear from the construction of the projection $\Gamma(\Ca_\alpha(\bar{\A}_\lambda^\theta(n,r)))\rightarrow
\bar{\A}_{\lambda+i}(n,1)$ that it is compatible with the filtration. On the other hand, the images of $S(\g^*)^G$ in both
$\Ca_\alpha(\bar{\A}_{\lambda}(n,r)),\bar{\A}_{\lambda+i}(n,1)$ lies in the filtration degree 0.
So it is enough to prove the coincidence of the  homomorphisms in the beginning of the step  after passing to associate
graded algebras.

{\it Step 5}. The associated graded homomorphisms coincide with analogous homomorphisms defined on the classical level.
Recall that the components of $\M^\theta(n,r)^{\alpha(\C^\times)}$ that are Hilbert schemes are realized as follows.
Pick an eigenbasis $w_1,\ldots,w_r$ for the fixed $r$-dimensional torus in $\operatorname{GL}_r$.
Then the $i$th component that is the Hilbert schemes consists of  $G$-orbits of $(A,B,0,j)$, where $j:\C^n\rightarrow \C^r$
is a map with image in  $\C w_j$. In particular, the homomorphism $S(\g^*)^G\rightarrow \gr \A_{\lambda+i}(n,1)$
is dual to the morphism given by $(A,B,0,j)\rightarrow A$.

On the other hand, the component of $\M^\theta(n,r)^{\alpha(\C^\times)}$ in consideration  maps
onto $\M(r,n)\quo \alpha(\C^\times)$ (via sending the orbit of $(A,B,0,j)$
to the orbit of the same element). The corresponding homomorphism of algebras is the associated graded of $\bar{\A}_\lambda(n,r)^{\alpha(\C^\times)}\rightarrow \bar{\A}_{\lambda+i}(n,1)$. Then we have the morphism
$\M(r,n)\quo \alpha(\C^\times)\rightarrow \g\quo G$ given by $(A,B,0,j)\mapsto A$.
The corresponding homomorphism of algebras is the associated graded
of $S(\g^*)^G\rightarrow \bar{\A}_\lambda(n,r)^{\alpha(\C^\times)}$. We have checked that the associated graded
homomorphism of $\varphi_i\circ\iota:S(\g^*)^G\rightarrow \bar{\A}_{\lambda+i}(n,1)$ coincides with that of the embedding
$S(\g^*)^G\rightarrow \bar{\A}_{\lambda+i}(n,1)$. This proves the claim of Step 4.

{\it Step 6}. The coincidence of similar homomorphisms $S(\g)^G\rightarrow \bar{\A}_{\lambda+i}(n,1)$ is established
analogously. The proof of the surjectivity of $\Ca_\alpha(\bar{\A}_{\lambda}(n,r))\rightarrow \bar{\A}_{\lambda+i}(n,1)$ is now complete.
\end{proof}

We still have a Hamiltonian action of $\C^\times$ on $\Ca_\alpha(\bar{\A}_\lambda(n,r))$ that makes the homomorphism
$\Ca_\alpha(\bar{\A}_\lambda(n,r))\rightarrow \Gamma(\Ca_\alpha(\bar{\A}_\lambda^\theta(n,r)))$ equivariant. So we can form
the category $\mathcal{O}(\Ca_\alpha(\bar{\A}_\lambda(n,r)))$ for this action. By Lemma \ref{Lem:prec_spec}, we have
$\alpha\prec^\lambda (m\alpha,1)$ for $m\gg 0$. We rescale $\alpha$ and assume that $m=1$. Recall, Lemma \ref{Lem:parab_ind}, that we have an isomorphism $\Ca_{1}(\Ca_\alpha(\bar{\A}_\lambda^\theta(n,r)))\cong
\Ca_{(\alpha,1)}(\bar{\A}_\lambda^\theta(n,r))$. So there is
a natural bijection between the sets of simples in $\mathcal{O}(\Ca_\alpha(\bar{\A}_\lambda(n,r)))$ and in
$\mathcal{O}(\bar{\A}_\lambda(n,r))$.

\begin{Prop}\label{Prop:simple_numbers}
The number of simples in $\mathcal{O}(\Ca_\alpha(\bar{\A}_\lambda(n,r)))$
is bigger than or equal to that in $\mathcal{O}(\Gamma(\Ca_\alpha(\bar{\A}_\lambda^\theta(n,r))))$.
\end{Prop}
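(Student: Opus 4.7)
The plan is to use the ring homomorphism $\varphi$ to produce a pullback functor from $\OCat(\mathsf{B})$ to $\OCat(\mathsf{A})$, where I abbreviate $\mathsf{A}:=\Ca_\alpha(\bar{\A}_\lambda(n,r))$ and $\mathsf{B}:=\Gamma(\Ca_\alpha(\bar{\A}_\lambda^\theta(n,r)))$, and I write $\gamma$ for the residual contracting $\C^\times$-action used to define $\OCat$ on both sides. First, I would check that $\varphi$ is $\gamma$-equivariant and sends the quantum moment map $h_\gamma \in \mathsf{A}$ to $h_\gamma \in \mathsf{B}$: both algebras inherit $\gamma$ from the $(\alpha\times\gamma)$-action on $\bar{\A}_\lambda(n,r)$, and $\varphi$ comes from the sheaf-level construction of $\Ca_\alpha$, which preserves the $\gamma$-weight grading. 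Consequently $\varphi^*$ preserves local finiteness of $h_\gamma$ and local nilpotence of positive-weight subalgebras, and so restricts to an exact functor $\varphi^* : \OCat(\mathsf{B}) \to \OCat(\mathsf{A})$.

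Next I would reduce the simple count to iterated Cartan algebras. By Lemma \ref{Lem:prec_spec}, after rescaling $\alpha$ we may arrange $\alpha \prec^\lambda (\alpha,\gamma)$. Then Lemma \ref{Lem:parab_ind} gives $\Ca_\gamma(\mathsf{A}) = \Ca_{(\alpha,\gamma)}(\bar{\A}_\lambda(n,r))$ and factors the Verma module functor; combined with the standard correspondence between simples in $\OCat$ and simples of the Cartan algebra (via taking heads of Vermas), this yields bijections between $\operatorname{Irr}\OCat(\mathsf{A})$ and $\operatorname{Irr}\Ca_\gamma(\mathsf{A})\text{-mod}$, and similarly on the $\mathsf{B}$-side. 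Corollary \ref{Cor:shift_Gies} then identifies
\[\Ca_\gamma(\mathsf{B}) \;=\; \bigoplus_{n_1+\cdots+n_r=n}\; \bigotimes_{i=1}^r \Ca_\gamma(\bar{\A}_{\lambda+i-1}(n_i,1)),\]
and since each Cherednik factor $\Ca_\gamma(\bar{\A}_{\lambda+i-1}(n_i,1))$ has $p(n_i)$ simples (one per partition of $n_i$, matching the simples in the corresponding Cherednik $\OCat$), the count on the right equals the number of $r$-multipartitions of $n$. So it suffices to show that $\Ca_{(\alpha,\gamma)}(\bar{\A}_\lambda(n,r))$ has at least this many simples.

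For the last step I would use the induced homomorphism $\varphi_1 : \Ca_\gamma(\mathsf{A}) \to \Ca_\gamma(\mathsf{B})$ and argue that pullback along $\varphi_1$ distinguishes simples. The approach is by induction on $n$, with trivial base case $n=0$. For the inductive step, split simples of $\Ca_\gamma(\mathsf{B})$ into two classes: those that are finite-dimensional (supported at the origin of the corresponding fixed-point component of $\bar{\M}^\theta(n,r)^{\alpha(\C^\times)}$) and those of larger support. For the finite-dimensional simples, Proposition \ref{Prop:surject} provides the surjectivity $\Ca_\gamma(\mathsf{A}) \twoheadrightarrow \mathcal{B}$ onto the maximal finite-dimensional quotient, and the $r$ finite-dimensional simples of $\mathcal{B}$ then pull back to $r$ pairwise non-isomorphic simples of $\Ca_\gamma(\mathsf{A})$.

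The main obstacle is the second class (simples of larger support), where the inductive machinery has to be carefully set up. For such a simple $L$ one chooses a point $x$ in its associated variety away from the origin; by Proposition \ref{Prop:dagger_prop}, passing to the slice algebra realizes $L$ (more precisely its restriction) as a finite-dimensional simple over a smaller algebra of the form $\bigotimes_k \bar{\A}_\lambda(n_k',r)$ with $n_k'<n$. By the inductive hypothesis applied to each factor, distinct simples of this form pull back to distinct simples over the corresponding slice of $\Ca_\gamma(\mathsf{A})$; these lifts then patch together via standard compatibility of restriction functors with $\varphi$ to give distinct simples of $\Ca_\gamma(\mathsf{A})$-mod. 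The only delicate point is verifying that the restriction functor is compatible with the Cartan construction $\Ca_\gamma$ and with $\varphi_1$, for which one uses the decomposition of slice algebras recalled in Subsection \ref{SS_quant_slice} together with Proposition \ref{Prop:dagger_prop}(3).
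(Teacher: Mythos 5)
Your treatment of the finite-dimensional simples (via Proposition \ref{Prop:surject} and the maximal finite-dimensional quotient $\mathcal{B}$) matches what the paper does, but the heart of the proposition is the other class of simples, and there your argument has a genuine gap. The restriction functors $\bullet_{\dagger,x}$ of Proposition \ref{Prop:dagger_prop} are defined for Harish-Chandra \emph{bimodules} over quantizations of quiver varieties; they do not restrict one-sided modules in category $\OCat$, and, more importantly, they are simply not available for the algebra $\mathsf{A}=\Ca_\alpha(\bar{\A}_\lambda(n,r))$ or its Cartan subquotient. That algebra is precisely the poorly understood object in this proposition: a priori it is only a quotient of $\bar{\A}_\lambda(n,r)_{\geqslant 0,\alpha}$, it is not known to be (a sum of) quantizations of symplectic singularities, so there is no slice theory, no ``slices of $\Ca_\gamma(\mathsf{A})$'', and no ``standard compatibility of restriction functors with $\varphi$'' to invoke. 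Even if one had such a restriction functor on the $\mathsf{B}$-side, the inductive step would still not show what is needed: the pullback $\varphi^*L$ of a simple $\mathsf{B}$-module $L$ need not be simple over $\mathsf{A}$, and to conclude that the number of simples of $\mathsf{A}$ (in category $\OCat$) is at least the number for $\mathsf{B}$ you must exhibit pairwise non-isomorphic simple $\mathsf{A}$-constituents attached to the distinct big-support simples of $\mathsf{B}$; your proposal never produces these.

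The paper's mechanism for exactly this point is different and is the key idea you are missing: one proves (Steps 2--6 of the paper's proof) that the natural map becomes an isomorphism after inverting the discriminant $\delta\in\C[\g]^G$, by explicitly computing both localized algebras as $\left(D(\h^{reg})\otimes(\C^{\oplus r})^{\otimes n}\right)^{\mathfrak{S}_n}$ (using $\g^{reg}=G\times_{N_G(\h)}\h^{reg}$ and $\Ca_\alpha(D^\lambda(\mathbb{P}^{r-1}))\cong\C^{\oplus r}$). Then, for the simples $L^0(p)$ of maximal support, one shows that the localizations $L^0(p)[\delta^{-1}]$ remain simple and pairwise non-isomorphic -- this uses the KZ functor together with a regular-singularities/monodromy argument across the extra hyperplanes -- and finally one picks a finitely generated $\Ca_\alpha(\bar{\A}_\lambda(n,r))$-lattice inside $L^0(p)$ and a simple constituent $\underline{L}^0(p)$ with $\underline{L}^0(p)[\delta^{-1}]=L^0(p)[\delta^{-1}]$; these constituents, together with the finite-dimensional ones, give the required count. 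Without an argument of this kind (an identification of $\mathsf{A}$ and $\mathsf{B}$ on a dense open locus plus a way to keep the simples distinguishable there), your induction-on-$n$ scheme cannot be carried out with the tools established in the paper.
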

\begin{proof}
The proof is again in several steps.

{\it Step 1}.
We have a natural homomorphism $\C[\g]^G\rightarrow \bigoplus \bar{\A}_\lambda(n_1,\ldots,n_r;r)$. It can be described as follows.
We have an identification $\C[\g]^G\cong \C[\h]^{\mathfrak{S}_n}$. This algebra embeds into $\bar{\A}_\lambda(n_1,\ldots,n_r;r)$
(that is a spherical Cherednik algebra for the group $\prod_{i=1}^r \mathfrak{S}_{n_i}$ acting on $\h$) via the inclusion $\C[\h]^{\mathfrak{S}_n}\subset
\C[\h]^{\mathfrak{S}_{n_1}\times\ldots\times \mathfrak{S}_{n_r}}$. For the homomorphism
$\C[\g]^G\rightarrow \bigoplus \bar{\A}_\lambda(n_1,\ldots,n_r;r)$ we take the direct sum of these embeddings. Similarly
to Steps 4,5 of the  proof of Proposition \ref{Prop:surject}, the maps $\C[\g]^G\rightarrow \Ca_\alpha(\bar{\A}_\lambda(n,r)), \Gamma(\Ca_\alpha(\bar{\A}_\lambda^\theta(n,r)))$
are intertwined by the homomorphism $\Ca_\alpha(\bar{\A}_\lambda(n,r))\rightarrow \Gamma(\Ca_\alpha(\bar{\A}_\lambda^\theta(n,r)))$.

{\it Step 2}. Let $\delta\in \C[\g]^G$ be the discriminant. We claim that $\Ca_\alpha(\bar{\A}_\lambda^\theta(n,r))[\delta^{-1}]\xrightarrow{\sim} \Gamma(\Ca_\alpha(\bar{\A}_\lambda^\theta(n,r)))[\delta^{-1}]$. Since $\delta$ is $\alpha(\C^\times)$-stable,
we have $\Ca_\alpha(\bar{\A}_\lambda(n,r))[\delta^{-1}]=\Ca_\alpha(\bar{\A}_\lambda(n,r)[\delta^{-1}])$.
We will describe the algebra $\Ca_\alpha(\bar{\A}_\lambda(n,r)[\delta^{-1}])$ explicitly and see that
$\Ca_\alpha(\bar{\A}_\lambda(n,r)[\delta^{-1}])\xrightarrow{\sim} \Gamma(\Ca_\alpha(\bar{\A}_\lambda^\theta(n,r)))[\delta^{-1}]$.

{\it Step 3.} We start with the description of $\bar{\A}_\lambda(n,r)[\delta^{-1}]$. Let $\g^{reg}$ denote the locus
of the regular semisimple elements in $\g$. Then $\bar{\A}_\lambda(n,r)[\delta^{-1}]=D(\g^{reg}\times \operatorname{Hom}(\C^n,\C^r))\red_\lambda G$. Here $\red_\lambda$ denotes the quantum Hamiltonian reduction with parameter $\lambda$.

Recall that $\g^{reg}=G\times_{N_G(\h)}\h^{reg}$ and so $\g^{reg}\times \Hom(\C^n,\C^r)=G\times_{N_G(\h)}(\h^{reg}\times \operatorname{Hom}(\C^n,\C^r))$. It follows that
\begin{align*}&D(\g^{reg}\times \operatorname{Hom}(\C^n,\C^r))\red_\lambda G=D(\h^{reg}\times \Hom(\C^n,\C^r))\red_\lambda N_G(\h)=\\
&(D(\h^{reg})\otimes D(\operatorname{Hom}(\C^n,\C^r))\red_\lambda H)^{\mathfrak{S}_n}= \left(D(\h^{reg})\otimes D^\lambda(\mathbb{P}^{r-1})^{\otimes n}\right)^{\mathfrak{S}_n}.\end{align*}
Here, in the second line, we write $H$ for the Cartan subgroup of $G$ and take the diagonal action of $\mathfrak{S}_n$.
In the last expression, it permutes the tensor factors.
A similar argument shows that $\bar{\M}^\theta(n,r)_{\delta}= (T^*(\h^{reg})\times T^*(\mathbb{P}^{r-1})^{n})/\mathfrak{S}_n$
and the restriction of $\bar{\A}^\theta_\lambda(n,r)$ to this open subset is $\left(D_{\h^{reg}}\otimes (D^\lambda_{\mathbb{P}^{r-1}})^{\otimes n}\right)^{\mathfrak{S}_n}$.

{\it Step 4}. Now we are going to describe the algebra $\Ca_\alpha(\left(D(\h^{reg})\otimes D^\lambda(\mathbb{P}^{r-1})^{\otimes n}\right)^{\mathfrak{S}_n})$. First of all, we claim that
\begin{equation}\label{eq:Ca_eq}\Ca_\alpha(\left(D(\h^{reg})\otimes D^\lambda(\mathbb{P}^{r-1})^{\otimes n}\right)^{\mathfrak{S}_n})=
(\Ca_\alpha\left(D(\h^{reg})\otimes D^\lambda(\mathbb{P}^{r-1})^{\otimes n}\right))^{\mathfrak{S}_n}\end{equation}
There is a natural homomorphism from the left hand side to the right hand side.
To prove that it is an isomorphism one can argue as follows. First, note, that since the $\mathfrak{S}_n$-action
on $\h^{reg}$ is free, we have
$$D(\h^{reg})\otimes D^\lambda(\mathbb{P}^{r-1})^{\otimes n}=D(\h^{reg})\otimes_{D(\h^{reg})^{\mathfrak{S}_n}}\left(D(\h^{reg})\otimes D^\lambda(\mathbb{P}^{r-1})^{\otimes n}\right)^{\mathfrak{S}_n}$$
Since $D(\h^{reg})$ is $\alpha(\C^\times)$-invariant, the previous equality implies (\ref{eq:Ca_eq}).

{\it Step 5}. Now let us describe $\Ca_\alpha(\left(D(\h^{reg})\otimes D^\lambda(\mathbb{P}^{r-1})^{\otimes n}\right)=D(\h^{reg})\otimes \Ca_\alpha\left((D^\lambda(\mathbb{P}^{r-1}))^{\otimes n}\right)$. The $\C^\times$-action on the tensor product $(D^\lambda(\mathbb{P}^{r-1}))^{\otimes n}$ is diagonal and it is easy to see that
$\Ca_\alpha\left((D^\lambda(\mathbb{P}^{r-1}))^{\otimes n}\right)=\left(\Ca_\alpha(D^\lambda(\mathbb{P}^{r-1}))\right)^{\otimes n}$.
  So we need to compute $\Ca_\alpha(D^\lambda(\mathbb{P}^{r-1}))$. We claim that this algebra is isomorphic to $\C^{\oplus r}$. Indeed, $D^\lambda(\mathbb{P}^{r-1})$ is a quotient of the central reduction $U_{\tilde{\lambda}}(\mathfrak{sl}_r)$ of $U(\mathfrak{sl}_r)$ at
the central character $\tilde{\lambda}:=\lambda\omega_{r}$. We remark that $\lambda\omega_r+\rho$ is regular because $\lambda\geqslant 0$. We have $\Ca_\alpha(U_{\tilde{\lambda}}(\mathfrak{sl}_r))=\C^{\oplus r!}$ and $\Ca_\alpha(D^\lambda(\mathbb{P}^{r-1}))$ is a quotient of that. The number of irreducible representations of $\Ca_\alpha(D^\lambda(\mathbb{P}^{r-1}))$ equals to the number of simples in the category $\mathcal{O}$ for $D^\lambda(\mathbb{P}^{r-1})$ that coincides with $r$ since the localization holds. An isomorphism  $\Ca_\alpha(D^\lambda(\mathbb{P}^{r-1}))=\C^{\oplus r}$ follows.

{\it Step 6}. So we see that $\Ca_\alpha(\bar{\A}_\lambda(n,r)[\delta^{-1}])= \left(D(\h^{reg})\otimes (\C^{\oplus r})^{\otimes n}\right)^{\mathfrak{S}_n}$.
By similar reasons, we have $\Gamma([\bar{\M}^\theta(n,r)_\delta]^{\alpha(\C^\times)}, \Ca_\alpha(\bar{\A}^\theta_\lambda(n,r)))=
\left(D(\h^{reg})\otimes (\C^{\oplus r})^{\otimes n}\right)^{\mathfrak{S}_n}$. The natural homomorphism
\begin{equation}\label{eq:local_iso1}\Ca_\alpha(\bar{\A}_\lambda(n,r)[\delta^{-1}])\rightarrow \Gamma((\bar{\M}^\theta(n,r)_\delta)^{\alpha(\C^\times)}, \Ca_\alpha(\bar{\A}^\theta_\lambda(n,r)))\end{equation}
is an isomorphism by the previous two steps. Also we have a natural homomorphism \begin{equation}\label{eq:local_iso2}\Gamma(\Ca_\alpha(\bar{\A}^\theta_\lambda(n,r)))[\delta^{-1}]\rightarrow
\Gamma([\M^\theta(n,r)_\delta]^{\alpha(\C^\times)}, \Ca_\alpha(\bar{\A}^\theta_\lambda(n,r))).\end{equation}
The latter homomorphism is an isomorphism from the explicit description
of $\Ca_\alpha(\bar{\A}^\theta_\lambda(n,r))$. Indeed, $\Ca_\alpha(\bar{\A}^\theta_\lambda(n,r))$
is the direct sum of quantizations of products of Hilbert schemes. The morphism $\prod \operatorname{Hilb}_{n_i}(\C^2)
\rightarrow \prod \C^{2n_i}/\mathfrak{S}_n$ is an isomorphism over the non-vanishing locus of $\delta$.
This implies that (\ref{eq:local_iso2}) is an isomorphism.

By the construction,  (\ref{eq:local_iso1}) is the composition of
$\Ca_\alpha(\bar{\A}_\lambda(n,r)[\delta^{-1}])\rightarrow
\Gamma(\Ca_\alpha(\bar{\A}^\theta_\lambda(n,r)))[\delta^{-1}]$ and (\ref{eq:local_iso2}).
So we have proved that  $\Ca_\alpha(\bar{\A}_\lambda(n,r))[\delta^{-1}]\rightarrow \Gamma(\Ca_\alpha(\bar{\A}^\theta_\lambda(n,r)))[\delta^{-1}]$ is
an isomorphism.

{\it Step 7}. For $p\in \bar{\M}^\theta(n,r)^{T\times \C^\times}$ let $L^0(p)$ be the corresponding irreducible
$\Gamma(\Ca_\alpha(\bar{\A}^\theta_\lambda(n,r)))$-module from category $\mathcal{O}$. These modules are either finite dimensional
(those are parameterized by the multi-partitions with one part equal to $(n)$ and others empty) or has support of maximal
dimension. It follows from Proposition \ref{Prop:surject} that all finite dimensional $L^0(p)$ restrict
to pairwise non-isomorphic $\Ca_\alpha(\bar{\A}_\lambda(n,r))$-modules. Now consider $L^0(p)$ with support of maximal dimension.
We claim that the localizations $L^0(p)[\delta^{-1}]$ are pairwise non-isomorphic simple $\Gamma(\Ca_\alpha(\bar{\A}^\theta_\lambda(n,r)))[\delta^{-1}]$-modules.
Let us consider $p=(p^1,\ldots,p^r)$ and $p'=(p'^1,\ldots,p'^r)$ with $|p^i|=|p'^i|$ for all $i$ and show that the corresponding localizations are simple and, moreover, are isomorphic only if $p=p'$. This claim holds if we localize to the regular locus for $\prod_{i=1}^r\mathfrak{S}_{|p^i|}$.
Indeed, this localization realizes the KZ functor that is a quotient onto its image. So the images of 
$L^0(p), L^0(p')$ under this localization are simple and non-isomorphic. 
Then we further restrict the localizations of $L^0(p),L^0(p')$ to the locus
where $x_i\neq x_j$ for all $i,j$. But there is no monodromy of the D-modules $L^0(p)[\delta^{-1}],L^0(p')[\delta^{-1}]$
along those additional hyperplanes and these D-modules  have regular singularities everywhere.
It follows that they remain simple and nonisomorphic (if $p\neq p'$).

{\it Step 8}. So we see that the $\Ca_\alpha(\bar{\A}_\lambda(n,r))[\delta^{-1}]$-modules  $L^0(p)[\delta^{-1}]$ are simple
and pair-wise non-isomorphic. The $\Ca_\alpha(\bar{\A}_\lambda(n,r))$-module $L^0(p)$ is not finitely generated a priori
but always lies in the ind-completion of the category $\mathcal{O}$ (thanks to the weight decomposition).
Pick a finitely generated $\Ca_\alpha(\A_\lambda(n,r))$-lattice $L^0_1(p)$ for $L^0(p)[\delta^{-1}]$ inside $L^0(p)$.
This now an object in the category $\mathcal{O}$. There is a simple constituent $\underline{L}^0(p)$
of $L^0_1(p)$ with $\underline{L}^0(p)[\delta^{-1}]=L^0(p)[\delta^{-1}]$ because the right hand side
is simple. The finite dimensional modules $L^0(p)$ together with the modules of the form $\underline{L}^0(p)$
give a required number of pairwise nonisomorphic simple $\A_\lambda(n,r)^0$-modules.
\end{proof}

\subsection{Completion of proofs}\label{SS_loc_compl}
The following proposition completes the proof of Theorem \ref{Thm:loc}.

\begin{Prop}\label{Prop:loc_compl}
Let $\lambda$ be a positive parameter with denominator $n$. Then the abelian localization holds for
$(\lambda,\det)$.
\end{Prop}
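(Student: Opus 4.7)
The plan is to combine exactness of $\Gamma_\lambda^\theta$ from McGerty--Nevins with the simple-count inequality of Proposition \ref{Prop:simple_numbers} to force $\Gamma_\lambda^\theta$ to have trivial kernel; the key step is to match the number of simples in $\OCat(\bar{\A}^\theta_\lambda(n,r))$ and $\OCat(\bar{\A}_\lambda(n,r))$.

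Since $\lambda>0>-r$, Proposition \ref{Prop:MN} yields $\ker\pi^{\det}_\lambda\subset\ker\pi_\lambda$, so $\Gamma_\lambda^\theta$ is exact and arises as a Serre quotient functor; it then suffices to rule out simples in its kernel. For the count on the sheaf side, $\OCat(\bar{\A}^\theta_\lambda(n,r))$ is a highest weight category whose standards are labeled by the $\tilde{T}$-fixed locus on $\bar{\M}^\theta(n,r)$, i.e., by $r$-multipartitions of $n$, giving exactly $\#\{r\text{-multipartitions of }n\}$ simples. On the algebra side, Lemma \ref{Lem:prec_spec} furnishes $\alpha\prec^\lambda(\alpha,1)$, so parabolic induction (Lemma \ref{Lem:parab_ind}) identifies simples of $\OCat(\bar{\A}_\lambda(n,r))$ with those of $\OCat(\Ca_\alpha(\bar{\A}_\lambda(n,r)))$. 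Proposition \ref{Prop:simple_numbers} bounds this count below by the number of simples in $\OCat(\Gamma(\Ca_\alpha(\bar{\A}^\theta_\lambda(n,r))))$; by Corollary \ref{Cor:shift_Gies} this algebra is $\bigoplus_{n_1+\cdots+n_r=n}\bigotimes_i\A_{\lambda+i-1}(n_i,1)$, a direct sum of tensor products of spherical Cherednik algebras whose $\OCat$-simples are indexed by partitions, summing to $\#\{r\text{-multipartitions of }n\}$. The reverse inequality is automatic since every simple of $\OCat(\bar{\A}_\lambda(n,r))$ is a head of some Verma module and the Vermas are labeled by the same set. Hence the counts agree.

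Equal counts together with the Serre-quotient property of $\Gamma_\lambda^\theta|_{\OCat}$, which sends distinct simples to distinct nonzero simples, force no simple of $\OCat(\bar{\A}^\theta_\lambda(n,r))$ to be killed, so $\Gamma_\lambda^\theta$ restricts to an equivalence of $\OCat$'s. To promote this to the whole category, I would argue: any simple $M\in\bar{\A}^\theta_\lambda(n,r)\operatorname{-mod}$ whose support meets the preimage of the open symplectic leaf of $\bar{\M}(n,r)$ has nonzero pushforward $\bar{\rho}_*M$ on the affine variety $\bar{\M}(n,r)$, hence $\Gamma_\lambda^\theta(M)\neq 0$; otherwise the support is contained in the preimage of a proper $\C^\times$-stable closed subset of $\bar{\M}(n,r)$, which conicality contracts into $\bar{\rho}^{-1}(0)$, placing $M\in\OCat$ so that the previous step applies.

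The main obstacle is this final extension step: showing that every simple outside category $\OCat$ still has nonzero global sections. The affinity of $\bar{\M}(n,r)$ handles simples whose support meets the open leaf, and a conicality argument ought to reduce simples with smaller support to $\OCat$, but verifying carefully that modules with intermediate support really sit inside $\OCat$ for the chosen torus action is the delicate point where the geometry of the Gieseker resolution enters.
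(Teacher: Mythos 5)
Your first half follows the paper's own route: exactness of $\Gamma_\lambda^\theta$ from McGerty--Nevins, the chain $\alpha\prec^\lambda(\alpha,1)$ from Lemma \ref{Lem:prec_spec}, parabolic induction (Lemma \ref{Lem:parab_ind}) and Proposition \ref{Prop:simple_numbers} together with Corollary \ref{Cor:shift_Gies} to match the numbers of simples, and the Serre-quotient property to get an equivalence on categories $\OCat$. (One small slip: the ``reverse inequality'' is not automatic from Verma modules --- at a non-generic $\lambda$ the Cartan algebra $\Ca_{\alpha'}(\bar{\A}_\lambda(n,r))$ need not be $\C[X^{\tilde T}]$, so its simples are not a priori labeled by $r$-multipartitions; the correct upper bound comes from the quotient-functor property of $\Gamma^\theta_\lambda$ on $\OCat$, which you invoke anyway.)

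The genuine gap is the final promotion from an equivalence of categories $\OCat$ to abelian localization on the whole module categories. Your dichotomy ``support meets the preimage of the open leaf, or conicality contracts the support into $\bar{\rho}^{-1}(0)$ so the module lies in $\OCat$'' fails: a simple coherent $\bar{\A}^\theta_\lambda(n,r)$-module avoiding the regular locus is only constrained to have $\C^\times$-stable coisotropic support inside the preimage of the union of the proper leaves, which is typically of large dimension; such a support need not lie in the contracting locus $Y$ of the Hamiltonian torus (membership in $\OCat$ also requires an $h_{\alpha'}$-stable good filtration), and the contracting $\C^\times$-action does not force it into $\bar{\rho}^{-1}(0)$. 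The dichotomy the paper does use (``support of dimension $rn-1$ meeting the regular locus, or finite-dimensional'') is proved only for strongly holonomic modules in $\OCat$, not for arbitrary simples, so your reduction collapses precisely on the modules supported over intermediate leaves. The paper instead bypasses this entirely: from the $\OCat$-equivalence it considers the HC bimodule $\mathcal{B}=\bar{\A}^{(\det)}_{\lambda+\chi,-\chi}(n,r)\otimes_{\bar{\A}_{\lambda+\chi}(n,r)}\bar{\A}^{(\det)}_{\lambda,\chi}(n,r)$ with its natural map to $\bar{\A}_\lambda(n,r)$, shows the kernel and cokernel have proper associated variety and hence are finite dimensional (via \cite[Proposition 5.15]{BL}), uses the uniqueness of the finite-dimensional simple $L$ and a $\operatorname{Tor}$ argument to conclude $\mathcal{B}\cong\bar{\A}_\lambda(n,r)$ (and similarly for the other composition), so that the translation bimodules are mutually inverse Morita equivalences --- which, by the criterion recalled around (\ref{eq:nat_homs}), is exactly abelian localization. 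Some argument of this kind (controlling the kernel/cokernel of the bimodule map by finite-dimensional representation theory) is the missing idea in your proposal.
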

\begin{proof}
Let $\alpha$ be the one-parameter subgroup $t\mapsto (t^{d_1},\ldots, t^{d_r})$ with $d_1\gg\ldots\gg d_r$.
Let $\beta: \C^\times\rightarrow T\times \C^\times$ have the form $t\mapsto (1,t)$. Set $\alpha'=m\alpha+\beta$
for $m\gg 0$. So we have $\alpha\prec^\lambda \alpha'$ for all $\lambda$ thanks to Lemma \ref{Lem:prec_spec}.

 Since $\Gamma_\lambda^\theta: \mathcal{O}_{\alpha'}(\bar{\A}^\theta_\lambda(n,r))
\rightarrow \mathcal{O}_{\alpha'}(\bar{\A}_\lambda(n,r))$ is a quotient functor, to prove that it is an equivalence
it is enough to verify that the number of simples in these two categories is the same. The number of
simples in $\OCat_{\alpha'}(\bar{\A}_\lambda(n,r))$ coincides with that for $\OCat(\Ca_\alpha(\bar{\A}_\lambda(n,r)))$
thanks to Lemma \ref{Lem:parab_ind}.
The latter is bigger than or equal to the number of simples for $\OCat(\bigoplus \bar{\A}_\lambda(n_1,\ldots,n_r;r))$ that, in its
turn coincides with the number of the $r$-multipartitions of $n$ because the abelian localization holds
for all summands $\bar{\A}_\lambda(n_1,\ldots,n_r;r)$. We deduce that the number of simples
in $\mathcal{O}_{\alpha'}(\bar{\A}^\theta_\lambda(n,r))$ and in $\mathcal{O}_{\alpha'}(\bar{\A}_\lambda(n,r))$ coincide.
So we see that $\Gamma^\theta_\lambda:\OCat_{\alpha'}(\bar{\A}^\theta_\lambda(n,r))\twoheadrightarrow \OCat_{\alpha'}(\bar{\A}_\lambda(n,r))$
is an equivalence. Now we are going to show that this implies that $\Gamma^\theta_\lambda:\bar{\A}_\lambda^\theta(n,r)\operatorname{-mod}
\rightarrow \bar{\A}_\lambda(n,r)\operatorname{-mod}$ is an equivalence. Below we write $\OCat$
instead of $\OCat_{\alpha'}$.

Since $\Gamma_\lambda^\theta $ is an equivalence between the categories $\OCat$, we see that $\bar{\A}^{(\det)}_{\lambda,\chi}(n,r)\otimes_{\bar{\A}_\lambda(n,r)}\bullet$
and $\bar{\A}^{(\det)}_{\lambda+\chi,-\chi}(n,r)\otimes_{\bar{\A}_{\lambda+\chi}(n,r)}\bullet$ are mutually
inverse equivalences between $\OCat(\bar{\A}_\lambda(n,r))$ and $\OCat(\bar{\A}_{\lambda+\chi}(n,r))$ for $\chi\gg 0$.
Set $\mathcal{B}:=\bar{\A}^{(\det)}_{\lambda+\chi,-\chi}(n,r)\otimes_{\bar{\A}_{\lambda+\chi}(n,r)}\bar{\A}^{(\det)}_{\lambda,\chi}(n,r)$.
This is a HC $\bar{\A}_\lambda(n,r)$-bimodule with a natural homomorphism to $\bar{\A}_\lambda(n,r)$ such that
the induced homomorphism $\mathcal{B}\otimes_{\bar{\A}_\lambda(n,r)}M\rightarrow M$ is an isomorphism
for any $M\in \OCat(\bar{\A}_\lambda(n,r))$.
It follows from  \cite[Proposition 5.15]{BL} that the kernel and the cokernel of $\mathcal{B}\rightarrow \bar{\A}_\lambda(n,r)$ have proper associated varieties and hence
are finite dimensional. Let $L$ denote an irreducible finite dimensional $\bar{\A}_\lambda(n,r)$-module, it is unique
because of the equivalence $\OCat(\bar{\A}_\lambda(n,r))\cong \OCat(\bar{\A}_{\lambda+\chi}(n,r))$. Since the homomorphism
$\mathcal{B}\otimes_{\bar{\A}_\lambda(n,r)}L\rightarrow L$ is an isomorphism,  we see that $\mathcal{B}\twoheadrightarrow \bar{\A}_\lambda(n,r)$. Let $K$ denote the kernel.  We have an exact sequence
$$\operatorname{Tor}^1_{\bar{\A}_\lambda(n,r)}(\bar{\A}_\lambda(n,r),L)\rightarrow K\otimes_{\bar{\A}_\lambda(n,r)}L
\rightarrow \mathcal{B}\otimes_{\bar{\A}_\lambda(n,r)}L\rightarrow L\rightarrow 0$$
Clearly, the first term is zero, while the last homomorphism is an isomorphism. We deduce that $K\otimes_{\bar{\A}_\lambda(n,r)}L=0$.
But $K$ is a finite dimensional $\bar{\A}_\lambda(n,r)$-bimodule and hence a $\bar{\A}_\lambda(n,r)/\operatorname{Ann}L$-bimodule and so its tensor product with $L$ can only be zero if $K=0$.

So we see that $\bar{\A}^{(\det)}_{\lambda+\chi,-\chi}(n,r)\otimes_{\bar{\A}_{\lambda+\chi}(n,r)}\bar{\A}^{(\det)}_{\lambda,\chi}(n,r)\cong
\bar{\A}_{\lambda}(n,r)$. Similarly, $\bar{\A}^{(\det)}_{\lambda,\chi}(n,r)\otimes_{\bar{\A}_{\lambda}(n,r)}\bar{\A}^{(\det)}_{\lambda+\chi,-\chi}(n,r)\cong
\bar{\A}_{\lambda+\chi}(n,r)$. It follows that $\Gamma_\lambda^\theta$ is an equivalence  $\bar{\A}_\lambda^\theta(n,r)\operatorname{-mod}\cong \bar{\A}_\lambda(n,r)\operatorname{-mod}$.
\end{proof}

Now we can complete the proof of (2) of Theorem \ref{Thm:fin dim}. It remains to show that $\bar{\A}_\lambda(n,r)$
with $-r<\lambda<0$ has no finite dimensional irreducible representations. Assume the converse, let $L$
denote a finite dimensional irreducible representation. Since $L\Loc_\lambda^\theta(\bar{\A}_\lambda(n,r))=\bar{\A}_\lambda^\theta(n,r)$
and $R\Gamma_\lambda^\theta(\bar{\A}^\theta_\lambda(n,r))=\bar{\A}_\lambda(n,r)$, we see that
$R\Gamma_\lambda^\theta\circ L\Loc_\lambda^\theta$ is the identity functor of $D^-(\bar{\A}_\lambda(n,r)\operatorname{-mod})$. The homology of $L\Loc_\lambda^\theta(L)$ are supported
on $\bar{\rho}^{-1}(0)$. It follows that the denominator of $\lambda$ is $n$.

Recall that $\Gamma_\lambda^\theta$ is an exact functor. Since $R\Gamma_\lambda^\theta\circ L\Loc_\lambda^\theta$
is the identity, the functor $\Gamma_\lambda^\theta$ does not kill
the simple $\bar{\A}_\lambda^\theta(n,r)$-module $\tilde{L}$ supported on $\bar{\rho}^{-1}(0)$.
On the other hand, $\Gamma_\lambda^\theta$ does not kill modules whose support intersects $\bar{\M}^\theta(n,r)^{reg}$,
the open subvariety in $\bar{\M}^\theta(n,r)$, where $\bar{\rho}$ is an isomorphism. In fact, every simple
in $\OCat(\bar{\A}_\lambda^\theta(n,r))$ is either supported on $\bar{\rho}^{-1}(0)$ or its support intersects
$\bar{\M}^\theta(n,r)^{reg}$. This is true when $\lambda'$ has denominator $n$ and satisfies the abelian localization theorem.
Indeed,  every module in $\OCat(\bar{\A}_{\lambda'}(n,r))$ is strictly holonomic. So if it has support of dimension
$rn-1$, then this support intersects regular locus, if not, the module is finite dimensional.    Our claim
about $\bar{\A}_\lambda^\theta(n,r)$-modules follows.

So we see that $\Gamma_\lambda^\theta$ does not kill any irreducible module in $\OCat(\bar{\A}^\theta_\lambda(n,r))$.
So it is an equivalence. However, the proof of Proposition \ref{Prop:loc_compl} shows that this is impossible.
This completes the proof of (2) of Theorem \ref{Thm:fin dim}.

\section{Affine wall-crossing and counting}\label{S_aff_wc_count}
The main goal of this section is to prove Theorem \ref{Thm:counting}. As in \cite[Section 8]{BL}, the proof follows
from the claim that the wall-crossing functor through the wall $\delta=0$ (the affine wall-crossing functor)
is a perverse equivalence with homological shifts less than $\dim \M^\theta(v,w)$. As was pointed out in \cite[9.2]{BL},
this follows from results that we have already proved and the following claims yet to be proved.
\begin{itemize}
\item[(i)] Let $\Leaf$ be a symplectic leaf in $\M(v,w)$. Consider the categories
$\HC_{fin}(\underline{\bar{\A}}_{\hat{\param}}(\underline{v},\underline{w}))$ of HC bimodules
over the corresponding slice algebra $\underline{\bar{\A}}_{\hat{\param}}(\underline{v},\underline{w})$
that are finitely generated (left and  right) modules over $\C[\hat{\param}]$
and $\HC_{\overline{\Leaf}}(\A_{\hat{\param}}(v,w))\subset \HC_{\Leaf\cup Y}(\A_{\hat{\param}}(v,w))$
of Harish-Chandra bimodules supported on $\overline{\Leaf}$ and on $\Leaf\cup Y$, where we write $Y$
for the union of all leaves that do not contain $\Leaf$ in their closure. Then, for $x\in \Leaf$, there is
a functor $\bullet^{\dagger,x}: \HC_{fin}(\underline{\bar{\A}}_{\hat{\param}}(\underline{v},\underline{w}))
\rightarrow \HC_{\overline{\Leaf}}(\A_{\hat{\param}}(v,w))$ that is right adjoint to
$\bullet_{\dagger,x}:\HC_{\Leaf\cup Y}(\A_{\hat{\param}}(v,w))\rightarrow \HC_{fin}(\underline{\bar{\A}}_{\hat{\param}}(\underline{v},\underline{w}))$.
\item[(ii)] Theorem \ref{Thm:ideals} holds together with a direct analog of \cite[Lemma 5.21]{BL}.
\item[(iii)] For a unique proper ideal $\J\subset \bar{\A}_\lambda(n,r)$, where $\lambda$ has denominator
$n$ and lies outside $(-r,0)$, we have $\operatorname{Tor}^i_{\bar{\A}_\lambda(n,r)}(\bar{\A}_\lambda(n,r)/\J,
\bar{\A}_\lambda(n,r)/\J)=\bar{\A}_\lambda(n,r)/\J$ if $i$ is even, between $0$ and $2nr-2$, and 0 otherwise.
\item[(iv)] The functor $\bullet_{\dagger,x}$ is faithful for $x$ and $\lambda$ specified below.
\end{itemize}
We take a Weil generic $\lambda$  on the hyperplane of the form $\langle \delta,\cdot\rangle=\kappa$, where
$\kappa$ is a fixed rational number with denominator $n'$. The choice of $x$ is
as follows. Recall the description of the symplectic leaves of $\M(v,w)$ in Subsection \ref{SS_leaves}.
We want $x$ to lie in the leaf corresponding to the decomposition $r^0\oplus (r^1)^{\oplus n'}
\oplus (r^2)^{\oplus n'}\oplus\ldots\oplus (r^q)^{n'}\oplus r^{q+1}\oplus\ldots\oplus r^{n-q(n'-1)}$, where $\dim r^i=\delta, i=1,\ldots,q,q=\lfloor n/n'\rfloor$
and $n$ is given by
$$n:=\lfloor \frac{w\cdot v-(v,v)/2}{w\cdot \delta}\rfloor$$
so that $n$ is maximal with the property that $v^0=v-n\delta$ is a root of $Q^w$. We remark that the slice to $x$
is $\M(n',r)^{q}$, where $r$ is given by $r:=w\cdot \delta$.

In the proof, it is enough to assume that $\nu$ is dominant. If this is not true, we can find an element
$\sigma$ of $W(Q)$ such that $\sigma\nu$ is dominant, then we have a quantum LMN isomorphism $\A_\lambda(v,w)\cong
\A_{\sigma\cdot \lambda}(\sigma\cdot v, w)$ and  $\langle\lambda,\delta\rangle= \langle\sigma\cdot\lambda, \delta\rangle$.

\subsection{Functor $\bullet^{\dagger,x}$}\label{S_fun_up_dag}
In this subsection we establish (i) above in greater generality.

Here we assume that $X\rightarrow X_0$ is a conical symplectic resolution  whose all slices are conical
and satisfy the additional assumption on  contracting $\C^\times$-actions from Subsection \ref{SS_HC_bimod}.
Recall that under these assumptions, for a point $x\in X_0$, we can define the exact functor  $\bullet_{\dagger,x}:
\HC(\A_\param)\rightarrow \HC(\underline{\A}_\param)$, where we write $\param$ for $H^2(X,\C)$. In this subsection
we are going to study its adjoint $\bullet^{\dagger,x}:\HC(\underline{\A}_\param)\rightarrow \widetilde{\HC}(\A_\param)$.
Here $\widetilde{\HC}(\A_\param)$ denotes the category of $\A_\param$-bimodules that are sums of their HC subbimodules.
The construction of the functor is similar to \cite{HC,sraco,W_dim}. Namely, we pick a HC $\underline{\A}_{\param}$-bimodule
$\mathcal{N}$, form the Rees bimodule $\mathcal{N}_\hbar$ and its completion $\mathcal{N}_\hbar^{\wedge_0}$ at $0$.
Then form the $\A_{\param,\hbar}$-bimodule $\M_\hbar$ that is the sum of all HC
subbimodules in $\Weyl_\hbar^{\wedge_0}\widehat{\otimes}_{\C[[\hbar]]}\mathcal{N}_\hbar^{\wedge_0}$
(this includes the condition that the Euler derivation acts locally finitely).
It is easy to see that if $\hbar m\in  \M_\hbar$, then
$m\in \M_\hbar$. Then we set $\mathcal{N}^{\dagger,x}:=\mathcal{M}_\hbar/(\hbar-1)\mathcal{M}_\hbar$. Similarly
to \cite[4.1.4]{W_dim}, we see that $\mathcal{N}\mapsto \mathcal{N}^{\dagger,x}$ is a functor and that
$\operatorname{Hom}(\M,\mathcal{N}^{\dagger,x})=\operatorname{Hom}(\M_{\dagger,x},\mathcal{N})$.

Our main result about the functor $\bullet^{\dagger,x}$ is the following claim.

\begin{Prop}\label{Prop:up_dag_prop}
If $\mathcal{N}$ is finitely generated over $\C[\param]$, then $\mathcal{N}^{\dagger,x}\in \HC(\A_\param)$ and $\VA(\mathcal{N}^{\dagger,x})=\overline{\Leaf}$. Here $\Leaf$ stands for the symplectic leaf
of $X$ containing $x$.
\end{Prop}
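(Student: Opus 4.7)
The plan is to adapt the construction of the ``spread'' functor for Harish-Chandra bimodules developed in \cite{HC,sraco,W_dim} from the W-algebra and Cherednik-algebra settings to the conical symplectic resolution setup of Subsection~\ref{SS_quant_slice}. Three things need to be verified: $\mathcal{N}^{\dagger,x}$ is finitely generated as an $\A_\param$-bimodule, it admits a good filtration with $\gr$ a coherent $\mathcal{O}_{X_0}$-module, and its associated variety equals $\overline{\Leaf}$.

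First I would fix a good filtration on $\mathcal{N}$ so that $\gr\mathcal{N}$ is finitely generated over $\gr\underline{\A}_\param=\C[\underline{X}_0\times\param]$. Finite generation of $\mathcal{N}$ over $\C[\param]$ is inherited by $\gr\mathcal{N}$; combined with the positivity of the grading on $\C[\underline{X}_0]$, this forces the support of $\gr\mathcal{N}$ in $\underline{X}_0\times\param$ to lie in $\{0\}\times\param$, so the completion $\mathcal{N}_\hbar^{\wedge_0}$ is well behaved and the contracting $\C^\times$-action on it has finite-dimensional weight spaces. On the tensor product $\Weyl_\hbar^{\wedge_0}\widehat{\otimes}_{\C[[\hbar]]}\mathcal{N}_\hbar^{\wedge_0}$ one has two commuting $\C^\times$-actions, and $\M_\hbar$ is the sub-bimodule consisting of vectors that lie in a finitely generated, weight-finite sub-bimodule for the corresponding Euler derivation. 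The saturation property $\hbar m\in\M_\hbar\Rightarrow m\in\M_\hbar$ is built into the construction.

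The main obstacle is establishing finite generation of $\mathcal{N}^{\dagger,x}$, and this will occupy the bulk of the proof. I would follow the Losev-type strategy of \cite{HC,W_dim}: choose $\C[\param]$-generators $n_1,\ldots,n_k$ of $\mathcal{N}$, lift $1\otimes n_i$ to weight-finite elements $\tilde n_i$ in the tensor product (possible because of the finite-dimensionality of weight spaces), and let $\M_\hbar^{(0)}$ be the $\A_{\hbar,\param}^{\wedge_x}$-sub-bimodule they generate. The key identity to verify is $\M_\hbar^{(0)}=\M_\hbar$, which modulo $\hbar$ reduces to the classical statement that on the formal product $D\times\underline{X}_0^{\wedge_0}$ the $\C^\times$-finite sections of the pullback of $\gr\mathcal{N}$ from the second factor are generated over $\C[D]\,\widehat{\otimes}\,\C[\underline{X}_0^{\wedge_0}]$ by the classes of the $\tilde n_i$; standard $\hbar$-adic approximation and completeness then promote the equality from modulo $\hbar$ to all orders.

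Once finite generation is in hand, the $\hbar$-filtration on $\M_\hbar$ gives a good filtration on $\mathcal{N}^{\dagger,x}$ whose associated graded is a coherent $\mathcal{O}_{X_0}$-module. By the previous paragraph, its completion at $x$ is the pullback of $\gr\mathcal{N}$ from the second factor in $X_0^{\wedge_x}\cong D\times\underline{X}_0^{\wedge_0}$. This simultaneously confirms the HC property and identifies the associated variety: the identity $\VA(\mathcal{N}^{\dagger,x})^{\wedge_x}=\Leaf^{\wedge_x}$ together with the fact that $\VA(\mathcal{N}^{\dagger,x})$ is a closed $\C^\times$-stable Poisson subvariety of $X_0$ forces $\VA(\mathcal{N}^{\dagger,x})=\overline{\Leaf}$. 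The adjunction to $\bullet_{\dagger,x}$ then follows from the universal property of the ``sum of HC sub-bimodules'' construction, as in \cite[4.1.4]{W_dim}.
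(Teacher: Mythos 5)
Your setup (the definition of $\mathcal{M}_\hbar$ inside $\Weyl_\hbar^{\wedge_0}\widehat{\otimes}_{\C[[\hbar]]}\mathcal{N}_\hbar^{\wedge_0}$, the observation that $\gr\mathcal{N}$ is supported on $\{0\}\times\param$, the saturation property, and the deduction of $\VA(\mathcal{N}^{\dagger,x})=\overline{\Leaf}$ once finite generation and full support near $x$ are known) matches the paper. The gap is in the step you call the main obstacle. Finite generation of $\mathcal{M}_\hbar$ is a statement about the sum of \emph{all} HC sub-bimodules, which is a global condition along the whole leaf closure $\overline{\Leaf}$; it cannot be detected in the formal neighborhood of $x$, and your ``classical statement'' -- that the $\C^\times$-finite sections of the completed pullback are generated over $\C[D]\widehat{\otimes}\C[\underline{X}_0^{\wedge_0}]$ by lifts of the $\C[\param]$-generators of $\mathcal{N}$ -- is neither classical nor, in the form you need it (namely $\mathcal{M}_\hbar^{(0)}=\mathcal{M}_\hbar$), true in general. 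Already on the classical level, take $\mathcal{N}$ of rank one, so that the relevant Poisson module is $\C[\Leaf]^{\wedge_x}$ itself: if $\pi_1^{alg}(\Leaf)\neq 1$ and $\widetilde{\Leaf}$ is the corresponding finite Galois cover, then $\C[\widetilde{\Leaf}]\subset\C[\widetilde{\Leaf}]^{\wedge_{\tilde{x}}}\cong\C[\Leaf]^{\wedge_x}$ is a finitely generated Poisson $\C[\overline{\Leaf}]$-submodule with locally finite Euler action that is strictly larger than the submodule generated by $1$. So the sum of HC sub-bimodules genuinely picks up pieces coming from nontrivial finite-monodromy local systems on $\Leaf$, and these are not obtained from lifts of generators of $\mathcal{N}$; your reduction therefore proves neither the equality you state nor, by itself, finite generation.

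What the paper actually does at this point is the content you are missing: it reduces to showing that, for a finite-rank Poisson $\C[\Leaf]^{\wedge_x}$-module with an Euler derivation (note also that the Euler derivation on the slice decomposition differs from the naive product grading by an inner term $\frac{1}{\hbar}[a,\cdot]$, so ``weight-finite in the tensor product'' is not the relevant notion), the sum of its finitely generated Poisson $\C[\overline{\Leaf}]$-submodules with locally finite Euler action is finitely generated. This is proved by invoking Namikawa's theorem that $\pi_1^{alg}(\Leaf)$ is finite, passing to the finite cover $\widetilde{\Leaf}$, identifying weakly $\C^\times$-equivariant Poisson modules on the smooth symplectic variety $\widetilde{\Leaf}$ with weakly equivariant $D$-modules, showing (via the vanishing of monodromy on $\widetilde{\Leaf}$, normality, and positivity of the grading) that these are free $\mathcal{O}_{\widetilde{\Leaf}}$-modules, and finally analyzing Poisson submodules of $\C[\widetilde{\Leaf}]^{\wedge_{\tilde{x}}}$ using that its Poisson center is $\C$. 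Without this (or some substitute controlling local systems on $\Leaf$ and their algebraization), both the finite generation of $\mathcal{N}^{\dagger,x}$ and the claim that its completion at $x$ has full support remain unproved; the rest of your argument, including the appeal to \cite[4.1.4]{W_dim} for the adjunction, is fine but is not where the difficulty lies.
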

\begin{proof}
The proof is similar to analogous proofs in \cite[3.3,3.4]{HC},\cite[3.7]{sraco}.  As in those proofs, it is enough to
show that if $\mathcal{M}$ is a  Poisson $\C[\Leaf]^{\wedge_x}$-module of finite rank equipped with an Euler
derivation, then the maximal Poisson $\C[\Leaf]$-submodule of $\mathcal{M}$ that is the sum of its finitely
generated Poisson $\C[\overline{\Leaf}]$-submodules (with locally finite action of the  Euler derivation) is
finitely generated. By an Euler derivation, we mean an endomorphism $\mathsf{eu}$ of $\mathcal{M}$ such that
\begin{itemize}
\item $\mathsf{eu}(am)=(\mathsf{eu}a)m+a(\mathsf{eu}m)$,
\item $\mathsf{eu}\{a,m\}=\{\mathsf{eu}a, m\}+\{a, \mathsf{eu}m\}-d\{a,m\}$.
\end{itemize}
Here by $\mathsf{eu}$ on $\C[\Leaf]^{\wedge_x}$ we mean the derivation induced by the contracting $\C^\times$-action.

{\it Step 1}. First, according to Namikawa, \cite{Namikawa_fund}, the algebraic fundamental group $\pi^{alg}_1(\Leaf)$ is  finite.
Let $\widetilde{\Leaf}$ be the corresponding Galois covering of $\Leaf$. Being the integral closure of
$\C[\overline{\Leaf}]$ in a finite extension of $\C(\Leaf)$, the algebra $\C[\widetilde{\Leaf}]$
is finite over $\C[\overline{\Leaf}]$. The group $\pi_1(\widetilde{\Leaf})$ has no homomorphisms to $\operatorname{GL}_m$
by the choice of $\widetilde{\Leaf}$. Also let us note that the $\C^\times$-action on $\Leaf$
lifts to a $\C^\times$-action on $\widetilde{\Leaf}$ possibly after replacing $\C^\times$ with
some covering torus. We remark that the action produces a positive grading on $\C[\widetilde{\Leaf}]$.

{\it Step 2}. Let $\mathcal{V}$ be a weakly $\C^\times$-equivariant $D_{\widetilde{\Leaf}}$-module.
We claim that $\mathcal{V}$ is the sum of several copies of $\mathcal{O}_{\widetilde{\Leaf}}$. Indeed,
this is so in the analytic category: $\mathcal{V}^{an}:=\mathcal{O}^{an}_{\widetilde{\Leaf}}\otimes_{\mathcal{O}_{\widetilde{\Leaf}}}\mathcal{V}\cong \mathcal{O}_{\widetilde{\Leaf}}^{an}\otimes\mathcal{V}^{fl}$ (where the superscript ``fl'' means flat sections)
because of the assumption on $\pi_1(\widetilde{\Leaf})$.
But then the space $\mathcal{V}^{fl}$ carries a holomorphic $\C^\times$-action that has to be diagonalizable
and by characters. So we have an embedding $\Gamma(\mathcal{V})\hookrightarrow \Gamma(\mathcal{V}^{an})^{\C^\times-fin}$.
Since $\operatorname{Spec}(\C[\widetilde{\Leaf}])$ is normal, any analytic function on
$\widetilde{\Leaf}$ extends to $\operatorname{Spec}(\C[\widetilde{\Leaf}])$. Since the grading on $\C[\widetilde{\Leaf}]$ is positive, any holomorphic $\C^\times$-semiinvariant function must be polynomial. So the embedding above
reduces to $\Gamma(\mathcal{V})\hookrightarrow \C[\widetilde{\Leaf}]\otimes \mathcal{V}^{fl}$.
The generic rank of $\Gamma(\mathcal{V})$ coincides with $\dim \mathcal{V}^{fl}$. Since the module
$\C[\widetilde{\Leaf}]\otimes \mathcal{V}^{fl}$ has no torsion, we see that $\Gamma(\mathcal{V})=
\C[\widetilde{\Leaf}]\otimes \mathcal{V}^{fl}$. It follows that $\mathcal{V}=\mathcal{O}_{\widetilde{\Leaf}}\otimes \mathcal{V}^{fl}$ as a D-module and the claim of this step follows.

{\it Step 3}. Let $Y$ be a symplectic variety.  We claim that a Poisson $\mathcal{O}_{Y}$-module
carries a canonical structure of a $D_{Y}$-module and vice versa. If $\mathcal{N}$ is a $D_Y$-module,
then we equip it with the structure of a Poisson module via $\{f,n\}:=v(f)n$. Here $f,n$ are local
sections of $\mathcal{O}_Y, \mathcal{N}$, respectively, and $v(f)$ is the skew-gradient of $f$,
a vector field on $Y$. Let us now equip a Poisson module with a canonical D-module structure.
It is enough to do this locally, so we may assume that there is an etale map $Y\rightarrow \C^k$.
Let $f_1,\ldots,f_k$ be the corresponding etale coordinates. Then we set $v(f)n:=\{f,n\}$. This defines
a D-module structure on $\mathcal{N}$ that is easily seen to be independent of the choice of an
\'{e}tale chart.

Let us remark that a weakly $\C^\times$-equivariant Poisson module gives rise to a weakly $\C^\times$-equivariant
D-module and vice versa.

So the conclusion of the previous 3 steps is that every weakly $\C^\times$-equivariant Poisson $\mathcal{O}_{\widetilde{\Leaf}}$-module
is the direct sum of several copies of $\mathcal{O}_{\widetilde{\Leaf}}$.

{\it Step 4}. Pick a point $\tilde{x}\in \widetilde{\Leaf}$ lying over $x$ so that $\widetilde{\Leaf}^{\wedge_{\tilde{x}}}$
is naturally identified with $\Leaf^{\wedge_x}$. Of course, any Poisson module over $\widetilde{\Leaf}^{\wedge_{\tilde{x}}}$
is the direct sum of several copies of $\C[\widetilde{\Leaf}]^{\wedge_{\tilde{x}}}$.  So the claim in the beginning
of the proof will follow if we check that any finitely generated Poisson $\C[\widetilde{\Leaf}]$-bimodule in
$\C[\widetilde{\Leaf}]^{\wedge_{\tilde{x}}}$ with locally finite $\mathsf{eu}$-action coincides with
$\C[\widetilde{\Leaf}]$. For this, let us note that the Poisson center of $\C[\widetilde{\Leaf}]^{\wedge_{\tilde{x}}}$
coincides with $\C$. On the other hand, any finitely generated Poisson submodule with locally finite
action of $\mathsf{eu}$ is the sum of  weakly $\C^\times$-equivariant Poisson submodules. The latter
have to be trivial and so are generated by the Poisson central elements. This implies our claim and completes the proof.
\end{proof}

We will also need to consider a map between the sets of two-sided ideals $\mathfrak{Id}(\underline{\A})\rightarrow
\mathfrak{Id}(\A)$ induced by the functor $\bullet^{\dagger,x}$, compare to \cite{wquant,HC,sraco}. Namely, for
$\mathcal{I}\in \mathfrak{Id}(\underline{\A})$ we write $\mathcal{I}^{\dagger_\A,x}$ for the kernel of the natural
map $\A\rightarrow (\underline{\A}/\mathcal{I})^{\dagger,x}$. Alternatively, the ideal $\mathcal{I}^{\dagger_\A,x}$
can be obtained as follows. Consider the ideal $\W_\hbar^{\wedge_0}\widehat{\otimes}_{\C[[\hbar]]}\mathcal{I}^{\wedge_0}_{\hbar}\subset
\A_\hbar^{\wedge_x}$. Set $\J_\hbar:=\W_\hbar^{\wedge_0}\widehat{\otimes}_{\C[[\hbar]]}\mathcal{I}^{\wedge_0}_{\hbar}\cap \A_\hbar$.
This is a $\C^\times$-stable $\hbar$-saturated ideal in $\A_\hbar$ and we set $\I^{\dagger_\A,x}:=\J_{\hbar}/(\hbar-1)\J_{\hbar}$.

We will need some properties of the map $\mathfrak{Id}(\underline{\A})\rightarrow \mathfrak{Id}(\A)$ analogous to those established
in \cite[Theorem 1.2.2]{wquant}.

\begin{Prop}\label{Prop:map_ideal_prop}
The following is true.
\begin{enumerate}
\item $\J\subset (\J_{\dagger,x})^{\dagger_\A,x}$ for all $\J\in \mathfrak{Id}(\A)$ and $(\I^{\dagger_\A,x})_{\dagger,x}\subset \I$
for all $\I\in \mathfrak{Id}(\underline{\A})$.
\item We have $\mathcal{I}_1^{\dagger_\A,x}\cap \mathcal{I}_2^{\dagger_\A,x}=(\mathcal{I}_1\cap \mathcal{I}_2)^{\dagger_\A,x}$.
\item If $\mathcal{I}$ is prime, then so is $\mathcal{I}^{\dagger_\A,x}$.
\end{enumerate}
\end{Prop}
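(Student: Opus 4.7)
The plan is to follow the scheme of \cite[Theorem 1.2.2]{wquant}, which treats a formally analogous situation. Everything will hinge on the adjunction $\Hom(\M,\mathcal{N}^{\dagger,x})=\Hom(\M_{\dagger,x},\mathcal{N})$ recalled just above the proposition, together with the $\hbar$-adic description of $\I^{\dagger_\A,x}$. A preliminary observation I will need is that any two-sided ideal $\J\subset \A$ is itself a HC bimodule: the induced filtration turns $\gr\J$ into a Poisson ideal of $\C[X_0]$, and since $\C[X_0]$ is finitely generated (hence Noetherian), $\gr\J$ and therefore $\J$ are finitely generated. In particular, exactness of $\bullet_{\dagger,x}$ applied to $0\to \J\to\A\to\A/\J\to 0$ gives $(\A/\J)_{\dagger,x}=\underline{\A}/\J_{\dagger,x}$, and the same recipe identifies the ring quotient $\A/\I^{\dagger_\A,x}$ with the image of the natural map $\A\to (\underline{\A}/\I)^{\dagger,x}$.

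For (1) I will use the adjunction directly. The unit $\A/\J\to((\A/\J)_{\dagger,x})^{\dagger,x}=(\underline{\A}/\J_{\dagger,x})^{\dagger,x}$ factors the natural map $\A\to (\underline{\A}/\J_{\dagger,x})^{\dagger,x}$, whose kernel is $(\J_{\dagger,x})^{\dagger_\A,x}$ by definition and visibly contains $\J$. The reverse inclusion $(\I^{\dagger_\A,x})_{\dagger,x}\subset \I$ will come dually from the counit $((\underline{\A}/\I)^{\dagger,x})_{\dagger,x}\to \underline{\A}/\I$: the injection $\A/\I^{\dagger_\A,x}\hookrightarrow (\underline{\A}/\I)^{\dagger,x}$, after applying $\bullet_{\dagger,x}$ and composing with the counit, yields the quotient map $\underline{\A}\to \underline{\A}/\I$, whose kernel therefore contains $(\I^{\dagger_\A,x})_{\dagger,x}$.

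Part (2) is cleanest in the $\hbar$-adic picture, where $\I^{\dagger_\A,x}_\hbar=(\W_\hbar^{\wedge_0}\widehat{\otimes}_{\C[[\hbar]]}\I_\hbar^{\wedge_0})\cap\A_\hbar$. Completion along $0$ is exact and commutes with finite intersections; $\W_\hbar^{\wedge_0}$ is flat over $\C[[\hbar]]$, so completed tensor product with it preserves intersections of $\hbar$-saturated submodules of $\underline{\A}_\hbar^{\wedge_0}$; and intersecting with $\A_\hbar$ tautologically commutes with intersections. Chaining these three statements yields $(\I_1\cap\I_2)^{\dagger_\A,x}_\hbar=\I_1^{\dagger_\A,x}_\hbar\cap\I_2^{\dagger_\A,x}_\hbar$, and passing to $\hbar=1$ gives (2).

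For (3) I will combine (1) with the tensor compatibility $(\B_1\otimes_\A \B_2)_{\dagger,x}=(\B_1)_{\dagger,x}\otimes_{\underline{\A}}(\B_2)_{\dagger,x}$, which is the $i=0$ case of Proposition~\ref{Prop:dagger_prop}(3). For two-sided ideals $\J_1,\J_2\subset\A$ the surjection $\J_1\otimes_\A\J_2\twoheadrightarrow \J_1\J_2$ then yields, after $\bullet_{\dagger,x}$, a surjection whose image is $(\J_1)_{\dagger,x}(\J_2)_{\dagger,x}$, so $(\J_1)_{\dagger,x}(\J_2)_{\dagger,x}\subset (\J_1\J_2)_{\dagger,x}$. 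Now if $\J_1\J_2\subset \I^{\dagger_\A,x}$, exactness of $\bullet_{\dagger,x}$ and part (1) give $(\J_1)_{\dagger,x}(\J_2)_{\dagger,x}\subset (\I^{\dagger_\A,x})_{\dagger,x}\subset \I$; primality of $\I$ forces some $(\J_i)_{\dagger,x}\subset \I$; and monotonicity of $\bullet^{\dagger_\A,x}$ (transparent from the $\hbar$-adic formula) combined with (1) yields $\J_i\subset ((\J_i)_{\dagger,x})^{\dagger_\A,x}\subset \I^{\dagger_\A,x}$. The main obstacle is the verification of the product compatibility for $\bullet_{\dagger,x}$ on ideals and the commutation of completed tensor with intersections in (2); both are standard $\hbar$-adic/flatness manipulations in the spirit of \cite{wquant}, but they must be checked carefully since the ambient rings are noncommutative and the tensor products are $\hbar$-adically completed.
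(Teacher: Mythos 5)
Your proposal is correct in substance, but it reaches the conclusion, especially part (3), by a genuinely different route than the paper. The paper disposes of (1) and (2) by appealing directly to the $\hbar$-adic description $\I^{\dagger_\A,x}=\bigl(\W_\hbar^{\wedge_0}\widehat{\otimes}_{\C[[\hbar]]}\I_\hbar^{\wedge_0}\cap\A_\hbar\bigr)/(\hbar-1)$, and proves (3) entirely at the completed level: it first notes that the ideals $\I_\hbar,\I_\hbar^{\wedge_0},\W_\hbar^{\wedge_0}\widehat{\otimes}\I_\hbar^{\wedge_0}$ are prime via the bijections between ($\C^\times$-stable, $\hbar$-saturated) two-sided ideals of $\underline{\A}$ and of the completed algebras, and then shows that intersecting a $\C^\times$-stable $\hbar$-saturated prime ideal of $\A_\hbar^{\wedge_x}$ with $\A_\hbar$ yields a prime ideal, by taking hypothetical $\J^1_\hbar\J^2_\hbar\subset\J_\hbar$, replacing $\J^i_\hbar$ by $\C^\times$-stable $\hbar$-saturated ideals (saturating, or passing to powers of the radical resp.\ associated primes), completing at $x$, and using $(\J^1_\hbar\J^2_\hbar)^{\wedge_x}=(\J^1_\hbar)^{\wedge_x}(\J^2_\hbar)^{\wedge_x}$. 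You instead derive (3) formally from (1), monotonicity of $\bullet^{\dagger_\A,x}$, and the tensor compatibility $(\J_1\otimes_\A\J_2)_{\dagger,x}\cong(\J_1)_{\dagger,x}\otimes_{\underline{\A}}(\J_2)_{\dagger,x}$ (the $i=0$ case of Proposition~\ref{Prop:dagger_prop}(3)), which reduces primality upstairs to primality of $\I$ downstairs; this is shorter and more categorical, and it is consistent with the paper's own later use of the fact that $\bullet_{\dagger,x}$ ``is tensor and preserves products of ideals'' (proof of Proposition~\ref{Prop:ideals_techn}), but it leans on the naturality of that isomorphism and on your preliminary observation that two-sided ideals are HC bimodules, whereas the paper's argument is self-contained at the $\hbar$-adic level and needs neither. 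Your adjunction proof of (1) is also a reformulation rather than the paper's direct computation; the only wrinkle there is the counit applied to $(\underline{\A}/\I)^{\dagger,x}$, which a priori lives only in $\widetilde{\HC}(\A)$ (Proposition~\ref{Prop:up_dag_prop} guarantees it is HC only under a finiteness hypothesis), so it is cleaner to invoke the adjunction identity $\Hom(\M,\mathcal{N}^{\dagger,x})=\Hom(\M_{\dagger,x},\mathcal{N})$ with $\M=\A/\I^{\dagger_\A,x}$ directly, or to argue as the paper does from the $\hbar$-adic formula; this is a presentational point, not a gap.
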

\begin{proof}
(1) and (2) follow from the alternative definition of $\I^{\dagger_\A,x}$ given above. The proof of (3) closely follows that of
an analogous statement, \cite[Theorem 1.2.2,(iv)]{wquant}, let us provide a proof for readers convenience. It is easy to
see that the ideals $\I_{\hbar},\I_\hbar^{\wedge_0}, \W^{\wedge_0}\widehat{\otimes}_{\C[[\hbar]]}\mathcal{I}^{\wedge_0}_{\hbar}$
are prime because of the bijections between the sets of two-sided ideals in $\underline{\A}, \underline{\A}_\hbar,
\underline{\A}_\hbar^{\wedge_0}, \W_\hbar^{\wedge_0}\widehat{\otimes}_{\C[[\hbar]]}\underline{\A}^{\wedge_0}_\hbar$
(we only consider the $\C^\times$-stable $\hbar$-saturated ideals in the last three algebras).

So we need to show that the intersection $\J_\hbar$ of a $\C^\times$-stable $\hbar$-saturated prime ideal $\mathcal{I}'_\hbar\subset
\A^{\wedge_x}_\hbar$ with $\A_\hbar$ is prime. Assume the converse, let there exist ideals $\J^1_\hbar,\J^2_\hbar\supsetneq \J_\hbar$
such that $\J^1_\hbar \J^2_\hbar\subset \J_\hbar$. We may assume that both $\J^i_\hbar$ are $\C^\times$-stable and $\hbar$-saturated.
Indeed, if they are not $\hbar$-saturated, then we can saturate them. To see that they can be taken $\C^\times$-stable
one can argue as follows. The radical of $\J_\hbar$ is $\C^\times$-stable and so we can take appropriate powers of the
radical for $\J^1_\hbar,\J^2_{\hbar}$ if $\J_\hbar$ is not semiprime. If $\J_\hbar$ is semiprime, then
its associated prime ideals are $\C^\times$-stable and we can take their appropriate intersections for $\J^1_\hbar,\J^2_\hbar$.

So let us assume that $\J^1_\hbar,\J^2_\hbar$ are $\hbar$-saturated and $\C^\times$-stable. Then so are $(\J^1_\hbar)^{\wedge_x},(\J^2_\hbar)^{\wedge_x}$. Also let us remark that   $(\J^1_\hbar)^{\wedge_x}(\J^2_\hbar)^{\wedge_x}=
(\J^1_\hbar \J^2_\hbar)^{\wedge_x}\subset \J_\hbar^{\wedge_x}\subset \I'_\hbar$. Without loss of generality,
we may assume that $(\J^1_\hbar)^{\wedge_x}\subset \I'_\hbar$. It follows that $\J^1_\hbar\subset \J_\hbar=\A_\hbar\cap \I'_\hbar$,
and we are done.
\end{proof}

\subsection{Two-sided ideals in $\bar{\A}_\lambda(n,r)$}\label{SS_two_sid_id}
The goal of this subsection is to prove Theorem \ref{Thm:ideals} and more technical statements in (ii) above.
We use the following notation. We write $\A$ for $\bar{\A}_\lambda(n,r)$ and write $\underline{\A}$
for $\bar{\A}_\lambda(n',r)$, where $n'$ is the denominator of $\lambda$.

Let us start with the description of the two-sided ideals in $\underline{\A}$.

\begin{Lem}\label{Lem:ideals_easy}
There is a unique
proper ideal in $\underline{\A}$.
\end{Lem}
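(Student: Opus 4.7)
The plan is to prove that any proper nonzero two-sided ideal $\mathcal{I}\subset \underline{\A}$ coincides with $\operatorname{Ann}(L)$, where $L$ is the unique finite-dimensional simple $\underline{\A}$-module provided by Theorem \ref{Thm:fin dim}(2). The argument proceeds by induction on $n'$: we may assume Theorem \ref{Thm:ideals} for $\bar{\A}_\lambda(n_i, r)$ with $n_i < n'$, so that, since $\lfloor n_i/n' \rfloor = 0$, each such algebra is simple.

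First I would show $V := \VA(\underline{\A}/\mathcal{I}) = \{0\}$. Since $\mathcal{I}$ is proper and nonzero, $V$ is a proper nonempty closed Poisson subvariety of $\bar{\M}(n',r)$, so it misses the open symplectic leaf. If it contained a non-zero non-open leaf $\Leaf$, pick $x \in \Leaf$; the slice algebra at $x$ is a tensor product $\bigotimes_i \bar{\A}_\lambda(n_i, r)$ with $n_i < n'$ (Subsection \ref{SS_quant_slice}), which is simple by the inductive hypothesis. Since $x \in V$, Proposition \ref{Prop:dagger_prop}(2) gives $(\underline{\A}/\mathcal{I})_{\dagger,x} \neq 0$, so by exactness of $\bullet_{\dagger,x}$ the restriction $\mathcal{I}_{\dagger,x}$ is a proper ideal of a simple algebra, hence zero. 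Then $(\underline{\A}/\mathcal{I})_{\dagger,x}$ equals the full slice algebra, so its associated variety is the entire slice $\underline{\bar{\M}}$, and Proposition \ref{Prop:dagger_prop}(2) forces $V^{\wedge_x}=\bar{\M}(n',r)^{\wedge_x}$; since $\bar{\M}(n',r)$ is irreducible and $V$ is closed, $V=\bar{\M}(n',r)$, contradicting properness. Therefore $V=\{0\}$ and $\underline{\A}/\mathcal{I}$ is finite-dimensional over $\C$. By Theorem \ref{Thm:fin dim}(2), $L$ is the only finite-dimensional simple $\underline{\A}$-module, so the nonzero algebra $\underline{\A}/\mathcal{I}$ has $L$ as its unique simple, giving $\mathcal{I} \subset \operatorname{Ann}(L)$.

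The main obstacle is the reverse inclusion $\operatorname{Ann}(L) \subset \mathcal{I}$, that is, ruling out strictly smaller proper ideals such as nilpotent thickenings $\operatorname{Ann}(L)^2, \operatorname{Ann}(L)^3,\ldots$. I would approach this by examining the bimodule $\operatorname{Ann}(L)/\mathcal{I}$: as a finite-dimensional HC bimodule whose left and right actions factor through $\underline{\A}/\operatorname{Ann}(L) \cong \End(L)$, it is a direct sum of copies of $L\boxtimes L^*$. Passing to associated gradeds, $\gr\mathcal{I}$ and $\gr\operatorname{Ann}(L)$ are Poisson ideals of $\C[\bar{\M}(n',r)]$ with the common zero locus $\{0\}$. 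Since $\{0\}$ is a single symplectic leaf (Lemma \ref{Lem:0_leaf}) and $\underline{\A}$ is a domain (because $\gr\underline{\A} = \C[\bar{\M}(n',r)]$ is integral), one uses that the Poisson bracket on the formal neighborhood of $0$, combined with the finite global dimension of $\underline{\A}$ from Theorem \ref{Thm:loc}, constrains such ideals enough to force $\gr\mathcal{I} = \gr\operatorname{Ann}(L)$, and hence $\mathcal{I}=\operatorname{Ann}(L)$ by strictness of the filtration.
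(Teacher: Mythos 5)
The genuine gap is exactly where you flag it: the reverse inclusion $\operatorname{Ann}(L)\subset\I$. Your proposed argument for it is not a proof. Both $\operatorname{Ann}(L)$ and a hypothetical smaller ideal such as $\operatorname{Ann}(L)^2$ have associated variety $\{0\}$, and "the Poisson bracket on the formal neighborhood of $0$ combined with finite global dimension constrains such ideals enough to force $\gr\I=\gr\operatorname{Ann}(L)$" is an unsupported assertion — nothing in the Poisson structure at the point $0$ distinguishes $\gr\operatorname{Ann}(L)$ from $\gr(\operatorname{Ann}(L)^2)$, and finite global dimension of $\underline{\A}$ plays no visible role. The ingredient that actually closes this step, and which the paper uses, is part (1) of Theorem \ref{Thm:fin dim}: the category of $\underline{\A}$-modules supported on $\bar\rho^{-1}(0)$ (equivalently, of finite-dimensional modules) is equivalent to $\operatorname{Vect}$, because in the highest weight category $\OCat$ one has $\operatorname{Ext}^1(L,L)=0$. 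Granting this, once you know $\VA(\underline{\A}/\I)=\{0\}$, the left module $\underline{\A}/\I$ is finite dimensional, hence isomorphic to $L^{\oplus k}$, and therefore $\I=\operatorname{Ann}_{\underline{\A}}(\underline{\A}/\I)=\operatorname{Ann}(L)$. Without invoking this semisimplicity you have not ruled out proper ideals strictly contained in $\operatorname{Ann}(L)$, which is precisely the content of the lemma beyond your first part.

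Two smaller points about the first part. Your reduction "$\VA(\underline{\A}/\I)$ is $\{0\}$ or everything" is essentially the paper's argument, but you derive simplicity of the slice algebras from Theorem \ref{Thm:ideals} for smaller $n_i$, which is proved later in the paper using this very lemma; to avoid a circular induction you should instead use only the weaker fact, already established in the proof of Theorem \ref{Thm:fin dim}, that $\bar{\A}_\lambda(n_i,r)$ with $n_i<n'$ (so denominator of $\lambda$ exceeds $n_i$) has no finite-dimensional representations: a leaf $\Leaf\neq\{0\}$ inside $\VA(\underline{\A}/\I)$, taken open in that variety, would by Proposition \ref{Prop:dagger_prop}(2) produce a nonzero finite-dimensional module over the slice algebra, a contradiction. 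This also sidesteps your unjustified claim that a tensor product $\bigotimes_i\bar{\A}_\lambda(n_i,r)$ of simple algebras is simple. With these repairs the first half matches the paper; the second half needs the appeal to Theorem \ref{Thm:fin dim} described above.
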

\begin{proof}
We have seen in the proof of Theorem \ref{Thm:fin dim} that the proper slice algebras for $\underline{\A}$ have no finite dimensional representations. So every ideal $\J\subset \underline{\A}$ is either of finite codimension or $\VA(\underline{\A}/\J)=\bar{\M}(n',r)$.
The algebra $\underline{\A}$ has no zero divisors so the second option is only possible when $\J=\{0\}$. Now suppose that $\J$
is of finite codimension. Then $\underline{\A}/\J$ (viewed as a left $\underline{\A}$-module) is the sum of several copies of the finite dimensional irreducible $\underline{\A}$-module. So $\J$ coincides with the annihilator of the finite dimensional irreducible module, and we are done.
\end{proof}

Let $\underline{\J}$ denote the unique two-sided ideal.

Now we are going to describe the two-sided ideals in $\underline{\A}^{\otimes k}$. For this we need some notation. Set
$\underline{\I}_i:=\underline{\A}^{\otimes i-1}\otimes \underline{\J}\otimes \underline{\A}^{\otimes k-i-1}$. For a subset
$\Lambda\subset \{1,\ldots,k\}$ define the ideals $\underline{\I}_{\Lambda}:=\sum_{i\in \Lambda} \underline{\I}_i,
\underline{\I}^{\Lambda}:=\prod_{i\in \Lambda} \underline{\I}_i$.

Recall that  a collection of subsets in $\{1,\ldots,k\}$ is called an {\it anti-chain} if none of these subsets is contained
in another. Also recall that an ideal $I$ in an associative algebra $A$ is called {\it semi-prime} if it is the intersection of
prime ideals.

\begin{Lem}\label{Lem:ideals_next}
The following is true.
\begin{enumerate}
\item The prime ideals in $\underline{\A}^{\otimes k}$ are precisely the ideals $\underline{\I}_\Lambda$.
\item For every ideal $\I\subset \underline{\A}^{\otimes k}$, there is a unique anti-chain $\Lambda_1,\ldots,\Lambda_q$
of subsets in $\{1,\ldots,k\}$ such that $\I=\bigcap_{i=1}^p \I_{\Lambda_i}$. In particular, every ideal is
semi-prime.
\item For every ideal $\I\subset \underline{\A}^{\otimes k}$, there is a unique anti-chain $\Lambda_1',\ldots,\Lambda_q'$
of subsets of $\{1,\ldots,k\}$ such that $\I=\sum_{i=1}^q \I^{\Lambda'_i}$.
\item The anti-chains in (2) and (3) are related as follows: from an antichain in (2), we form all possible subsets
containing an element from each of $\Lambda_1,\ldots,\Lambda_p$. Minimal such subsets form an anti-chain in (3).
\end{enumerate}
\end{Lem}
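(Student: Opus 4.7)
\textbf{Plan for Lemma \ref{Lem:ideals_next}.} Set $A := \underline{\A}$ and $B := A/\underline{\J}$. By Theorem \ref{Thm:fin dim}(2) and Jacobson density, $B \cong \operatorname{End}_\C(L) \cong M_d(\C)$ where $d := \dim L$. Moreover $\underline{\J}^2 = \underline{\J}$: indeed $\underline{\J}^2$ is a proper two-sided ideal, so by Lemma \ref{Lem:ideals_easy} equals $0$ or $\underline{\J}$, and $\underline{\J}^2 = 0$ is excluded by $A$ being a domain (which follows from $\gr A = \C[\bar{\M}(n',r)]$ being a domain, by irreducibility of $\bar{\M}(n',r)$). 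Consequently each $\underline{\I}_i$ is idempotent, as are all $\underline{\I}_\Lambda$ and $\underline{\I}^\Lambda$. The quotient $A^{\otimes k}/\underline{\I}_\Lambda$ is canonically $\bigotimes_{j=1}^k C_j$ with $C_j = B$ for $j \in \Lambda$ and $C_j = A$ otherwise; after rearrangement and using $B^{\otimes|\Lambda|} \cong M_{d^{|\Lambda|}}(\C)$, it is Morita equivalent to $A^{\otimes(k-|\Lambda|)}$. Moreover $A^{\otimes m}$ is a domain for every $m$ since $\gr A^{\otimes m} = \C[\bar{\M}(n',r)^m]$ is one.

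For (1), the Morita reduction combined with $A^{\otimes m}$ being a domain gives primeness of each $\underline{\I}_\Lambda$. For the converse I would induct on $k$ (base case Lemma \ref{Lem:ideals_easy}): given a prime $P \subset A^{\otimes k}$, the intersection $P_i := P \cap A_i$ with the $i$-th tensor factor is a prime of $A$ by a direct commutation argument (if $aA_ib \subset P_i$ for $a,b \in A_i$, then using that distinct tensor factors commute we get $aA^{\otimes k}b = (aA_ib)\cdot C_i \subset P$ where $C_i$ is the subalgebra generated by the other factors, and primeness of $P$ forces $a$ or $b$ into $P$). Hence $P_i \in \{0, \underline{\J}\}$; setting $\Lambda_P := \{i : P_i = \underline{\J}\}$ gives $P \supset \underline{\I}_{\Lambda_P}$, and it remains to show the induced prime $\bar P$ in the Morita-equivalent algebra $A^{\otimes(k-|\Lambda_P|)}$, with trivial intersection with each remaining tensor factor, must be zero. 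I would prove this rigidity claim --- the main obstacle --- via associated graded: $\gr\bar P$ is a Poisson ideal of $\C[\bar{\M}(n',r)^{k-|\Lambda_P|}]$ whose minimal (Poisson) primes correspond to irreducible closed Poisson subvarieties projecting dominantly to each factor. Invoking Lemma \ref{Lem:0_leaf} and the product structure of symplectic leaves (the open leaf of a product is the product of open leaves), any proper Poisson subvariety factors through the non-regular locus in some factor, contradicting dominant projection; so the subvarieties fill the entire product, forcing $\gr\bar P = 0$ and $\bar P = 0$.

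For (2) and (3), I would first establish by a direct combinatorial calculation that for any antichain $\{\Lambda_1,\ldots,\Lambda_p\}$ in $2^{\{1,\ldots,k\}}$, the intersection $\bigcap_j \underline{\I}_{\Lambda_j}$ equals the sum $\sum_{\Lambda'} \underline{\I}^{\Lambda'}$ taken over the minimal transversals $\Lambda'$ of $\{\Lambda_j\}$; this rests on the basic identities $\underline{\I}_i \cap \underline{\I}_j = \underline{\I}^{\{i,j\}} = \underline{\I}_i \cdot \underline{\I}_j$ (for $i \neq j$) together with the distributive structure of the lattice generated by the $\underline{\I}_i$ inside $A^{\otimes k}$, which follows from the tensor-product vector-space decomposition $A = \underline{\J} \oplus C$. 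This identity yields the combinatorial correspondence (4) and shows any such intersection is idempotent (being a sum of idempotent ideals $\underline{\I}^{\Lambda'}$). Now for any ideal $\I$, let $\{\Lambda_1,\ldots,\Lambda_p\}$ be the minimal elements of $\mathcal{T}(\I) := \{\Lambda : \I \subset \underline{\I}_\Lambda\}$; by (1) these give the minimal primes above $\I$, so $\sqrt{\I} = \bigcap_j \underline{\I}_{\Lambda_j}$ is idempotent. Noetherianity provides $\sqrt{\I}^n \subset \I$ for some $n$, and idempotence $\sqrt{\I}^n = \sqrt{\I}$ then forces $\I = \sqrt{\I}$, proving (2). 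Uniqueness is intrinsic from $\mathcal{T}(\I)$, and parts (3) and (4) then follow from the combinatorial identity applied to this specific antichain.
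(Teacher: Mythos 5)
Your treatment of (2)--(4) is fine and close in substance to the paper's: the paper also reduces everything to the identity $\bigcap_j\underline{\I}_{\Lambda_j}=\sum_j\underline{\I}^{\Lambda'_j}$ (proved there by an inductive manipulation rather than your cleaner decomposition $A=\underline{\J}\oplus C$, which does work since, using $\underline{\J}^2=\underline{\J}$, all the ideals involved are direct sums of the subspaces $T_S=\bigotimes_i(\underline{\J}\text{ or }C)$), and then uses idempotence of such ideals plus nilpotence of the radical modulo a given ideal to conclude semiprimeness. The primeness of the $\underline{\I}_\Lambda$ via ``matrix algebra tensor domain'' is also the paper's argument.

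The genuine gap is in the converse half of (1), namely your ``rigidity claim'': if $\bar P\subset A^{\otimes m}$ is prime with $\bar P\cap A_i=0$ for every tensor factor, then $\bar P=0$. Your proposed proof asserts that the minimal Poisson primes of $\gr\bar P$ correspond to irreducible Poisson subvarieties of $\bar{\M}(n',r)^m$ projecting dominantly to each factor, and then uses the leaf structure of the product to force $\gr\bar P=0$. But the implication ``$\bar P\cap A_i=0$ for all $i$ $\Rightarrow$ every component of $\VA(A^{\otimes m}/\bar P)$ dominates each factor'' is not a consequence of anything at the level of associated graded ideals: the semiprime ideal $\underline{\I}_1\cap\underline{\I}_2\subset A^{\otimes 2}$ has zero intersection with both tensor factors (since $1\notin\underline{\J}$), yet each component of its associated variety is degenerate in one of the two factors. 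So any proof of the rigidity claim must use primeness of $\bar P$ in an essentially noncommutative way, and your sketch never does --- it invokes primeness only to speak of minimal primes of $\gr\bar P$, which cannot distinguish $\bar P$ from the example above. (Even the weaker statement that the whole of $\VA(A^{\otimes m}/\bar P)$ dominates each factor is not automatic: from a factor-$i$ function $f$ vanishing on $\VA$ you only get $f^N\in\gr\bar P$ for some $N$, i.e.\ some element of $\bar P$ with that symbol, not a nonzero element of $\bar P\cap A_i$.) The paper avoids this entirely: it takes a generic point $x$ of an open leaf in $\VA(\underline{\A}^{\otimes k}/\I)$, uses the restriction functor $\bullet_{\dagger,x}$ and its adjoint $\bullet^{\dagger,x}$ (Proposition \ref{Prop:up_dag_prop}) together with the uniqueness of the finite-dimensional irreducible of the slice algebra and a Gelfand--Kirillov dimension argument (\cite[Corollary 3.6]{BoKr}) to show $\I=\I'^{\dagger,x}$, and then matches the count of primes with the count of the $\underline{\I}_\Lambda$. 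To repair your route you would need either this slice machinery or some other genuinely noncommutative input (e.g.\ irreducibility of associated varieties of primes, which is not available here and is essentially what is being established).
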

The proof essentially appeared in \cite[5.8]{sraco}.
\begin{proof}
Let us prove (1). Let $\I$ be a prime ideal. Let $x$ be a generic  point in an open leaf  $\Leaf\subset\VA(\underline{\A}^{\otimes k}/\I)$
of maximal dimension. The corresponding slice algebra $\underline{\A}'$ has a finite dimensional irreducible and so is again the product of several copies of $\underline{\A}$. The leaf $\Leaf$ is therefore the product of one-point leaves and full leaves in
$\bar{\M}(n',r)^k$.  An irreducible finite dimensional representation of $\underline{\A}'$ is unique, let $\I'$ be its annihilator.
Then $\I\subset \I'^{\dagger_{\underline{\A}^{\otimes k}},x}$. By Proposition \ref{Prop:up_dag_prop},
$\VA(\underline{\A}^{\otimes k}/\I'^{\dagger_{\underline{\A}^{\otimes k}},x})=\overline{\Leaf}$. It follows from
\cite[Corollar 3.6]{BoKr} that $\I=\I'^{\dagger_{\underline{\A}^{\otimes k}},x}$. So the number of the prime ideals coincides
with that of the non-empty subsets  $\{1,\ldots,k\}$. On the other hand, the ideals $\I^\Lambda$ are all different
(they have different associated varieties)
and all prime (the quotient $\underline{\A}^{\otimes k}/\I^{\Lambda}$ is the product of a matrix algebra and the algebra
$\underline{\A}^{\otimes k-|\Lambda|}$ that has no zero divisors).

Let us prove (2) (and simultaneously (3)). Let us write $\I_{\Lambda_1,\ldots,\Lambda_p}$ for $\bigcap_{j=1}^s \I_{\Lambda_j}$.
For ideals in $\underline{\A}^{\otimes k-1}$
we use notation like $\underline{\I}_{\Lambda_1',\ldots,\Lambda_q'}$. Reordering the indexes, we may assume that $k\in \Lambda_1,\ldots,
\Lambda_s$ and $k\not\in \Lambda_{s+1},\ldots,\Lambda_p$. Set $\Lambda_j':=\Lambda_j\setminus \{k\}$ for $j\leqslant s$.
Then
\begin{equation}\label{eq:ideal1}\I_{\Lambda_1,\ldots,\Lambda_p}=(\underline{\A}^{\otimes k-1}\otimes \underline{\J}+ \underline{\I}_{\Lambda'_1,\ldots,\Lambda'_s}\otimes \underline{\A})\cap (\underline{\I}_{\Lambda_{s+1},\ldots,\Lambda_p}\otimes \underline{\A}).\end{equation}
We claim that the right hand side of (\ref{eq:ideal1}) coincides with
\begin{equation}\label{eq:ideal2}
\underline{\I}_{\Lambda_{s+1},\ldots,\Lambda_p}\otimes \underline{\J}+\underline{\I}_{\Lambda_1',\ldots,\Lambda_s',\Lambda_{s+1},\ldots,\Lambda_p}\otimes \underline{\A}.
\end{equation}
First of all, we notice that (\ref{eq:ideal2}) is contained in (\ref{eq:ideal1}). So we only need to prove the
opposite inclusion.   The projection  of (\ref{eq:ideal1}) to $\underline{\A}^{\otimes k-1}\otimes
\underline{\A}/\underline{\J}$ is contained in $\underline{\I}_{\Lambda_1',\ldots,\Lambda_s',\Lambda_{s+1},\ldots,\Lambda_p}$
and hence also in the projection of (\ref{eq:ideal2}). Also the intersection of (\ref{eq:ideal1}) with $\underline{\A}^{\otimes k-1}\otimes \underline{\J}$ is contained in $\underline{\I}_{\Lambda_{s+1},\ldots,\Lambda_p}\otimes \underline{\J}$. So (\ref{eq:ideal1})
is included into (\ref{eq:ideal2}).

Repeating this argument with the two summands in (\ref{eq:ideal2}) and other factors of $\underline{\A}^{\otimes k}$
we conclude that $\I_{\Lambda_1,\ldots,\Lambda_p}=\sum_j \I^{\Lambda'_j}$, where the subsets
$\Lambda'_j\subset \{1,\ldots,k\}$ are formed as described in (4).   So we see that the ideals (2) are the same
as the ideals in (3) and that (4) holds. What remains to do is to prove that every ideal has the form described in (2).
To start with, we notice that every semi-prime ideal has the form as in (2) because of (1). In particular, the radical
of any ideal has such form.

Clearly,
$\I^{\Lambda'_1}\I^{\Lambda'_2}=\I^{\Lambda'_1\cup\Lambda'_2}$. So it follows any sum of the ideals $\I^{\Lambda_j'}$ coincides with its
square. So if $\I$ is an ideal whose radical is $\I_{\Lambda_1,\ldots,\Lambda_p}$, then $\I$ coincides with
its radical. This completes the proof.
\end{proof}

Now we are ready to establish a result that will imply Theorem \ref{Thm:ideals} together with technical results required in
(ii). Let $x_i\in \bar{\M}(n,r)$ be a point corresponding to the leaf with slice $\bar{\M}(n',r)^{i}$ (i.e. to the semisimple representations of the form $r^0\oplus (r^1)^{n'}\oplus\ldots\oplus (r^i)^{n'}$). We set
$\J_i:=\I^{\dagger_{\A}, x_i}$, where $\I$ is the maximal ideal in $\underline{\A}^{\otimes i}$, equivalently,
the annihilator of the finite dimensional irreducible representation.

\begin{Prop}\label{Prop:ideals_techn}
The ideals $\J_i, i=1,\ldots,q$, have the following properties.
\begin{enumerate}
\item The ideal $\J_i$ is prime for any $i$.
\item $\VA(\A/\J_i)=\overline{\Leaf}_i$, where $\Leaf_i$ is the symplectic leaf containing $x_i$.
\item $\J_1\subsetneq \J_2\subsetneq\ldots\subsetneq \J_q$.
\item Any proper two-sided ideal in $\A$ is one of $\J_i$.
\item We have $(\J_i)_{\dagger,x_j}=\underline{\A}^{\otimes j}$ if $j<i$ and $(\J_i)_{\dagger,x_j}=\sum_{|\Lambda|=j-i+1} \I^{\Lambda}$ else.
\end{enumerate}
\end{Prop}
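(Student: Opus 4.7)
The plan is to prove the five claims in the order (1), (2), (5), (4), (3), since (3) and (4) will ultimately rely on (2) and (5).

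Claim (1) is immediate from Proposition \ref{Prop:map_ideal_prop}(3): the ideal $\I\subset\underline{\A}^{\otimes i}$ is prime because the quotient is a tensor power of matrix algebras, hence simple. For (2), the tautological embedding $\A/\J_i\hookrightarrow (\underline{\A}^{\otimes i}/\I)^{\dagger,x_i}$ (built into the definition of $\J_i$) combined with Proposition \ref{Prop:up_dag_prop} gives $\VA(\A/\J_i)\subseteq \overline{\Leaf}_i$. Conversely, applying the exact functor $\bullet_{\dagger,x_i}$ and using the $(\bullet_{\dagger,x_i},\bullet^{\dagger,x_i})$-adjunction yields a nonzero map $(\A/\J_i)_{\dagger,x_i}\to \underline{\A}^{\otimes i}/\I$, so $x_i\in \VA(\A/\J_i)$, forcing equality since $\overline{\Leaf}_i$ is the smallest closed union of leaves containing $x_i$.

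For (5), if $j<i$ then $x_j\notin \overline{\Leaf}_i$ (the leaves $\overline{\Leaf}_k$ are strictly nested with $\overline{\Leaf}_{k+1}\subsetneq \overline{\Leaf}_k$, a consequence of the fact that degenerating one of the generic irreducibles into $n'$ coincident ones moves between these strata), so $(\A/\J_i)_{\dagger,x_j}=0$ by Proposition \ref{Prop:dagger_prop}(2), giving $(\J_i)_{\dagger,x_j}=\underline{\A}^{\otimes j}$. For $j\geqslant i$, I would combinatorially identify the leaves of the slice $\bar{\M}(n',r)^j$ near $x_j$ that lie in $\overline{\Leaf}_i$: a leaf corresponding to a subset $\Lambda\subseteq\{1,\dots,j\}$ of factors sent to the origin of $\bar{\M}(n',r)$ lies in $\overline{\Leaf}_{|\Lambda|}$, so the intersection $\overline{\Leaf}_i\cap \bar{\M}(n',r)^j$ is the locus where at least $i$ of the factors coincide with the origin. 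This locus is cut out by the semi-prime ideal $\sum_{|\Lambda|=j-i+1}\I^\Lambda$. Since every ideal in $\underline{\A}^{\otimes j}$ is semi-prime and hence determined by its associated variety (Lemma \ref{Lem:ideals_next}(2)), this identifies $(\J_i)_{\dagger,x_j}$ with the claimed ideal.

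For (4), let $\J\subset \A$ be a proper ideal, $\Leaf$ an open leaf of $\VA(\A/\J)$, and $x\in \Leaf$. Then $(\A/\J)_{\dagger,x}$ is a nonzero finite dimensional bimodule over the slice algebra $\underline{\A}_x=\bigotimes\bar{\A}_\lambda(n_\alpha,r)$, so by Theorem \ref{Thm:fin dim} each $n_\alpha=n'$ and $\Leaf=\Leaf_k$ for some $k$. The restriction $\J_{\dagger,x_k}$ is then a finite codimension ideal in $\underline{\A}^{\otimes k}$, which by Lemma \ref{Lem:ideals_next} must be $\I$; hence $\J\subseteq \J_k$ by Proposition \ref{Prop:map_ideal_prop}(1). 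The main obstacle is upgrading this inclusion to equality. I would do it by induction on $q-k$: assume $\J\subsetneq \J_k$, then $M:=\J_k/\J$ is a nonzero HC bimodule with $M_{\dagger,x_k}=\I/\I=0$, so $\VA(M)\subseteq \overline{\Leaf}_{k+1}$; after ``truncating'' by passing to the prime $\J'\supseteq \J$ generated by the image in $\A/\J_{k+1}$ (or alternatively using semi-primality of $\J$ proved analogously to Lemma \ref{Lem:ideals_next}(2) via the faithfulness of restriction functors -- item (iv) in the outline), one inductively reduces to a case already excluded. Finally (3) follows: by (2) the $\J_k$ are pairwise distinct, and by (4) the intersection $\J_i\cap \J_j$ is some $\J_k$, but $\VA(\A/(\J_i\cap \J_j))=\overline{\Leaf}_i\cup\overline{\Leaf}_j=\overline{\Leaf}_{\min(i,j)}$, forcing $\J_i\cap\J_j=\J_{\min(i,j)}$ and hence the strict chain $\J_1\subsetneq\J_2\subsetneq\cdots\subsetneq\J_q$.
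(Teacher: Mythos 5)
Parts of your argument are sound and essentially follow the paper: (1) via Proposition \ref{Prop:map_ideal_prop}(3), (2) via Proposition \ref{Prop:up_dag_prop} together with $(\J_i)_{\dagger,x_i}\subseteq\I$, the $j<i$ half of (5), and the first half of (4) (every open leaf of $\VA(\A/\J)$ must be some $\Leaf_k$ because the slice algebras $\bigotimes\bar{\A}_\lambda(n_\alpha,r)$ admit finite dimensional modules only when all $n_\alpha=n'$, whence $\J\subseteq\J_k$ by adjunction). Your derivation of (3) from (4) is also fine as logic, though note the paper proves (3) directly (from $(\J_j)_{\dagger,x_i}\subseteq\I=(\J_i)_{\dagger,x_i}$ and $[\I^{\dagger_{\A},x_i}]=\J_i$) and then \emph{uses} the chain inside the proof of (4); since your (3) depends on your (4), it inherits the problem below.

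The genuine gap is in the second half of (4). The step from $\J\subseteq\J_k$ to $\J=\J_k$ is exactly where the content lies, and your sketch does not deliver it: setting $M=\J_k/\J$ and observing $M_{\dagger,x_k}=0$, so $\VA(M)\subseteq\overline{\Leaf}_{k+1}$, produces no contradiction by itself, and the phrase ``passing to the prime $\J'$ generated by the image in $\A/\J_{k+1}$'' is not an argument (that ideal is $\J+\J_{k+1}$, which is not obviously prime, and it is unclear what is being induced on). Your alternative -- semiprimality of all ideals of $\A$ -- is indeed the paper's mechanism, but you cannot simply cite item (iv) of the outline: that faithfulness statement concerns $\A_\lambda(v,w)$ for an affine quiver at a Weil generic parameter on a hyperplane, not $\bar{\A}_\lambda(n,r)$. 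The paper proves the needed faithfulness of $\bullet_{\dagger,x_q}$ on HC $\bar{\A}_\lambda(n,r)$-bimodules inside this very proof (any nonzero HC bimodule has a nonzero finite dimensional restriction at some point, forcing its associated variety to contain some $\Leaf_i$, and $\Leaf_q\subseteq\overline{\Leaf}_i$ for all $i$), and then uses exactness plus the tensor property of $\bullet_{\dagger,x_q}$ together with Lemma \ref{Lem:ideals_next}(2) to conclude that every ideal of $\A$ is semiprime, hence (primes being the $\J_i$, which form a chain) equal to some $\J_i$. None of this chain of reasoning appears in your proposal.

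There is a second, smaller gap in (5) for $j\geqslant i$: you prove only the easy inclusion (strata with at least $i$ factors at the origin lie in $\overline{\Leaf}_i$) and then assert the equality $\overline{\Leaf}_i\cap\bar{\M}(n',r)^j=\{\text{at least }i\text{ factors at the origin}\}$. The reverse inclusion (no components of $\overline{\Leaf}_i^{\wedge_{x_j}}$ lying in strata with fewer origins) is not automatic and requires an argument. The paper sidesteps the geometry: since $(\J_i)_{\dagger,x_j}$ is an ideal of $\underline{\A}^{\otimes j}$, Lemma \ref{Lem:ideals_next} forces its associated variety to be a union of products of points and full factors $\bar{\M}(n',r)$; Proposition \ref{Prop:dagger_prop}(2) and $\dim\overline{\Leaf}_i$ force exactly $j-i$ full factors; and the $\mathfrak{S}_j$-symmetry of the slice forces all such products to occur, after which uniqueness of the ideal with a given associated variety gives the formula. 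Your argument can be repaired along these lines (your easy inclusion plus the dimension count does suffice), but as written the key inclusion is unproven.
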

\begin{proof}
(1) is a special case of (3) of Proposition \ref{Prop:map_ideal_prop}. (2) follows from Proposition \ref{Prop:up_dag_prop},
compare with the proof of (1) in Lemma \ref{Lem:ideals_next}.

Let us prove (3). Since $(\J_{i})_{\dagger,x_i}$ has finite codimension, we see that it coincides with the maximal ideal
in $\underline{\A}^{\otimes i}$. So $(\J_j)_{\dagger,x_i}\subset (\J_i)_{\dagger,x_i}$ for $j<i$. It follows that
$\J_j\subset [(\J_j)_{\dagger,x_i}]^{\dagger_\A,x_i}\subset [(\J_i)_{\dagger,x_i}]^{\dagger_\A,x_i}=\J_i$.

Let us prove (4). The functor $\bullet_{\dagger,x_q}$ is faithful. Indeed, otherwise we have a HC bimodule
$\M$ with $\VA(\M)\cap \Leaf_q=\varnothing$. But $\M_{\dagger,x}$ has to be nonzero finite dimensional
for some $x$ and this is only possible when $x\in \Leaf_i$ for some $i$. But $\Leaf_q\subset \overline{\Leaf}_i$
for all $i$ that shows faithfulness. Since $\bullet_{\dagger,x_q}$ is faithful and exact, it follows that it embeds the lattice of
the ideals in $\A$ into that in $\underline{\A}^{\otimes q}$. We claim that this implies that every ideal in
$\A$ is semiprime. Indeed, the functor $\bullet_{\dagger,x_q}$ is, in addition, tensor and so
preserves products of ideals. Our claim follows from (2) of Lemma \ref{Lem:ideals_next}. But every prime ideal
in $\A$ is some $\J_i$, this is proved analogously to (1) of Lemma \ref{Lem:ideals_next}. Since the ideals
$\J_i$ form a chain, any semiprime ideal is prime and so coincides with some $\J_i$.

Let us prove (5). We will deduce that from the behavior of $\bullet_{\dagger,x}$ on the associated varieties.
We have an action of $\mathfrak{S}_j$ on $\M(n',r)^j$ by permuting factors. The action is induced
from $N_{G}(G_{\tilde{x}})$, where $\tilde{x}$ is a point from the closed $G$-orbit lying over $x$.
It follows that the intersection of any leaf with the slice is $\mathfrak{S}_j$-stable. The associated
variety $\VA(\underline{\A}^{\otimes j}/(\J_i)_{\dagger,x_j})$ is the union of some products with factors
$\{\operatorname{pt}\}$ and $\bar{\M}(n',r)$, where, for the dimension reasons, $\bar{\M}(n',r)$
occurs $j-i$ times. Because of the $\mathfrak{S}_j$-symmetry, all products occur. Now we deduce
the required formula for $(\J_i)_{\dagger,x_j}$ from the description of the two-sided ideals
in $\bar{\A}^{\otimes j}$. This description shows that for each associated variety there is at most
one two-sided ideal.
\end{proof}

\subsection{Computation of Tor's}\label{SS_tor_comput}
Here we consider the case when the denominator of $\lambda$ is $n$ and $\lambda$ is regular. Set $\A:=\bar{\A}_\lambda(n,r)$
and let $\J$ denote a unique proper ideal in this algebra. We want to establish (iii).

\begin{Prop}\label{Prop:Tor}
We have $\operatorname{Tor}_i^{\A}(\A/\J,\A/\J)=\A/\J$ if $i$ is even and $0\leqslant i\leqslant 2rn-2$, and
$\operatorname{Tor}_i^{\A}(\A/\J,\A/\J)=0$ else.
\end{Prop}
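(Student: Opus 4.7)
The plan is to rewrite $\operatorname{Tor}^\A_\bullet(\A/\J,\A/\J)$ in terms of the self-Ext algebra of the unique finite-dimensional simple $L$ of $\A$, and then compute that Ext algebra inside $\OCat(\A)$ using Theorem \ref{Thm:catO_str}.

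By Theorem \ref{Thm:ideals} the ideal $\J$ is the annihilator of $L$, so $\A/\J\cong\operatorname{End}(L)\cong L\otimes_\C L^*$ as an $\A$-bimodule (with the right structure acting on the dual $L^*$). Flatness of the outer $\C$-factors yields
$$\A/\J\otimes^L_\A \A/\J \;\cong\; L\otimes_\C (L^*\otimes^L_\A L)\otimes_\C L^*,$$
and the duality $\operatorname{RHom}_\A(L,L)\cong(L^*\otimes^L_\A L)^{*}$ (which uses $L^{**}=L$) translates into $\operatorname{Tor}^\A_i(L^*,L)\cong\operatorname{Ext}^i_\A(L,L)^{*}$. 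Hence, as $\A$-bimodules,
$$\operatorname{Tor}^\A_i(\A/\J,\A/\J)\;\cong\;\operatorname{Ext}^i_\A(L,L)^{*}\otimes_\C(\A/\J),$$
and Proposition \ref{Prop:Tor} reduces to the statement that $\dim_\C\operatorname{Ext}^i_\A(L,L)=1$ for even $i$ with $0\leqslant i\leqslant 2rn-2$ and $=0$ otherwise.

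Since $L\in\OCat(\A)$, Lemma \ref{Lem:Ext_coinc} identifies $\operatorname{Ext}^i_\A(L,L)$ with $\operatorname{Ext}^i_{\OCat(\A)}(L,L)$. By Theorem \ref{Thm:catO_str} the nontrivial block of $\OCat(\A)$ containing $L$ is equivalent to the model category $\Cat_{rn}$, and $L$ is sent to the simple $L(p_1)$ at the top of the linearly ordered poset. Under the identification of $\Cat_{rn}$ with the nontrivial block of $\OCat$ for the rational Cherednik algebra $\mathcal{H}_{1/(rn)}(\mathfrak{S}_{rn})$, this top simple is the one-dimensional irreducible $L_{1/(rn)}((rn))$. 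Invoking Koszulity of this Cherednik block — so that the self-Ext algebra of the top simple is the commutative graded algebra $H^\bullet(\mathbb{P}^{rn-1};\C)=\C[x]/(x^{rn})$ with $\deg x=2$ — delivers the claimed Ext pattern, whence the formula for $\operatorname{Tor}$ above yields Proposition \ref{Prop:Tor}.

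As a Koszulity-free alternative, one may build the minimal injective resolution of $L(p_1)$ directly in $\Cat_{rn}$, starting from the injective hull $I(p_1)=\nabla(p_1)$ and iteratively identifying the socle of each successive cokernel via the socle map $\nu$ of Step~6 in the proof of Theorem \ref{Thm:catO_str} (together with the descriptions $I(p_i)=T(p_{i-1})$ for $i>1$ from Step~5). The main obstacle in this direct route is that those structural results only pin down the indecomposable injectives by label, so tracking $\operatorname{Hom}(L(p_1),-)$ through the resolution requires extra Loewy-layer bookkeeping; the Koszulity shortcut bypasses this at the cost of one external input.
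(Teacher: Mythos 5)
Your reduction steps are sound and follow the same skeleton as the paper: writing $\A/\J\cong L\otimes_\C L^*$ reduces the statement to showing $\operatorname{Tor}^\A_i(L^*,L)=\C$ for even $i$ with $0\leqslant i\leqslant 2rn-2$ and $0$ otherwise, and your Tor--Ext comparison via the tensor-hom adjunction and exactness of $(-)^*$ is correct -- in fact simpler than the paper's treatment, which proves $\operatorname{Tor}_i^\A(L^*,L)^*\cong\operatorname{Ext}^i_\A(L,L)$ by a balancing argument with $\Delta=\A/\A\A_{>0}$ and a costandard coresolution, following the appendix to \cite{BLPW}. One omission is minor but real: the paper first uses translation equivalences to replace $\lambda$ by a Zariski generic parameter with the same denominator; you need this (or a substitute) to invoke the algebra-level part of Lemma \ref{Lem:Ext_coinc} and to transfer Theorem \ref{Thm:catO_str} from $\OCat(\bar{\A}^\theta_\lambda(n,r))$ to $\OCat(\bar{\A}_\lambda(n,r))$, since both require a Zariski-genericity/abelian-localization hypothesis that the given $\lambda$ need not satisfy.

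The essential gap is the computation of $\operatorname{Ext}^\bullet_{\OCat}(L,L)$, which is the whole content of the proposition. You justify it by ``Koszulity of the Cherednik block,'' but (a) Koszulity of this block is an external result neither proved in the paper nor precisely cited, and (b) even if granted, Koszulity of the block algebra does not by itself identify the self-Ext algebra of one simple: the assertion $\operatorname{Ext}^\bullet(L(p_1),L(p_1))\cong H^\bullet(\mathbb{P}^{rn-1};\C)$ is exactly what has to be proved, and extracting it from Koszulity would require knowing the quadratic dual explicitly, i.e.\ redoing the computation. Your ``Koszulity-free alternative'' is the honest route, but you explicitly do not carry it out. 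The paper closes this step cheaply: take the BGG resolution of $L$ by standard objects (from \cite{BEG}; in the block $\Cat_{rn}$ it has length $rn$, one standard for each label) together with the analogous coresolution of $L$ by costandard objects; since $\operatorname{Ext}^{>0}(\Delta(p_i),\nabla(p_j))=0$ and $\operatorname{Hom}(\Delta(p_i),\nabla(p_j))=\delta_{ij}\C$, the resulting double complex has vanishing differentials on its diagonal, giving exactly one copy of $\C$ in each even degree $0,2,\dots,2rn-2$ and nothing else. If you replace the Koszulity appeal by this argument (or actually execute the injective-resolution bookkeeping in $\Cat_{rn}$, whose multiplication table Theorem \ref{Thm:catO_str} does determine completely), and add the translation-equivalence reduction, your proof is complete.
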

The proof closely follows that of \cite[Lemma 7.4]{BL} but we need to modify some parts of that argument.
\begin{proof}
Thanks to the translation equivalences it is enough to prove the claim when $\lambda$ is Zariski generic.

Let $L$ denote a unique finite dimensional irreducible $\A$-module. What we need to show is that
$\operatorname{Tor}_i^{\A}(L^*,L)=\C$ if $i$ is even and $0\leqslant i\leqslant 2nr-2$ and that
the Tor vanishes otherwise. We claim that $\operatorname{Tor}_i^{\A}(L^*,L)=\operatorname{Ext}^i_{\A}(L,L)^*$.
Knowing that, one can argue as follows. By Lemma \ref{Lem:Ext_coinc}, $\operatorname{Ext}^i_{\A}(L,L)=
\operatorname{Ext}^i_{\OCat(\A)}(L,L)$. The block in $\OCat(\A)$ containing $L$ was described in
Theorem \ref{Thm:catO_str}. In this block, we have $\operatorname{Ext}^i(L,L)=\C$ when $i$ is
even, $0\leqslant i\leqslant 2nr-2$, and $\operatorname{Ext}^i(L,L)=0$ otherwise. To see this one considers
the so called BGG resolution, see \cite{BEG}, for the first copy of $L$ and its analog with costandard
objects for the second copy.

So we need to show that $\operatorname{Tor}_i^{\A}(L^*,L)^*=\operatorname{Ext}^i_{\A}(L,L)$.
The proof is similar to that of \cite[Theorem A.1]{BLPW}. Namely, we consider the objects
$\Delta:=\A/\A\A_{>0}, \nabla^\vee:=\A/\A_{<0}\A$ and let $\nabla$ be the restricted dual
of $\nabla^\vee$. Then, since $\lambda$ is Zariski generic, we see that $\Delta$ is the sum
of the standard objects in $\OCat(\A)$, while $\nabla$ is the sum of all costandard objects
in $\OCat(\A)$. Then, as we have checked in the appendix to \cite{BLPW}, we have
$\operatorname{Tor}^{\A}_i(\nabla^\vee,\Delta)=\Ext_{\A}^i(\Delta,\nabla)=0$ for $i>0$ and moreover
$(\nabla^\vee\otimes_{\A}\Delta)^*=\Hom_{\A}(\Delta,\nabla)$ (an equality of $\C \bar{\M}^\theta(n,r)^{T\times\C^\times}$-bimodules).
Taking a resolution $P$ of the first copy of $L$ in  $\operatorname{Ext}^i_{\A}(L,L)$ by  direct summands
of $\Delta$ (which exists because of the structure of $\OCat(\A)$) and of the second copy by
direct summands of $\nabla$, denote this resolution by $Q$, we get
$\operatorname{Ext}^i_{\A}(L,L)=H_i(\operatorname{Hom}_{\A}(P,Q))=H_i(Q^\vee\otimes_{\A}P)^*=\operatorname{Tor}_i^{\A}(Q^\vee,P)^*=
\operatorname{Tor}^i_{\A}(L^*,L)^*$.
\end{proof}

\subsection{Faithfulness}\label{SS_faith}
Now we are going to establish (iv) for $x$ and $\lambda$ specified above. Let $\Leaf$ be the leaf containing $x$
and $\Leaf'$ be the leaf corresponding to the decomposition $r^0\oplus (r^1)^{\oplus n}$, where $r^1$ is an irreducible
representation of dimension $\delta$. Clearly, $\Leaf'\subset \overline{\Leaf}$.

We are going to prove that $\Leaf$ is contained in the associated variety of any HC $\A_\lambda(v,w)$-bimodule $\M$
(or $\A_{\lambda'}(v,w)$-$\A_\lambda(v,w)$-bimodule or $\A_\lambda(v,w)$-$\A_{\lambda'}(v,w)$-bimodule; thanks
to Proposition \ref{Prop:up_dag_prop}, a direct analog of \cite[Corollary 5.19]{BL} holds so that the associated
variety of any HC bimodule $\A_{\lambda'}(v,w)$-$\A_\lambda(v,w)$-bimodule coincides with those of
$\A_{\lambda'}(v,w)/\J_\ell, \A_\lambda(v,w)/\J_{r}$, where $\J_{\ell},\J_r$ are the left and right annihilators).
This is equivalent to saying that $\bullet_{\dagger,x}$ is faithful.
The scheme of the proof is as follows:
\begin{itemize}
\item[(a)] We first show that $\Leaf'\subset \VA(\M)$.
We do this by showing that $\M_{\dagger,y}$ cannot be finite dimensional nonzero for $y$ from a leaf $\Leaf_0$
such that $\Leaf'\not\subset\overline{\Leaf}_0$.
\item[(b)] From $\Leaf'\subset \VA(\M)$ we deduce that $\Leaf\subset \VA(\M)$.
\end{itemize}

Let us deal with (a). As in the proof of \cite[Lemma 7.10]{BL}, it is enough to show that the slice algebra
$\underline{\A}$ corresponding to $y$ has no finite dimensional representations. So let us analyze the structure of the leaves that contain $\Leaf'$ in their closure. For a partition $\mu=(\mu_1,\ldots,\mu_k)$ with $|\mu|<n$, we write $\Leaf(\mu)$ for the leaf corresponding
to the decomposition of the form $r^0\oplus (r^1)^{\oplus \mu_1}\oplus\ldots\oplus (r^k)^{\oplus\mu_k}$. From \cite[Theorem 1.2]{CB_geom} it follows that $\Leaf'=\Leaf(n)\subset
\overline{\Leaf(\mu)}$. There is a natural surjection from the set of leaves in $\bar{\M}(n,r)$ (this is precisely the slice to
$\Leaf'$) to the set of leaves in $\M(v,w)$ whose closure contains $\Leaf'$.


As in the proof of \cite[Lemma 7.10,(1)]{BL}, we need to prove that, for a generic $p\in \ker\delta$, the variety
$\underline{\bar{\M}}_p(\underline{v},\underline{w})$ has no single point symplectic leaves as long as
the corresponding symplectic leaf $\Leaf$ is different from $\Leaf(\mu)$. The latter is equivalent
to the condition that the decomposition of $v$ defining $\Leaf$ contains real roots. So let this decomposition
be $v=v^0+\nu_1\delta+\ldots+\nu_\ell\delta+\sum_{i\in Q_0}m_i\alpha_i$. The slice variety
$\underline{\bar{\M}}_p(\underline{v},\underline{w})$ is the product $\prod_\ell \bar{\M}(\nu_i, r)\times \M(v',w')$,
where $v'=(m_i)_{i\in Q_0}$ and $w'$ is given by $w'_i=w_i-(v^0,\alpha_i)$. We remark that $\dim \M(v',w')>0$.
Indeed, we have
\begin{align*}
\dim \M(v',w')&=2\sum_{i\in Q_0}(w_i-(v^0,\alpha_i))m_i-(\sum_i m_i\alpha_i, \sum_i m_i\alpha_i)=\\&=2w\cdot (v-v^0)-2(v^0,v-v^0)-(v-v^0,v-v^0)=\\
&=2w\cdot (v-v^0)-2(v,v-v^0)+(v-v^0,v-v^0).
\end{align*}
But $w\cdot (v-v^0)\geqslant (v,v-v^0)$ because $\nu$ is dominant. We also have $(v-v^0,v-v^0)\geqslant 0$
with the equality only if $v-v^0=k\delta$. But if the last equality holds, then we have $(v,v-v^0)=0$,
while $w\cdot \delta$ is always positive. The quiver defining  $\M_p(v',w')$ has no loops so that variety cannot have single point leaves.
So we have proved that $\underline{\A}$ cannot have a finite dimensional representation. This implies
that $\Leaf'\subset \VA(\M)$.

Now let us show that $\Leaf\subset \VA(\M)$. The slice algebra corresponding to $\Leaf'$ is $\underline{\A}'=
\bar{\A}_{\langle \lambda,\delta\rangle}(n,r)$. It follows that $\VA(\M_{\dagger,x'})$ contains the leaf
corresponding to the partition $\mu=(n'^{q})$. It follows that $\VA(\M)$ contains
$\Leaf(\mu)$.

\subsection{Affine wall-crossing functor and counting}\label{SS_aff_WC}
Using (i)-(iv) proved above one gets a direct analog of \cite[Theorem 7.2]{BL}. Let us introduce some notation.
Let $\theta,\theta'$ be two stability conditions separated by $\ker\delta$. Let $\lambda,\lambda'$ be two parameters
with $\lambda'-\lambda\in \Z^{Q_0}$, $\langle \lambda,\delta\rangle$ has denominator $n'\leqslant n$, and $(\lambda,\theta)$,
$(\lambda',\theta')$ satisfy the abelian localization. Set $q=\lfloor n/n'\rfloor$.

\begin{Thm}\label{Thm:perv}
There are chains of two-sided ideals $\{0\}\subsetneq \J_{q}\subsetneq \J_{q-1}\subsetneq \ldots \subsetneq \J_1\subsetneq \A_\lambda(v,w)$
and $\{0\}\subsetneq \J'_q\subsetneq \J'_{q-1}\subsetneq\ldots \subsetneq \J'_1\subsetneq \A_{\lambda'}(v,w)$ with the following properties.   Let
$\mathcal{C}_i$  be the subcategory of all modules in $\A_{\lambda}(v,w)\operatorname{-mod}$ annihilated by $\J_{q+1-\lfloor i/(rm-1)\rfloor}$
(this is a Serre subcategory by a direct analog of (1) of \cite[Theorem 7.1]{BL}) and let $\mathcal{C}'_i\subset \A_{\lambda'}(v,w)\operatorname{-mod}$ be defined
analogously. Then the following is true:
\begin{enumerate}
\item $\WC_{\theta\rightarrow \theta'},\WC_{\theta'\rightarrow \theta}$ are perverse equivalences with respect
to these filtrations inducing mutually inverse bijections between simples.
\item For a simple $S\in \mathcal{C}_{j(rm-1)}\setminus\mathcal{C}_{j(rm-1)+1}$, the simple $S'$ is a quotient of $H_{j(rm-1)}(\WC_{\theta\rightarrow \theta'}S)$.
\item The bijection $S\mapsto S'$ preserves the associated varieties of the annihilators.
\end{enumerate}
\end{Thm}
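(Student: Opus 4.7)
The plan is to follow the strategy of \cite[Theorem 7.2]{BL}, adapted to the present setting using the inputs (i)--(iv) assembled above. First I would construct the ideal chains as images of maximal ideals in slice algebras under the $\bullet^{\dagger_{\A},x}$ construction. For each $j=1,\ldots,q$, let $x_j$ be a generic point of the leaf $\Leaf_j$ whose slice quantization is $\underline{\A}_j:=\bar{\A}_{\langle\lambda,\delta\rangle}(n',r)^{\otimes j}$, and let $\I_j\subset\underline{\A}_j$ be the unique maximal ideal. Set $\J_j:=\I_j^{\dagger_{\A_\lambda(v,w)},x_j}$ and define $\J'_j\subset\A_{\lambda'}(v,w)$ analogously. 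Proposition \ref{Prop:up_dag_prop}, Proposition \ref{Prop:map_ideal_prop}(3), and the analog of Proposition \ref{Prop:ideals_techn} furnished by (ii) show that these are proper chains of prime ideals with $\VA(\A_\lambda(v,w)/\J_j)=\overline{\Leaf}_j$; the associated varieties determine the chains uniquely, and in particular match under the identification of the two sides.

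The heart of the argument is a local computation of $\WC_{\theta\to\theta'}$ through the restriction functor $\bullet_{\dagger,x_j}$. By a direct analog of \cite[Proposition 5.17]{BL}, $\bullet_{\dagger,x_j}$ intertwines the wall-crossing bimodule with the external tensor product of $j$ copies of the wall-crossing bimodule for $\bar{\A}_{\langle\lambda,\delta\rangle}(n',r)$. For a single factor, Proposition \ref{Prop:Tor} pins down the derived tensor product with the finite-dimensional quotient $\bar{\A}/\J$: nonzero in even degrees $0\leqslant i\leqslant 2(rm-1)$ and vanishing elsewhere, where $m=n'$. A K\"unneth calculation then identifies the $j$-fold derived tensor product as concentrated in even degrees between $0$ and $2j(rm-1)$, producing precisely the homological shift $j(rm-1)$ that governs the perverse filtration in the theorem.

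To globalize, I would use the adjunction $(\bullet_{\dagger,x},\bullet^{\dagger,x})$ from (i) together with the faithfulness of $\bullet_{\dagger,x_q}$ from (iv). A simple object $S\in\mathcal{C}_{j(rm-1)}\setminus\mathcal{C}_{j(rm-1)+1}$ has annihilator exactly $\J_{q+1-j}$, so a generic point of $\Leaf_{q-j}$ lies in $\VA(S)$; restriction at such a point reduces the behavior of $\WC_{\theta\to\theta'}$ on $S$ to the slice computation above, which determines both the shift and the leading term $S'$ up to the Serre quotient by modules with strictly smaller support. Iterating across strata yields the bijection of simples and the formula in (1) and (2); part (3) then follows from the observation that $\bullet_{\dagger,x}$ is exact and tensor, hence preserves associated varieties of annihilators, and faithfulness on the deepest stratum upgrades compatibility of annihilators to equality.

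The main obstacle will be assembling the strata-wise Tor computation into a genuine perverse equivalence on the entire derived category: one must verify that the higher Tor contributions computed on $\Leaf_{q-j}$ do not contaminate the required vanishings on $\Leaf_{q-j-1}$, so that the filtration is perverse rather than merely semi-perverse. This is precisely where the sharp Tor vanishing of (iii), together with the analog of \cite[Lemma 5.21]{BL} referenced in (ii), is indispensable; once these are in place, the remainder of the argument proceeds formally along the lines of \cite[Section 7]{BL}.
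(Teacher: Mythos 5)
Your proposal follows the same route as the paper: the paper's own proof of this theorem consists precisely of establishing the inputs (i)--(iv) in Subsections \ref{S_fun_up_dag}--\ref{SS_faith} (ideal chains via $\bullet^{\dagger_{\A},x}$ applied to the maximal ideals of the slice algebras $\bar{\A}_{\langle\lambda,\delta\rangle}(n',r)^{\otimes j}$, the Tor computation of Proposition \ref{Prop:Tor} giving the even range $0\leqslant i\leqslant 2(rn'-1)$, and faithfulness of $\bullet_{\dagger,x}$) and then invoking the argument of \cite[Section 7]{BL} verbatim, which is exactly the assembly you sketch. Your outline, including the slice-wise identification of the wall-crossing bimodule, the K\"unneth bookkeeping yielding the shift $j(rm-1)$, and the globalization via the adjunction and faithfulness, matches the intended proof.
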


Similarly to \cite[Section 8]{BL}, this theorem implies Theorem \ref{Thm:counting}.

\end{document}